\setlist[enumerate]{noitemsep,topsep=3pt}
\setlist[itemize]{noitemsep,topsep=3pt}
\theoremstyle{definition}
\newtheorem{definition}{Definition}
\newtheorem{remark}[definition]{Remark}
\theoremstyle{plain}
\newtheorem{lemma}[definition]{Lemma}
\newtheorem{theorem}[definition]{Theorem}
\newtheorem{corollary}[definition]{Corollary}
\newtheorem{proposition}[definition]{Proposition}
\newtheorem{conjecture}[definition]{Conjecture}
\newcommand{\sdim}{{\rm sdim}}
\newcommand{\edim}{{\rm edim}}
\newcommand{\adim}{{\rm adim}}
\newcommand{\res}{{\rm res}}
\newcommand{\mdim}{{\rm mdim}}
\newcommand{\pd}{{\rm pd}}
\newcommand{\msrepr}{\ensuremath{\operatorname{m}}}
\newcommand{\msl}{\{\hspace*{-0.1cm}|}
\newcommand{\msr}{|\hspace*{-0.1cm}\}}
\newcommand{\ter}{\operatorname{ter}}
\author{Dorota Kuziak$^1$ and Ismael G. Yero$^2$\\[0.2cm]
$^1$ Department of Statistics and Research Operations,\\
$^2$ Department of Mathematics,\\
Universidad de C\'{a}diz, 11202 Algeciras, Spain}
\date{}
\title{Metric dimension related parameters in graphs: A survey on combinatorial, computational and applied results}
\begin{document}
\maketitle

\begin{abstract}
Topics concerning metric dimension related invariants in graphs are nowadays intensively studied. This compendium of combinatorial and computational results on this topic is an attempt of surveying those contributions that are of the highest interest for the research community dealing with several variants of metric dimension in graphs.
\medskip \\
\textbf{Keywords:} Metric dimension parameters; metric graph theory; distances in graphs.\\
\textbf{AMS Subject Classification Numbers:} 05C12; 05C75; 05C76.
\end{abstract}

\section{Introduction}

The origin of researches concerning metric dimension of graphs is frequently relatively lost in the literature since such concept has arisen in connection with some other related and/or more general areas than that of graphs. For instance, considering the case of metric spaces in general, the notion of metric dimension dates back to 1953 \cite{Blumenthal-1953}. It is frequently said that metric dimension was independently introduced by Slater, and by Harary and Melter in \cite{Slater-1975}, and \cite{Harary-1976}, respectively. However, metric dimension in graphs actually dates back to \cite{Erdos-1963} and \cite{Erdos-1965}, two works of Erd\"os and others. Moveover, if we indeed continue digging into the databases (for instance MathSciNet), we see that there are other antecessors of such works which we have not been able to find.

Despite the fact it is old concept, their study became much popular just in the last two decades approximately. Before this only a few sporadic works appeared, from which we might remark the complexity result (indeed a comment without proof) given in the book \cite{Garey-1979}, claiming that computing the metric dimension of graphs is an NP-hard problem. The formal proof of this fact was further on given in \cite{Khuller-1996}. It is perhaps the work \cite{Chartrand-2000-a} which produced a significant turning point on the popularity of the topic in the research community.

Since the first results on metric dimension were published, a few interesting applications of the topic to practical problems have been raised up, as well as, a huge number of theoretical results have been published so far. Among the most interesting applications we remark the following ones.

In \cite{Chartrand-2000-a,Chartrand-2000-c}, the authors have described that the structure of some chemical compounds is frequently represented by a labeled graph where the vertex and edge labels specify the atom and bond types, respectively. In this sense, a lot of issues in the field of chemistry are related to obtaining a mathematical representation for chemical compounds, such that each one of these representations leads to different compounds. This means that some theoretical concepts of graph theory are related to such a chemical problem. Therefore, obtaining representations for the vertices of a graph, such that all vertices in it have different representations is ``in some sense'' related to the problem of providing representations of chemical compounds with respect to the others. For instance, the author of \cite{Johnson-1993,Johnson-1998} rediscovered the concepts of resolving sets while he was investigating some aspects about pattern recognition into a chemical compound in a pharmacy company.

Furthermore, metric dimension has some applications to problems of pattern recognition and image processing, some of which involve the use of hierarchical data structures \cite{Melter-1984}. Other applications to navigation of robots in networks and other areas appeared in \cite{Chartrand-2000-a,Hulme-1984,Khuller-1996}. Some interesting connections between resolving sets in Hamming graphs and the Mastermind game or coin weighing have been presented in \cite{Caceres-2007}. Some other applications of parameters related to the metric dimension to combinatorial searching have been presented in \cite{Sebo-2004}. Specifically, there have been analyzed some problems on false coins arising from a connection between information theory and extremal combinatorics. Also, they have dealt with a combinatorial optimization problem related to finding ``connected joins'' in graphs. In such a work, several results about detection of false coins have been used to approximate the value of the strong variant of metric dimension of some specific graphs, like for example the Hamming graphs. Some other more recent applications of metric dimension parameters are given in \cite{Bailey-2019} and \cite{Trujillo-Rasua-2016}, where connections between such parameters with error correcting codes and privacy in social networks, respectively, are demonstrated.

Nowadays, the topic of metric dimension in graphs, in its classical version as well as in all the known variants, is very well studied, although there are still a lot of open problems of interest for the research community. We next give some data which show this. The next table contains the results of several queries made on a number of databases, and the quantity of entries obtained in each query. Note that the results of Scholar Google can be ``significantly inflated''.

\begin{center}
\begin{tabular}{|c|c|c|c|}
  \hline
  Database & Query & Lifetime works  & Last 10 years \\ \hline
  MathSciNet & Metric dimension \& graph & 232  & 193 \\
   & Resolving set \& graph & 18  & 16 \\ \hline
  Web of Science & Metric dimension \& graphs & 228  & 209 \\
   & Resolving set \& graph & 28  & 22 \\ \hline
  Scholar Google & Metric dimension \& graph & 3080  & 2510 \\
   & Resolving set \& graph &  1090 & 955 \\ \hline
  DBLP & Metric dimension \& graph & 146 & 133 \\
   & Resolving set \& graph & 14 & 11 \\
  \hline
\end{tabular}
\end{center}

In connection with the classical version of metric dimension, it has recently appeared the survey \cite{Tillquist-2021}, which is a relatively fairly complete compendium containing the vast majority of mainly the most important results on metric dimension of graphs. However, not much appears there regarding the huge number of versions of metric dimension which are giving more insight into it. Another significant work which also surveys/reviews the literature of metric dimension in graphs, mainly addressed to the existent connections with other topics like group theory and topology is \cite{Bailey-2011}. In view of these facts, we are aimed to present here an extensive survey on the most important results concerning variants of metric dimension in graphs. We remark that in view of \cite{Tillquist-2021}, we shall not recall (unless specifically necessary) any result concerning the classical concept of metric dimension.

\section{The classical version}

From now on in this survey, we consider $G=(V,E)$ is a connected, undirected and simple graph without loops and multiple edges. The set of vertices and of edges of $G$ shall be written as $V(G)$ and $E(G)$, respectively. The \emph{order} and the \emph{size} of $G$ are $n=|V(G)|$ and $m=|E(G)|$. Given two vertices $u,v\in V(G)$, the \emph{distance} between $u$ and $v$ is the length of a shortest path joining $u$ and $v$, and is denoted by $d_G(u,v)$. The two vertices $u,v$ are (\emph{resolved, identified} or \emph{recognized}) by a vertex $x\in V(G)$ if $d_G(u,x)\ne d_G(v,x)$. It is also said that $x$ (\emph{resolves, identifies} or \emph{recognizes}) the pair of vertices $u,v$. Moreover, by $R_G\{x, y\}$ we mean the set of all vertices of $G$ that resolves the pair $x,y$ (note that $x,y\in R_G\{x, y\}$). Whenever possible, the subindex $G$ in the above notations could be removed if the graph $G$ is clear from the context. Based on these definitions, the following concepts are the heart of the exposition.

\begin{definition}
A set of vertices $S\subset V(G)$ is a \emph{resolving set} of $G$ if any two vertices in $V(G)$ are resolved by a vertex of $S$. A resolving set with the smallest possible cardinality is called a \emph{metric basis}. The cardinality of a metric basis in $G$ is the \emph{metric dimension} of $G$, denoted by $\dim(G)$.
\end{definition}

To better get the point in such definition, we use for instance the following example graph given in Figure \ref{Fig_resolving-basis}. There appear a graph $G$ on 12 vertices, which is indeed the so-called grid graph $P_4\Box P_3$ (Cartesian product of the two paths $P_4$ and $P_3$. It can be noted that for any pair of vertices in this grid, there is at least one vertex in red (or in blue) which recognizes such pair. Notice that pairs of vertices formed by vertices which are in the set itself are also recognized by themselves. For the grid graphs, it is well known that any two ``consecutive corner vertices'' are enough to uniquely identify all the vertices of such grids.

\begin{figure}[h]
\centering
\begin{tikzpicture}[scale=.7, transform shape]
\node [draw, shape=circle, fill=red] (a1) at  (0,0) {};
\node [draw, shape=circle, fill=red] (a5) at  (0,1.5) {};
\node [draw, shape=circle, fill=red] (a9) at  (0,3) {};
\node [draw, shape=circle] (a2) at  (3,0) {};
\node [draw, shape=circle] (a6) at  (3,1.5) {};
\node [draw, shape=circle] (a10) at  (3,3) {};
\node [draw, shape=circle] (a3) at  (6,0) {};
\node [draw, shape=circle] (a7) at  (6,1.5) {};
\node [draw, shape=circle] (a11) at  (6,3) {};
\node [draw, shape=circle, fill=blue] (a4) at  (9,0) {};
\node [draw, shape=circle, fill=blue] (a8) at  (9,1.5) {};
\node [draw, shape=circle, fill=blue] (a12) at  (9,3) {};

\draw(a1)--(a2)--(a3)--(a4)--(a8)--(a7)--(a6)--(a5)--(a1);
\draw(a5)--(a9)--(a10)--(a11)--(a12)--(a8);
\draw(a2)--(a6)--(a10);
\draw(a3)--(a7)--(a11);
\end{tikzpicture}
\hspace*{0.6cm}
\begin{tikzpicture}[scale=.7, transform shape]
\node [draw, shape=circle, fill=blue] (a1) at  (0,0) {};
\node [draw, shape=circle] (a5) at  (0,1.5) {};
\node [draw, shape=circle, fill=red] (a9) at  (0,3) {};
\node [draw, shape=circle] (a2) at  (3,0) {};
\node [draw, shape=circle] (a6) at  (3,1.5) {};
\node [draw, shape=circle] (a10) at  (3,3) {};
\node [draw, shape=circle] (a3) at  (6,0) {};
\node [draw, shape=circle] (a7) at  (6,1.5) {};
\node [draw, shape=circle] (a11) at  (6,3) {};
\node [draw, shape=circle, fill=blue] (a4) at  (9,0) {};
\node [draw, shape=circle] (a8) at  (9,1.5) {};
\node [draw, shape=circle, fill=red] (a12) at  (9,3) {};

\draw(a1)--(a2)--(a3)--(a4)--(a8)--(a7)--(a6)--(a5)--(a1);
\draw(a5)--(a9)--(a10)--(a11)--(a12)--(a8);
\draw(a2)--(a6)--(a10);
\draw(a3)--(a7)--(a11);
\end{tikzpicture}
\caption{Two resolving sets $($one in red other in blue$)$ in the left hand side graph, and two metric bases $($one in red other in blue$)$ in the right hand side graph.}\label{Fig_resolving-basis}
\end{figure}
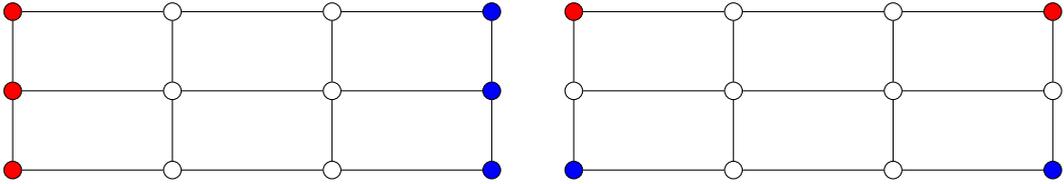

Other useful point of view for defining a resolving set is as follows. For a set $S=\{u_1,\dots,u_r\}\subset V(G)$, and a vertex $x\in V(G)$, the \emph{metric representation} of $x$ with respect to $S$ is the vector $r(x|S)=(d_G(x,u_1),\dots,d_G(x,u_r))$. With this notation in mind, the set $S$ is a resolving set for $G$ if the metric representations of all vertices of $G$ are pairwise different. This indeed means that for every pair of distinct vertices of $G$, their metric representations differ in at least one position.

It is necessary to remark some points regarding the definitions above. Resolving sets have been also called locating sets (in \cite{Slater-1975}) and metric generators (first time in \cite{Sebo-2004}). The metric dimension of graphs has been denoted in at least three different manners, where $\beta(G)$, $\mu(G)$ and $\dim(G)$ are the most common ones. Metric dimension was also called locating number in \cite{Slater-1975}. From our point of view, the notation $\dim(G)$ is the most natural one, since for instance the notation $\beta(G)$ is also used for the independence number, which is a very classical parameter in graph theory. In concordance, from now on we shall use $\dim(G)$ for the whole exposition, and we indeed would suggest researchers to unify their works with this notation.

Several variations of metric dimension in graphs are nowadays more or less well known and studied. In general, the metric dimension parameters can be classified into five types. Notice that we do not mention all the instances of such parameters, but just some of the most remarkable ones, from our point of view.\\

\noindent
1) Resolving sets which also satisfy other properties of the graph:
\begin{itemize}
  \item[--] resolving dominating set \cite{Brigham-2003} - a resolving set which is also a dominating set;
  \item[--] independent resolving set \cite{Chartrand-2003} - a resolving set which is also an independent set;
  \item[--] connected resolving set \cite{Saenpholphat-2003} - a resolving set which is also a connected set.
\end{itemize}

\noindent
2) Resolving sets which have a modified condition of resolvability:
\begin{itemize}
  \item[--] strong resolving set \cite{Sebo-2004} - a set that for any two vertices of the graph, one of them belongs to a shortest path between the other vertex and an extra vertex from such set;
  \item[--] local resolving set \cite{Okamoto-2010} - a set such that every two adjacent vertices of the graph have distinct vectors of distances to the vertices in this set;
  \item[--] adjacency resolving set \cite{Jannesari-2012} - a set such that any two different vertices not belonging to the set have different neighborhood in this set.
\end{itemize}

\noindent
3) Partitions of the vertex set of a graph having some resolving properties:
\begin{itemize}
  \item[--] resolving partition \cite{Chartrand-2000-b} - a partition such that every two different vertices of the graph have distinct vectors of distances to the sets of the partition;
  \item[--] strong resolving partition \cite{Yero-2014-a} - a partition where every two different vertices of the graph belonging to the same set of the partition are strongly resolved by some set of the partition;
  \item[--] metric coloring \cite{Chartrand-2009} - a partition such that every two adjacent vertices of the graph have distinct vectors of distances to the set of the partition.
\end{itemize}

\noindent
4) Resolving sets that are extensions of the classical resolving sets:
\begin{itemize}
  \item[--] $k$-resolving set \cite{Estrada-Moreno-2015} - a set such that any pair of vertices of the graph is distinguished by at least $k$ vertices of this set
  \item[--] simultaneous resolving set \cite{Ramirez-Cruz-2016} - a set which is simultaneously a metric generator for a given family of connected graphs with a common vertex set.
\end{itemize}

\noindent
5) Resolving sets which identify other elements of the graphs:
\begin{itemize}
  \item[--] edge resolving set \cite{Kelenc-2018} - a set such that any pair of edges of the graph is distinguished by the vertices of this set;
  \item[--] mixed resolving set \cite{Kelenc-2017} - a set such that any pair of elements (vertices or edges) of the graph is distinguished by the vertices of this set;
  \item[--] solid resolving set \cite{Hakanen-2020} - a set that uniquely identifies not only pairs of vertices but also pairs of subsets of vertices of the graph.
\end{itemize}

According to the amount of literature concerning all the variants of this topic, and in view of the excellent recent survey \cite{Tillquist-2021}, it is now our goal to present a compendium containing (in our opinion) some of the most interesting contributions published so far on the most important variants of metric dimension in graphs. For each surveyed variant, the most interesting results, research lines and open questions shall be mentioned. In connection with this, we need to remark that the topic of identifying codes (see \cite{Karpovsky-1998}) will be not addressed here since from our point of view, the literature on this topic and related ones is huge and indeed deserves to be surveyed separately. By a similar reason, topics of ``location'' in graphs (locating dominating sets, locating total dominating sets, open locating dominating sets, metric locating dominating sets, locating chromatic partitions, metric locating partitions, resolving dominating partitions, and a long etc.) shall not be addressed in this exposition, since the number of such parameters (many of them quite similar to each other), and results in this direction is also enormous that would deserve a separate survey for themselves.

Another remark we need to say is that, although there have appeared several contributions on metric dimension parameters for the cases of infinite graphs and of digraphs, we shall not mention in this survey any result concerning these parts. That is, all the graphs considered here are finite and non directed.

\subsection{Some terminology and notation}\label{subsec:terminology}

In order to better develop this exposition, we here describe some extra terminology, basic concepts and notations that will be useful for the reader. For any other very basic terminology on graph theory, we refer the reader to the book \cite{West-2001}.

We begin with some concepts defined on trees, where a vertex of degree at least three is called a \emph{major vertex}, and a leaf $u$ is a \emph{terminal vertex} of a major vertex $v$ if $d_T(u, v) < d_T(u, w)$ for every other major vertex $w$. The set of leaves of $T$ shall be denoted as $L(T)$ and we also shall write $l(T)=|L(T)|$. Now, the \emph{terminal degree} $ter_T(v)$ of a major vertex $v$ is the number of terminal vertices of $v$. A major vertex $v$ is called an \emph{exterior major vertex} if $ter_T(v)>0$. Moreover, by $M(G)$ and $ex(T)$ we denote the set and the number of exterior major vertices of $T$, respectively. By $ex_k(T)$, the number of exterior major vertices $u$ for which $ter_T(u)=k$. Note that all the concepts and notations above can be extended to non tree graphs in a natural way.

A shortest path between two vertices $u,v\in V(G)$ is called a \emph{geodesic} and, it is usually written as a $u,v$-geodesic. The \emph{diameter} of $G$ is the largest possible distance between any two vertices of $G$ and is denoted by $D(G)$. The \emph{open neighborhood} of a vertex $v\in V(G)$ is the set of neighbors of $v$ in $G$, and is denoted as $N_G(v)$. The \emph{closed neighborhood} of $v$ is $N_G[v]=N_G(v)\cup \{v\}$. Two distinct vertices are called \emph{true twins} if they have the same closed neighborhood, and they are called \emph{false twins} if they have the same open neighborhood. A vertex $v$ is a \emph{twin} if there exists another vertex $u$ such that $u,v$ are either true twins or false twins. It is well known that the property of being a twin forms an equivalence relation in a graph, and the equivalence classes can be of type true twins class or false twin class or trivial class (this latter one whenever a vertex is not a twin).

A vertex $u$ of $G$ is said to be \emph{maximally distant} from other vertex $v$ if every vertex $w\in N_G(u)$ satisfies that $d_G(v,w)\le d_G(v,u)$. The set of all vertices of $G$ that are maximally distant from some vertex of the graph is called the {\em boundary} of the graph, and is denoted by $\partial(G)$. If a vertex $u$ is maximally distant from other distinct vertex $v$, and $v$ is maximally distant from $u$, then it is said that $u$ and $v$ are \emph{mutually maximally distant}. In the case of complete graphs $K_n$, complete bipartite graphs $K_{r,s}$, cycles $C_n$ and hypercube graphs $Q_k$, the boundary is simply the whole vertex set. On the other hand, the boundary of a tree consists of its leaves. In the grid graph of Figure \ref{Fig_resolving-basis}, any two ``diagonal corner vertices'' are mutually maximally distant.

A set of vertices $S\subset V(G)$ is a \emph{vertex cover set} of $G$, if every edge of $E(G)$ is incident to at least one vertex of $S$. The cardinality of a smallest possible vertex cover set of $G$, is the \emph{vertex cover number} of $G$, denoted by $\alpha(G)$. It is very well known that the complement of a vertex cover set forms an \emph{independent set}, which is an edgeless set of vertices. The \emph{independence number} is the cardinality of a largest independent set of a graph $G$, denoted as $\beta(G)$. Gallai's theorem precisely states that, for any graph $G$ of order $n$, $\alpha(G)+\beta(G)=n$.

Throughout this exposition, several allusions will be made regarding some products of graphs: Cartesian ($\Box$), direct ($\times$), strong ($\boxtimes$), lexicographic ($\circ$) as those called the four standard ones; and other ones like corona ($\odot$), rooted ($\circ_v$, with respect to a vertex $v$), and hierarchical, which are more known as kind of ``operations'' with graphs rather than products. For definitions of these products, and more information about them, we suggest the excellent book \cite{Hammack-2011}.

Whenever $G$ and $H$ are two isomorphic graphs, we shall write $G\cong H$. For a set of vertices $S\subset V(G)$, by $G[S]$ we represent the subgraph of $G$, induced by the set $S$, that is, a graph $G'$ with vertex set $V(G')=S$, and two vertices $u,v\in S$ are adjacent in $G'$ if and only if they are adjacent in $G$. We want to recall that through our exposition, in every notation defined above, whenever there is a subindex with regard to the graph $G$, we could remove such subindex if the graph is clear from the context.


\section{Strong metric dimension} \label{Sec:strong-dim}

A resolving set for a graph $G$ uniquely distinguishes every vertex of $G$ by means of a vector of distances to such resolving set. The notion of strong resolving sets and strong metric dimension of graphs appears as an attempt to also uniquely distinguish graphs in the following sense. In  \cite{Sebo-2004}, authors discussed the following question: ``\emph{For a given resolving set $T$ of a graph $H$, whenever $H$ is a subgraph of a graph $G$ and the metric vectors of the vertices of $H$ relative to $T$ agree in both $H$ and $G$, is $H$ an isometric subgraph
of $G$}? As stated in \cite{Sebo-2004}, \emph{even though the metric vectors relative to a resolving set of a graph distinguish all pairs of
vertices in the graph, they  do not uniquely determine all distances in a graph.}''\footnote{A sentence from
\cite{Rodriguez-Velazquez-2014-a}.} In connection with this situation, a stronger notion of resolving sets was introduced
in \cite{Sebo-2004}.

Given a connected graph $G$, it is said that a vertex $w\in V(G)$ \emph{strongly resolves} two different vertices $u,v\in V(G)$ if
$d_G(w,u)=d_G(w,v)+d_G(v,u)$ or $d_G(w,v)=d_G(w,u)+d_G(u,v)$. That is, there is some shortest $w-u$ path
that contains $v$ or some shortest $w-v$ path containing $u$. A set $S\subset V(G)$ is a
\emph{strong resolving set} (or a \emph{strong metric generator}) for $G$, if every two vertices of $G$ are strongly resolved by some vertex of $S$. The cardinality of a smallest strong resolving set for $G$ is called the \emph{strong metric dimension} of $G$, denoted by $\sdim(G)$ (in some literature it is also denoted by $\dim_s(G)$, see for instance \cite{Rodriguez-Velazquez-2014-a}). A \emph{strong metric basis} of $G$ is a strong resolving set of cardinality $\sdim(G)$. We must remark that if two vertices are mutually maximally distant, then they are strongly resolved only by themselves. This means that any strong resolving set of a graph $G$ must contain at least one vertex from each pair of mutually maximally distant vertices of $G$.

In Figure \ref{Fig:strong-resolving} appears a graph with a strong metric basis in red color. Notice that for instance the two vertices in the center are mutually maximally distant. Thus, at least one of them must belong to a given strong resolving set. Similarly, the two left hand side vertices together with the two ride hand side ones are also pairwise mutually maximally distant, and so, an strong metric basis must contain at least three of them.

\begin{figure}[ht]
\centering
\begin{tikzpicture}[scale=.7, transform shape]
\node [draw, shape=circle,fill=red] (a1) at  (-4,-1) {1};
\node [draw, shape=circle,fill=red] (a2) at  (-4,1) {2};
\node [draw, shape=circle] (a3) at  (-3,0) {3};
\node [draw, shape=circle] (a4) at  (-1,0) {4};
\node [draw, shape=circle,fill=red] (a5) at  (0,-1) {5};
\node [draw, shape=circle] (a6) at  (0,1) {6};
\node [draw, shape=circle,fill=red] (a10) at  (4,-1) {10};
\node [draw, shape=circle] (a9) at  (4,1) {9};
\node [draw, shape=circle] (a8) at  (3,0) {8};
\node [draw, shape=circle] (a7) at  (1,0) {7};

\draw(a3)--(a1)--(a2)--(a3)--(a4)--(a5)--(a7)--(a8)--(a9)--(a10)--(a8);
\draw(a4)--(a6)--(a7);
\end{tikzpicture}
\caption{A strong resolving set of the smallest possible cardinality appears in red.}\label{Fig:strong-resolving}
\end{figure}
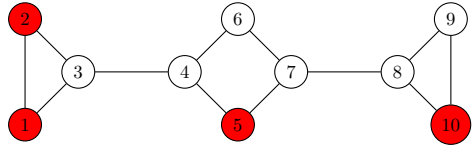

Some literature on the strong metric dimension of graphs was already surveyed in \cite{Kratica-2014}. Also, the Ph. D. dissertation \cite{Kuziak-PhD} contains a nice compendium of results in this topic, with emphasis in the case of product graphs, and in addition, a selection of results appeared in the recent survey \cite{Tillquist-2021}. In connection with these facts, we shall just pay attention here to a few interesting and more recent contributions published after the survey article \cite{Kratica-2014}.

A very significant relationship between the strong metric dimension of a graph $G$ and the vertex cover number of a graph closely related to $G$ was given in \cite{Oellermann-2007}. To see this we need some terminology and notation.

The \emph{strong resolving graph} of $G$, denoted by $G_{SR}$, is a graph that has vertex set
$V(G'_{SR})=V(G)$ and two vertices $u,v$ are adjacent in $G'_{SR}$ if and only if $u$ and $v$ are mutually maximally
distant in $G$. Clearly, if $v\notin \partial(G)$, then $v$ is an isolated vertex in $G_{SR}$. The strong resolving graph of several basic families of graphs can be easily constructed. For instance, $(K_n)_{SR}\cong K_n$; $(C_{2n})_{SR}\cong \bigcup_{i=1}^n K_2$; $(C_{2n+1})_{SR}\cong C_{2n+1}$; $(K_{r,s})_{SR}\cong K_r\cup K_s$; and if $T$ is a tree, then $T_{SR}$ is isomorphic to a graph with a component isomorphic to $K_{|L(T)|}$ and $|V(T)|-|L(T)|$ isolated vertices. As an example, in Figure \ref{Fig:SRG} we have drawn the strong resolving graph of the graph appearing in Figure \ref{Fig:strong-resolving}. We may remark that constructing $G_{SR}$ can be polynomially done. The strong resolving graph $G_{SR}$ turned out to be a remarkable tool while studying the strong metric dimension of graphs based on the following result.

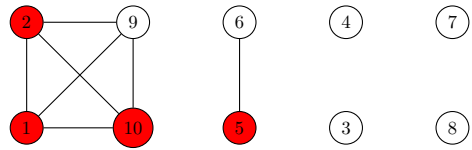
\begin{figure}[ht]
\centering
\begin{tikzpicture}[scale=.7, transform shape]
\node [draw, shape=circle,fill=red] (a1) at  (-4,-1) {1};
\node [draw, shape=circle,fill=red] (a2) at  (-4,1) {2};
\node [draw, shape=circle] (a3) at  (2,-1) {3};
\node [draw, shape=circle] (a4) at  (2,1) {4};
\node [draw, shape=circle,fill=red] (a5) at  (0,-1) {5};
\node [draw, shape=circle] (a6) at  (0,1) {6};
\node [draw, shape=circle,fill=red] (a10) at  (-2,-1) {10};
\node [draw, shape=circle] (a9) at  (-2,1) {9};
\node [draw, shape=circle] (a8) at  (4,-1) {8};
\node [draw, shape=circle] (a7) at  (4,1) {7};

\draw(a1)--(a2)--(a9)--(a10)--(a1)--(a9);
\draw(a2)--(a10);
\draw(a5)--(a6);

\end{tikzpicture}
\caption{The strong resolving graph of the graph in Figure \ref{Fig:strong-resolving}.}\label{Fig:SRG}
\end{figure}

\begin{theorem}{\em \cite{Oellermann-2007}}\label{th:strong-dim-cover}
For any connected graph $G$, $\sdim(G)=\alpha(G_{SR})$.
\end{theorem}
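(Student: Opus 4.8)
The plan is to establish the equality $\sdim(G)=\alpha(G_{SR})$ by showing that strong resolving sets of $G$ are exactly the vertex covers of $G_{SR}$, so that minimizing cardinality on both sides gives the result. The crucial structural fact, already noted in the excerpt, is that if $u$ and $v$ are mutually maximally distant in $G$, then the only vertices of $G$ that strongly resolve the pair $u,v$ are $u$ and $v$ themselves; this is the bridge between the two notions.

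First I would prove the easy direction: every strong resolving set $S$ of $G$ is a vertex cover of $G_{SR}$. Take any edge $uv$ of $G_{SR}$; by definition of the strong resolving graph, $u$ and $v$ are mutually maximally distant in $G$. Since $S$ strongly resolves the pair $u,v$, some vertex $w\in S$ strongly resolves it, but by the structural fact the only candidates are $u$ and $v$, so $u\in S$ or $v\in S$, i.e.\ $S$ covers the edge $uv$. Hence $S$ is a vertex cover of $G_{SR}$, giving $\alpha(G_{SR})\le \sdim(G)$.

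For the reverse direction I would show that every vertex cover $C$ of $G_{SR}$ is a strong resolving set of $G$, which yields $\sdim(G)\le\alpha(G_{SR})$. The key lemma needed here is: for any two distinct vertices $x,y\in V(G)$, there exist vertices $u,v$ that are mutually maximally distant in $G$ such that $x$ lies on a shortest $u$--$v$ geodesic and $y$ lies on a shortest $u$--$v$ geodesic, and moreover one of $u,v$ strongly resolves the pair $x,y$ (equivalently, $x,y$ both lie on some common $u$--$v$ geodesic with $u$ before $x,y$ before $v$, up to relabelling). Concretely, given $x,y$, one walks away from $y$ through $x$: starting from $x$ one repeatedly moves to a neighbour strictly farther from $y$ until reaching a vertex $u$ maximally distant from $y$ with $d_G(u,y)=d_G(u,x)+d_G(x,y)$; then one extends past $u$ away from $x$ (or symmetrically) to reach a vertex $v$ so that $u$ and $v$ become mutually maximally distant, while keeping the geodesic through $x$ and $y$. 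Then $uv\in E(G_{SR})$, so $C$ contains $u$ or $v$, and that vertex strongly resolves $x,y$. Thus every pair is strongly resolved by a vertex of $C$.

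The main obstacle is precisely this last lemma — verifying that from an arbitrary pair $x,y$ one can always reach a pair of mutually maximally distant vertices whose connecting geodesic passes through both $x$ and $y$ (so that an endpoint strongly resolves the pair). The delicate point is the extension step: after finding $u$ maximally distant from $y$ along a geodesic through $x$, one must argue that continuing to push the far endpoint produces a vertex $v$ that is maximally distant from $u$ \emph{and} from which $u$ is maximally distant, without destroying the property that $x$ (and $y$) lies on a $u$--$v$ geodesic; this requires a careful monotonicity/finiteness argument using that $G$ is finite and connected. Once this lemma is in hand, combining the two inclusions gives $\sdim(G)=\alpha(G_{SR})$, and I would close by remarking that since $G_{SR}$ can be constructed in polynomial time, this reduces computing $\sdim(G)$ to a vertex cover computation.
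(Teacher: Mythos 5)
Your plan is essentially the standard proof of this result (note that the survey itself does not prove the theorem; it only cites \cite{Oellermann-2007}). The first direction is complete as written: for an edge $uv$ of $G_{SR}$ the vertices $u,v$ are mutually maximally distant in $G$, only $u$ and $v$ themselves strongly resolve that pair, so every strong resolving set must meet $\{u,v\}$; hence every strong resolving set is a vertex cover of $G_{SR}$ and $\alpha(G_{SR})\le\sdim(G)$.

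For the reverse direction, the ``delicate point'' you isolate is in fact not delicate, and here is the argument you left open. Given $x\ne y$, walk from $x$ away from $y$ (repeatedly replacing the current vertex by a neighbour strictly farther from $y$) until reaching a vertex $u$ that is maximally distant from $y$ and satisfies $d_G(u,y)=d_G(u,x)+d_G(x,y)$; then walk from $y$ away from $u$ (not ``past $u$ away from $x$'', as you wrote) until reaching a vertex $v$ that is maximally distant from $u$ and satisfies $d_G(u,v)=d_G(u,y)+d_G(y,v)$. Both walks terminate because distances to a fixed vertex are bounded in a finite graph. The only remaining check is that $u$ is still maximally distant from $v$, and this is a one-line triangle inequality: for any $w\in N_G(u)$ we have $d_G(w,v)\le d_G(w,y)+d_G(y,v)\le d_G(u,y)+d_G(y,v)=d_G(u,v)$, the middle inequality holding because $u$ is maximally distant from $y$. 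Hence $u$ and $v$ are mutually maximally distant, so $uv\in E(G_{SR})$ and the cover $C$ contains one of them; since $d_G(u,v)=d_G(u,x)+d_G(x,y)+d_G(y,v)$, both endpoints strongly resolve $x,y$ ($u$ because $x$ lies on a $u$--$y$ geodesic, $v$ because $y$ lies on a $v$--$x$ geodesic), so whichever of them lies in $C$ does the job. With that inserted your proof is complete and coincides with the original one.
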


Notice for instance that the set of vertices in red drawn in Figure \ref{Fig:strong-resolving}, which is a strong metric basis of this graph, appears also in red in Figure \ref{Fig:SRG}, and it is vertex cover set of such graph. The result above allowed authors to prove for instance the NP-hardness of the problem of computing the strong metric dimension of graphs, as well as, it allows to claim that finding the strong metric dimension of graphs can be approximated within a constant factor, by using the approximability properties of the vertex cover number. On the other hand, several other conclusions of using this connections can be found in the literature. For more information on the strong resolving graph itself (as a graph transformation independent from the strong metric dimension concept), we suggest the survey \cite{Kuziak-2018} and the work \cite{Lenin-2019}, where an open problem from \cite{Kuziak-2018} was settled. Moreover, another application of the strong resolving graphs of graphs can be found in \cite{Klavzar-2019}.

To prove the NP-completeness of the decision problem related to the strong metric dimension of graphs, in \cite{Oellermann-2007}, the authors proved the following.

\begin{theorem}{\em \cite{Oellermann-2007}}
Let $H$ be any non-complete connected graph. Then there is a graph $G^{(H)}$ such that $H$ is an induced subgraph of $G_{SR}^{(H)}$, and such that $\alpha(G_{SR}^{(H)})=\alpha(H)+ft(H)$, where $ft(H)$ represents the total number of vertices that belong to a non-trivial equivalence class with respect to the false twin\footnote{Two vertices $u,v$ of a graph $G$ are false twins if $N_G(u)=N_G(v)$.} equivalence relation.
\end{theorem}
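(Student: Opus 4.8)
The plan is to reduce the statement to a single clean fact about graphs of diameter at most two, after first performing a small surgery on $H$ that destroys all of its false twins.

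\emph{Step 1 (killing the false twins).} For every vertex $v$ that lies in a non-trivial false-twin class of $H$, attach to $v$ a pendant path on two new vertices $v',v''$ by adding the edges $vv'$ and $v'v''$ (so $v''$ becomes a leaf); call the resulting graph $H^{+}$. I would then check four routine facts. (a) $H$ is an induced subgraph of $H^{+}$, because only pendant paths were added. (b) $H^{+}$ is connected and has no isolated vertex (recall $|V(H)|\ge 3$, since $H$ is non-complete and connected). (c) $H^{+}$ has no pair of false twins: for the vertices $c_1,\dots,c_k$ of a former false-twin class we now have $c_i'\in N_{H^{+}}(c_i)$ but $c_i'\notin N_{H^{+}}(c_j)$ for $j\ne i$, so no two of them are false twins any more, and a short case analysis over the three possible vertex types (old vertices of $H$, middle vertices $v'$, leaves $v''$) shows that no new false-twin pair is created. (d) $\alpha(H^{+})=\alpha(H)+ft(H)$: the number of pendant paths attached equals $ft(H)$, each such path forces at least one of its two new vertices into every vertex cover (the edge $v'v''$ is incident to no other vertex), and conversely a minimum vertex cover of $H$ together with the middle vertices of all attached paths is a vertex cover of $H^{+}$ of size $\alpha(H)+ft(H)$.

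\emph{Step 2 (the diameter-two lemma).} I would prove that if $G$ is a connected graph of diameter at most two, then two distinct vertices $u,v$ are mutually maximally distant in $G$ if and only if either $uv\notin E(G)$, or $u$ and $v$ are true twins of $G$. Indeed, if $uv\notin E(G)$ then $d_G(u,v)=2$ while every neighbour of $u$, and every neighbour of $v$, lies within distance two of the other vertex, so the pair is mutually maximally distant; and if $uv\in E(G)$ then ``$u$ is maximally distant from $v$'' forces $N_G(u)\subseteq N_G[v]$, so the pair is mutually maximally distant iff $N_G[u]=N_G[v]$. Consequently $G_{SR}$ is obtained from the complement $\overline{G}$ by adding an edge between every pair of true twins of $G$.

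\emph{Step 3 (construction and conclusion).} Let $G^{(H)}$ be the graph consisting of the complement $\overline{H^{+}}$ together with one extra vertex $z$ joined to all vertices of $\overline{H^{+}}$. Then $G^{(H)}$ is connected and has diameter at most two, and $\overline{G^{(H)}}$ is $H^{+}$ together with one isolated vertex. The true twins of $G^{(H)}$ are precisely the false-twin pairs of $H^{+}$ (living inside the copy of $\overline{H^{+}}$) together with any isolated vertices of $H^{+}$, which could be true twins of the apex $z$; by (b) and (c) there are none of either kind, so $G^{(H)}$ has no true twins. By Step 2, $G^{(H)}_{SR}=\overline{G^{(H)}}=H^{+}\cup K_1$. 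Hence $H$ is an induced subgraph of $H^{+}$, and therefore of $G^{(H)}_{SR}$; and $\alpha\bigl(G^{(H)}_{SR}\bigr)=\alpha(H^{+})=\alpha(H)+ft(H)$, which is the claim. (Via Theorem~\ref{th:strong-dim-cover} this also gives $\sdim(G^{(H)})=\alpha(H)+ft(H)$, the identity that drives the NP-completeness reduction from the vertex cover problem.) The step I expect to be the real obstacle is the bookkeeping of Step~1(c) together with the true-twin analysis of $G^{(H)}$: one must be certain that the surgery eliminates \emph{every} false-twin pair of $H$ while introducing \emph{none}, so that $\overline{H^{+}}$ has no true twins, and that the apex $z$, added only to make $G^{(H)}$ connected, does not accidentally become a true twin. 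Working with pendant paths of length two rather than single pendant vertices is exactly what makes both this and the precise count in Step~1(d) come out cleanly.
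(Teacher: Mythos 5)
Your argument is correct, but there is an important caveat about the comparison: the survey itself contains no proof of this theorem --- it is stated with a citation to \cite{Oellermann-2007} and only the consequence for the NP-completeness reduction is discussed --- so the only thing to compare against is the construction in the cited source. Your route is a genuinely self-contained derivation, and every step checks out: the pendant paths in Step 1 do eliminate all false twins of $H$ without creating new ones (the case analysis over old vertices, middle vertices and leaves is routine but closes), and they raise the vertex cover number by exactly $ft(H)$ since each new edge $v'v''$ is covered independently and the middle vertices alone suffice; the diameter-two lemma of Step 2 is exactly right, since for adjacent $u,v$ mutual maximal distance is equivalent to $N_G[u]=N_G[v]$ while non-adjacent pairs in a diameter-two graph are always mutually maximally distant; and in Step 3 the universal apex $z$ guarantees connectivity and diameter two, acquires no true twin because $H^{+}$ has no isolated vertex, so $G^{(H)}_{SR}=\overline{G^{(H)}}=H^{+}\cup K_1$ and the identity $\alpha(G^{(H)}_{SR})=\alpha(H)+ft(H)$ follows. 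The shape of the formula in the original statement, where $ft(H)$ counts the false twins of $H$ itself, suggests that \cite{Oellermann-2007} does not preprocess $H$ but instead lets its false-twin classes survive into $G^{(H)}$ and pays for the extra mutually-maximally-distant pairs they generate in the cover; your version moves that bookkeeping into the surgery on $H$, at the price of an auxiliary graph $H^{+}$, and in exchange isolates the one clean structural fact doing all the work, namely that a diameter-two graph without true twins has $G_{SR}=\overline{G}$. Either way the intended corollary is identical: by Theorem \ref{th:strong-dim-cover}, $\sdim(G^{(H)})=\alpha(H)+ft(H)$, which is what drives the reduction from vertex cover.
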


Clearly, since for any connected graph $H$, it can be polynomially computed $ft(H)$, it follows that $\alpha(G_{SR}^{(H)})$ can be polynomially determined if and only if $\alpha(H)$ can be computed in polynomial time. This is the key of the NP-completeness reduction for deciding whether $\sdim(G)\le k$ for any connected graph $G$ of order $n\ge k+1$.

Although computing the strong metric dimension of graphs was proved to be NP-hard in \cite{Oellermann-2007}, there are some non-trivial families of graphs in which this can be polynomially done. For example, this is the case of trees, which is trivial to deduce. An interesting case regarding this was presented in \cite{Moravcik-2017}, where authors showed that the strong metric dimension of connected split graphs can be found in polynomial time.

Despite the fact that Theorem \ref{th:strong-dim-cover} directly gives an approximation result for the strong metric dimension, some extra work on approximating such parameter was published in \cite{DasGupta-2017}. There was proved, among other results, that the problem of computing the strong metric dimension of a graph of order $n$ admits a $\mathcal{O}(2^{0.287n})$-time exact computation algorithm. Also, it does not admit a polynomial time $2-\epsilon$-approximation algorithm assuming the unique games conjecture is true, as well as, it does not admit a $\mathcal{O}(2^{\mathcal{O}(n)})$-time exact computation algorithm assuming the exponential time hypothesis is true.

It is clear that $\dim(G)\le \sdim(G)$ for any connected graph $G$. In this sense, it is natural to consider the problem of characterizing the graphs attaining an equality in such bound, as well as, in general comparing both parameters for connected graphs. This was the goal of the article \cite{Moravcik-2017} where the following results were presented. Among other things, all trees $T$ for which both invariants achieve the same value were described, thing that happens if and only if $T$ is a path or $T$ has exactly one exterior major vertex. They also determined the class of trees for which the difference between these invariants attains a maximum. In addition, the authors observed that there is no linear upper bound for the strong dimension of trees in terms of the metric dimension. In contrast with this fact, for the case of cographs the following result attracts our attention.

\begin{theorem}{\em \cite{Moravcik-2017}}\label{th:comparing-cograph}
If $G$ is a connected cograph, then $\sdim(G)\le 3\dim(G)$.
\end{theorem}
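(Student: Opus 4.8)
The plan is to exploit Theorem~\ref{th:strong-dim-cover}, which reduces the problem to comparing $\alpha(G_{SR})$ with $\dim(G)$, together with the well-understood structure of cographs. Recall that a cograph is a $P_4$-free graph, equivalently a graph obtained from single vertices by repeated disjoint union and join operations; in particular every connected cograph of order at least $2$ is a join $G = G_1 + G_2$ of two smaller cographs, and cographs have diameter at most $2$. The first step is to understand $G_{SR}$ for a connected cograph: since the diameter is $2$, two vertices $u,v$ are mutually maximally distant precisely when $d_G(u,v)=2$ and no neighbour of $u$ is at distance $2$ from $v$ and vice versa, or when $uv$ is an edge and every neighbour of $u$ is adjacent to $v$ and vice versa (the true-twin situation). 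I would show that $\partial(G)$ and the adjacency structure of $G_{SR}$ can be described cleanly in terms of the twin equivalence classes and the join decomposition.

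The heart of the argument is a bound of the form $\alpha(G_{SR}) \le 3\dim(G)$. The natural route is to relate both sides to the twin classes of $G$. On one hand, it is a classical fact (true for the ordinary metric dimension) that any resolving set must contain all but one vertex of each twin class, so $\dim(G) \ge \sum_{C} (|C|-1)$ over twin classes $C$, and more precisely $\dim(G)$ is bounded below by a quantity growing with the number and sizes of nontrivial twin classes. On the other hand, I would argue that $G_{SR}$ for a connected cograph decomposes into a bounded number of cliques (coming from the twin classes, each of which induces either a clique or an independent set in $G$ and contributes a clique to $G_{SR}$) plus possibly a few extra edges, so that $\alpha(G_{SR})$ — the size of a minimum vertex cover of $G_{SR}$, i.e.\ $|V| - \beta(G_{SR})$ — is controlled: covering a disjoint union of cliques $K_{n_1}, \dots, K_{n_t}$ costs $\sum (n_i - 1)$. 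Matching this against the lower bound on $\dim(G)$, with the constant $3$ absorbing the boundedly many ``bad'' vertices and extra edges, yields the result. One clean way to organise this: partition $V(G_{SR})$ into its non-isolated part, split that into twin-class cliques; show each such clique either forces roughly that many vertices into every metric basis of $G$ (contributing to $\dim$) or is small enough to be paid for by a constant; the factor $3$ covers the worst interaction between the two join factors.

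The main obstacle I anticipate is controlling the part of $G_{SR}$ coming from \emph{non-twin} boundary vertices that are nonetheless mutually maximally distant. In a cograph these pairs live across the join or within a join factor at distance $2$, and a careful case analysis of $G = G_1 + G_2$ is needed: one must show that after accounting for the twin-class cliques, the residual subgraph of $G_{SR}$ on genuinely non-twin vertices has vertex cover number bounded by a constant multiple of $\dim(G)$, with the constant being exactly $3$ and attained. Pinning down why $3$ (and not $2$) is the right constant — i.e.\ exhibiting the extremal configuration and verifying the inequality is tight there — is the delicate quantitative part; the rest is structural bookkeeping on joins and twins.

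\begin{proof}[Proof sketch]
By Theorem~\ref{th:strong-dim-cover} it suffices to bound $\alpha(G_{SR})$ by $3\dim(G)$. Using the join decomposition $G = G_1 + G_2$ of the connected cograph $G$ and the fact that $D(G)\le 2$, one first describes $G_{SR}$: its non-isolated vertices lie in $\partial(G)$, and the twin equivalence classes of $G$ each contribute a clique to $G_{SR}$. Since any metric basis of $G$ omits at most one vertex from each twin class, $\dim(G)$ is bounded below in terms of the sizes of these classes. A minimum vertex cover of $G_{SR}$ can then be built by taking all-but-one vertex of each twin-class clique (paid for by $\dim(G)$) and handling the remaining mutually-maximally-distant pairs of non-twin vertices — which, by the case analysis across and within the join factors, form a subgraph whose vertex cover number is at most a constant times $\dim(G)$. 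Assembling these estimates gives $\alpha(G_{SR}) = |V(G_{SR})| - \beta(G_{SR}) \le 3\dim(G)$, and hence $\sdim(G)\le 3\dim(G)$.
\end{proof}
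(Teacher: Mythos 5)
There is a genuine gap, and also a concrete error in the setup. First the error: your characterization of mutually maximally distant pairs at distance $2$ is wrong. In a graph of diameter $2$, a vertex $u$ is maximally distant from $v$ whenever $d_G(u,v)=2$, because the defining condition only requires every neighbour $w$ of $u$ to satisfy $d_G(v,w)\le d_G(v,u)=2$, which is automatic; no extra condition about neighbours of $u$ avoiding distance $2$ from $v$ is needed. Consequently \emph{every} non-adjacent pair of a connected cograph is mutually maximally distant, so $G_{SR}$ contains the entire complement $\overline{G}$ as a spanning subgraph (plus the true-twin edges). The ``residual subgraph on genuinely non-twin vertices'' is therefore not ``a few extra edges'' to be absorbed by bookkeeping: it already forces $\alpha(G_{SR})\ge \alpha(\overline{G})=n-\omega(G)$, so the theorem is essentially the claim that $\dim(G)\gtrsim (n-\omega(G))/3$ for connected cographs. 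Your sketch reduces the problem to exactly this inequality and then asserts it (``whose vertex cover number is at most a constant times $\dim(G)$'') without proof; since the twin-class lower bound $\dim(G)\ge\sum_C(|C|-1)$ says nothing about vertices lying in trivial twin classes, the entire quantitative content of Theorem~\ref{th:comparing-cograph} is left open. The reduction to $\alpha(G_{SR})$ via Theorem~\ref{th:strong-dim-cover} and the observation that twin classes induce cliques in $G_{SR}$ are correct, but they are the easy part.

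You should also note that the argument in \cite{Moravcik-2017} pivots on a structural lemma your proposal neither states nor uses: if $G$ is a connected cograph, then $G_{SR}$ is again a cograph. That lemma is what makes the vertex cover number of $G_{SR}$ tractable and is the key point of the published proof; some substitute for it (or a direct proof that a metric basis $B$ of $G$, suitably augmented by at most $2|B|$ further vertices, covers all edges of $G_{SR}$) is what your sketch is missing. Until the inequality $\alpha(G_{SR})\le 3\dim(G)$ is actually established for the non-twin pairs --- which, as noted, comprise all non-adjacent pairs of $G$ --- the proposal does not prove the theorem.
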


The bound of Theorem \ref{th:comparing-cograph} has a key point based on the useful result they proved, which states that if $G$ is a connected cograph, then $G_{SR}$ is a cograph as well. As a final contribution of \cite{Moravcik-2017}, regarding comparing $\dim(G)$ and $\sdim(G)$, it was shown that the bound of Theorem \ref{th:comparing-cograph} is indeed asymptotically sharp.

Some interesting Nordhauss-Gaddum results for the strong metric dimension were given in \cite{Yi-2013}. That is, some relationships between the strong metric dimension of a graph and its complement, when both of them are connected. Among other contributions, the next result was proved in \cite{Yi-2013}.

\begin{theorem}{\em \cite{Yi-2013}}
Let $G$ and $\overline{G}$ be connected graphs of order $n\ge 4$. Then $2\le \sdim(G)+sdim(\overline{G}) \le 2(n-2)$. Moreover, $\sdim(G)+\sdim(\overline{G})=2$ if and only if $n=4$. In addition:
\begin{itemize}
  \item If $G$ is a tree and $n\ge 5$, then $3\le \sdim(G)+sdim(\overline{G}) \le 2(n-3)$.
  \item If $G$ is a unicyclic graph and $n\ge 5$, then $4\le \sdim(G)+sdim(\overline{G}) \le 2(n-2)$.
  \item If $G$ is a unicyclic graph and $n\ge 6$, then $4\le \sdim(G)+sdim(\overline{G}) \le 2(n-3)$.
\end{itemize}
\end{theorem}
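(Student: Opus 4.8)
The plan is to argue entirely through the strong resolving graph. By Theorem~\ref{th:strong-dim-cover} and Gallai's identity, $\sdim(G)=\alpha(G_{SR})=n-\beta(G_{SR})$, where $\beta(G_{SR})$ is the independence number of $G_{SR}$, equivalently the largest number of vertices of $G$ that are pairwise not mutually maximally distant; hence $\sdim(G)+\sdim(\overline{G})=2n-\beta(G_{SR})-\beta(\overline{G}_{SR})$, so every upper bound to be proved amounts to a lower bound on $\beta(G_{SR})+\beta(\overline{G}_{SR})$. First I would record two facts. \textbf{(F1)} $\sdim(H)=1$ if and only if $H\cong P_n$: this follows from $\dim(H)\le\sdim(H)$, from the classical equivalence $\dim(H)=1\iff H\cong P_n$, and from $\sdim(P_n)=1$. \textbf{(F2)} $\sdim(H)=n-1$ if and only if $H\cong K_n$: if $\beta(H_{SR})=1$ then every two vertices of $H$ are mutually maximally distant, so every two \emph{adjacent} vertices satisfy $N_H[u]=N_H[v]$ (are true twins), and connectedness then forces $H\cong K_n$; conversely $(K_n)_{SR}\cong K_n$. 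Since $G$ and $\overline{G}$ are connected on $n\ge 4$ vertices, each has an edge, so neither is complete; by (F2) this gives $1\le\sdim(G),\sdim(\overline{G})\le n-2$, hence the general bounds $2\le\sdim(G)+\sdim(\overline{G})\le 2(n-2)$. For the equality $\sdim(G)+\sdim(\overline{G})=2$ one needs $\sdim(G)=\sdim(\overline{G})=1$, so by (F1) both $G$ and $\overline{G}$ are paths; comparing $|E(\overline{P_n})|=\binom{n}{2}-(n-1)$ with $n-1$ forces $n=4$, and conversely $P_4$ (self-complementary) is the only graph on $4$ vertices with connected complement, giving $\sdim(P_4)+\sdim(\overline{P_4})=2$.

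For the tree statement, recall from the list of basic strong resolving graphs preceding Theorem~\ref{th:strong-dim-cover} that $T_{SR}\cong K_{l(T)}\cup(n-l(T))K_1$, so $\sdim(T)=l(T)-1$ and $\beta(T_{SR})=n-l(T)+1$; moreover $\overline{T}$ being connected rules out $T$ being a star, so $2\le l(T)\le n-2$. For the upper bound $\le 2(n-3)$ I would split on $l(T)$: if $l(T)\le n-3$, then $\sdim(T)=l(T)-1\le n-4$ and $\sdim(\overline{T})\le n-2$ already suffice; if $l(T)=n-2$, then $T$ is a double star $D_{a,b}$ with $a+b=n-2$ and $a,b\ge 1$, and a direct inspection of $\overline{D_{a,b}}$ shows that one centre, one leaf attached to it, and one leaf attached to the other centre are pairwise not mutually maximally distant, so $\beta(\overline{T}_{SR})\ge 3$, i.e.\ $\sdim(\overline{T})\le n-3$. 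For the lower bound $\ge 3$: if $T\not\cong P_n$ then $l(T)\ge 3$, so $\sdim(T)\ge 2$; and if $T\cong P_n$ with $n\ge 5$, then $\overline{P_n}$ has more than $n-1$ edges, hence is not a path, so $\sdim(\overline{P_n})\ge 2$ by (F1).

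For the unicyclic statements, $G$ contains a cycle so $G\not\cong P_n$, and $\overline{G}$ has $\binom{n}{2}-n$ edges, a number never equal to $n-1$ (the equation $\binom{n}{2}-n=n-1$ has no integer solution), so $\overline{G}\not\cong P_n$; hence $\sdim(G),\sdim(\overline{G})\ge 2$ by (F1), which gives $\sdim(G)+\sdim(\overline{G})\ge 4$ for all $n\ge 5$, while the upper bound $\le 2(n-2)$ is the general one. For the sharper upper bound $\le 2(n-3)$ when $n\ge 6$, i.e.\ $\beta(G_{SR})+\beta(\overline{G}_{SR})\ge 6$, I would distinguish two cases. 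If $G=C_n$, then using $(C_{2m})_{SR}\cong mK_2$ and $(C_{2m+1})_{SR}\cong C_{2m+1}$, together with the observation that for $n\ge 5$ the graph $\overline{C_n}$ has diameter $2$ and no true twins (so its mutually maximally distant pairs are exactly its pairs at distance $2$, which are precisely the edges of $C_n$, whence $(\overline{C_n})_{SR}\cong C_n$), one computes $\sdim(C_n)+\sdim(\overline{C_n})$ explicitly: it equals $n$ when $n$ is even and $n+1$ when $n$ is odd, and both are $\le 2(n-3)$ for $n\ge 6$. If $G$ is unicyclic with at least one leaf, one must instead show separately that $\beta(G_{SR})\ge 3$ and $\beta(\overline{G}_{SR})\ge 3$.

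The main obstacle is this last case: proving $\beta(G_{SR})\ge 3$ (equivalently $\sdim(G)\le n-3$) for an arbitrary unicyclic graph with pendant trees, together with $\beta(\overline{G}_{SR})\ge 3$ for its complement. For the former one analyses the boundary $\partial(G)$ (all leaves of $G$ lie in $\partial(G)$, whereas support vertices and internal vertices of the pendant trees typically do not) and keeps track of the connected components of $G_{SR}$ restricted to $\partial(G)$, each of order at least $2$; the delicate configurations are those in which $\partial(G)$ nearly exhausts $V(G)$, where three vertices that are pairwise not mutually maximally distant must be exhibited by hand, for instance two non-adjacent vertices of the cycle together with a non-boundary vertex. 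For the latter one uses that $\overline{G}$, being the complement of a sparse graph, has small diameter, so its mutually maximally distant pairs are readily listed and three vertices that are pairwise not mutually maximally distant can again be produced explicitly. Everything else in the theorem reduces to a short edge count or to a finite check.
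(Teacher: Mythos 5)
The survey itself states this theorem without proof (it is quoted from the cited reference), so I can only judge your plan on its own merits. Most of what you actually carry out is correct: the reduction $\sdim(G)=\alpha(G_{SR})=n-\beta(G_{SR})$ via Theorem~\ref{th:strong-dim-cover} and Gallai, the two facts (F1)--(F2) and with them the general bounds $2\le\sdim(G)+\sdim(\overline{G})\le 2(n-2)$, the characterization of equality at $2$ (indeed $P_4$ is the unique order-$4$ graph with connected complement), the whole tree case (I checked your double-star claim: for $T=D_{a,b}$ one gets $\overline{T}_{SR}\cong K_2\cup K_a\cup K_b$, so $\{u,x_1,y_1\}$ is independent in $\overline{T}_{SR}$ and $\sdim(\overline{T})=n-3$), the unicyclic lower bound via the edge count, and the computation for $G=C_n$.

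The genuine gap is the upper bound $2(n-3)$ for unicyclic graphs with at least one leaf, which you yourself label ``the main obstacle'' and then only gesture at. This is not a formality that can be waved through: the inequality $\sdim(G)\le n-3$ is \emph{false} for unicyclic graphs in general. Take $G$ to be a triangle $abc$ with $k\ge 3$ pendant leaves attached to $a$; then every pair among $\{b,c,x_1,\dots,x_k\}$ is mutually maximally distant, so $G_{SR}\cong K_{n-1}\cup K_1$ and $\sdim(G)=n-2$. This example is excluded only because $a$ dominates $G$ and hence $\overline{G}$ is disconnected — which shows that any correct argument for $\beta(G_{SR})\ge 3$ must invoke the connectedness of $\overline{G}$ inside the boundary analysis of $G$ itself, and your sketch gives no indication of where or how that hypothesis enters. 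The companion claim $\beta(\overline{G}_{SR})\ge 3$ is likewise only asserted: your premise that $\overline{G}$ has small diameter so its mutually maximally distant pairs are ``readily listed'' is not automatic either (attach the extra edge $x_1y_1$ to a double star to make it unicyclic and $\overline{G}$ is connected of diameter $3$). Since this case is precisely where the content of the third bullet lies, the proposal as written does not establish the theorem; the rest of the skeleton is sound and would survive once that case is genuinely argued.
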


Characterizations of the families of graphs attaining equality in the bounds given above were also given in \cite{Yi-2013}, which made such work a significant contribution to the topic of Nordhauss-Gaddum results. For some extra information on Nordhauss-Gaddum results in graph theory we suggest the survey \cite{Aouchiche-2013}.

One interesting research line concerning $\sdim(G)$ is connected with the case of product graphs. The goal is to find relationships between the strong metric dimension of a product graph and that of the factors in the product. This kind of investigation allows, among other facts, generating large graphs with prescribed value in the strong metric dimension. Table \ref{tab:strong-dim} summarizes the most remarkable contributions about this. The table include bounds and closed formulae for the strong metric dimension of the four standard products of graphs (as stated in \cite{Hammack-2011}), and the references where they were obtained. The main technique used into obtaining the results of Table \ref{tab:strong-dim} was that of constructing the strong resolving graph of the corresponding product, and then make use of Theorem \ref{th:strong-dim-cover}, which required to compute or bound the vertex cover number of the related strong resolving graph. To better understand the results concerning the structure of the corresponding strong resolving graphs of each case, we suggest \cite{Kuziak-2018}.

\begin{table}[h]
  \centering
  \small{\begin{tabular}{|c|c|c|}
  \hline
  $\mathbf{G*H}$ & $\mathbf{\sdim(G*H)}$ & \textbf{Ref.} \\ \hline \hline
  $G\Box H$ & $=\alpha(G_{SR}\times H_{SR})$ & \cite{Rodriguez-Velazquez-2014-a} \\
  \hline
  $G\Box H$\footnotemark & $=\frac{|V(G)||\partial(H)|}{2}$ & \cite{Rodriguez-Velazquez-2014-a} \\
  \hline
  $G\times K_n$\footnotemark & $=\alpha((G\Box N_n)\sqcup (G_{SR}\circ N_n)\sqcup (N_{|W|}\Box K_n))$\footnotemark & \cite{Kuziak-2017} \\
  \hline
  $P_r\times K_n$ & $=n\left\lceil\frac{r}{2}\right\rceil$ & \cite{Kuziak-2017} \\
  \hline
  $K_{r,t}\times K_n$ & $=n(r+t-1)$ & \cite{Kuziak-2017} \\
  \hline
  $G\boxtimes H$ & $\begin{array}{c}
                                     \ge \max \{|V(H)|\sdim(G),|V(G)|\sdim(H)\} \\ [0.1cm]
                                     \le |V(H)|\sdim(G) + |V(G)|\sdim(H) - \sdim(G)\sdim(H)
                                   \end{array}
  $ & \cite{Kuziak-2015} \\
  \hline
  $G\boxtimes H$ & $= |V(H)|\sdim(G) + |V(G)|\sdim(H) - \sdim(G)\sdim(H)$\footnotemark & \cite{Kuziak-2015} \\
  \hline
  $C_{2r+1}\boxtimes C_{2t+1}$ & $\begin{array}{c}
                                     \ge 3rt + 2r + 2t +1 - \left\lfloor \frac{r}{2} \right\rfloor \\ [0.1cm]
                                     \le 3rt + 2r + 2t +1
                                   \end{array}
  $ & \cite{Kuziak-2015} \\
  \hline
  $G\circ H$\footnotemark & $\begin{array}{c}
                                     =|V(G)|\sdim(H) + |V(H)|\sdim(G)-\dim_s(G)\sdim(H)\footnotemark \\ [0.1cm]
                                     =|V(G)|\sdim(K_1+H) + |V(H)|\sdim(G)-\sdim(G)\sdim(K_1+H)\footnotemark
                                   \end{array}
  $ & \cite{Kuziak-2016} \\
  \hline
  $G\circ K_n$ & $= |V(G)|(n-1)+\sdim(G)$ & \cite{Kuziak-2016} \\
  \hline

\end{tabular}}
  \caption{Bounds and closed formulae for the strong metric dimension of some product graphs.}\label{tab:strong-dim}
\end{table}
\addtocounter{footnote}{-6}
\footnotetext{$G$ be a distance-regular graph and $H$ is a connected graph such that $H_{SR}$ is a regular bipartite graph.}
\addtocounter{footnote}{+1}
\footnotetext{$G$ is a graph containing no pair of mutually maximally distant vertices $u$ and $v$ with $d_G(u,v)=2$.}
\addtocounter{footnote}{+1}
\footnotetext{$W$ is the subset of $V(G)$ which contains all vertices belonging to a triangle in $G$ and $N_{|W|}$ is an edgeless graph with vertex set $W$.}
\addtocounter{footnote}{+1}
\footnotetext{If the vertex set of $G$ can be partitioned into $\beta(G)$ cliques.}
\addtocounter{footnote}{+1}
\footnotetext{$G$ has no true twin vertices.}
\addtocounter{footnote}{+1}
\footnotetext{$G$ has diameter at most 2.}
\addtocounter{footnote}{+1}
\footnotetext{$G$ has diameter larger than 2. $K_1+H$ is the \emph{joint graph} of $K_1$ and $H$.}

We might recall that results of Table \ref{tab:strong-dim} are just the most general ones. By using them, a large number of consequences for the strong metric dimension of product graphs involving particular families of graphs were deduced, that is, products of cycles, paths, complete graphs, complete bipartite graphs, trees, or diameter two graphs, etc. On the other hand, several non standard products have been dealt with in connection with its strong metric dimension. These are the cases of Cartesian sum graphs (\cite{Kuziak-2015-a}); corona graphs and join graphs (\cite{Kuziak-2013}); and rooted product graphs (\cite{Kuziak-2016-a}).

Nearly to the case of product graphs, there exist some graphs operations that are of interest, and they have not escaped of being investigated about its strong metric dimension. A nice case is found in the generalized Sierpi\'nski graphs $S(G,t)$ of a graph $G$. To see more information on the definition and combinatorial properties of Sierpi\'nski graphs we suggest for instance the survey \cite{Hinz-2017} and the work \cite{Gravier-2011} where the generalized version was first introduced. In this direction, the following result from \cite{Estaji-2016} is of interest.

\begin{theorem}{\em \cite{Estaji-2016}}
Let $G$ be a connected graph of order $n$ having $k$ leaves and let $t$ be a positive integer. If every non-leaf vertex of $G$ is a cut vertex, then
$$\sdim(S(G,t))=\frac{k(n^t-2n^{t-1}+1)-n+1}{n-1}.$$
\end{theorem}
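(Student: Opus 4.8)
The plan is to use Theorem~\ref{th:strong-dim-cover} to reduce the problem to computing $\alpha\big((S(G,t))_{SR}\big)$, and then to show that $(S(G,t))_{SR}$ is the disjoint union of a clique $K_{l(S(G,t))}$ on the leaves of $S(G,t)$ together with $|V(S(G,t))|-l(S(G,t))$ isolated vertices. Since the vertex cover number of such a graph is $l(S(G,t))-1$, the theorem then follows from a count of $l(S(G,t))$ and an algebraic simplification.

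Two standard observations drive the structural part. A leaf $x$ of a connected graph is maximally distant from every other vertex (its unique neighbour $w$ satisfies $d(v,w)=d(v,x)-1$ for all $v\neq x$), so any two leaves of $S(G,t)$ are mutually maximally distant and hence form a clique in $(S(G,t))_{SR}$. Conversely, a cut vertex is never a boundary vertex: if $x$ is a cut vertex and $v$ is arbitrary, picking a neighbour $w$ of $x$ in a component of the graph minus $x$ avoiding $v$ gives $d(v,w)=d(v,x)+1>d(v,x)$. Hence it suffices to prove that, under the hypothesis, \emph{every non-leaf vertex of $S(G,t)$ is a cut vertex of $S(G,t)$}; this forces $\partial(S(G,t))=L(S(G,t))$ and pins down $(S(G,t))_{SR}$.

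I would prove this claim by induction on $t$, using the decomposition of $S(G,t)$ into $n$ copies of $S(G,t-1)$ (one per vertex of $G$), where copies $i$ and $j$ are joined by a single edge between two of their extreme vertices whenever $ij\in E(G)$; the base case $t=1$ is exactly the hypothesis. Write a non-leaf $u=u_1w$ of $S(G,t)$ inside its copy $C_{u_1}$. If $u_1$ is a leaf of $G$, then $C_{u_1}$ is joined to the rest of $S(G,t)$ by a single edge, and splitting on whether $w$ is a leaf of $S(G,t-1)$ (using the induction hypothesis when it is not) quickly shows that deleting $u$ disconnects $S(G,t)$. If $u_1$ is a non-leaf of $G$, hence a cut vertex of $G$ by hypothesis, one uses that the sequence of first coordinates along any walk of $S(G,t)$ is a walk of $G$: were $u$ not a cut vertex, two components of $C_{u_1}-u$ would be reconnected by a path leaving and re-entering $C_{u_1}$, forcing two extreme vertices $j^{t-1},j'^{t-1}$ of $S(G,t-1)$ that lie in distinct components of $S(G,t-1)-w$ to satisfy that $j$ and $j'$ are neighbours of $u_1$ lying in the same component of $G-u_1$. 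Ruling this out, by combining the cut-vertex structure of $G$ with the component structure of $S(G,t-1)-w$ supplied by the induction hypothesis, is the technical core; I expect this reconnection analysis to be the main obstacle.

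For the count, passing from $n$ disjoint copies of $S(G,t-1)$ to $S(G,t)$ changes the degree only of extreme vertices of the form $ij^{t-1}$ with $j$ adjacent to $i$ in $G$, and such a vertex ceases to be a leaf precisely when $j$ is a leaf of $G$ and $i$ is its unique neighbour; each leaf of $G$ accounts for exactly one such vertex, so $l(S(G,t))=n\,l(S(G,t-1))-k$ with $l(S(G,1))=k$. Solving the recurrence gives $l(S(G,t))=k\cdot\frac{n^{t}-2n^{t-1}+1}{n-1}$, and therefore
\[
\sdim(S(G,t))=l(S(G,t))-1=\frac{k(n^{t}-2n^{t-1}+1)}{n-1}-1=\frac{k(n^{t}-2n^{t-1}+1)-n+1}{n-1}.
\]
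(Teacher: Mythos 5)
Your proposal follows exactly the route the paper sketches: establish that every non-leaf vertex of $S(G,t)$ is a cut vertex (the paper cites precisely this as the key fact), deduce that $\partial(S(G,t))=L(S(G,t))$ so that $(S(G,t))_{SR}$ is a clique on the leaves plus isolated vertices, count the leaves, and apply Theorem \ref{th:strong-dim-cover}. Your recurrence $l(S(G,t))=n\,l(S(G,t-1))-k$ and its solution check out against the stated formula, so the argument is correct and essentially identical in approach to the paper's (which only outlines these steps without carrying out the induction you describe).
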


The key of the proof of the theorem above relies on the fact that if every internal vertex of $G$ is a cut vertex, then every internal vertex of $S(G, t)$ is a cut vertex (in $S(G, t)$) as well, for any $t\ge 2$. Moreover, by making for example a count on the number of leaves of $S(G, t)$, and using Theorem \ref{th:strong-dim-cover}, one can deduce the formula.

In other direction, the strong metric dimension of the power graph of a finite group was studied in \cite{Ma-2018}. Such study allowed the authors to compute the strong metric dimension of the power graph of some algebraic structures like a cyclic group, an abelian group, a dihedral group and a generalized quaternion group.

We next mention some other classes of graphs for which its strong metric dimension has been computed in the last recent years. This is summarized in Table \ref{tab:strong-extra}

\begin{table}[h]
  \centering
  \begin{tabular}{|c|c|c|c|c|c|}
    \hline
    \textbf{Graph family} & \textbf{Ref.} & \textbf{Graph family} & \textbf{Ref.} & \textbf{Graph family} & \textbf{Ref.} \\ \hline
    Wheel related & \cite{Kusmayadi-2016} & Cozero-divisor graphs & \cite{Nikandish-2021} & Generalized Petersen & \cite{Kratica-2017} \\
    graphs &  & &  &  graphs &  \\ \hline
    Zero-divisor & \cite{Bhat-2019} & Cactus graphs & \cite{Kuziak-2020-a} & Annihilator graphs & \cite{Ebrahimi-2021} \\
    graphs of rings &  &  &  & of commutative rings &  \\
    \hline
  \end{tabular}
  \caption{Some other studies on the strong metric dimension of graphs.}\label{tab:strong-extra}
\end{table}

\subsection{Some open problems}

\begin{itemize}
  \item Since finding the vertex cover number of bipartite graphs can be polynomially done, in view of Theorem \ref{th:strong-dim-cover}, it would be worthwhile to describe all graphs $G$ for which $G_{SR}$ is bipartite.
  \item In view of the bound $\dim(G)\le \sdim(G)$, can we characterize graphs $G$ such that $\dim(G)=\sdim(G)$?
  \item In view of \cite{Moravcik-2017}, where the authors observed there is no linear upper bound for the strong dimension of trees in terms of the metric dimension, can we find some families of graphs $G$, other than cographs, where $\sdim(G)$ is bounded by a constant factor of $\dim(G)$?
\end{itemize} 

\section{Local metric dimension}\label{sec:local}

A set of vertices $S$ of a connected graph $G$ is a \emph{local resolving set} for $G$ if for any two adjacent vertices $u,v\in V(G)$, there is a vertex $x\in S$ such that $d_G(u,x)\ne d_G(v,x)$. In such case, we also say that $x$ locally resolves (identifies or recognizes) $u,v$. A local resolving set of the smallest possible cardinality is a \emph{local metric basis}, and its cardinality is the \emph{local metric dimension} of $G$, denoted by $\dim_{\ell}(G)$. Concepts above were first introduced in \cite{Okamoto-2010} where the notation for $\dim_{\ell}(G)$ was $\mathrm{lmd}(G)$. In order to be consequent with all the notations of this exposition, we propose and use $\dim_{\ell}(G)$. In Figure \ref{Fig:local-res}, two different local metric bases are drawn.

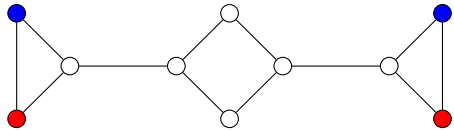
\begin{figure}[ht]
\centering
\begin{tikzpicture}[scale=.7, transform shape]
\node [draw, shape=circle,fill=red] (a1) at  (-4,-1) {};
\node [draw, shape=circle,fill=blue] (a2) at  (-4,1) {};
\node [draw, shape=circle] (a3) at  (-3,0) {};
\node [draw, shape=circle] (a4) at  (-1,0) {};
\node [draw, shape=circle] (a5) at  (0,-1) {};
\node [draw, shape=circle] (a6) at  (0,1) {};
\node [draw, shape=circle,fill=red] (a10) at  (4,-1) {};
\node [draw, shape=circle,fill=blue] (a9) at  (4,1) {};
\node [draw, shape=circle] (a8) at  (3,0) {};
\node [draw, shape=circle] (a7) at  (1,0) {};

\draw(a3)--(a1)--(a2)--(a3)--(a4)--(a5)--(a7)--(a8)--(a9)--(a10)--(a8);
\draw(a4)--(a6)--(a7);
\end{tikzpicture}
\caption{Two local resolving sets of the smallest possible cardinality appear in red and in blue.}\label{Fig:local-res}
\end{figure}

We might notice that this parameter indeed does not require the connectedness of the studied graphs, since only pairs of vertices being adjacent need to be resolved. However, it is not difficult to observe the following result.

\begin{remark}\label{rem_local-not-connected}
Let $G_1,\dots,G_r$, $r\ge 2$, be the subgraphs induced by the components of a disconnected graph $G$. Then $\dim_{\ell}(G)=\sum_{i=1}^{r}\dim_{\ell}(G_i)$.
\end{remark}

In connection with this, we consider only connected graphs as usually made with metric dimension related parameters in graphs. Variations of such parameter are also known in the literature. Among them, we can find for instance the local fractional metric dimension \cite{Aisyah-2019}, the adjacency local metric dimension \cite{Fernau-2014,Fernau-2018}, and the simultaneous local metric dimension \cite{Barragan-Ramirez-2017}. For some more specific information on several results on the local metric dimension of graphs we suggest the Ph. D. dissertation \cite{Barragan-Ramirez-phd}.

We first want to remark an interesting ``application'' of local metric dimension in delivery services that has recently appeared in \cite{Klavzar-2020}. In such work, authors assumed that a given company could need to assign codes to its customers such that the code of a customer will uniquely determine its location. In some sense, it seems to be natural that the company would be interested into making the length of the codes as short as possible. In order to design a graph theory model for this problem, in \cite{Klavzar-2020}, customers are considered as the vertices of a (an edge-weighted) graph $G$. Vertices $u$ and $v$ are declared to be adjacent in $G$ if one of the following conditions is fulfilled: (i) there is no other customer on the $u, v$-geodesics; (ii) the first letters of the family names of the customers $u$ and $v$ are the same. Next, let $S=\{v_1,\dots,v_r\}$ be a local metric basis for $G$. Then, if $F$ is the first letter of the family name of a given customer $v$, then the company allocates the ordered pair $(F,(d_G(v,v_1),\dots,d_G(v,v_r)))$ to the customer $v$ as its code. Note that the second part of such pair is precisely $r(v|S)$, \emph{i.e.}, the metric representation of $v$ with respect to $S$ . The authors of \cite{Klavzar-2020} made a comparison of such model with a related one from \cite{Khuller-1996} (which uses the classical metric dimension), and found out that the local metric dimension model in general behaves much better, and for instance, in cases like bipartite graphs (where the local metric dimension equals 1, and the classical metric dimension could be much larger) the efficiency of the new model is much higher.

In \cite{Salman-2014}, an integer linear programming model for the local metric dimension of graphs was presented. Such formulations follow in general the ideas already known for other related invariants like for instance the classical metric dimension.

Concerning computational aspects of this parameter, it was shown in \cite{Fernau-2014,Fernau-2018} that the decision problem regarding computing the local metric dimension of a graph is NP-complete. Moreover, there was also proved that, assuming ETH, there is no $\mathcal{O}(pol(n+m)2^{o(n)})$-algorithm for solving such problem on graphs of order $n$ and size $m$. In such work, several other computational aspects concerning the NP-completeness of the decision problems related to the adjacency, and the adjacency local metric dimension of graphs were given, even when restricted to planar graphs. However, their reductions were not useful for proving the NP-completeness for the local metric dimension problem of planar graphs. In this sense, authors left an open question on whether finding the local metric dimension of planar graphs is NP-hard. The main ideas behind their proofs are based on some relationships that are given between the studied metric parameters in the corona and strong products of graphs and that of their factors.

With respect to combinatorial results on the local metric dimension, we first immediately notice that if $S$ is an independent set of $G$ of order $n$, then the set $V(G)\setminus S$ is a local resolving set, and so,
\begin{equation}\label{eq:ldim-indep}
  \dim_{\ell}(G)\le n-\beta(G).
\end{equation}
In this sense, it would be interesting to characterize the graphs $G$ for which $\dim_{\ell}(G)= n-\beta(G)$. Clearly, any resolving set is also a local resolving set, and trivial bounds for the local metric dimension are 1 and $n-1$ as well as for the classical metric dimension. That is, for any connected graph $G$,
$$1\le \dim_{\ell}(G)\le \dim(G)\le n-1.$$
Characterizations of the graphs achieving the limit values in the bounds above were given in \cite{Okamoto-2010} as follows. If $G$ is a nontrivial connected graph of order $n$, then $\dim_{\ell}(G) = n-1$ if and only if $G$ is the complete graph $K_n$; and $\dim_{\ell}(G) = 1$ if and only if $G$ is a bipartite graph. This latter result relies on the following property of a bipartite graph $G$. For any vertex $w\in V(G)$, if we consider an edge $uv\in E(G)$, then (w.l.g.) the distance $d_G(u,w)$ is even and the distance $d_G(v,w)$ is odd.  On the other hand, a related characterization for graphs $G$ with $\dim_{\ell}(G) = n-2$ was given in connection with the clique number $\omega(G)$ of $G$ as follows.

\begin{theorem}\emph{\cite{Okamoto-2010}}
A connected graph $G$ of order $n>3$ has local metric dimension $n-2$ if and only if $\omega(G) = n-1$.
\end{theorem}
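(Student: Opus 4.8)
The plan is to prove the two implications separately, using throughout the following reformulation: for a $3$-set $X=\{x,y,z\}\subseteq V(G)$, the set $V(G)\setminus X$ is a local resolving set of $G$ if and only if every edge of $G$ with both endpoints in $X$ is resolved by some vertex outside $X$ (an edge meeting $X$ in at most one vertex is resolved by its endpoint lying outside $X$). Since enlarging a local resolving set keeps it one, $\dim_{\ell}(G)\le n-3$ is equivalent to the existence of such an $X$, hence $\dim_{\ell}(G)\ge n-2$ is equivalent to there being no such $X$.

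For the implication $\omega(G)=n-1\Rightarrow\dim_{\ell}(G)=n-2$, write $V(G)=K\cup\{v\}$, where $K=\{u_1,\dots,u_{n-1}\}$ induces a clique and $v$ is the remaining vertex; connectivity together with $\omega(G)\ne n$ gives $1\le\deg_G(v)\le n-2$, so $d_G(v,u_i)\in\{1,2\}$ for every $i$. As $G\ne K_n$ we already have $\dim_{\ell}(G)\le n-2$, so it suffices to show there is no local resolving set $S$ with $|S|\le n-3$. Given such an $S$, the set $V(G)\setminus S$ contains at least two vertices $u_i,u_j$ of $K$, and the edge $u_iu_j$ is resolved by no vertex of $S$: any clique vertex of $S$ is at distance $1$ from both $u_i$ and $u_j$; and if $v\in S$, then at least three clique vertices lie outside $S$, so by the pigeonhole principle applied to their distances to $v$ (values in $\{1,2\}$) one may pick $u_i,u_j$ equidistant from $v$, while if $v\notin S$ it is not available as a resolver at all. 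This contradicts the choice of $S$, hence $\dim_{\ell}(G)\ge n-2$.

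For the converse, assume $\dim_{\ell}(G)=n-2$. The bound $\dim_{\ell}(G)\le n-\beta(G)$ forces $\beta(G)\le2$, and $\beta(G)=1$ gives $G=K_n$ with $\dim_{\ell}(G)=n-1$; hence $\beta(G)=2$. Suppose for contradiction that $\omega(G)\le n-2$; the goal is to exhibit a $3$-set $X$ with $V(G)\setminus X$ a local resolving set, contradicting $\dim_{\ell}(G)=n-2$. I would use the elementary facts that any vertex adjacent to exactly one endpoint of an edge resolves that edge, and that an edge $pq$ is resolved by some vertex other than $p,q$ unless $p$ and $q$ are true twins. First, if some edge $xy$ that is not a pair of true twins has a common non-neighbour $z$, then $X=\{x,y,z\}$ contains only the edge $xy$, and a vertex adjacent to exactly one of $x,y$ — which exists as $x,y$ are not true twins, and which is different from $z$ since $z$ is adjacent to neither $x$ nor $y$ — resolves $xy$ from outside $X$; done. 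Otherwise every edge with a common non-neighbour is a pair of true twins, so every non-true-twin edge has no common non-neighbour; since $G\ne K_n$, such a non-true-twin edge exists. Picking a non-edge $ab$ (available as $\beta(G)=2$) and applying this to the edges joining $a$ to its neighbours outside its true-twin class, I would derive the structure $V(G)=T_a\sqcup T_b\sqcup C$, where $T_a,T_b$ are the true-twin classes of $a,b$, these are completely joined to $C$ and mutually non-adjacent, and $C\ne\emptyset$ by connectivity. Comparing $\omega(G)=\max(|T_a|,|T_b|)+\omega(G[C])$ with $n-2$ then forces either $\min(|T_a|,|T_b|)\ge2$, or (say) $|T_b|=1$ and $G[C]$ not complete. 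In the first case $X=\{x,y,c\}$ with $x\in T_a,\ y\in T_b,\ c\in C$ works: its edges are $xc$ and $yc$, a second vertex of $T_b$ resolves $xc$ and a second vertex of $T_a$ resolves $yc$, both lying outside $X$. In the second case $X=\{a,b,c_1\}$ works for a non-edge $c_1c_2$ of $G[C]$: its edges are $ac_1$ and $bc_1$, and $c_2$ (adjacent to $a$ and to $b$ but not to $c_1$) resolves both. In every case $\dim_{\ell}(G)\le n-3$, a contradiction, so $\omega(G)=n-1$.

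I expect the main obstacle to be the last case of the converse: ruling out graphs that are ``almost'' complete multipartite and may carry large true-twin classes, where the obvious candidates $X$ fail precisely because every vertex resolving an edge inside $X$ is forced back into $X$. The structural splitting $V(G)=T_a\sqcup T_b\sqcup C$ is what makes a workable choice of $X$, together with the resolving vertices outside it, visible; establishing that splitting and keeping track of which triples genuinely escape the true-twin trap is the delicate part, while the implication $\omega(G)=n-1\Rightarrow\dim_{\ell}(G)=n-2$ and the reduction to $\beta(G)=2$ are routine.
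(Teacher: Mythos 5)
The survey only quotes this characterization from \cite{Okamoto-2010} and gives no proof of it, so there is no in-paper argument to compare yours against; judged on its own terms, your proof is correct and complete. The reduction of ``$\dim_{\ell}(G)\le n-3$'' to the existence of a $3$-set $X$ whose complement is a local resolving set, and of the latter to resolving only the edges inside $X$ from outside $X$, is sound. The forward implication works: writing $V(G)=K\cup\{v\}$, any $S$ with $|S|\le n-3$ leaves at least two clique vertices exposed that no clique vertex of $S$ can separate, and the pigeonhole on $d_G(v,\cdot)\in\{1,2\}$ over the at least three exposed clique vertices disposes of the case $v\in S$. In the converse, the reduction to $\beta(G)=2$ via $\dim_{\ell}(G)\le n-\beta(G)$, the dichotomy ``some non-true-twin edge has a common non-neighbour'' (which immediately yields a good $X$) versus the forced splitting $V(G)=T_a\sqcup T_b\sqcup C$ with $T_a,T_b$ completely joined to $C$ and mutually non-adjacent, the identity $\omega(G)=\max(|T_a|,|T_b|)+\omega(G[C])$, and the two terminal cases ($\min(|T_a|,|T_b|)\ge 2$, where second twins resolve the edges $xc$ and $yc$; and $|T_b|=1$ with $G[C]$ incomplete, where the other end of a non-edge of $C$ resolves both edges of $X=\{a,b,c_1\}$) all check out and together exhaust the possibilities under the assumption $\omega(G)\le n-2$. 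The one step you should write out explicitly in a final version is the derivation of the splitting itself: for $c\notin T_a\cup T_b$, independence number $2$ forces $c$ to be adjacent to one of $a,b$, and if it missed the other, that missed vertex would be a common non-neighbour of the endpoints of a non-true-twin edge, contradicting the standing assumption; this is exactly the ``delicate part'' you flagged, and it does go through.
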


In addition, the realization of graphs with a given value in the local metric dimension was given in \cite{Okamoto-2010}. That is, for each pair $k, n$ of integers with $1 \le k \le n-1$, there exists a connected graph $G$ of order $n$ with $\dim_{\ell}(G) = k$. Moreover, in connection with the bound in \eqref{eq:ldim-indep}, it was proved in \cite{Okamoto-2010} that for each pair $a, n$ of integers with $1\le a \le n - 1$, there exists
a nontrivial connected graph $G$ of order $n$ and independence number $a$ such that $\dim_{\ell}(G) = n - a$. Some other realizations related to diameter and true twin equivalence classes were also proved in \cite{Okamoto-2010}.

Based on the NP-hardness of finding the local metric dimension of graphs, it is clearly desirable to bound such parameter for general graphs or some specific families, as well as computing its exact value in some particular situations. These two research lines have centered several investigations on this parameter in the same manner as other metric dimension topics. Next two tables summarize those (in our opinion) more interesting results in these directions.

\begin{table}[ht]
  \centering
  \small{\begin{tabular}{|c|c|c|}
    \hline
    \textbf{Graphs} $\mathbf{G}$ & $\mathbf{\dim_{\ell}(G)}$ & \textbf{Reference} \\ \hline
    Graph $G$, clique number $\omega(G)$  & $\ge \log_2\omega(G)$ & \cite{Okamoto-2010} \\ \hline
    Graph $G$, clique number $\omega(G)$ & $\ge |V(G)|-2^{|V(G)|-\omega(G)}$ & \cite{Okamoto-2010} \\ \hline
    Generalized hierarchical products & Some bounds & \cite{Klavzar-2020} \\ \hline
    Graph $G[\mathcal{H}]$,\footnotemark $H=\{H_1,\dots,H_r\}$ & Some bounds & \cite{Rodriguez-Velazquez-2015} \\ \hline
    $\begin{array}{c}
      \mbox{Strong product graphs $G\boxtimes H$} \\
      |V(G)|=r, V(H)=t
    \end{array}$
     & $\begin{array}{c}
         \ge 3 \\
         \le r\dim_{\ell}(H)+t\dim_{\ell}(G)-\dim_{\ell}(G)\dim_{\ell}(H)
       \end{array}$
      & \cite{Barragan-Ramirez-2016} \\ \hline
      $\begin{array}{c}
      \mbox{Strong product graphs $G\boxtimes H$} \\
      \mbox{H, adjacency $k$-resolved graph\footnotemark}\\
      D(G)<k
    \end{array}$
     & $\le |V(H)|\dim_{\ell}(G)$
      & \cite{Barragan-Ramirez-2016} \\ \hline
      $\begin{array}{c}
      \mbox{Strong product graphs $G\boxtimes P_t$} \\
      t\ge 2D(G)+1
    \end{array}$
     & $\ge \left\lceil\frac{t-1}{D(G)}\right\rceil+1$ & \cite{Barragan-Ramirez-2016} \\ \hline
  \end{tabular}}
  \caption{Bounds for the local metric dimension of graphs.}\label{tab:bounds-ldim}
\end{table}
\addtocounter{footnote}{-1}
\footnotetext{$G[\mathcal{H}]$ is a graph obtained by ``point attaching''. Examples of such graphs can be for instance the rooted product graph.}
\addtocounter{footnote}{+1}
\footnotetext{See \cite{Barragan-Ramirez-2016} for this definition.}

\begin{table}[ht]
  \centering
  \small{\begin{tabular}{|c|c|c|}
    \hline
    \textbf{Graphs} $\mathbf{G}$ & $\mathbf{\dim_{\ell}(G)}$ & \textbf{Reference} \\ \hline
    Bipartite graph $G$ & $1$ & \cite{Okamoto-2010} \\ \hline
    Cartesian product graphs $G\Box H$ & $\max\{\dim_{\ell}(G),\dim_{\ell}(H)\}$ & \cite{Okamoto-2010} \\ \hline
    Corona graphs $G\odot H$, $H$ an empty graph & $\dim_{\ell}(G)$ & \cite{Barragan-Ramirez-2014,Rodriguez-Velazquez-2016} \\ \hline
    Corona graphs $G\odot H$, $G$ order n, $H$ not empty\footnotemark & $n\dim_{\ell}(K_1+H)$ & \cite{Barragan-Ramirez-2014,Rodriguez-Velazquez-2016} \\ \hline
    Corona graphs $G\odot H$, $G$ order n, $H$ not empty\footnotemark & $n(\dim_{\ell}(K_1+H-1))$ & \cite{Barragan-Ramirez-2014,Rodriguez-Velazquez-2016} \\ \hline
    Graph $G[\mathcal{H}]$, $H=\{H_1,\dots,H_r\}$ & Some formulas & \cite{Rodriguez-Velazquez-2015} \\ \hline
    Rooted product graphs $G\circ_v H$ & Some formulas & \cite{Susilowati-2015} \\ \hline
    $\begin{array}{c}
      \mbox{Strong product graphs $G\boxtimes P_t$} \\
      \mbox{$G$ connected bipartite }, t\ge 2D(G)+1
    \end{array}$
     & $\left\lceil\frac{t-1}{D(G)}\right\rceil+1$\footnotemark & \cite{Barragan-Ramirez-2016} \\ \hline
    Edge-corona of graphs  & Some formulas & \cite{Suprajitno-2016} \\ \hline
    Some convex polytopes  & Some formulas & \cite{Salman-2014} \\ \hline
    Generalized hierarchical products & Some formulas & \cite{Klavzar-2020} \\ \hline
    Molecular graph $\Gamma_{n,k}$ & $2$ & \cite{Klavzar-2020} \\ \hline
    Lexicographic product graphs & Some formulas & \cite{Barragan-Ramirez-2019} \\ \hline
  \end{tabular}}
  \caption{Exact values of the local metric dimension of some graphs.}\label{tab:formulas-ldim}
\end{table}
\addtocounter{footnote}{-2}
\footnotetext{The vertex of $K_1$ does not belong to any local metric basis for $K_1 + H$.}
\addtocounter{footnote}{+1}
\footnotetext{The vertex of $K_1$ belongs to a local metric basis for $K_1 + H$.}
\addtocounter{footnote}{+1}
\footnotetext{This result allowed to settle a conjecture stated in \cite{Rodriguez-Velazquez-2015-a} concerning the metric dimension of the strong product of two paths $P_r\boxtimes P_t$.}

\subsection{Some open problems}

\begin{itemize}
  \item Characterize the graphs $G$ for which $\dim_{\ell}(G)= n-\beta(G)$.
  \item Is it the case that computing the local metric dimension of planar graphs is NP-hard?
  \item Since the local metric dimension can be understood for non-connected graphs, in contrast with other metric dimension parameters, studying the direct product of graphs (which is frequently not connected) for this parameter seems to be worthwhile.
  \item Characterize the family of graphs $G$ for which $\dim_{\ell}(G)=\dim(G)$.
  \item Since $\dim_{\ell}(G)\le \dim(G)$ for any connected graph $G$, is it possible that $\dim(G)$ will be above by some constant factor of $\dim_{\ell}(G)$.
  \item Is there any relationship between $\dim_{\ell}(G)$ and other metric dimension related parameters of graphs, other than $\dim(G)$?
\end{itemize}

\section{Adjacency dimension}\label{sect:adjacency}

The adjacency dimension of graphs was introduced in \cite{Jannesari-2012} as a tool to study the metric dimension of the lexicographic product of graphs. The necessity of such parameter is based on the following. If we consider the lexicographic product $G\circ H$, then we can notice that any two vertices $(g,h_1),(g,h_2)$ belonging to a same copy of $H$ have the same distance to any other vertex not belonging to the same copy which  $(g,h_1),(g,h_2)$ belong. Moreover, these vertices also have the same distance to every vertex $(g,h)$ in the same copy which they belong, and for which $d_H(h,h_1)\ge 2$ and $d_H(h,h_2)\ge 2$. In this sense, if we want to resolve the pair of vertices $(g,h_1),(g,h_2)$, then we needs to use those vertices $(g,h')$ such that $h'$ is neighbor of either $h_1$ or $h_2$ in $H$. This means that, in order to uniquely recognize all the vertices of $G\circ H$, in each copy of $H$ in $G\circ H$, one needs a set $S$ of vertices that will identifies the vertices of $H$ by means of different neighborhoods in $S$.

With these ideas in mind, authors of \cite{Jannesari-2012} defined the concept of adjacency resolving sets and adjacency metric dimension of graphs and used them into giving some contributions for the metric dimension of the lexicographic product of graphs. After this seminal paper, the parameter became interesting by itself and several other contributions on it have appeared. Similarly to the case of the local metric dimension, this parameter is not influenced by the connectivity of the studied graphs. However, again it can be easily noted an analogous result to Remark \ref{rem_local-not-connected}. In consequence, we shall center our attention in this section on connected graphs, unless we will specifically state the contrary.

A set $S\subset V(G)$ of vertices is said to be an \emph{adjacency resolving set} for $G$ if for every two vertices $x,y\in V\setminus S$ there exists $s\in S$ such that $s$ is adjacent to exactly one of $x$ and $y$. An adjacency resolving set of minimum cardinality is called an \emph{adjacency basis} of $G$, and its cardinality  the \emph{adjacency dimension} of $G$, denoted by $\dim_A(G)$. These concepts are initially introduced in \cite{Jannesari-2012} by using the adjacency representation version (the metric representation modified for this situation) of a vertex $v$; namely $r_A(v|S)$, where the distance between $v$ and any vertex $v_i\in S$ is defined as follows: $$d_G(v,v_i)=\left\{\begin{array}{ll}
                                                                          0, & \mbox{if $v=v_i$}, \\
                                                                          1, & \mbox{if $vv_i\in E(G)$} \\
                                                                          2, & \mbox{otherwise.}
                                                                        \end{array}
\right.$$
In Figure \ref{Fig:adjacency} appears an example of a graph where an adjacency metric basis is drawn in red color. 

\begin{figure}[ht]
\centering
\begin{tikzpicture}[scale=.7, transform shape]
\node [draw, shape=circle,fill=red] (a1) at  (-4,-1) {};
\node [draw, shape=circle,fill=red] (a2) at  (-4,1) {};
\node [draw, shape=circle] (a3) at  (-3,0) {};
\node [draw, shape=circle] (a4) at  (-1,0) {};
\node [draw, shape=circle,fill=red] (a5) at  (0,-1) {};
\node [draw, shape=circle] (a6) at  (0,1) {};
\node [draw, shape=circle,fill=red] (a10) at  (4,-1) {};
\node [draw, shape=circle] (a9) at  (4,1) {};
\node [draw, shape=circle] (a8) at  (3,0) {};
\node [draw, shape=circle,fill=red] (a7) at  (1,0) {};

\draw(a3)--(a1)--(a2)--(a3)--(a4)--(a5)--(a7)--(a8)--(a9)--(a10)--(a8);
\draw(a4)--(a6)--(a7);
\end{tikzpicture}
\caption{An adjacency resolving set of the smallest possible cardinality appears in red.}\label{Fig:adjacency}
\end{figure}
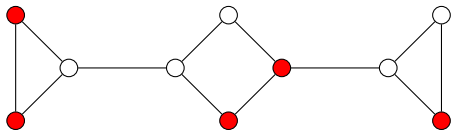

We may remark also that the notion of adjacency resolving sets has an antecedent in \cite{Babai-1980}, were it was studied this idea of adjacency resolving sets for strongly regular graphs, but in the context of the graph isomorphism problem. There, such structures were called distinguishing sets. Moreover, a few other very similar parameters are known in the literature, like for instance, the locating dominating sets \cite{Colbourn-1987}. Moreover, several variants of the adjacency dimension are also known like for instance the local adjacency dimension \cite{Fernau-2018}, the simultaneous adjacency dimension (of graph families) \cite{Ramirez-Cruz-2016-a}, and the $k$-adjacency dimension \cite{Estrada-Moreno-2016-b}. This latter variant has been indeed very well studied in a few articles. To know more on such version, we suggest the Ph. D. dissertation \cite{Estrada-Moreno-PhD}.

It is natural to think that adjacency dimension is related to the classical metric dimension, and it can be easily noted that for any connected graph $G$, $\dim(G)\le \dim_A(G)$. In this sense, characterizing the graphs achieving such equality seems to be a good research line.

One of the first interesting contributions on this topic is noticing that the adjacency dimension of a graph $G$ and that of its complement $\overline{G}$ are indeed equal. This is based on the fact that, if $S$ is any set of vertices of $G$ and $u,v\in V(G)$ satisfy that $N_G(u)\cap S\ne N_G(v)\cap S$, then also $N_{\overline{G}}(u)\cap S\ne N_{\overline{G}}(v)\cap S$. This was a result presented in \cite{Jannesari-2012}.

\begin{theorem}\emph{\cite{Jannesari-2012}}
For every graph $G$, $\dim_A(G)=\dim_A(\overline{G})$.
\end{theorem}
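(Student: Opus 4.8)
The plan is to show that every adjacency resolving set of $G$ is also an adjacency resolving set of $\overline{G}$, and then to conclude by a symmetry argument. First I would fix an adjacency basis $S$ of $G$, so that $|S|=\dim_A(G)$, and take two arbitrary distinct vertices $x,y\in V(G)\setminus S$. Since $V(\overline{G})=V(G)$, these are also two distinct vertices lying outside $S$ in $\overline{G}$. Because $S$ is an adjacency resolving set for $G$, there is a vertex $s\in S$ adjacent in $G$ to exactly one of $x$ and $y$; equivalently, $s$ lies in the symmetric difference $N_G(x)\triangle N_G(y)$.

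The key (and essentially only) observation is that $s\neq x$ and $s\neq y$, because $s\in S$ while $x,y\notin S$. Hence, for each of $x$ and $y$, the vertex $s$ is adjacent to it in $\overline{G}$ if and only if it is \emph{not} adjacent to it in $G$. Consequently, if $s$ is adjacent in $G$ to exactly one of $x,y$, then $s$ is adjacent in $\overline{G}$ to exactly the \emph{other} one of $x,y$ --- in particular, still to exactly one of the two. Thus $s$ adjacency-resolves the pair $x,y$ in $\overline{G}$ as well. Since $x,y$ were arbitrary, $S$ is an adjacency resolving set for $\overline{G}$, which yields $\dim_A(\overline{G})\le |S|=\dim_A(G)$.

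Finally I would apply the same argument with the roles of $G$ and $\overline{G}$ interchanged (using $\overline{\overline{G}}=G$) to obtain $\dim_A(G)\le\dim_A(\overline{G})$, and combine the two inequalities to get equality. There is no genuine obstacle here; the only point requiring a moment's care is the remark that the resolving vertex $s$ is distinct from both $x$ and $y$, which is precisely what makes adjacency ``flip'' correctly under complementation and is guaranteed by $s\in S$ together with $x,y\notin S$.
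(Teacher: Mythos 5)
Your proposal is correct and follows essentially the same route as the paper, which justifies the equality by observing that for $u,v$ and a set $S$, the condition $N_G(u)\cap S\ne N_G(v)\cap S$ is preserved under complementation; your version makes explicit the one point that matters, namely that the resolving vertex $s\in S$ is distinct from the pair $x,y\notin S$, so adjacency to $x$ and to $y$ genuinely flips in $\overline{G}$ and the symmetric difference with $S$ is unchanged. Nothing further is needed.
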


Similarly to other metric dimension parameters, it also happens that $1\le\dim_A(G)\le n-1$ for any graph $G$. Characterizations of graphs achieving the equality in this trivial bounds were given in \cite{Jannesari-2012} as well. In addition to this, all graphs with adjacency dimension 2, and all graphs of order $n$ with adjacency dimension $n-2$ are studied in \cite{Jannesari-2021}.

Computational issues concerning the adjacency dimension of graphs were presented in \cite{Fernau-2018}, where it was proved that computing the adjacency dimension of graphs is NP-hard even when restricted to planar graphs. In addition, an inapproximation result was also given there, which states that, assuming ETH, there is no $\mathcal{O}(pol(n + m)2^{o(n)})$ algorithm solving the decision problem regarding computing the adjacency dimension of graphs, on graphs of order $n$ and size $m$. The reductions used there are similar to that ones we mention in Section \ref{sec:local} for the case of local metric dimension. Some other computational contributions on this parameter were also given in \cite{Fernau-2018}.

With respect to combinatorial results on this parameter, some bounds and exact values for the adjacency dimension of some families of graphs are summarized in Table \ref{tab:adjacency}.

\begin{table}[ht]
  \centering
  \small{\begin{tabular}{|c|c|c|}
    \hline
    \textbf{Graphs} $\mathbf{G}$ & $\mathbf{\dim_A(G)}$ & \textbf{Reference} \\ \hline
    Cycle $C_n$ and path $P_n$ & $=\left\lfloor\frac{2n+2}{5}\right\rfloor$ & \cite{Jannesari-2012} \\ \hline
    Corona graph $G\odot H$ & Some bounds and formulas & \cite{Estrada-Moreno-2016-c,Estrada-Moreno-2016-e} \\ \hline
    Lexicographic product graph $G\circ H$ & Some bounds and formulas & \cite{Estrada-Moreno-2016-b} \\ \hline
    Graph $G$, order $n$, diameter $D$ & $\le n-D-1 + \left\lfloor\frac{2D + 4}{5}\right\rfloor$ & \cite{Jannesari-2021} \\ \hline
  \end{tabular}}
  \caption{Bounds and formulas for the adjacency dimension of some families of graphs.}\label{tab:adjacency}
\end{table}

\subsection{Some open problems}

\begin{itemize}
  \item Characterizing the graphs $G$ for which $\dim(G)=\dim_A(G)$.
  \item Studying the adjacency dimension of Cartesian, strong and direct products of graphs.
  \item Which is the adjacency dimension of grid graphs?
  \item Since $\dim(G)\le \dim_A(G)$ for any graph $G$, can $\dim_A(G)$ be bounded above by a constant factor of $\dim(G)$?
\end{itemize} 

\section{$k$-metric dimension}

The extension of the classical metric dimension of graphs to the $k$-metric dimension was independently introduced in \cite{Adar-2017} and \cite{Estrada-Moreno-2015} (although the latter work was much earlier known), as an attempt of improving the weakness of existing unique vertices in a metric basis that are uniquely recognizing some vertices of the graph. The idea had been previously partially taken into account in the fault tolerant metric dimension of graphs (see \cite{Hernando-2008}), which is indeed considering $k=2$ in the extended version of $k$-metric dimension. However, nothing more had been made so far on this issue. On the other hand, this concept might not be confused with that one, also called $k$-metric dimension, introduced in \cite{Sooryanarayana-2016}, which is in fact a very different parameter.

A set $S\subseteq V(G)$ is said to be a \emph{$k$-resolving set} for a connected graph $G$ if and only if any pair of vertices of $G$ is distinguished by at least $k$ elements of $S$. That is, for any pair of different vertices $u,v\in V(G)$, there are at least $k$ vertices $w_1,w_2,\ldots,w_k\in S$ such that $d_G(u,w_i)\ne d_G(v,w_i)$ for every $i\in \{1,\ldots,k\}$. A $k$-resolving set of minimum cardinality in $G$ is  called a \emph{$k$-metric basis} and its cardinality the \emph{$k$-metric dimension} of $G$, which is denoted by $\dim_{k}(G)$. If $S$ is a $k$-metric basis, then the metric representations of each pair of distinct vertices differ in at least $k$ positions, and moreover, there must exist at least one pair of vertices whose metric representations differ in exactly $k$ positions. The notation is from \cite{Estrada-Moreno-2015}, where indeed $k$-resolving sets were called $k$-metric generators. Clearly, $1$-resolving sets form the standard resolving sets.

As an example, Figure \ref{Fig:k-dim} shows a grid graph and examples of $t$-metric bases for $t\in\{1,\dots,7\}$ are given by the sets $S=\{1,\dots,t\}\cup\{8,\dots,7+t\}$. Notice that each of these $t$-metric bases is having cardinality $2t$, result that was proved in \cite{Bailey-2019}.

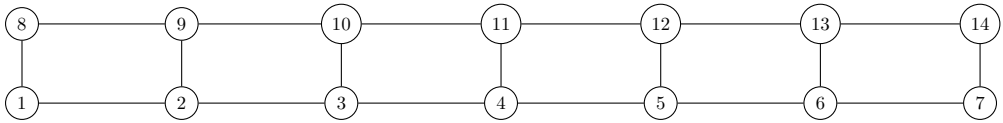
\begin{figure}[h]
\centering
\begin{tikzpicture}[scale=.7, transform shape]
\node [draw, shape=circle] (a1) at  (0,0) {1};
\node [draw, shape=circle] (a11) at  (0,1.5) {8};
\node [draw, shape=circle] (a2) at  (3,0) {2};
\node [draw, shape=circle] (a22) at  (3,1.5) {9};
\node [draw, shape=circle] (a3) at  (6,0) {3};
\node [draw, shape=circle] (a33) at  (6,1.5) {10};
\node [draw, shape=circle] (a4) at  (9,0) {4};
\node [draw, shape=circle] (a44) at  (9,1.5) {11};
\node [draw, shape=circle] (a5) at  (12,0) {5};
\node [draw, shape=circle] (a55) at  (12,1.5) {12};
\node [draw, shape=circle] (a6) at  (15,0) {6};
\node [draw, shape=circle] (a66) at  (15,1.5) {13};
\node [draw, shape=circle] (a7) at  (18,0) {7};
\node [draw, shape=circle] (a77) at  (18,1.5) {14};

\draw(a1)--(a2)--(a3)--(a4)--(a5)--(a6)--(a7);
\draw(a11)--(a22)--(a33)--(a44)--(a55)--(a66)--(a77);
\draw(a1)--(a11);\draw(a2)--(a22);\draw(a3)--(a33);\draw(a4)--(a44);
\draw(a5)--(a55);\draw(a6)--(a66);\draw(a7)--(a77);
\end{tikzpicture}
\caption{The set $S=\{1,\dots,t\}\cup\{8,\dots,7+t\}$, with $1\le t\le 7$ is a $t$-resolving set of the grid graph $P_7\Box P_2$.}\label{Fig:k-dim}
\end{figure}

It must be mentioned that \cite{Adar-2017} introduced two approaches of the concepts above in the following sense. In one hand, there is one approach which is in fact the same as defined above (the same as from \cite{Estrada-Moreno-2015}), which they called the all-pairs model (AP). In a second hand, in \cite{Adar-2017}, authors considered the idea of uniquely identifying not the whole set of vertices of the graph, but only the vertices outside the resolving set in question, and they called this, the non-landarks model (NL). These facts clearly influence much on the conclusions we get for the corresponding $k$-metric dimension problem. We want to also recall that \cite{Adar-2017} contains some results dealing with a weighted version of the problem. However, we shall not include anything about this, since it is not our goal to present any results concerning weighted graphs.

We may notice that for every $k$-resolving set $S$ it follows $|S|\geq k$. Also, if $k>1$, then $S$ is also a $(k-1)$-resolving set. One first observation (in the AP model) is as follows. If $k=1$, then the problem of checking if a given set $S$ is a resolving set is confirmed by only checking whether the vertices $u,v\in V(G)\setminus S$ are uniquely determined by $S$, since every vertex in $S$ is already distinguished by itself. Moreover, if $k=2$, then one needs to only check a similar fact only for those pairs having at most one vertex in $S$, since two vertices of $S$ are already distinguished by themselves. However, if $k\ge 3$, then we are required to check every pair of different vertices of the graph. These facts make much differences while dealing with the $k$-metric dimension of graphs, for the cases $k=1$, $k=2$ and $k\ge 3$. This is also related with the existence of the two approaches presented in \cite{Adar-2017}. From now on, unless specifically stated, whenever we would deal with the $k$-metric dimension problem, it must be understood we are dealing with the AP model.

One first observation on the $k$-metric dimension of graphs is that, there is an upper limit for $k$, making that a given graph does not contain $k'$-resolving sets for every $k'>k$. In this sense, a graph $G$ is called $k$-\emph{metric dimensional}, if $k$ is the largest integer for which $G$ contains a $k$-resolving set. It was shown in \cite{Estrada-Moreno-2015} the following result concerning this fact.

\begin{theorem}{\em \cite{Estrada-Moreno-2015}}
A connected graph  $G$ is $k$-metric dimensional  if and only if $$k=\min\{\vert R_G\{x, y\}\vert\,:\,x,y\in V(G),\; x\ne y\}.$$
\end{theorem}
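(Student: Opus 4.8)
The plan is to prove both directions of the equivalence by unpacking the definition of a $k$-metric dimensional graph together with the observation, already noted in the excerpt, that $R_G\{x,y\}$ is precisely the set of vertices resolving the pair $x,y$, and that a set $S$ is a $k$-resolving set if and only if $|S \cap R_G\{x,y\}| \ge k$ for every pair of distinct vertices $x,y \in V(G)$. Write $\kappa = \min\{\,|R_G\{x,y\}| : x,y \in V(G),\ x \ne y\,\}$. The goal is to show that $G$ is $k$-metric dimensional exactly when $k = \kappa$; equivalently, that the largest integer $k$ for which $G$ admits a $k$-resolving set equals $\kappa$.

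First I would show that $V(G)$ itself is a $\kappa$-resolving set, which proves $G$ admits a $\kappa$-resolving set. Indeed, for any pair of distinct vertices $x,y$, the set of vertices resolving them is $R_G\{x,y\}$, which has cardinality at least $\kappa$ by definition of the minimum; since $V(G)$ contains all of $R_G\{x,y\}$, the pair $x,y$ is resolved by at least $\kappa$ vertices of $V(G)$. Hence $V(G)$ is a $\kappa$-resolving set, so the largest $k$ with a $k$-resolving set is at least $\kappa$. For the reverse inequality, suppose $S$ is any $k$-resolving set of $G$. Pick a pair $x_0,y_0$ attaining the minimum, i.e. $|R_G\{x_0,y_0\}| = \kappa$. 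Since $S$ resolves this pair with at least $k$ vertices, and every vertex resolving $x_0,y_0$ lies in $R_G\{x_0,y_0\}$, we get $k \le |S \cap R_G\{x_0,y_0\}| \le |R_G\{x_0,y_0\}| = \kappa$. Therefore no $k$-resolving set exists for $k > \kappa$, and combining the two inequalities, the largest such $k$ is exactly $\kappa$, which is the claim.

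I do not expect any serious obstacle here: the statement is essentially a reformulation of the definitions once one has the right lens, namely that resolving a pair $x,y$ by ``many'' vertices is the same as intersecting $R_G\{x,y\}$ in a large set, and that $R_G\{x,y\}$ is the natural ``budget'' for that pair. The only mild subtlety is making sure the minimum is well-defined (it is, since $G$ is connected with at least two vertices, and $x,y \in R_G\{x,y\}$ guarantees each $|R_G\{x,y\}| \ge 2 > 0$) and that one correctly identifies the extremal pair $x_0,y_0$ as the bottleneck forcing the upper bound on $k$. A clean write-up would simply state the two inequalities $\kappa \le \max\{k : G \text{ has a } k\text{-resolving set}\}$ and the reverse, each in one or two lines.
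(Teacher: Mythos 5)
Your argument is correct and is essentially the standard proof of this result: the survey itself only states the theorem with a citation to the original reference, but the argument given there proceeds exactly as you do, showing that $V(G)$ is a $\kappa$-resolving set (so the maximal $k$ is at least $\kappa$) and that any $k$-resolving set $S$ satisfies $k\le |S\cap R_G\{x_0,y_0\}|\le \kappa$ for a minimizing pair $x_0,y_0$ (so the maximal $k$ is at most $\kappa$). The key identification — that $S$ is a $k$-resolving set if and only if $|S\cap R_G\{x,y\}|\ge k$ for every pair — is the right lens, and your handling of well-definedness via $x,y\in R_G\{x,y\}$ is exactly the observation the survey makes immediately after stating the theorem.
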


Based on this result, and taking into account that every pair of distinct vertices is recognized at least by the vertices of the pair, it is clear that every graph $G$ is $k$-metric dimensional for some $k\ge 2$. This immediately opens the question of characterizing the family of all $2$-metric dimensional graphs, as well as, the problem of finding the integer $k$ for which a given graph is $k$-metric dimensional.

Regarding the first problem, it was shown in \cite{Estrada-Moreno-2015} that a connected graph $G$ of order $n\geq 2$ is $2$-metric dimensional if and only if $G$ has twin vertices (for instance, a tree with two leaves having a common neighbor). Concerning the latter problem, from \cite{Yero-2017}, it is known that finding the value $k$ for which a given graph $G$ is $k$-metric dimensional can be efficiently (polynomially) done, and that has order $\mathcal{O}(n^3)$, where $n$ is the order of the graph. Despite this fact, some basic situations can be easily deduced. For instance, it can be readily seen that only  whether $n=2$ it is possible to see a graph that is $n$-metric dimensional (\emph{i.e.}, the graph $K_2$). In consequence, it might be of interest to know those graphs that are $(n-1)$-metric dimensional. This was done in \cite{Estrada-Moreno-2015}.

\begin{theorem}\emph{\cite{Estrada-Moreno-2015}}
A graph $G$ of order $n\ge3$ is $(n-1)$-metric dimensional if and only if $G$ is a path or $G$ is an odd cycle.
\end{theorem}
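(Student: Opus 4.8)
The plan is to reduce the statement to the characterization theorem just above: a connected graph $G$ is $k$-metric dimensional if and only if $k=\min\{|R_G\{x,y\}|:x,y\in V(G),\ x\ne y\}$. For a pair $x\ne y$ put $B(x,y)=\{w\in V(G):d_G(w,x)=d_G(w,y)\}$, the set of vertices that \emph{fail} to resolve $\{x,y\}$; then $R_G\{x,y\}=V(G)\setminus B(x,y)$, so $|R_G\{x,y\}|=n-|B(x,y)|$, and moreover $B(x,y)\subseteq V(G)\setminus\{x,y\}$ since $x$ and $y$ resolve themselves. Consequently $G$ of order $n$ is $(n-1)$-metric dimensional if and only if $|B(x,y)|\le 1$ for every pair of distinct vertices and $|B(x,y)|=1$ for at least one such pair. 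I will check the ``if'' direction by direct computation on paths and odd cycles, and the ``only if'' direction by first establishing $\Delta(G)\le 2$.

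For the ``if'' direction, consider first a path $P_n$ with vertices labelled $1,\dots,n$ along the path, so $d_G(z,i)=|z-i|$; then $d_G(z,i)=d_G(z,j)$ forces $z=(i+j)/2$, whence $|B(i,j)|\le 1$ for every pair, while $B(1,3)=\{2\}$ when $n\ge 3$. Thus $\min|R_G\{x,y\}|=n-1$ and $P_n$ is $(n-1)$-metric dimensional. Next consider an odd cycle $C_n$. For two vertices $x,y$ with $d_G(x,y)=d$, the cycle is split into two arcs of lengths $d$ and $n-d$; exactly one of these is even (their sum $n$ is odd), and the midpoint vertex of that even arc is equidistant from $x$ and $y$, so $|B(x,y)|\ge 1$. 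On the other hand $\sum_{\{x,y\}}|B(x,y)|=n\cdot\frac{n-1}{2}=\binom{n}{2}$, because a fixed vertex has exactly two vertices at each distance $1,\dots,\frac{n-1}{2}$ and hence lies in exactly $\frac{n-1}{2}$ of the sets $B(x,y)$; since there are $\binom{n}{2}$ pairs, every $|B(x,y)|$ equals $1$. Again $\min|R_G\{x,y\}|=n-1$, so $C_n$ is $(n-1)$-metric dimensional.

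For the ``only if'' direction, assume $|B(x,y)|\le 1$ for every pair. The key step is to show $\Delta(G)\le 2$. Suppose, to the contrary, that some vertex $v$ has three distinct neighbours $a,b,c$. Since $a,b,c$ are all adjacent to $v$, we have $d_G(c,a),d_G(c,b)\in\{1,2\}$, so $c$ is equidistant from $a$ and $b$ unless $c$ is adjacent to exactly one of $a,b$; similarly for the other two pairs. Writing $p=\mathbf{1}[ab\in E(G)]$, $q=\mathbf{1}[ac\in E(G)]$ and $r=\mathbf{1}[bc\in E(G)]$, the assumption that each of $a,b,c$ fails to be equidistant from the opposite pair forces $q+r=1$, $p+r=1$ and $p+q=1$, hence $2(p+q+r)=3$, which is impossible. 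Therefore some pair among $\{a,b,c\}$, say $\{a,b\}$, has the third vertex $c$ equidistant from it; but $v$ is equidistant from $a$ and $b$ as well, so $|B(a,b)|\ge 2$, a contradiction. Hence $\Delta(G)\le 2$, and since $G$ is connected it is a path or a cycle. An even cycle $C_n$ (with $n\ge 4$) is ruled out because $B(0,2)$ then contains the two distinct vertices $1$ and $\tfrac{n}{2}+1$, indices read modulo $n$. Together with the ``if'' direction, this proves the equivalence.

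The step I expect to be the main obstacle is the degree bound $\Delta(G)\le 2$: once $G$ is known to be a path or a cycle everything reduces to elementary distance computations, but excluding a degree-$3$ vertex requires the little adjacency/parity argument above, and one has to be careful that the two witnessing equidistant vertices produced ($v$ and the third neighbour) are genuinely distinct and lie outside the pair, and that $d_G(a,c)\in\{1,2\}$ for neighbours $a,c$ of $v$.
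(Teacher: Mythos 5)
Your proof is correct. The survey states this theorem without proof (it is only cited from the 2015 source), and your argument follows the same route as the original: reduce everything to the characterization $k=\min_{x\ne y}|R_G\{x,y\}|$, verify paths and odd cycles directly (your counting argument $\sum_{\{x,y\}}|B(x,y)|=\binom{n}{2}$ forcing $|B(x,y)|=1$ for every pair of an odd cycle is a clean way to do this), and for the converse show $\Delta(G)\le 2$ by exhibiting two vertices equidistant from a pair of neighbours of a degree-$3$ vertex, then exclude even cycles. The parity bookkeeping at the delicate points (that the midpoint of the even arc realizes its distance inside that arc since its length is at most $n-1$, that the indicator equations $p+q=q+r=p+r=1$ are inconsistent, and that $v$ and $c$ are genuinely distinct witnesses outside $\{a,b\}$) all checks out.
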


In order to complete the case of cycles graphs, it is known from \cite{Estrada-Moreno-2015} that any even cycle of order $n$ is $(n-2)$-metric dimensional. Another case, where the time complexity can be lowered is that of trees. In this sense, the following result was given in \cite{Yero-2017}, together with a corresponding algorithm.

\begin{theorem}{\em \cite{Yero-2017}}
The positive integer $k$ for which a tree different from a path is $k$-metric dimensional can be computed in linear time with respect to the order of the tree.
\end{theorem}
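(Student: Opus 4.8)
The plan is to combine the characterisation recalled above (a connected graph $G$ is $k$-metric dimensional precisely for $k=\min\{|R_G\{x,y\}|:x,y\in V(G),\ x\ne y\}$) with the rigidity of distances in a tree, so that the a priori quadratic search over all pairs collapses to a linear one driven by a single depth-first search.

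First I would derive a closed formula for $|R_T\{x,y\}|$. Fix distinct $x,y\in V(T)$, let $P$ be the unique $x,y$-path, and for a vertex $w$ let $p(w)$ be its projection onto $P$, i.e.\ the vertex of $P$ nearest to $w$; then $d_T(w,x)=d_T(w,p(w))+d_T(p(w),x)$ and $d_T(w,y)=d_T(w,p(w))+d_T(p(w),y)$, so $w\notin R_T\{x,y\}$ iff $d_T(p(w),x)=d_T(p(w),y)$. Since $d_T(p(w),x)+d_T(p(w),y)=d_T(x,y)$, this can happen only when $d_T(x,y)$ is even and $p(w)$ is the midpoint $m$ of $P$. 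Hence $|R_T\{x,y\}|=n$ when $d_T(x,y)$ is odd, while when $d_T(x,y)$ is even the vertices \emph{not} in $R_T\{x,y\}$ are $m$ together with all vertices of the components of $T-m$ other than the two components $C_x,C_y$ containing $x$ and $y$; consequently $|R_T\{x,y\}|=|C_x|+|C_y|$.

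Next I would pass to the formula $k=\min\bigl\{|C|+|C'| : m\in V(T),\ \deg_T(m)\ge 2,\ C\ne C'\text{ components of }T-m\bigr\}$. Every such $m$ and pair $C,C'$ is realised by taking $x,y$ to be the neighbours of $m$ in $C$ and $C'$ (a pair at even distance $2$ with midpoint $m$); conversely every even-distance pair arises this way, and odd-distance pairs contribute $n$, which never improves on $|C|+|C'|\le n-1$. Writing $\sigma_1(m)\le\sigma_2(m)$ for the two smallest component orders of $T-m$, this reads $k=\min_{m:\deg_T(m)\ge2}\bigl(\sigma_1(m)+\sigma_2(m)\bigr)$; for a non-path tree the minimum is reached at a vertex of degree $\ge 3$, and it equals $2$ exactly when $T$ has two leaves with a common neighbour, consistently with the known description of $2$-metric dimensional graphs.

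The linear-time algorithm is then immediate. Root $T$ at an arbitrary vertex and, with one depth-first search, compute every subtree order $\sigma(v)$ in $\mathcal{O}(n)$ time. For a non-root $m$ the component orders of $T-m$ are exactly $n-\sigma(m)$ and the numbers $\sigma(c)$ over the children $c$ of $m$ (for the root, just the latter); scanning this list of $\deg_T(m)$ values once yields its two smallest entries, and since $\sum_m\deg_T(m)=2(n-1)$ the whole pass costs $\mathcal{O}(n)$. The minimum of $\sigma_1(m)+\sigma_2(m)$ over vertices of degree $\ge 2$ is the sought integer $k$. The only genuinely delicate step is the structural fact of the second paragraph — that the vertices failing to resolve a pair are exactly those projecting onto the midpoint of the pair, whence it suffices to inspect the $\mathcal{O}(n)$ ``neighbour pairs'' rather than all $\Theta(n^2)$ pairs; once this is in hand the single-DFS implementation and its $\mathcal{O}(n)$ bound are routine, and in fact the same procedure computes $k$ for any tree (returning $n-1$ on a path), the hypothesis of being non-path only discarding the trivial case.
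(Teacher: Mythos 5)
Your proof is correct, and it is self-contained where the survey is not: the survey only cites \cite{Yero-2017} for this theorem, and the algorithm there is driven by the leg-based characterization of $k$-metric dimensional trees from \cite{Estrada-Moreno-2015}, namely that for a non-path tree $T$ the relevant $k$ equals the minimum, over exterior major vertices $w$ with $\ter_T(w)\ge 2$, of the sum of the distances from $w$ to its two nearest terminal vertices; the linear-time procedure then locates the exterior major vertices and measures their legs by a tree traversal. You instead re-derive $|R_T\{x,y\}|$ from first principles using the gate (projection) property of paths in trees, which correctly yields $|R_T\{x,y\}|=n$ for pairs at odd distance and $|R_T\{x,y\}|=|C_x|+|C_y|$ for pairs at even distance with midpoint $m$, and hence $k=\min_m\bigl(\sigma_1(m)+\sigma_2(m)\bigr)$ over internal vertices $m$, where $\sigma_1(m)\le\sigma_2(m)$ are the two smallest component orders of $T-m$. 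The two formulas necessarily agree: as you note, a vertex of degree $2$ contributes exactly $n-1$ while any vertex of degree at least $3$ contributes at most $n-2$, and at an optimal branching vertex the two smallest components must in fact be legs (otherwise a major vertex inside such a component would do strictly better), which recovers the leg-based expression. Both routes reduce the a priori $\Theta(n^2)$ pair search to $\mathcal{O}(n)$ work and implement it with a single DFS computing subtree orders, with total cost $\mathcal{O}\bigl(\sum_m\deg_T(m)\bigr)=\mathcal{O}(n)$. What your version buys is that the only structural input is the gatedness of paths in trees rather than the machinery of exterior major vertices and terminal degrees, and it uniformly covers paths as well (returning $n-1$); what the cited approach buys is direct compatibility with the formula for $\dim_k(T)$ in terms of the quantities $I_k(w)$, which is computed alongside $k$ in the same paper. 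I see no gap in your argument.
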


In order to also slightly improve the time complexity $\mathcal{O}(n^3)$ of finding the integer $k$ for which a given graph is $k$-metric dimensional, some bounds on such value were given in \cite{Estrada-Moreno-2015}, and further on, in \cite{Corregidor-2021}. Such bounds were given in terms of different parameters or invariants of the graph such as the minimum and maximum degrees, the girth, the diameter, the clique number, and others. In special, the article \cite{Corregidor-2021} gives some bounds which are improving some ones from \cite{Estrada-Moreno-2015}.

\subsection{Computational aspects}

With respect to the computational complexity of the problem of computing the $k$-metric dimension of graphs, it was published a result in \cite{Yero-2017} where was claimed that such problem is NP-hard. However, such result has a significant gap in the proof, which is ``rather impossible'' to avoid. Fortunately, the recent investigation \cite{Schmitz-2021+} (still a manuscript) presents a proof of such result that seems to be correct. The technique (which turned out to be not applicable) applied in \cite{Yero-2017} was highly inspired in the proof from \cite{Khuller-1996}, which uses a reduction from the 3-satisfiability problem (3-SAT). In contrast, \cite{Schmitz-2021+} presents a different technique, while proving a result that go even further.

\begin{theorem}{\em \cite{Schmitz-2021+}}
For any $k\ge 2$, the decision problem regarding computing the $k$-metric dimension ($k$-MD problem for short) of graphs is NP-complete even when restricted to bipartite graphs.
\end{theorem}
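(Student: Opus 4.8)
The plan is to build a polynomial-time many-one reduction from a known NP-complete problem — the natural candidate is the classical metric dimension problem on general graphs (or, better, on a restricted class), but to land inside \emph{bipartite} graphs the cleanest route is probably a reduction from \textsc{Vertex Cover} or from \textsc{3-SAT} via a bipartite gadget construction, mirroring the spirit of the Khuller--Raghavachari--Rosenfeld reduction in \cite{Khuller-1996} but engineered so that the only pairs of vertices that are ``hard'' to resolve are resolved by exactly $k$ vertices of any small solution. First I would fix $k\ge 2$ and, given an instance $I$ of the source problem together with a target bound $t$, construct a bipartite graph $G=G(I,k)$ together with an integer $t'=t'(I,k)$ such that $I$ is a yes-instance if and only if $\dim_k(G)\le t'$. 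Membership in NP is immediate: a nondeterministically guessed set $S$ of size at most $t'$ can be verified in polynomial time by computing all pairwise distances (BFS from each vertex) and checking, for every pair $u\ne v$, that $|\{w\in S: d_G(u,w)\ne d_G(v,w)\}|\ge k$.

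The heart of the construction is a \textbf{forcing gadget}. For each element of the ground set (say each variable or each vertex of the Vertex Cover instance) I would attach a pendant-like bipartite structure — e.g.\ a bunch of leaves hanging off a common vertex, or a short even path ending in several twin-leaves — chosen so that (i) it keeps the whole graph bipartite, and (ii) it forces at least $k-1$ (or some fixed number depending on $k$) ``cheap'' vertices of that gadget to lie in every $k$-resolving set, simply because a pair of twin leaves is resolved only by themselves plus a bounded number of nearby vertices, so one is forced to spend a predictable budget locally. The remaining budget is exactly what encodes the combinatorial choice of the source instance: choosing which extra vertices of a gadget to include should correspond to setting a variable true/false or to picking a vertex into the cover. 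Distances between gadgets must be arranged (via a sufficiently long bipartite ``backbone'') so that resolution of the encoding pairs depends only on the local choices, and so that every such pair is resolved by \emph{exactly} $k$ vertices when the choices are consistent, and by fewer than $k$ otherwise. I would verify bipartiteness throughout by a $2$-colouring argument, noting that even paths, stars, and complete bipartite pieces are all bipartite and can be glued along a fixed colour class.

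After the gadgets are in place, the argument splits into the two standard directions. For the forward direction, from a yes-certificate of $I$ I would exhibit an explicit $k$-resolving set of size $t'$: take the forced cheap vertices from every gadget plus the vertices dictated by the certificate, and check by a short case analysis that every pair of vertices of $G$ — pairs inside one gadget, pairs across gadgets, pairs on the backbone — is resolved by at least $k$ vertices of $S$; here the even/odd parity of distances in a bipartite graph is a convenient bookkeeping device, since any vertex resolves a pair $\{x,y\}$ with $d(x,y)$ odd whenever it lies on the right side, which already contributes a controlled number of resolvers ``for free''. For the backward direction I would argue that any $k$-resolving set $S$ with $|S|\le t'$ must, by the forcing gadgets, contain the cheap forced vertices in each gadget, leaving so little slack that the rest of $S$ must correspond to a feasible solution of $I$; the key sub-lemma is that a gadget whose local budget is spent ``incorrectly'' leaves some pair resolved by at most $k-1$ vertices.

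The main obstacle I anticipate is precisely the tension between three requirements that pull against each other: keeping $G$ \emph{bipartite}, making the forcing \emph{exact} (so that a pair is resolved by $k-1$ versus $k$ vertices, not merely by ``few'' versus ``many''), and doing so \emph{uniformly in $k$}. In the classical ($k=1$) reductions one exploits that a vertex always resolves itself, which kills most pairs for free; for $k\ge 3$ that shortcut evaporates and one must genuinely certify $k$ distinct resolvers for \emph{every} pair, including pairs both of whose vertices lie in the candidate set. Getting the gadget's distances to pin the count to exactly $k$ — rather than being swamped by incidental resolvers coming from the backbone or from other gadgets — will require careful tuning of path lengths and of the number of twin-leaves, and this delicate counting (not any deep new idea) is where the real work of \cite{Schmitz-2021+} lies; I would expect to spend most of the effort there, and to handle the NP membership and the bipartiteness check almost in passing.
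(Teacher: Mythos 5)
Your plan never actually commits to a construction: everything hinges on a ``forcing gadget'' whose existence you assume but do not exhibit, and you correctly identify that the hard part is pinning the number of resolvers of each critical pair to exactly $k$ versus $k-1$. But the route you choose --- a 3-SAT or Vertex Cover reduction in the style of \cite{Khuller-1996} --- is precisely the route this survey flags as broken: the earlier NP-hardness claim in \cite{Yero-2017} was ``highly inspired'' by that 3-SAT reduction and has a gap that is ``rather impossible'' to avoid. The shortcut you yourself note for $k=1$ (every vertex resolves itself, so all pairs meeting the candidate set are handled for free) is exactly what makes the classical satisfiability gadgets work, and its disappearance for $k\ge 3$ is not a matter of delicate counting that more care in tuning path lengths would fix; it is the structural reason the 3-SAT template does not transfer.

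The missing idea is a change of source problem. The proof in \cite{Schmitz-2021+} does not reduce from 3-SAT or Vertex Cover at all. It first establishes NP-completeness of the 3-Dimensional $k$-Matching problem (given $S\subseteq A\times B\times C$ with $|A|=|B|=|C|=n$, decide whether $S$ contains a subset $M$ of size $kn$ such that each element of $A$, $B$ and $C$ lies in exactly $k$ triples of $M$), and then reduces 3-Dimensional $(k-1)$-Matching to the $k$-MD problem on bipartite graphs. The ``covered exactly $k$ times'' structure of that intermediate problem is what supplies, combinatorially, the exact multiplicity of resolvers that your gadgets would have to manufacture by hand --- and that is the step for which your proposal has no substitute. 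The NP-membership argument and the bipartiteness bookkeeping in your write-up are fine, but they are the easy part.
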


To proof the result above the authors are first required to prove the NP-completeness of another problem called the 3-Dimensional $k$-Matching problem, which is stated as follows: Given a set $S\subseteq A\times B\times C$, where $A$, $B$ and $C$ are disjoint sets
of the same size $n$: Does $S$ contain a $k$-matching, {\em i.e.} a subset $M$ of size $kn$, such that each element of $A$, $B$ and $C$ is contained in exactly $k$ triples of $M$? Once proved the NP-completeness of such result, a reduction from the 3-Dimensional $(k-1)$-Matching problem to the $k$-MD problem for bipartite graphs is carried out.

It must be also said that, a partial proof of the NP-completeness of the $k$-MD problem is also known from \cite{Estrada-Moreno-2021+}, where (in a more general setting) it is shown that $k$-MD problem is NP-complete, whenever $k$ is an odd integer. The technique used here is also rather different from the traditional ones, due to the most general setting which is studied in such work.

Despite the NP-hardness of computing the $k$-metric dimension of graphs, there are non trivial families of graphs where such parameter can be efficiently computed. This is the case of a tree for instance. In \cite{Yero-2017} was presented a polynomial algorithms that finds the $k$-metric dimension as well as a $k$-metric basis of any tree in linear time.

\subsection{Combinatorial issues}

Based on the nature of the problem, one might immediately consider analysing a possible monotonicity with respect to $k$ on the $k$-metric dimension. Such property can be relatively clearly deduced since for any $k$-resolving set $B$ and any $x\in B$, all pairs of different vertices in $V(G)$ are distinguished by at least $k$ vertices of $B$, which allows to claim that $B-\{x\}$ will always be a $(k-1)$-resolving set for $G$. This proves the following result.

\begin{theorem}{\em \cite{Estrada-Moreno-2015}}
Let $G$ be a $k$-metric dimensional graph and let $k_1,k_2$ be two integers. If $1\le k_1<k_2\le k$, then $\dim_{k_1}(G)<\dim_{k_2}(G)$.
\end{theorem}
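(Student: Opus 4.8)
The plan is to reduce the statement to the case of consecutive values of the parameter, and then to settle that case by a single vertex deletion. First I would observe that it suffices to prove that $\dim_{j-1}(G) < \dim_{j}(G)$ for every integer $j$ with $2 \le j \le k$. Granting this, the general inequality follows by transitivity: if $1 \le k_1 < k_2 \le k$, then $\dim_{k_1}(G) < \dim_{k_1+1}(G) < \cdots < \dim_{k_2}(G)$, and hence $\dim_{k_1}(G) < \dim_{k_2}(G)$.

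For the consecutive step, fix $j$ with $2 \le j \le k$. Since $G$ is $k$-metric dimensional and $j \le k$, a $j$-resolving set exists; let $B$ be a $j$-metric basis, so that $|B| = \dim_{j}(G)$ and $|B| \ge j \ge 2$, in particular $B \ne \emptyset$. Pick any $x \in B$ and set $B' = B \setminus \{x\}$. The key claim is that $B'$ is a $(j-1)$-resolving set for $G$. Indeed, let $u,v \in V(G)$ be distinct; because $B$ is a $j$-resolving set, at least $j$ vertices of $B$ distinguish $u$ and $v$, and deleting the single vertex $x$ can destroy at most one of these, so at least $j-1 \ge 1$ vertices of $B'$ still distinguish $u$ and $v$. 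Therefore $B'$ is a $(j-1)$-resolving set, and consequently $\dim_{j-1}(G) \le |B'| = |B| - 1 = \dim_{j}(G) - 1 < \dim_{j}(G)$, as desired.

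This argument has essentially no real obstacle; the only points worth a moment's care are that a $j$-resolving set indeed exists for every $j \le k$ (which is exactly the meaning of $G$ being $k$-metric dimensional) and that the chosen $j$-metric basis is nonempty, so that the deletion is legitimate — both are immediate once $j \ge 2$. I would close by remarking that the strictness obtained in the consecutive case is precisely what upgrades the obvious monotonicity $\dim_{k_1}(G) \le \dim_{k_2}(G)$ into the strict inequality claimed in the statement.
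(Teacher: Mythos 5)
Your proposal is correct and follows essentially the same route as the paper: the authors also observe that removing any single vertex $x$ from a $k$-resolving set $B$ leaves every pair of vertices distinguished by at least $k-1$ members of $B\setminus\{x\}$, so that $\dim_{k-1}(G)\le\dim_k(G)-1$, and the general strict inequality follows by chaining consecutive steps. Your added care about the existence and nonemptiness of a $j$-metric basis is a harmless elaboration of what the paper leaves implicit.
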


Consequences of the result above are for instance the following ones (published in \cite{Estrada-Moreno-2015}), where we consider $G$ is a $k$-metric dimensional graph.

\begin{itemize}
\item For every $r\in\{1,\ldots, k\}$, $\dim_r(G)\ge \dim(G)+(r-1).$
\item If $G\not\cong P_n$, then  for any $r\in \{1,\ldots,k\}$, $\dim_{r}(G)\geq r+1.$
\end{itemize}

Based on the NP-hardness of the $k$-MD problem, it is clearly desirable to have tight general bounds on the $k$-metric dimension of graphs, as well as, closed formulas for several specific and non-trivial families of graphs. Some of the most remarkable results on this issue are next shown. A first interesting result deals with the $k$-metric dimension of cycles.

\begin{proposition}{\em \cite{Estrada-Moreno-unpublished}}
Let $C_{n}$ be the  cycle graph of order $n$.
\begin{itemize}
\item If $n$ is even, then
\begin{itemize}
\item $\dim_k(C_n)=k+1$ for every $1\le k\le\frac{n}{2}-1$,
\item $\dim_k(C_n)=k+2$ for every $\frac{n}{2}\le k\le n-2$.
\end{itemize}
\item If $n$ is odd, then
\begin{itemize}
\item $\dim_k(C_n)=k+1$ for every $1\le k\le n-1$.
\end{itemize}
\end{itemize}
\end{proposition}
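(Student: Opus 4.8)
The plan is to reduce the whole statement to an explicit description of, for each pair of distinct vertices $u,v$ of $C_n$, the set $V(C_n)\setminus R_{C_n}\{u,v\}$ of vertices equidistant from $u$ and $v$; I will call these the \emph{non-resolvers} of the pair. Labelling the vertices $0,1,\dots,n-1$ cyclically so that $d_{C_n}(i,j)=\min(|i-j|,n-|i-j|)$, a vertex $w$ is a non-resolver of $\{u,v\}$ exactly when, viewing the vertices as $n$ points evenly spread on a circle, $w$ lies on the perpendicular bisector of the chord $uv$ (graph distance on $C_n$ is a strictly increasing function of the chord length); that bisector passes through the centre and through the midpoints of the two arcs joining $u$ and $v$, so a short parity check yields the key structural fact: (i) if $n$ is odd, every pair $\{u,v\}$ has exactly one non-resolver, and every vertex $w$ is the unique non-resolver of the pair $\{w-1,w+1\}$; (ii) if $n$ is even, a pair at odd distance has no non-resolver, while a pair at even distance has exactly two non-resolvers, which form an antipodal pair $\{a,a+\frac{n}{2}\}$, and conversely every antipodal pair $\{a,a+\frac{n}{2}\}$ is the non-resolver set of the even-distance pair $\{a-1,a+1\}$.

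Now recall that a set $S$ with $|S|=m$ is a $k$-resolving set of $C_n$ if and only if for every pair $\{u,v\}$ at least $k$ vertices of $S$ resolve $\{u,v\}$, equivalently at most $m-k$ vertices of $S$ are non-resolvers of $\{u,v\}$. The bound $\dim_k(C_n)\ge k+1$ holds in all cases: a set $S$ of size exactly $k$ cannot be $k$-resolving, since then every vertex of $S$ would have to resolve every pair, yet $w\in S$ never resolves $\{w-1,w+1\}$ (alternatively invoke $\dim_k(G)\ge\dim(G)+(k-1)$ together with $\dim(C_n)=2$). For $n$ odd this already finishes the argument: any $S$ with $|S|=k+1$ contains at most one non-resolver of any given pair, hence at least $k$ resolvers, so $S$ is $k$-resolving as soon as $k+1\le n$; therefore $\dim_k(C_n)=k+1$ for $1\le k\le n-1$.

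For $n$ even write $h=\frac{n}{2}$. By the structural fact, pairs at odd distance impose nothing beyond $m\ge k$, so $S$ (with $|S|=m$) is $k$-resolving if and only if $m\ge k$ and $|S\cap P|\le m-k$ for every antipodal pair $P=\{a,a+h\}$. If $m\ge k+2$ this holds automatically, so $\dim_k(C_n)\le k+2$ whenever $k+2\le n$. If $1\le k\le h-1$, take $S=\{0,1,\dots,k\}$: all its elements lie in $\{0,\dots,h-1\}$, so $S$ meets each antipodal pair in at most one vertex, whence $|S\cap P|\le 1=(k+1)-k$ for every $P$ and $S$ is $k$-resolving; with the lower bound, $\dim_k(C_n)=k+1$. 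If $h\le k\le n-2$, then any $S$ with $|S|=k+1\ge h+1$ must, by the pigeonhole principle applied to the partition of $V(C_n)$ into the $h$ antipodal pairs, contain a full antipodal pair $P=\{a,a+h\}$; then the pair $\{a-1,a+1\}$ is resolved by only $(k+1)-2=k-1<k$ vertices of $S$, so $S$ is not $k$-resolving. Hence no $k$-resolving set of size $k+1$ exists, and together with $\dim_k(C_n)\le k+2$ (valid since $k\le n-2$) we get $\dim_k(C_n)=k+2$.

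The only non-routine ingredient is the structural step: one must carry out the parity analysis cleanly and, in the even case, verify both that the two non-resolvers of an even-distance pair are genuinely antipodal and that every antipodal pair is realized as such a non-resolver set. Everything after that is bookkeeping with the pigeonhole principle, and the stated ranges for $k$ are exactly the admissible ones, since $C_n$ is $(n-1)$-metric dimensional for $n$ odd and $(n-2)$-metric dimensional for $n$ even.
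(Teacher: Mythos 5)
Your argument is correct and complete. The survey only states this proposition with a citation to an unpublished manuscript and gives no proof, so there is nothing in the paper to compare against; judged on its own, your proof is sound. The structural step is the right one and you have verified it correctly: equidistance from $u$ and $v$ in $C_n$ is equivalent to lying at the midpoint of one of the two $u$--$v$ arcs, an arc midpoint is a vertex exactly when the arc has even length, and for $n$ even the two midpoints of an even-distance pair sit at positions $u+t/2$ and $u+t/2+n/2$, hence are antipodal, with every antipodal pair realized by the distance-two pair $\{a-1,a+1\}$. From there the reduction of the $k$-resolving condition to ``$|S\cap P|\le |S|-k$ for every antipodal pair $P$'' (plus $|S|\ge k$ for the odd-distance pairs) is exactly what is needed, and the three cases --- the interval construction for $k\le \frac{n}{2}-1$, the pigeonhole obstruction for $k\ge \frac{n}{2}$, and the automatic upper bound $k+2$ --- close the argument. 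The stated ranges of $k$ are also the correct admissible ones, matching the facts recorded elsewhere in the survey that $C_n$ is $(n-1)$-metric dimensional for $n$ odd and $(n-2)$-metric dimensional for $n$ even.
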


Several other results concerning the $k$-metric dimension of unicyclic graphs are described in \cite{Estrada-Moreno-unpublished}.

\subsection{The case of trees}

The first situation that needs to be considered is that of paths graphs, since it behaves different from the remaining tree structures.

\begin{proposition}{\em \cite{Estrada-Moreno-2015}}
Let $k\geq 3$ be an integer. For any path graph $P_{n}$ of order $n\geq k+1$, $\dim_{k}(P_{n})=k+1.$
\end{proposition}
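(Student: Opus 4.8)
The plan is to prove both a lower and an upper bound on $\dim_k(P_n)$ for $k \ge 3$. For the lower bound, I would invoke the general consequence stated above for $k$-metric dimensional graphs that for every $r \in \{1,\dots,k\}$, $\dim_r(G) \ge \dim(G) + (r-1)$. Since $\dim(P_n) = 1$ (a single endpoint resolves a path — this is classical and recalled implicitly via the survey \cite{Tillquist-2021}), this gives $\dim_k(P_n) \ge 1 + (k-1) = k$. To push this to $k+1$, I would argue that no set $S$ of exactly $k$ vertices can be a $k$-resolving set: if $|S| = k$, then every pair of distinct vertices must be resolved by \emph{all} $k$ vertices of $S$, in particular by the two endpoints-most vertices of $S$; but in a path, label the vertices $1, \dots, n$, and take two vertices $u, v$ symmetric about the midpoint of some vertex $w \in S$ (i.e. $d(u,w) = d(v,w)$), so $w$ fails to resolve $u, v$. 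One must check that for any $k$-subset $S$ with $k \le n-1$, there is always a pair of distinct vertices not resolved by some chosen vertex of $S$ — this follows because each vertex $w$ of the path fails to resolve at least one pair of distinct vertices of $P_n$ whenever $w$ is not an endpoint, and even an endpoint of $S$ need not be an endpoint of $P_n$. Hence $\dim_k(P_n) \ge k+1$.

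For the upper bound, I would exhibit an explicit $k$-resolving set of size $k+1$. Label $V(P_n) = \{v_1, \dots, v_n\}$ along the path. I would propose $S = \{v_1, v_2, \dots, v_{k-1}\} \cup \{v_{n-1}, v_n\}$, or more symmetrically a set consisting of roughly the first $\lceil (k+1)/2 \rceil$ and last $\lfloor (k+1)/2 \rfloor$ vertices (note $n \ge k+1$ guarantees these are disjoint or at least total $k+1$ distinct vertices). The key computation is then: for any two distinct vertices $v_i, v_j$ with $i < j$, count how many elements of $S$ resolve the pair. A vertex $v_a$ with $a \le i$ resolves $v_i, v_j$ iff $|i - a| \ne |j - a|$, which holds automatically since $a \le i < j$ gives $j - a > i - a \ge 0$. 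Symmetrically every $v_a$ with $a \ge j$ resolves the pair. So the pair $v_i, v_j$ is resolved by all elements of $S$ lying outside the open interval $(i, j)$. The worst case is when $(i,j)$ swallows as many elements of $S$ as possible; because $S$ is concentrated at the two ends, an interval can contain at most... here is where the count must be done carefully: an interval $(i,j)$ can contain at most one "block" of $S$ entirely only if it reaches past one end, but then $v_i$ or $v_j$ itself would be near that end. I would verify that in the worst case exactly one element of $S$ is lost, leaving $k$ resolvers, hence $S$ is a $k$-resolving set, giving $\dim_k(P_n) \le k+1$.

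Combining the two bounds yields $\dim_k(P_n) = k+1$. I would also note that $P_n$ is indeed $k$-metric dimensional for the claimed range: by the characterization theorem, the relevant quantity is $\min\{|R\{x,y\}|\}$, and the pair of the two endpoints $v_1, v_n$ satisfies $|R\{v_1, v_n\}| = n-1$ (every vertex except the midpoint when $n$ is odd resolves them — actually all $n$ vertices resolve two distinct endpoints except... one must check), while the "hardest" pair is a pair of vertices symmetric about a near-central vertex, giving the value $n-1$; this is consistent with the earlier theorem that $P_n$ is $(n-1)$-metric dimensional, so for $k \le n-1$ the parameter $\dim_k(P_n)$ is well-defined, and the hypothesis $n \ge k+1$ is exactly $k \le n-1$.

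\textbf{Main obstacle.} The delicate part is the worst-case counting argument in the upper bound: one must choose the $(k+1)$-element set $S$ so that \emph{no} pair of distinct vertices has more than one of its resolvers "eaten" by the interval between them. Getting the split between the two ends right (and handling the parity of $k$ and the boundary case $n = k+1$, where $S$ is forced to be nearly all of $V(P_n)$) is the only place where care is genuinely needed; everything else is routine.
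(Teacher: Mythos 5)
Your lower bound is essentially sound, if slightly over-engineered: the detour through $\dim_r(G)\ge\dim(G)+(r-1)$ only gives $\ge k$, and the real content is the second step, which can be stated cleanly as follows. If $|S|=k$, then every vertex of $S$ must resolve every pair of distinct vertices; but any internal vertex $v_w$ of the path ($2\le w\le n-1$) fails to resolve the pair $v_{w-1},v_{w+1}$, so $S$ could consist only of the two endpoints of $P_n$, which is impossible for $k\ge3$. Hence no $k$-set is a $k$-resolving set and $\dim_k(P_n)\ge k+1$.

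The upper bound, however, has a genuine gap, and the ``delicate worst-case count'' you flag does not close as you have set it up. You assert that the pair $v_i,v_j$ is resolved by exactly those elements of $S$ lying outside the open interval $(i,j)$, and then hope that at most one element of $S$ ever falls inside that interval. Neither holds: with your own set $S=\{v_1,\dots,v_{k-1},v_{n-1},v_n\}$ and the pair $v_1,v_n$, the open interval contains $k-1$ elements of $S$, so under your accounting only two resolvers survive, which is $<k$. What rescues the statement is that your premise drastically undercounts resolvers: a vertex $v_a$ with $i<a<j$ resolves $v_i,v_j$ unless $a-i=j-a$, i.e.\ unless $a=(i+j)/2$. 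Thus for any pair of distinct vertices of $P_n$, at most one vertex of the \emph{entire path} (the midpoint, which exists only when $i+j$ is even) fails to resolve it; equivalently $|R_{P_n}\{v_i,v_j\}|\ge n-1$ for every pair, which is exactly the fact, already recorded in the survey via the characterization of $k$-metric dimensional graphs, that $P_n$ is $(n-1)$-metric dimensional. Consequently \emph{every} $(k+1)$-subset of $V(P_n)$ is a $k$-resolving set --- no choice of $S$, no parity analysis, and no boundary case $n=k+1$ needs any care --- and $\dim_k(P_n)\le k+1$ follows at once. With that replacement your argument is complete.
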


In order to study the trees different from paths, we need some terminology, which uses some concepts already defined in Subsection \ref{subsec:terminology}. For a graph $G$ and any exterior major vertex $w\in M(G)$, $l(w)$ stands for the smallest distance between $w$ and any of the terminal vertices of $w$. Moreover, if $G$ is $k'$-metric dimensional, then for every $k\le k'$, we let
\[
I_k(w)=\left\{ \begin{array}{ll}
\left(\ter(w)-1\right)\left(k-l(w)\right)+l(w), & \mbox{if } l(w)\le\lfloor\frac{k}{2}\rfloor,\\[0.2cm]
\left(\ter(w)-1\right)\lceil\frac{k}{2}\rceil+\lfloor\frac{k}{2}\rfloor, & \mbox{otherwise.}
\end{array}
\right.
\]
With this notation in mind, the $k$-metric dimension of trees is as follows.

\begin{theorem}{\em \cite{Estrada-Moreno-2015}}
If $T$ is a $k'$-metric dimensional tree which is not a path, then for any $k\in \{1,\ldots, k'\}$, $$\dim_{k}(T)=\sum_{w\in M(T)}I_{k}(w).$$
\end{theorem}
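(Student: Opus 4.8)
The plan is to prove the formula $\dim_k(T) = \sum_{w \in M(T)} I_k(w)$ by establishing matching lower and upper bounds, treating each exterior major vertex essentially independently. The underlying intuition is that, just as in the classical metric dimension of trees (where $\dim(T) = \sum_{w\in M(T)}(\ter(w)-1)$), the only pairs of vertices that are ``hard'' to resolve are those lying in the legs (paths from an exterior major vertex to its terminal leaves) hanging off a common exterior major vertex $w$, and contributions from different exterior major vertices do not interfere.

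First I would analyze a single exterior major vertex $w$ with its $\ter(w)$ legs. The key local lemma is: given a pair of vertices $u,v$ lying in two distinct legs of $w$ (or one of them lying in a leg and relevant to $w$), the set $R_T\{u,v\}$ consists precisely of the vertices in those two legs at suitable distances, and a vertex $x$ in a leg $P$ of $w$ resolves a pair $u,v$ in legs $P', P''$ only when $P \in \{P', P''\}$. This reduces the $k$-resolving condition to a covering/packing condition within the legs of $w$: for each leg we must choose enough vertices (counted with how deep they sit) so that every pair of vertices in every pair of legs is separated by at least $k$ chosen vertices. I would then argue that the minimum number of vertices needed from the legs of $w$ is exactly $I_k(w)$, by a direct optimization. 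The two cases in the definition of $I_k(w)$ correspond to whether the shortest leg $l(w)$ is short enough ($l(w)\le \lfloor k/2\rfloor$) that one must take \emph{all} of it plus $k-l(w)$ vertices from each of the other $\ter(w)-1$ legs, or long enough that it suffices to take $\lceil k/2\rceil$ from each of $\ter(w)-1$ legs and $\lfloor k/2\rfloor$ from the shortest — here the asymmetric split $\lceil k/2\rceil$ vs $\lfloor k/2\rfloor$ comes from the fact that a pair of vertices symmetrically placed at distance $d$ and $d$ in two legs needs the chosen vertices in those two legs to jointly number $\ge k$, and one optimizes the split across all legs. I'd verify that with this choice every bad pair is $k$-resolved, and that any cheaper choice leaves some pair resolved by fewer than $k$ vertices.

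Next I would handle the global assembly. For the \textbf{lower bound}: the legs associated with distinct exterior major vertices are vertex-disjoint, and a vertex outside all legs resolves none of the ``bad'' in-leg pairs better than vertices in the legs do; so any $k$-resolving set must contain at least $I_k(w)$ vertices ``charged'' to each $w\in M(T)$, and these charges are disjoint, giving $\dim_k(T) \ge \sum_{w} I_k(w)$. For the \textbf{upper bound}: take in each leg of each $w$ the optimal configuration from the local analysis (always choosing vertices closest to the leaves, i.e., the $j$ vertices nearest the terminal leaf), and show this union is a $k$-resolving set for the whole tree — pairs within the legs of a single $w$ are handled by the local lemma, and any other pair of distinct vertices of $T$ (at least one of which lies outside the ``branch-free'' skeleton, or which are separated in the tree) is already resolved by $\ge k$ of the chosen leaf-adjacent vertices because of how distances grow along a tree. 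One should double-check the boundary pairs: pairs where one vertex is an exterior major vertex itself, and pairs straddling the core of the tree; a short argument using that $T$ is $k'$-metric dimensional with $k\le k'$ (so $\min_{x\ne y}|R_T\{x,y\}| \ge k$ by the earlier theorem) ensures no pair is under-resolved.

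The main obstacle I anticipate is the local optimization at a single exterior major vertex $w$ — precisely pinning down that $I_k(w)$ is optimal and not off by one. One must carefully account for (i) pairs of vertices within the \emph{same} leg (which require chosen vertices in that leg beyond them toward the leaf), (ii) pairs in two different legs at possibly different depths, and (iii) the interaction with the short leg when $2l(w) < k$, forcing the entire short leg into the resolving set and shifting the burden onto the other legs. Getting the floors and ceilings exactly right in the split $\lceil k/2\rceil / \lfloor k/2\rfloor$, and confirming that the ``take the $j$ vertices nearest the leaf'' greedy choice is simultaneously optimal for all pairs, is the delicate combinatorial heart of the argument; everything else is bookkeeping built on the classical tree case and the characterization of $k$-metric dimensional graphs already quoted.
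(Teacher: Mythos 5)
The survey states this theorem without proof, so the only comparison available is with the original argument in \cite{Estrada-Moreno-2015}, which does proceed essentially as you propose: a local optimization at each exterior major vertex, disjointness of the leg systems for the lower bound, and an explicit construction for the upper bound. Your plan is therefore the right one, but two of its load-bearing steps are wrong as written. First, the ``key local lemma'' is false in the stated generality: if $u$ sits at depth $a$ on one leg of $w$ and $v$ at depth $b$ on another, then any vertex $x$ outside those two legs satisfies $d(x,u)=d(x,w)+a$ and $d(x,v)=d(x,w)+b$, so it \emph{does} resolve the pair whenever $a\ne b$. The containment of $R_T\{u,v\}$ in the union of the two legs holds exactly for the equal-depth pairs $a=b$, and then with equality: $R_T\{u,v\}$ is the whole union. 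These equal-depth pairs are the only ones forcing the constraints $|S\cap L_i|+|S\cap L_j|\ge k$ over all pairs of legs $L_i,L_j$ of $w$, which, together with $|S\cap L_i|\le |L_i|$ and the feasibility condition $|L_i|+|L_j|\ge k$ (this is precisely where $k\le k'$ enters), is the optimization that produces $I_k(w)$; disjointness of the leg systems of distinct exterior major vertices then gives the lower bound. A corollary you miss is that, since the critical pairs are resolved by \emph{every} vertex of the two legs, only the counts per leg matter, and your ``nearest the leaf'' choice is neither better nor worse than any other placement.

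Second, the sentence closing your upper bound is a non sequitur: knowing $\min_{x\ne y}|R_T\{x,y\}|\ge k$ (from $k\le k'$) says nothing about $|S\cap R_T\{x,y\}|$ for your particular $S$, which lives entirely on the legs. The genuine content of the upper bound is a case analysis of all remaining pairs showing that $R_T\{x,y\}$ always captures at least $k$ chosen vertices: for same-leg pairs, unequal-depth cross-leg pairs, and pairs with an endpoint at $w$, the set $R_T\{x,y\}$ omits at most one vertex of $T$, so one needs $|S|\ge k+1$ (true, but it requires observing that a non-path tree has either at least two exterior major vertices or a single one of terminal degree at least three); for pairs with an endpoint off the legs of $w$, $R_T\{x,y\}$ can exclude \emph{all} legs of $w$ except one, and one must then locate a full leg system, or a full pair of legs, of another exterior major vertex inside $R_T\{x,y\}$ carrying at least $k$ chosen vertices. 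None of this is supplied by your appeal to $k'$, and without it the constructed set is not verified to be a $k$-resolving set.
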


We need to remark that the value of the formula in the result above indeed represents a lower bound for the $k$-metric dimension of connected graphs in general, as also stated in \cite{Estrada-Moreno-2015}.

\subsection{An application of $k$-resolving sets}

A remarkable contribution to this topic is that one presented in \cite{Bailey-2019}. There was described a construction of error-correcting codes from graphs by means of $k$-resolving
sets, and having a decoding algorithm which makes use of covering designs. That is, given a connected graph $G$ on $n$ vertices and diameter $d$, and a $k$-resolving set $S=\{v_1,v_2,\ldots,v_{\ell}\}$ of cardinality $\ell$, the set
$$\mathcal{C}(G,S) = \{(d_G(u,v_1),d_G(u,v_2),\ldots,d_G(u,v_{\ell})) \, : \, u\in V\}$$ was called a {\em $(G,k)$-code} in \cite{Bailey-2019}.

From this definition, it is clear that $\mathcal{C}(G,S)$ is an error-correcting code of length $\ell$, size $n$ and minimum Hamming distance at least $k$, over the alphabet $\{0,\ldots,d\}$, and it can correct $r=\lfloor (k-1)/2 \rfloor$ errors. Note that it must be $k\geq 3$ (for otherwise the code has no sense).

Let $G_j(u)$ denote the subset of vertices of $G$ at distance $j$ from $u$.
Assume that $u\in V$ and $v_u=r(u|S)=(d_G(u,v_1),d_G(u,v_2),\ldots,d_G(u,v_{\ell})) \in \mathcal{C}(G,S)$. We transmit $v_u$ and receive the word $X=(x_1,x_2,\ldots,x_{\ell})$, which is assumed to have at most $r$ errors. Then, we have the following.

\begin{lemma}{\em \cite{Bailey-2019}}\label{lemma:uniqueness}
Let $u$, $v_u$ and $X$ be as above, and suppose that $I$ is an $(\ell-r)$-subset of $\{1,\ldots,\ell\}$.
\begin{itemize}
\item If the received word $X$ contains no errors in the positions indexed by $I$, then
$\bigcap_{i\in I} G_{x_i}(v_i) = \{u\}.$
\item If the received word $X$ does contain an error in a position in $I$, then
$\bigcap_{i\in I} G_{x_i}(v_i) = \varnothing.$
\end{itemize}
\end{lemma}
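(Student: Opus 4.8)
The plan is to convert each statement about intersections of distance spheres into a statement about Hamming distances between metric representations, and then play the bound $|I|=\ell-r$ against the defining property of a $k$-resolving set, using that $k\ge 2r+1$ (which holds since $r=\lfloor(k-1)/2\rfloor$, hence $2r\le k-1$). The single elementary observation I would record first is that, for any vertex $w\in V$, one has $w\in\bigcap_{i\in I}G_{x_i}(v_i)$ if and only if $d_G(w,v_i)=x_i$ for every $i\in I$, i.e.\ if and only if the representation $r(w\,|\,S)$ agrees with the received word $X$ on every coordinate indexed by $I$. Since $|I|=\ell-r$, this forces $r(w\,|\,S)$ and $X$ to differ in at most $r$ coordinates. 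Also, by hypothesis $X$ differs from $v_u=r(u\,|\,S)$ in at most $r$ coordinates.

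For the first bullet I would argue as follows. If $X$ has no error on $I$, then $x_i=d_G(u,v_i)$ for all $i\in I$, so $u$ itself lies in $\bigcap_{i\in I}G_{x_i}(v_i)$, which is therefore nonempty. Suppose some $w\ne u$ also lies in it. Then $r(w\,|\,S)$ agrees with $X$ on $I$, and $X$ agrees with $v_u$ on $I$, so $r(w\,|\,S)$ and $v_u$ agree on all of $I$ and thus differ in at most $r$ coordinates. But $S$ is a $k$-resolving set, so the distinct vertices $u,w$ are distinguished by at least $k$ elements of $S$, i.e.\ their representations differ in at least $k\ge 2r+1>r$ coordinates --- a contradiction. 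Hence $\bigcap_{i\in I}G_{x_i}(v_i)=\{u\}$.

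For the second bullet, assume $X$ has an error at some position $j\in I$ and suppose for contradiction that some $w$ lies in $\bigcap_{i\in I}G_{x_i}(v_i)$. By the first observation $r(w\,|\,S)$ differs from $X$ in at most $r$ coordinates and $X$ differs from $v_u$ in at most $r$ coordinates, so by the triangle inequality for Hamming distance $r(w\,|\,S)$ differs from $v_u$ in at most $2r\le k-1<k$ coordinates. If $w\ne u$, this contradicts the $k$-resolving property exactly as above. If $w=u$, then $r(w\,|\,S)=v_u$ must agree with $X$ on all of $I$, contradicting the assumed error at $j\in I$. Either way we reach a contradiction, so $\bigcap_{i\in I}G_{x_i}(v_i)=\varnothing$.

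The only delicate point is the bookkeeping: one must keep straight which three words ($v_u$, $X$, and $r(w\,|\,S)$) are being compared on which index sets, so that the two ``at most $r$'' bounds are applied to the right pairs before invoking the triangle inequality. Once that is set up, nothing beyond the inequality $k\ge 2r+1$ and the definition of a $k$-resolving set is needed, so I do not anticipate a substantive obstacle.
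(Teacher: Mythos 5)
Your proof is correct and follows essentially the same route as the paper, which only gives a two-sentence sketch of the same idea (agreement on the $\ell-r$ positions of $I$ plus the $k$-resolving property forces uniqueness, and an error in $I$ rules out $u$ itself). Your version merely supplies the Hamming-distance bookkeeping and the inequality $k\ge 2r+1$ explicitly, which the paper leaves implicit.
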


To see the validity of this, we observe the following. The existence of no errors in the positions indexed by $I$, means both words $X$ and $v_u$ contain the same entries in the corresponding positions. Thus, $u$ is the unique vertex having such distances from the corresponding vertices in the $k$-resolving set. For the contrary, the existence of errors in such positions (in $I$), means that no such vertex can exist. That is, the intersection is empty.

The goal when decoding a $(G,k)$-code is that of finding (efficiently if possible) an $(\ell-r)$-subset of positions for which the intersection $\displaystyle{\bigcap_{i\in I} G_{x_i}(v_i)}$ is not empty. We might successively enumerate all the $(\ell-r)$-subsets of $\{1,\ldots,\ell\}$ to achieve this, but this is clearly slow in practice. By making the assumption that there are at most $r'<r$ errors (for instance $r'=2$ or $r'=3$), the notion of uncovering design can be used. That is, given the integers $\nu$, $\kappa$, $\tau$ such that $\nu\geq \kappa \geq \tau \geq 0$.  A {\em $(\nu,\nu-\kappa,\tau)$-uncovering} is a collection $\mathcal{U}$ of $(\nu-\kappa)$-subsets of $\{1,\ldots,\nu\}$ with the property that any $\tau$-subset of $\{1,\ldots,\nu\}$ is disjoint from at least one member of $\mathcal{U}$. To see more information on uncovering designs we suggest \cite{Bailey-2006,Bailey-2009}.

Now assume we have a $(G,k)$-code of length $\ell$, and we want to correct $r'$ errors. If $\mathcal{U}$ is an $(\ell, \ell-r, r')$-uncovering, then one can proceed as follows: for a received word $X$, consider each $I\in\mathcal{U}$ and search for $\displaystyle{\bigcap_{i\in I} G_{x_i}(v_i)}$. By Lemma~\ref{lemma:uniqueness}, this intersection is either empty or contain the vertex $u$ corresponding to the transmitted word $U$.  If $\mathbf{u}$ contains at most $r'$ errors, then by the definition of uncovering, there exists an $I\in\mathcal{U}$ which is disjoint from the error positions. In consequence, it is guaranteed the possibility of finding the transmitted word.
In order to compute $\displaystyle{\bigcap_{i\in I} G_{x_i}(v_i)}$, one need to consider the matrix $M$ whose rows are indexed by $V$ and whose columns are indexed by the $k$-resolving set $S$, and where the entries are $M_{uv}=d_G(u,v)$. That is, the rows of $M$ are precisely the codewords.  For a given $I\subseteq S$, one needs to examine the rows of the submatrix to find a row which agrees with $X$ in the corresponding positions. If such a row exists, by Lemma~\ref{lemma:uniqueness}, it must be unique and will correspond to the vertex $u$ which we search for.

Clearly, the realization of the error correcting code above relies on the fact that we would need to have the possibility of finding a $k$-resolving set for a given graph that one would pretend to use in the code. However, computing the $k$-metric dimension of graphs is known to be NP-hard. One solution for this can be that of generating graphs with a prescribed value in the $k$-metric dimension, which can be made by using some structures of product graphs. In connection with this, in \cite{Bailey-2019}, was presented a sample of error correcting code that precisely uses the grid graphs $P_s\times P_t$, which is the Cartesian product of two paths. It was first proved there that for any $s,t\ge 2$, the graph $G=P_s\Box P_t$ is $(s+t-2)$-metric dimensional. With this value in mind, the $k$-metric dimension of grid graphs was given.

\begin{theorem}{\em \cite{Bailey-2019}}
For any grid graph $G=P_s\Box P_t$ and every $k\in \{1,\ldots,s+t-2\}$,
$$\dim_k(G)=2k.$$
\end{theorem}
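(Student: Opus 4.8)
The plan is to prove the two inequalities $\dim_k(G)\ge 2k$ and $\dim_k(G)\le 2k$ separately. Throughout, identify the vertices of $G=P_s\Box P_t$ with the pairs $(i,j)$, $1\le i\le s$, $1\le j\le t$, so that $d_G((i,j),(i',j'))=|i-i'|+|j-j'|$, and let $F$ denote the \emph{frame} of $G$, i.e.\ the set of vertices lying in the first or last row or column (so $|F|=2s+2t-4$ when $s,t\ge 2$).

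\emph{Lower bound.} I would use the four ``corner pairs''. For a corner $c$ let $P_c$ be the pair consisting of the two neighbours of $c$; for instance $P_{(1,1)}=\{(1,2),(2,1)\}$. Since $|x-1|-|x-2|$ equals $-1$ for $x=1$ and $+1$ for $x\ge 2$, a vertex $(a,b)$ resolves $P_{(1,1)}$ exactly when precisely one of $a=1$, $b=1$ holds; hence the set $R_{(1,1)}$ of resolvers of $P_{(1,1)}$ is $\big(\{(1,j):1\le j\le t\}\cup\{(i,1):1\le i\le s\}\big)\setminus\{(1,1)\}$, an ``L'' of size $s+t-2$ (in accordance with $G$ being $(s+t-2)$-metric dimensional), and the analogous pairs $P_c$ and sets $R_c$ for the other three corners are obtained by reflections of the grid. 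Any $k$-resolving set $S$ satisfies $|S\cap R_c|\ge k$ for each corner $c$. The key point is then combinatorial: each vertex of $F$ lies in exactly two of the four sets $R_c$ (one checks this for a non-corner vertex of each side and for each corner), while no vertex outside $F$ lies in any $R_c$; therefore
\[
4k\;\le\;\sum_{c}|S\cap R_c|\;=\;2\,|S\cap F|\;\le\;2\,|S|,
\]
so $|S|\ge 2k$. (Incidentally, equality forces any minimum $k$-resolving set to lie in $F$ and to meet each $R_c$ in exactly $k$ vertices, which dictates the shape of the extremal sets below.)

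\emph{Upper bound.} I would exhibit a $k$-resolving set of size $2k$. When $k\le s$ take $S=\{(i,1):1\le i\le k\}\cup\{(i,t):1\le i\le k\}$ (and, by the symmetry interchanging the two factors, two partial extreme columns when $k\le t$); when $\max\{s,t\}<k\le s+t-2$, which forces $\min\{s,t\}\ge 3$, take the four corners together with equally many vertices along each of the two extreme columns and each of the two extreme rows, arranged so that all four numbers $|S\cap R_c|$ equal $k$ — this is possible precisely because $k\le s+t-2$. In each case $S\subseteq F$ and $|S|=2k$. To verify the $k$-resolving property one writes a pair as $\{(p,q),(p',q')\}$ and, using $d_G((p,q),(i,1))-d_G((p',q'),(i,1))=(|p-i|-|p'-i|)-(q'-q)$ and its analogues for the other side and for the columns, counts the members of $S$ resolving the pair, distinguishing the cases $p=p'$, $q=q'$, and $p\ne p'$ with $q\ne q'$ (the last subdivided according to how $|p-p'|$ and $|q-q'|$ compare); in every case the count is at least $k$, and it is exactly $k$ only for pairs of adjacent ``near-diagonal'' vertices $(p,q),(p\pm 1,q\mp 1)$, which is why $2k$ cannot be lowered.

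\emph{Main obstacle.} I expect the hard part to be the upper-bound verification. The near-diagonal pairs, where the bound is attained and the bookkeeping with the absolute values is delicate, need the most care, and in the regime $k>\max\{s,t\}$ the selected vertices bend around the corners of the grid, which makes the distance computations noticeably messier. A few degenerate cases also deserve separate (easy) treatment: when $s=2$ or $t=2$ a ``side minus its corner'' is almost empty, and when $s=t=2$ the graph is $C_4$; both agree with the formula.
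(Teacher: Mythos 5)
Your lower bound is correct: the four corner--neighbour pairs each need $k$ resolvers, every frame vertex lies in exactly two of the four sets $R_c$ while no interior vertex lies in any of them, and the double count $4k\le\sum_c|S\cap R_c|=2|S\cap F|\le 2|S|$ gives $|S|\ge 2k$. This is a clean version of the argument behind the cited result (cleaner, in fact, than the sketch reproduced in the survey), and your upper-bound construction for $k\le\max\{s,t\}$ (the first $k$ vertices of each of two opposite borders, checked via the sign analysis of $|p-i|-|p'-i|$) also works.

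The gap is the upper bound in the regime $\max\{s,t\}<k\le s+t-2$. The conditions you impose there --- the four corners, equally many vertices of $S$ on each of the two extreme rows and on each of the two extreme columns, and $|S\cap R_c|=k$ for the four corners --- are necessary but not sufficient, so ``arranged so that all four numbers $|S\cap R_c|$ equal $k$'' does not pin down a valid set. In $P_6\Box P_6$ with $k=7$, take
$$S=\{(1,1),(1,6),(6,1),(6,6),(1,2),(1,3),(1,4),(6,2),(6,3),(6,4),(4,1),(5,1),(2,6),(3,6)\}.$$
Then $|S|=14=2k$, each extreme row carries three non-corner vertices of $S$ and each extreme column carries two, and $|S\cap R_c|=7$ for all four corners; yet the pair $(3,4),(4,5)$ is resolved exactly by the vertices $(a,b)$ with ($a\le3$ and $b\le4$) or ($a\ge4$ and $b\ge5$), and only five members of $S$, namely $(1,1),(1,2),(1,3),(1,4),(6,6)$, lie in that region, so $S$ is not $7$-resolving. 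The underlying issue is that the corner pairs are not the only tight pairs: \emph{every} diagonally adjacent pair $\{(p,q),(p+1,q\pm1)\}$ has exactly $s+t-2$ resolvers on the frame (two opposite L-shaped arcs of the frame), so $S$ must place at least $k$ vertices on every such pair of arcs, not merely on the four corner L's. This forces an alignment between opposite sides --- for instance, letting the two extreme columns carry the same set of row indices and the two extreme rows the same set of column indices makes every diagonal count come out to exactly $k$ --- and proving that such an aligned choice exists for every admissible $k$ and resolves all remaining pairs is precisely the work your sketch postpones.
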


The idea of the proof comes by first noticing that taking $k$ vertices either in each one of the two ``horizontal borders'', or in each one of the two ``vertical borders'' of the grid gives a $k$-resolving set for the grid. In addition, since the two neighbors of any corner vertex are only resolved by vertices from the copies of the path to which the corner belongs, one can see that any $k$-resolving set for the grid must be part of precisely the two ``horizontal borders'', or the two ``vertical borders'' of the grid. These are the key points for proving the result above.

With this in mind, for the grid graph $G=P_s \Box P_t$, and for any $k\leq s+t-2$, the $(G,k)$-code with $n=st$ codewords of length $\ell=2k$ over an alphabet of size $s+t+1$ was  given in \cite{Bailey-2019}. Such a $(G,k)$-code has correction capability $r=\left\lfloor (k-1)/2 \right\rfloor$.

\subsection{$k$-resolving sets versus product graphs}

Following with the last notes from the previous subsection, we now center our attention into the $k$-metric dimension of product graphs. The main contributions to this topic are centered into three products: lexicographic (\cite{Estrada-Moreno-2016-c,Estrada-Moreno-2016-e}), hierarchical (\cite{Klavzar-2021+}) and corona (\cite{Estrada-Moreno-2016-d}) products, although some sporadic results have appeared for some other products. Namely, still several contributions on the $k$-metric dimension of product graphs could enrich this theory.

The corona product graph $G\odot H$ of two graph $G$ and $H$ was studied in \cite{Estrada-Moreno-2016-d}. Indeed, the work \cite{Estrada-Moreno-2016-d} centers its attention in a more general case of corona graphs. That is, when the $n$ copies of the graph $H$ used in the product are replaced with $n$ non necessarily isomorphic graphs $H_1,\dots,H_n$. However, for the purposes of this survey this is not exactly of much interest, since the generalization can be done a relatively natural way. It was first noticed in \cite{Estrada-Moreno-2016-d} that studying such product needs to be separated in two different scenarios: a first one whether the graph $G$ has order larger than one, and a second one whether $G$ is the singleton graph $K_1$, which can be understood as the join of $K_1$ and $H$ (an example of such graph is for instance the wheel graph $W_{1,n}=K_1+C_n=K_1\odot C_n$.

As usual in this topic, the first contributions are centered into finding the value $k$ for which a given corona product graph is $k$-metric dimensional. To see this, the following concept is required. For a connected non-trivial graph $H$, it is defined
$$\mathcal{C}(H)=\min_{x,y\in V(H)}\{\vert N_H(x)\triangledown N_H(y) \cup\{x,y\}\vert\},$$ where $\triangledown$ represents the symmetric difference between two sets. By using this fact, it was proved in \cite{Estrada-Moreno-2016-d} that for any connected non-trivial graph $G$ (different from $K_1$) and any non-trivial graph $H$, $$\mbox{$G\odot H$ is $k$-metric dimensional if and only if $k=\mathcal{C}(H)$.}$$ Once made this, some particular situations are dealt with. Among them, we remark the following one which uses the girth (length of a shortest cycle) of $H$ as a premise.

\begin{corollary}\label{cuello-ge5}{\em\cite{Estrada-Moreno-2016-d}}
Let $G$ be a connected non-trivial graph of order $n$ and let $H$ be a $\delta$-regular graphs with $g(H)\ge 5$. Then
$G\odot H$ is a $2\delta$-metric dimensional graph.
\end{corollary}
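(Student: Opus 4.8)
The plan is to reduce the statement to the characterization quoted immediately above the corollary: for a connected non-trivial graph $G\ne K_1$ and a non-trivial graph $H$, the graph $G\odot H$ is $k$-metric dimensional if and only if $k=\mathcal{C}(H)$, where $\mathcal{C}(H)=\min_{x,y\in V(H)}|N_H(x)\triangledown N_H(y)\cup\{x,y\}|$. Since $g(H)\ge 5$ forces $H$ to contain a $5$-cycle, $H$ is automatically non-trivial, so it remains only to prove that $\mathcal{C}(H)=2\delta$ for a $\delta$-regular graph $H$ of girth at least $5$.

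First I would evaluate the defining expression on a pair of \emph{adjacent} vertices $x,y$. Because $g(H)\ge 5$ forbids triangles, $x$ and $y$ have no common neighbour, so $N_H(x)\cap N_H(y)=\varnothing$ and $N_H(x)\triangledown N_H(y)=N_H(x)\cup N_H(y)$ has size $|N_H(x)|+|N_H(y)|=2\delta$; moreover $x\in N_H(y)$ and $y\in N_H(x)$, so appending $\{x,y\}$ adds nothing and the value equals $2\delta$. Hence $\mathcal{C}(H)\le 2\delta$. Next I would show that no pair does better. For \emph{non-adjacent} $x,y$, the absence of $4$-cycles (again from $g(H)\ge 5$) gives $|N_H(x)\cap N_H(y)|\le 1$, so $|N_H(x)\triangledown N_H(y)|\ge 2\delta-2$; since $x\notin N_H(x)\cup N_H(y)$ and symmetrically for $y$, the pair $\{x,y\}$ is disjoint from the symmetric difference, so the value is $|N_H(x)\triangledown N_H(y)|+2\ge 2\delta$, with equality exactly when $x,y$ have a common neighbour. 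Combining the two cases, $\mathcal{C}(H)=2\delta$, and the corollary follows from the quoted characterization.

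The argument is an elementary counting exercise; the one point needing care is the bookkeeping of the term $\cup\{x,y\}$. For adjacent vertices each endpoint already lies in the other's neighbourhood, so it contributes nothing, whereas for non-adjacent vertices both endpoints are new, but this gain is exactly cancelled by the single common neighbour that is removed through the symmetric difference. I do not expect a genuine obstacle here; the only thing to be vigilant about is that the hypothesis $g(H)\ge 5$ is invoked twice in complementary ways — ``no triangles'' in the adjacent case and ``no $4$-cycles'' in the non-adjacent case.
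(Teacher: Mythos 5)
Your proposal is correct and follows exactly the route the paper intends: the survey presents this corollary as a consequence of the characterization that $G\odot H$ is $k$-metric dimensional iff $k=\mathcal{C}(H)$, and your computation that $\mathcal{C}(H)=2\delta$ (no triangles handling the adjacent pairs, no $4$-cycles handling the non-adjacent ones, with the $\cup\{x,y\}$ bookkeeping done correctly in both cases) is precisely the missing step. The only nitpick is that $g(H)\ge 5$ yields a shortest cycle of length at least $5$, not necessarily a $5$-cycle, but all you need is that $H$ is non-trivial, which still follows.
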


For the case of corona product graphs with the first factor isomorphic to $K_1$, the following was deduced in \cite{Estrada-Moreno-2016-d}. For a graph $H$ of order $n'\ge 2$ and maximum degree $\Delta(H)$, the graph $K_1+ H$ is $k$-metric dimensional if and only if $k=\min\{\mathcal{C}(H),n'-\Delta(H)+1\}$. For instance, for any $n\ge 4$, the fan graph  $F_{1,n}$ is $3$-metric dimensional, and for any $n\ge 5$, the wheel graph  $W_{1,n}$ is $4$-metric dimensional.

Once found the limit values for $k$ described above, the $k$-metric dimension of corona product graphs was dealt with in \cite{Estrada-Moreno-2016-d}. Tight general lower and upper bounds for the $k$-metric dimension of such graphs were deduced. We summarize some of the most remarkable contributions in the following table.

\begin{table}[ht]
  \centering
  \begin{tabular}{|c|c|c|}
    \hline
    Connected graph $G$ & Connected non-trivial graph $H$ & $\dim_k(G\odot H)$ \\ \hline
    $G$, order $n\ge 2$ & $H$, order $n'$ & $\begin{array}{l}
                                                                              \ge n\dim_k(H) \\
                                                                              \le nn'
                                                                            \end{array}$
     \\ \hline
    $G$, order $n\ge 2$ & $H$, diameter at most $2$ & $=n\dim_k(H)$ \\ \hline
    $K_1$ & $P_n$ ($n\ge 6$) & $\begin{array}{l}
                      =\left\lceil\frac{n+1}{2}\right\rceil \mbox{(when $k=2$)} \\[0.15cm]
                      =n-\left\lceil\frac{n-4}{5}\right\rceil \mbox{(when $k=3$)}
                    \end{array}$
     \\
    \hline
    $K_1$ & $C_n$ ($n\ge 7$) & $\begin{array}{l}
                      =\left\lceil\frac{n}{2}\right\rceil \mbox{(when $k=2$)} \\[0.15cm]
                      =n-\left\lceil\frac{n}{5}\right\rceil \mbox{(when $k=3$)}\\
                      =n \mbox{(when $k=4$)}
                    \end{array}$
     \\
    \hline
  \end{tabular}
  \caption{Bounds and closed formulae for the $k$-metric dimension of corona product graphs appeared in \cite{Estrada-Moreno-2016-d}.}\label{tab:corona-k-dim}
\end{table}

The next interesting case is the family of lexicographic product graphs. In this direction, the investigation was divided into two cases. One case whether $k=2$ \cite{Estrada-Moreno-2016-c}, since it is known that any graph $G$ is $k$-metric dimensional for some $k\ge 2$, and there are indeed a lot of $2$-metric dimensional graphs. Another situation \cite{Estrada-Moreno-2016-e} was developed for the remaining values $k\ge 3$. As usual in this kind of researches, the first results were as the following one, in which the definitions coming from Section \ref{sect:adjacency} are necessary, and whether the existence or not of (false or true) twins in crucial.

\begin{corollary}{\em\cite{Estrada-Moreno-2016-e}}\label{coro-k-dim-nal-lexico}
Let $G$ be a connected non-trivial graph and let $H$ be a graph of order $n'\ge 2$ and maximum degree $\delta(H)$.
\begin{itemize}
\item If $G$ is twins free, then the graph $G\circ H$ is $k$-metric dimensional $k=\mathcal{C}(H)$.
\item If $G$ contains at least one false twin and one true twin, then the graph $G\circ H$ is $k$-metric dimensional for $k=\min\{2\delta(H)+2,2(n'-\Delta(H)),\mathcal{C}(H)\}$.
\item If $G$ is true twins free and contains at least one false twin, then the graph $G\circ H$ is $k$-metric dimensional for $k=\min\{2\delta(H)+2,\mathcal{C}(H)\}$.
\item If $G$ is false twins free and contains at least one true twin, then the graph $G\circ H$ is $k$-metric dimensional for $k=\min\{2(n'-\Delta(H)),\mathcal{C}(H)\}$.
\end{itemize}
\end{corollary}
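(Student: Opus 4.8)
The plan is to invoke the characterization that a connected graph is $k$-metric dimensional exactly when $k$ equals the minimum of $|R\{x,y\}|$ over unordered pairs of distinct vertices; applied to $G\circ H$ (which is connected since $G$ is), this reduces the statement to computing $\min\{\,|R_{G\circ H}\{u,v\}|:u,v\in V(G\circ H),\ u\ne v\,\}$. Identify $V(G\circ H)$ with $V(G)\times V(H)$ and use the lexicographic distance formula: $d_{G\circ H}((g,h),(g',h'))=d_G(g,g')$ if $g\ne g'$, and $d_{G\circ H}((g,h),(g',h'))=\min\{d_H(h,h'),2\}$ if $g=g'$, the cap $2$ being available because $G$ is connected and non-trivial so $g$ has a neighbour. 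I would then split the pairs $\{u,v\}$ into three families and handle each: (i) a common copy, $u=(g,h_1)$, $v=(g,h_2)$ with $h_1\ne h_2$; (ii) two copies, $u=(g_1,h_1)$, $v=(g_2,h_2)$ with $g_1\ne g_2$ and $g_1,g_2$ not twins in $G$; (iii) two copies with $g_1\ne g_2$ and $g_1,g_2$ twins in $G$ (true or false).

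For family (i): any $(g',h')$ with $g'\ne g$ is equidistant from $u$ and $v$ (distance $d_G(g',g)$), so only vertices of $\{g\}\times V(H)$ can resolve, and there $(g,h')$ resolves $u,v$ exactly when $h'\in\{h_1,h_2\}$ or $h'$ is adjacent to precisely one of $h_1,h_2$; hence $|R_{G\circ H}\{u,v\}|=|(N_H(h_1)\triangledown N_H(h_2))\cup\{h_1,h_2\}|$, whose minimum over family (i) is exactly $\mathcal{C}(H)$. For family (ii): by the standard fact that two vertices of a graph are twins precisely when no third vertex resolves them, some $g\notin\{g_1,g_2\}$ resolves $g_1,g_2$ in $G$; then the whole copy $\{g\}\times V(H)$ lies in $R_{G\circ H}\{u,v\}$, and together with $u,v$ themselves this forces $|R_{G\circ H}\{u,v\}|\ge n'+2>n'\ge\mathcal{C}(H)$, so family (ii) never realises the minimum.

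For family (iii): twins in $G$ are equidistant from every other vertex, so copies other than those of $g_1,g_2$ contribute nothing and $R_{G\circ H}\{u,v\}$ is the union of its parts in the copies of $g_1$ and $g_2$. If $g_1,g_2$ are true twins then $d_G(g_1,g_2)=1$, and $(g_1,h')$ resolves $u,v$ iff $\min\{d_H(h',h_1),2\}\ne1$, i.e. $h'\notin N_H(h_1)$, contributing $n'-\deg_H(h_1)$ vertices from copy $g_1$ and $n'-\deg_H(h_2)$ from copy $g_2$; minimising over all such pairs (the minimum being attained with $h_1=h_2$ of degree $\Delta(H)$) gives $2(n'-\Delta(H))$. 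If $g_1,g_2$ are false twins then $d_G(g_1,g_2)=2$, and $(g_1,h')$ resolves iff $\min\{d_H(h',h_1),2\}\ne2$, i.e. $h'\in N_H[h_1]$, contributing $\deg_H(h_1)+1$ and $\deg_H(h_2)+1$ vertices respectively; minimising (attained with $h_1=h_2$ of degree $\delta(H)$) gives $2\delta(H)+2$.

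Combining the three families, $\min\{\,|R_{G\circ H}\{u,v\}|\,\}$ equals $\mathcal{C}(H)$, lowered to $\min\{\mathcal{C}(H),2(n'-\Delta(H))\}$ as soon as $G$ has a pair of true twins and to $\min\{\mathcal{C}(H),2\delta(H)+2\}$ as soon as $G$ has a pair of false twins; reading off the four combinations of ``$G$ has / lacks true / false twins'' produces precisely the four claimed values of $k$. I expect the only delicate point to be family (iii): one must get the dichotomy $d_G(g_1,g_2)=1$ versus $d_G(g_1,g_2)=2$ for true versus false twins, translate it correctly into the conditions $h'\notin N_H(h_1)$ versus $h'\in N_H[h_1]$, and check that minimising over $h_1,h_2$ (allowing $h_1=h_2$) really yields $2(n'-\Delta(H))$ and $2\delta(H)+2$; the remaining steps are routine bookkeeping with the distance formula.
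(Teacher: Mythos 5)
Your argument is correct, and it is worth noting that the survey itself gives no proof of this corollary: it is quoted from \cite{Estrada-Moreno-2016-e}, where it is obtained as a consequence of a more general theorem on lexicographic products, with the only hint in the surrounding text being that the characterization $k=\min|R\{x,y\}|$ and the presence of twins in $G$ are the crucial ingredients. Your direct computation is exactly the natural self-contained route: the three-way split of pairs (same copy; different non-twin copies; different twin copies), the observation that non-twin pairs contribute at least $n'+2>\mathcal{C}(H)$ and so never realise the minimum, and the dichotomy $d_G(g_1,g_2)=1$ versus $d_G(g_1,g_2)=2$ for true versus false twins yielding $2(n'-\Delta(H))$ and $2\delta(H)+2$ respectively, all check out (including the degenerate facts you implicitly use: $\mathcal{C}(H)\le n'$, false twins in a connected non-trivial graph are at distance exactly $2$, and a pair of vertices is resolved by some third vertex iff it is not a twin pair). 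The one cosmetic point is that the statement as printed in the survey calls $\delta(H)$ the ``maximum degree''; your reading of $\delta$ as minimum degree and $\Delta$ as maximum degree is the only one under which the formulas are consistent, and it is the intended one.
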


Once known these facts, the investigation is then devoted to compute the $k$-metric dimension of lexicographic products for any suitable value of $k$, or finding relationships between this parameter and that of the factors in the product together with other invariants of the factors. It is important to remark three key points in these contributions. One of them is the usefulness of the $k$-adjacency dimension of the graphs in the second factor, the second one, the role which play the (true or false) twins in the first factor of the product, and the third one concern the possibility of knowing not only the $k$-metric dimension of $G\circ H$ but also that of $G\circ \overline{H}$, based on the results given in \ref{sect:adjacency} ($\overline{H}$ is the complement graph of $H$). To this end, the use of the following notations and terminology is required, as well as, some other ones from Section \ref{sect:adjacency}.

A graph $H$ satisfies the \textbf{Property $\mathcal{P}_1$}, if it contains $r$ distinct $k$-adjacency bases $A_{i_1},A_{i_2},\ldots,A_{i_r}$ such that for every $j,l\in\{1,\ldots,r\}$, $j\ne l$, and every $x,y\in V(H)$, $x\ne y$, it follows, $$|(A_{i_j}\cap(V(H)-N_{H}(x)))\cup(A_{i_l}\cap(V(H)-N_{H}(y)))|\ge k.$$
Also, $H$ satisfies the \textbf{Property $\mathcal{P}_2$}, if it contains $r$ distinct $k$-adjacency bases $A_{i_1},A_{i_2},\ldots,A_{i_r}$, such that for every $j,l\in\{1,\ldots,r\}$, $j\ne l$, and every $x,y\in V(H)$, $x\ne y$, it follows, $$|(A_{i_j}\cap N_{H}[x])\cup(A_{i_l}\cap N_{H}[y])|\ge k.$$
Existence of graphs satisfying these properties were given in \cite{Estrada-Moreno-2016-e}. With this in mind, the following formulas (although in a more general setting) were presented in \cite{Estrada-Moreno-2016-e}.

\begin{corollary}{\em \cite{Estrada-Moreno-2016-e}}
Let $G$ be a connected graph of order $n\ge 2$ and let $H$ be a non-trivial graph. Then for any $k$ taken according to Corollary \ref{coro-k-dim-nal-lexico}, the following statements hold.
\begin{itemize}
\item If $G$ is twins free, then $$\dim_k(G\circ H)=\dim_k(G\circ\overline{H})=n\cdot\adim_k(H).$$
\item If $G$ is false twins free and $H$ holds Property $\mathcal{P}_1$, then $$\dim_k(G\circ H)=n\cdot\adim_k(H).$$
\item If $G$ is true twins free and $H$ holds Property $\mathcal{P}_2$, then $$\dim_k(G\circ H)=n\cdot\adim_k(H).$$
\end{itemize}
\end{corollary}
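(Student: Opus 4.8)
The plan is to reduce the statement about $\dim_k(G\circ H)$ to a statement about adjacency dimension in the copies of $H$, using the fact (exploited throughout \cite{Estrada-Moreno-2016-e}) that in a lexicographic product $G\circ H$, a pair of vertices lying in the same copy of $H$ can only be distinguished by vertices in that copy and in the neighbouring copies, and that distances inside a copy collapse to the ``adjacency metric'' of $H$. First I would set up notation: write $V(G)=\{g_1,\dots,g_n\}$ and let $H^{(i)}$ denote the $i$-th copy of $H$ in $G\circ H$. Given an adjacency basis $A_i$ of the copy $H^{(i)}$ for each $i$, I would show that $S=\bigcup_{i=1}^n A_i$ is a $k$-resolving set for $G\circ H$; conversely, I would argue that any $k$-resolving set must restrict to a $k$-adjacency resolving set on (essentially) each copy, giving the matching lower bound $\dim_k(G\circ H)\ge n\cdot\adim_k(H)$. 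Combining the two directions yields $\dim_k(G\circ H)=n\cdot\adim_k(H)$ under the appropriate twin-freeness hypothesis, and the same argument run for $\overline H$ in the second factor (using that $\adim_k(H)=\adim_k(\overline H)$, the $k$-analogue of the theorem of \cite{Jannesari-2012} on $\dim_A(G)=\dim_A(\overline G)$) gives $\dim_k(G\circ\overline H)=n\cdot\adim_k(H)$ as well.

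The heart of the upper-bound direction is a careful case analysis of which pairs of vertices of $G\circ H$ need to be resolved. For a pair $(g_i,h),(g_i,h')$ in the same copy, I would check that the $k$ vertices of $A_i$ that adjacency-distinguish $h$ from $h'$ in $H$ also distance-distinguish the two product vertices, because the distance from $(g_i,a)$ to $(g_i,h)$ is $1$ if $ah\in E(H)$ and $2$ otherwise (as $g_i$ has a neighbour in $G$, or one uses the convention for the product), which is exactly the adjacency metric. For a pair $(g_i,h),(g_j,h')$ with $i\ne j$, the two vertices already have distinct ``coordinates'' in $G$, and since $g_i\ne g_j$ and $G$ is twins free there is a vertex $g_l$ of $G$ with $d_G(g_l,g_i)\ne d_G(g_l,g_j)$; I would then amplify this single distinguishing coordinate of $G$ into $k$ distinguishing vertices of $S$ by pulling back to the $k$ vertices of $A_l\subseteq H^{(l)}$, since all of $H^{(l)}$ lies at distance governed by $d_G(g_l,g_i)$ vs.\ $d_G(g_l,g_j)$ (plus a bounded correction inside $H^{(l)}$). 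When $G$ is merely false-twins free (resp.\ true-twins free), the hypothesis that $H$ satisfies Property $\mathcal P_1$ (resp.\ $\mathcal P_2$) is precisely what is needed to still produce $k$ distinguishing vertices for the remaining troublesome pairs of vertices in two distinct copies that are twins-related in $G$ — this is where the properties $\mathcal P_1,\mathcal P_2$ enter, exactly as they were designed.

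For the lower bound $\dim_k(G\circ H)\ge n\cdot\adim_k(H)$, I would argue that if $S$ is any $k$-resolving set then, for each fixed copy index $i$, the restriction $S\cap H^{(i)}$ together with at most finitely many extra contributions must $k$-adjacency-resolve $H$: a pair $(g_i,h),(g_i,h')$ with $d_H(h,h')\ge 2$ can only be distinguished by vertices in $H^{(i)}$ (vertices in other copies are equidistant from both), and a pair of adjacent vertices in the same copy likewise needs adjacency-distinguishers inside the copy; hence $|S\cap H^{(i)}|\ge \adim_k(H)$, and summing over the $n$ disjoint copies gives the bound. The main obstacle I anticipate is the bookkeeping in the upper bound for pairs in distinct copies: one must verify that the bounded additive correction from moving within the ``witness copy'' $H^{(l)}$ cannot destroy the $k$-fold distinction inherited from $G$, and one must handle separately the degenerate situation where $d_G(g_l,g_i)$ and $d_G(g_l,g_j)$ differ by exactly $1$ while some vertices of $H^{(l)}$ happen to equalize the product distances — this is exactly the configuration that the twins-freeness of $G$ (or Properties $\mathcal P_1$, $\mathcal P_2$ when $G$ has only one kind of twin) is there to rule out, so the whole proof hinges on threading that case analysis correctly.
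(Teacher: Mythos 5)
Your overall architecture is the right one, and it matches the strategy the survey itself describes for this result (the survey does not reproduce a proof of the corollary; it only names the ingredients: the $k$-adjacency dimension of the second factor, the role of twins in the first factor, and the passage to $\overline{H}$). In particular, your lower bound $\dim_k(G\circ H)\ge n\cdot\adim_k(H)$ by restricting a $k$-resolving set to each copy (any pair inside a copy is seen identically from all other copies, and inside its own copy only through distances truncated at $2$), and your reduction of the $G\circ\overline{H}$ statement to the $k$-analogue of $\adim(H)=\adim(\overline{H})$, are both correct as stated.

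The problem is in the upper bound, in two complementary ways. First, the ``main obstacle'' you single out does not exist: in the lexicographic product one has $d_{G\circ H}((g_l,a),(g_i,h))=d_G(g_l,g_i)$ \emph{exactly} whenever $l\ne i$, with no correction depending on $a$ or $h$; so once twin-freeness of $G$ supplies some $g_l\notin\{g_i,g_j\}$ with $d_G(g_l,g_i)\ne d_G(g_l,g_j)$, all $|A_l|=\adim_k(H)\ge k$ vertices of $A_l$ distinguish the pair and there is nothing to ``thread''. Your worry that vertices of the witness copy might ``equalize the product distances'' conflates the lexicographic product with the Cartesian or strong product. Second, and conversely, the case that actually carries the weight — pairs $(g_i,h),(g_j,h')$ where $g_i,g_j$ are twins in $G$ — is left as a black box. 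For such a pair no vertex outside copies $i$ and $j$ can distinguish, so all $k$ distinguishers must come from $A_i\cup A_j$; a vertex $(g_i,a)$ with $a\in A_i$ lies at distance $\min\{d_H(a,h),2\}$ from $(g_i,h)$ and at distance $d_G(g_i,g_j)$ from $(g_j,h')$, so it distinguishes the pair iff $a\in V(H)\setminus N_H(h)$ when $g_i,g_j$ are true twins ($d_G(g_i,g_j)=1$) and iff $a\in N_H[h]$ when they are false twins ($d_G(g_i,g_j)=2$). These are precisely the sets counted by Properties $\mathcal{P}_1$ and $\mathcal{P}_2$, and to invoke them one must assign pairwise \emph{distinct} $k$-adjacency bases to the copies lying over a single twin class of $G$ — that is why the properties are formulated for $r$ distinct bases, and a complete proof must relate $r$ to the sizes of the twin classes and also dispose of the subcase $h=h'$, which the properties as stated (for $x\ne y$) do not literally cover. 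Asserting that $\mathcal{P}_1$ and $\mathcal{P}_2$ enter ``exactly as they were designed'' does not discharge this step, which is where the entire content of the second and third bullets resides.
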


These and some other general results for the $k$-metric dimension of lexicographic product graphs were given in \cite{Estrada-Moreno-2016-e}. In addition, some particular situations involving cycles and/or paths in the second factor were settled.

On the other hand, the case $k=2$ was completely analyzed in \cite{Estrada-Moreno-2016-c}, where formulas for $\dim_2(G\circ H)$ were given based on the fact that for any connected graph $G$ of order $n\ge 2$ and any nontrivial graph $H$, there exists a non-negative integer $f(G,H)$ such that
$$\dim_2(G\circ H)=n\cdot \adim_2(H)+f(G,H).$$
The aim of \cite{Estrada-Moreno-2016-c} was then to characterize all possible values that such $f(G,H)$ can attain. Such characterizations are based on the existence or not of (true or false) twin vertices in $G$, the number of equivalence classes that such vertices define, and some properties which need to be satisfied by the $2$-adjacency bases of $H$.

Finally, to end this subsection, we mention that the $k$-metric dimension of the hierarchical product of graphs was studied in \cite{Klavzar-2021+} together with other less common graph operations called splice and link products. The most remarkable aspect of this work is the application of some integer linear programming formulation while computing the $k$-metric dimension of special cases of the hierarchical product graph $\dim_k(G(U)\sqcap H)$ (with respect to a set $U\subseteq V(G)$), aimed to show the tightness of the main bound of the article, which is next stated. For definitions of $G(U)\sqcap H$, see \cite{Klavzar-2021+} precisely.

\begin{theorem}{\em \cite{Klavzar-2021+}}
Let $G$ be a graph, $U\subseteq V(G)$ and $\dim_k(G(U))=t$. If $H$ is a graph with $\dim_{\lceil k/t\rceil}(H)<\infty$, then
$$\dim_k(G(U)\sqcap H)\le n(H)\dim_k(G(U)).$$
\end{theorem}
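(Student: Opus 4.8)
The plan is to produce a $k$-resolving set of $G(U)\sqcap H$ of size $n(H)\dim_k(G(U))$, namely the \emph{columnar} set $W=B\times V(H)$, where $B\subseteq V(G)$ is a $k$-resolving set for $G(U)$ with $|B|=t=\dim_k(G(U))$ (we may assume $U\ne\varnothing$, else the product is disconnected). Since $|W|=t\cdot n(H)=n(H)\dim_k(G(U))$, it suffices to show that $W$ is a $k$-resolving set for $G(U)\sqcap H$. First I would record the metric of the generalized hierarchical product: its vertex set is $V(G)\times V(H)$; two vertices in a common $H$-fiber $\{(g,h):g\in V(G)\}$ are at distance $d_G(g,g')$; and for $h\ne h'$ one has $d_{G(U)\sqcap H}((g,h),(g',h'))=d_H(h,h')+D(g,g')$, where $D(a,b):=\min_{u\in U}\big(d_G(a,u)+d_G(u,b)\big)\ge d_G(a,b)$. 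This is proved by a short routing argument: a walk between different fibers can change level only along columns over vertices of $U$, and the triangle inequality shows a single $u\in U$ realises the minimum; in particular, writing $d$ for distance in $G(U)\sqcap H$, $d((g,h),(s,h'))$ equals $d_G(g,s)$ if $h'=h$ and equals $d_H(h,h')+D(g,s)$ if $h'\ne h$.

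Next I would split an arbitrary pair of distinct vertices into three types and, for each, count members of $W$ that resolve it. Type (a): same fiber, $(g,h)$ and $(g',h)$ with $g\ne g'$. For $s\in B$ the vertex $(s,h)$ yields distances $d_G(g,s)$ and $d_G(g',s)$, so at least as many vertices of $B\times\{h\}$ resolve the pair as $B$ resolves $g,g'$ inside $G(U)$, which is $\ge k$. Type (b): same $G$-coordinate, $(g,h_1)$ and $(g,h_2)$ with $h_1\ne h_2$. For every $s\in B$ the vertex $(s,h_1)$ resolves the pair, since $d((g,h_1),(s,h_1))=d_G(g,s)$ while $d((g,h_2),(s,h_1))=d_H(h_1,h_2)+D(g,s)\ge d_G(g,s)+1$; thus all $t\ge k$ vertices of $B\times\{h_1\}$ work. (Alternatively — and this is where $\dim_{\lceil k/t\rceil}(H)<\infty$ enters — each of the $\ge\lceil k/t\rceil$ levels of $H$ resolving $h_1,h_2$ contributes $t$ resolvers in $W$, again totalling $\ge t\lceil k/t\rceil\ge k$.)

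Type (c) is the real obstacle: $(g,h_1)$ and $(g',h_2)$ with $g\ne g'$ and $h_1\ne h_2$. The key point is a dichotomy for each fixed $s\in B$: at least one of $(s,h_1),(s,h_2)$ resolves the pair. Indeed, were neither to resolve it, then $d_G(g,s)=d_H(h_1,h_2)+D(g',s)$ and $d_G(g',s)=d_H(h_1,h_2)+D(g,s)$; using $D(g',s)\ge d_G(g',s)$, $D(g,s)\ge d_G(g,s)$ and $d_H(h_1,h_2)\ge 1$, the first identity forces $d_G(g,s)>d_G(g',s)$ and the second forces $d_G(g',s)>d_G(g,s)$, a contradiction. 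Hence $W$ supplies at least $|B|=t\ge k$ resolvers to this pair. Assembling the three types shows $W$ is a $k$-resolving set, whence $\dim_k(G(U)\sqcap H)\le |W|=n(H)\dim_k(G(U))$. The step I expect to be most delicate is establishing the product distance formula with full rigour (the behaviour of geodesics that dip into columns over $U$), together with checking — under the precise definition of a $k$-resolving set for $G(U)$ taken from the source — that type (a) is genuinely handled by the same-fiber copies of $B$ alone; it is exactly to secure a sufficient reservoir of resolving levels, used in type (b), that the hypothesis $\dim_{\lceil k/t\rceil}(H)<\infty$ is imposed.
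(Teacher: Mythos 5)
The survey does not prove this statement: it is quoted from \cite{Klavzar-2021+}, and the text explicitly defers even the definition of $G(U)\sqcap H$ (and, with it, of $\dim_k(G(U))$) to that reference, so there is no in-paper argument to compare yours against. Your architecture is certainly the intended one — take $W=B\times V(H)$ with $|B|=t$, establish the distance formula $d((g,h),(g',h'))=d_H(h,h')+D(g,g')$ for $h\ne h'$ with $D(x,y)=\min_{u\in U}(d_G(x,u)+d_G(u,y))$, and split pairs into three types — and your type (c) dichotomy (for each $s\in B$ at least one of $(s,h_1),(s,h_2)$ resolves the pair) is a correct and clean observation. But the proof is not complete as it stands, for two concrete reasons.

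First, you never pin down what ``$k$-resolving set for $G(U)$'' means, and the case that depends on it most is type (a). The only reading under which the symbol $G(U)$ is not redundant is that $B$ $k$-resolves pairs of $V(G)$ with respect to the through-$U$ distance $D$, not with respect to $d_G$. Under that reading your type (a) count fails as written: the same-fiber copies $(s,h)$ resolve $(g,h),(g',h)$ exactly when $d_G(g,s)\ne d_G(g',s)$, which is not what $B$ guarantees; the repair is to count the off-fiber copies $(s,h'')$, $h''\ne h$, which resolve exactly when $D(g,s)\ne D(g',s)$, giving at least $k(n(H)-1)$ resolvers, and this needs $n(H)\ge 2$. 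You flag the issue yourself but leave it open, and it is the load-bearing step of that case. Second, and more tellingly, your argument never genuinely uses the hypothesis $\dim_{\lceil k/t\rceil}(H)<\infty$: in types (b) and (c) you substitute $t\ge k$. If $t\ge k$ were forced by the source's definition, then $\lceil k/t\rceil=1$ and the hypothesis would be vacuous; its presence in the statement is strong evidence that the notion of $k$-resolving set for $G(U)$ in \cite{Klavzar-2021+} admits $t<k$. In that case your counts for types (b) and (c) collapse. Type (b) is saved by the multiplicative count you relegate to a parenthesis, $t\cdot|R_H\{h_1,h_2\}|\ge t\lceil k/t\rceil\ge k$ — which is exactly where the hypothesis does its work — but type (c) then needs strictly more than the $t$ resolvers your dichotomy produces, and the condition for $(s,h'')$ with $h''\notin\{h_1,h_2\}$ to resolve mixes $d_H$ and $D$ in a way that does not factor, so the multiplicative count does not transfer. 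Until the definitions are taken from the source and the type (a) and type (c) counts are redone against them, the argument has a genuine gap.
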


We need to finally remark a couple of more general settings in connection with this parameter. In one side, the idea of $k$-metric dimension of graphs is extrapolated to the notion of metric spaces, and some old ideas on the metric dimension from \cite{Blumenthal-1953} are taken into account and generalized to $k$-metric dimension. These ideas were presented in \cite{Beardon-2019,Rodriguez-Velazquez-2020}. On the other hand, the $k$-metric dimension was extended in \cite{Estrada-Moreno-2021+} when the notion the classical vertex distance in graphs is understood in the following way.

Given a positive integer $t$ and the classical distance $d$ in a graph $G=(V,E)$, consider the metric  $d_t:V\times V\rightarrow \mathbb{R}$, defined by
$$d_t(x,y)=\min\{d(x,y),t\}.$$
In this context, the concept of $k$-resolving set is naturally defined by just taking the metric $d_t$ instead of the classical one induced by the standard vertex distance. $k$-resolving sets are called $(k,t)$-resolving sets and the $k$-metric dimension is  called $(k,t)$-metric dimension. The case $t\ge D(G)$, where $D(G)$ denotes the diameter of $G$, corresponds to the original theory of $k$-metric dimension, and  the case $t=2$ corresponds to the theory of $k$-adjacency dimension (see Section \ref{sect:adjacency}). Furthermore, it is pointed out that such approach allows to extend the theory of $k$-metric dimension to the general case of non-necessarily connected graphs, which is a limitation that has the metric dimension theory. In \cite{Estrada-Moreno-2021+}, several computational and combinatorial results for the $(k,t)$-metric dimension of graphs were given. However, nothing more has been done concerning this so far.

\subsection{Some open problems}

\begin{itemize}
  \item In \cite{Diaz-2012}, it was proved that computing the metric dimension of graphs is NP-hard for planar graphs, and in contrast, it is polynomial for outerplanar graphs. Can this be generalized for every suitable values $k\ge 2$ while computing the $k$-metric dimension of graphs?
  \item General studies on the $k$-metric dimension of the four standard product graphs are known for the lexicographic case only. In this sense, it is desirable to continue studying the three remaining ones.
  \item In \cite{Estrada-Moreno-2021+}, it is proved that computing the $(k,t)$-metric dimension of graphs is NP-hard for any odd integer $k$. Can you prove that computing the $(k,t)$-metric dimension of graphs is also NP-hard whenever $k$ is even?
\end{itemize}

\section{Resolving partitions}

The notion of partition dimension of graphs was born aimed to giving some more insight into the concept of metric dimension in graphs. Such concept was first introduced in \cite{Chartrand-2000-b}, and since then, a lot of investigations on partition dimension of graphs have been developed. Although from our point of view, this topic would deserve itself a separate survey, in this section, we shall try to make a compilation of results on the parameter, emphasizing on those more recent ones that, in our humble opinion, are the most remarkable contributions.

The parameter of partition dimension is now very well studied, and one can find in the literature a lot of different styles of contributions on it. However, a very surprising situation is that there is not any computational aspect of such parameter. This means that it is not known on whether computing such parameter could belong to the NP-hard class, or to any other related complexity class. It could even be possible that computing the partition dimension of trees will be an NP-hard problem.

On the other hand, variations of the parameter itself are also known. For instances, we can mention the connected partition dimension \cite{Saenpholphat-2002}, the strong partition dimension \cite{Yero-2014}, the metric chromatic number \cite{Chartrand-2009}, the locating-chromatic number \cite{Chartrand-2002}, the $k$-partition dimension \cite{Estrada-Moreno-2020}, and the fault-tolerant partition dimension \cite{Javaid-2009} (which is indeed the case $k=2$ in the $k$-partition dimension). We shall also shortly survey a couple of these variants as well.

\subsection{Partition dimension}\label{subsec:pd}

Given a connected graph $G=(V,E)$ and an ordered partition $\Pi =\{P_1,P_2, ...,P_t\}$ of the vertex set of $G$, the {\it partition representation} of a vertex $v\in V$ with respect to the partition $\Pi$ is the vector $r(v|\Pi)=(d_G(v,P_1),d_G(v,P_2),...,d_G(v,P_t))$, where $d_G(v,P_i)$, with $1\leq i\leq t$, represents the distance between the vertex $v$ and the set $P_i$. That is $d_G(v,P_i)=\min_{u\in P_i}\{d_G(v,u)\}$. It is said that $\Pi$ is a {\it resolving partition} for $G$ if for every pair of distinct vertices $u,v\in V$, it follows $r(u|\Pi)\ne r(v|\Pi)$, or equivalently, if for any two distinct vertices $u,v\in V(G)$, there exists a set $P_i\in \Pi$ such that $d_G(u,P_i)\ne d_G(v,P_i)$. In such case, we say that the set $P_i$ \emph{resolves} (\emph{identifies} or \emph{recognizes}) the vertices $u,v$. The {\it partition dimension} of $G$ is the minimum number of sets among all resolving partitions for $G$, and is denoted by $pd(G)$. A resolving partition of cardinality $\pd(G)$ is a \emph{partition basis} for $G$. Concepts above were first introduced in \cite{Chartrand-2000-b}.

Notice that, while checking whether a given partition of $V(G)$ represents a resolving partition, one only needs to check pairs of vertices belonging to a same set, since those vertices belonging to different sets of the partition are already resolved at least by the two sets which they belong. Figure \ref{fig:pd-3} shows an example of graph with partition dimension 3 with vertices colored according to the sets they belong in a partition basis.

\begin{figure}[h]
\centering
\begin{tikzpicture}[scale=.8, transform shape]
\node [draw, shape=circle,fill=red] (a1) at  (0,0) {};
\node [draw, shape=circle,fill=green] (a5) at  (0,1.5) {};
\node [draw, shape=circle,fill=red] (a2) at  (3,0) {};
\node [draw, shape=circle,fill=green] (a6) at  (3,1.5) {};
\node [draw, shape=circle,fill=green] (a3) at  (6,0) {};
\node [draw, shape=circle,fill=green] (a7) at  (6,1.5) {};
\node [draw, shape=circle,fill=blue] (a4) at  (9,0) {};
\node [draw, shape=circle,fill=green] (a8) at  (9,1.5) {};
\node [draw, shape=circle,fill=blue] (a9) at  (12,0) {};
\node [draw, shape=circle,fill=green] (a10) at  (12,1.5) {};

\draw(a1)--(a2)--(a3)--(a4)--(a9)--(a10)--(a8)--(a7)--(a6)--(a5)--(a1);
\draw(a2)--(a6);
\draw(a3)--(a7);
\draw(a4)--(a8);

\end{tikzpicture}
\caption{A graph with partition dimension 3.}
\label{fig:pd-3}
\end{figure}
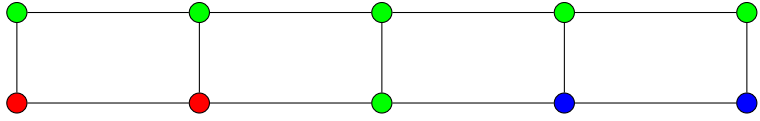

A natural question that one first consider is that of the existence of resolving partitions in graphs, and the answer is positive: we can always construct such a partition for any connected graph $G$. To this end, consider $S=\{v_1,\dots,v_k\}$ is any metric basis for $G$. Now, let $\Pi=\{P_1=\{v_1\},P_2=\{v_2\},\dots,P_k=\{v_k\},V(G)\setminus S\}$. Since $S$ is a metric basis, for any two distinct vertices $u,v\in V(G)$, there is a vertex $v_j\in S$ such that $d_G(u,v_j)\ne d_G(v,v_j)$. Thus, one can readily see that for the vertices $u,v\in V(G)$, the set $P_j=\{v_j\}$ resolves the pair $u,v$, and so, $\Pi$ is a resolving partition. Since every connected graph contains at least one metric basis, it is clear that it also contains at least one resolving partition. By this arguments, the following relationship between metric dimension and partition dimension is deduced. For any connected graph $G$,
\begin{equation}\label{eq:pd-dim}
\pd(G)\le \dim(G)+1\;\;\mbox{\cite{Chartrand-2000-b}}.
\end{equation}
This bound is indeed tight for some classes of graphs including paths, cycles, complete graphs and other ones. However, there are a lot of cases in which it behaves very badly. Now, with respect to the realization of graphs with given values in the metric dimension and partition dimension, the following result is known from \cite{Chartrand-2000-b}.

\begin{theorem}\emph{\cite{Chartrand-2000-b}}
For every pair $a, b$ of positive integers with $\left\lceil\frac{b}{2}\right\rceil+ 1 \le a \le b + 1$, there exists a connected graph $G$ such that $\pd(G)=a$ and $\dim(G) = b$.
\end{theorem}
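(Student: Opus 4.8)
The constraint $a\le b+1$ is just the inequality \eqref{eq:pd-dim}, so the task is purely constructive: for each admissible pair $(a,b)$ one must exhibit a connected $G$ with $\dim(G)=b$ and $\pd(G)=a$. I would use a single parametrized family. Put $s=b-(a-1)$; the hypothesis $\lceil b/2\rceil+1\le a\le b+1$ is equivalent to $0\le s\le a-1$. If $s=0$ (i.e.\ $a=b+1$), take $G=K_{a}$: here $\dim(K_a)=a-1=b$, and since all vertices of $K_a$ are mutually true twins and true twins cannot lie in a common class of a resolving partition, $\pd(K_a)=a$. If $s\ge 1$, let $G=G_{a,b}$ be obtained from a clique $K_a$ on $u_1,\dots,u_a$ by attaching at $u_1$ a bundle of $s+2$ pendant paths of length two, i.e.\ by adding vertices $x_1,\dots,x_{s+2}$, $y_1,\dots,y_{s+2}$ and edges $u_1x_i$, $x_iy_i$.

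First I would pin down $\dim(G_{a,b})$. Two leaves $y_i,y_j$ can only be separated by a vertex lying on path $i$ or path $j$, so any resolving set meets all but one of the $s+2$ pendant paths; and two clique vertices $u_j,u_k$ with $j,k\ge 2$ have identical distances to every vertex outside $\{u_j,u_k\}$, so any resolving set contains at least $a-2$ of $u_2,\dots,u_a$. These two demands are disjoint, and one checks they are also sufficient, the set $\{x_1,\dots,x_{s+1}\}\cup\{u_2,\dots,u_{a-1}\}$ being resolving; hence $\dim(G_{a,b})=(s+1)+(a-2)=b$. For the bound $\pd(G_{a,b})\le a$ I would exhibit a partition with $a$ classes: place $u_j$ in class $P_j$, keep $P_1=\{u_1\}$, and distribute the pairs $\{x_i,y_i\}$ so that the ordered pair (class of $x_i$, class of $y_i$) is distinct for distinct $i$ — possible since there are $(a-1)^2$ such pairs available among $P_2,\dots,P_a$ while only $s+2\le a+1$ paths are present, which fits for $a\ge 3$. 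Because the distances from $y_i$ (and from $x_i$) to the classes avoiding $x_i,y_i$ do not depend on $i$, distinct signature pairs force distinct representations, and one then verifies the remaining pairs ($x_i$ versus $x_j$, $x_i$ versus $y_j$, the $u_k$'s, etc.) are resolved too; the small cases $a\le 2$ are handled directly by paths and edges.

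The genuine obstacle is the matching lower bound $\pd(G_{a,b})\ge a$, which has to be argued structurally rather than by a codeword count. The key point is that $u_2,\dots,u_a$ are pairwise true twins in $G_{a,b}$, hence occupy $a-1$ distinct classes of any resolving partition $\Pi$; if $\Pi$ had only $a-1$ classes they would fill all of them, $u_1$ would be forced to share a class with some $u_k$ ($k\ge 2$), and then every class of $\Pi$ contains a neighbour of both $u_1$ and $u_k$, so no class separates them — a contradiction. Thus $\pd(G_{a,b})=a$. The step that needs care is checking that the attached gadget does not secretly create an extra separating class for the core vertices; it does not, because each $y_i$ sits at distance $2$ from $u_1$ and distance $3$ from every other $u_k$, so the bundle "looks the same" from all core vertices except $u_1$. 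Keeping the pendant paths of length exactly two and all anchored at a single core vertex is precisely what makes the clique core behave like a rigid twin-obstruction while still leaving enough slack in the partition to realise $\pd=a$ throughout the range $0\le s\le a-1$.
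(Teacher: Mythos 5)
Your construction is genuinely different from the paper's: the survey records that Chartrand, Salehi and Zhang realise the pair $(a,b)$ with the complete bipartite graph $K_{a,\,b-a+2}$, for which $\dim=a+(b-a+2)-2=b$ and $\pd=\max\{a,b-a+2\}=a$ follow in one line from known formulas, whereas you build a clique with a bundle of pendant paths. Your computation of $\dim(G_{a,b})=b$ and your lower bound $\pd(G_{a,b})\ge a$ (the true twins $u_2,\dots,u_a$ occupy $a-1$ distinct classes, and with only $a-1$ classes every class other than the shared one contains a common neighbour of $u_1$ and its classmate) are correct.

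The gap is in the upper bound $\pd(G_{a,b})\le a$ when $a=3$. Your count of $(a-1)^2$ available signature pairs ignores that not every pair is usable: if $a=3$, $c(x_i)=j$ and $c(y_i)=k$ with $\{j,k\}=\{2,3\}$, then $r(x_i|\Pi)=r(u_j|\Pi)=(1,0,1)$ (up to reordering) --- both vectors are $1$ at position $1$, $0$ at position $j$, and $1$ at position $k$, for $x_i$ because its neighbour $y_i$ lies in $P_k$ and for $u_j$ because $u_k$ does. So for $a=3$ only the two signatures $(2,2)$ and $(3,3)$ survive, while you must accommodate $s+2\in\{3,4\}$ pendant paths; the scheme with $P_1=\{u_1\}$ therefore cannot work for $(a,b)=(3,3)$ or $(3,4)$. (For $a\ge4$ the clash disappears, since $r(x_i)$ then has some coordinate equal to $2$ where $r(u_j)$ has a $1$, and your verification does go through.) The graph itself is fine: one can repair $a=3$ by moving some $x_i$ into $P_1$, e.g.\ $P_1=\{u_1,x_3\}$, $P_2=\{u_2,x_1,y_1,y_3\}$, $P_3=\{u_3,x_2,y_2\}$ resolves $G_{3,3}$ --- but the partition as you describe it does not exist there. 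Separately, the pair $(a,b)=(2,2)$ lies in the stated range yet cannot be realised by any graph, since $\pd(G)=2$ forces $G\cong P_n$ and $\dim(P_n)=1$; this boundary defect is inherited from the theorem itself (the bipartite construction $K_{2,2}$ also fails there, giving $\pd=3$), so it is not specific to your argument, but ``handled directly by paths and edges'' should not be allowed to paper over it.
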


To prove this result the authors used the complete bipartite graph $K_{s,t}$ with $s = a$ and $t = b-a + 2$. Also, this theorem above immediately raised up one question: Is it the case that $\pd(G)\ge \left\lceil\frac{\dim(G)}{2}\right\rceil+ 1$ for every nontrivial connected graph G? Such question was negatively answered in \cite{Chappell-2008}, where authors showed that for every pair $a, b$ of positive integers with $3 \le a \le b + 1$, there exists a connected graph $G$ such that $\pd(G)=a$ and $\dim(G) = b$.

\subsubsection{Bounding the partition dimension}

Clearly, for any connected graph $G$ of order $n$, it must happen $2\le \pd(G)\le n$, which are the trivial bounds for this parameter. In \cite{Chartrand-2000-b}, graph achieving the equality in such bounds were characterized as follows:

\begin{itemize}
  \item $\pd(G)=2$ if and only if $G$ is the path $P_n$.
  \item $\pd(G)=n$ if and only if $G$ is the complete graph $K_n$.
\end{itemize}

Since the graphs $G$ of order $n$ for which $\pd(G)=n$ are only the complete graphs, authors of \cite{Chartrand-2000-b} also characterized the case $\pd(G)=n-2$, and they proved that if $G$ be a connected graph of order $n\ge 3$, then $\pd(G) = n-1$ if and only if $G$ is one of the graphs $K_{1,n-1}$, $K_n-e$, $K_1 + (K_1 \cup K_{n-2})$. In addition, in \cite{Tomescu-2008}, the graphs of order $n$  having partition dimension $n-2$ were characterized. However, such characterization had some serious gaps and a corrected version of it was further presented in \cite{Hernando-2016}. In this same stye of result, all graphs of order $n\ge 11$ and diameter 2 with partition dimension $n-3$ were characterized in \cite{Baskoro-2020}.

Some general bounds that are known are limiting $\pd(G)$ in terms of some parameter of the graph. Some of them were first given in \cite{Chappell-2008} with corresponding examples of graphs achieving such bounds.

\begin{theorem}\emph{\cite{Chappell-2008}}
For any connected graph $G$ of maximum degree $\Delta$, $\pd(G)\ge 1 + \log_3 (\Delta+1)$.
\end{theorem}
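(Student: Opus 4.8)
The plan is to take a partition basis $\Pi=\{P_1,\dots,P_t\}$ of $G$ with $t=\pd(G)$, fix a vertex $v$ with $\deg(v)=\Delta$, and bound the size of its closed neighbourhood $N[v]$ (which has $\Delta+1$ vertices) by the number of admissible ``local codes''. Since every vertex lies in exactly one part, I may assume $v\in P_1$, so that $d_G(v,P_1)=0$ and $d_G(v,P_j)\ge 1$ for all $j\ge 2$. The idea is that the codes of the vertices of $N[v]$ are forced to lie close to the code of $v$, and that we can afford to discard the coordinate of the part containing $v$ without losing injectivity.

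First I would record the elementary distance estimate: for every $w\in N[v]$ and every $j$, the triangle inequality applied in both directions together with $d_G(w,v)\le 1$ gives $d_G(w,P_j)\in\{d_G(v,P_j)-1,\ d_G(v,P_j),\ d_G(v,P_j)+1\}$. Consequently the ``reduced representation'' $\psi(w):=(d_G(w,P_2),\dots,d_G(w,P_t))$, obtained from $r(w|\Pi)$ by deleting the first coordinate, takes at most $3^{t-1}$ distinct values as $w$ ranges over $N[v]$; no truncation issue arises here because $d_G(v,P_j)\ge 1$ for every $j\ge 2$, so each of the $t-1$ coordinates genuinely has exactly three options.

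The key step, and the only one that really uses that $\Pi$ is resolving, is to show that $\psi$ is injective on $N[v]$. Suppose $w\ne w'$ in $N[v]$ satisfy $\psi(w)=\psi(w')$. Since $\Pi$ is resolving, $r(w|\Pi)\ne r(w'|\Pi)$, hence $d_G(w,P_1)\ne d_G(w',P_1)$; but both $w,w'\in N[v]$ and $v\in P_1$, so both of these distances are at most $1$, whence one equals $0$ and the other equals $1$ — say $d_G(w,P_1)=0$ and $d_G(w',P_1)=1$. Then $w\in P_1$ while $w'\notin P_1$, so $w'\in P_i$ for some $i\in\{2,\dots,t\}$ and thus $d_G(w',P_i)=0$; but $\psi(w)=\psi(w')$ forces $d_G(w,P_i)=0$, i.e. $w\in P_i$, contradicting $w\in P_1$ with $i\ne 1$. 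Hence $\Delta+1=|N[v]|\le 3^{t-1}$, which rearranges to $\pd(G)=t\ge 1+\log_3(\Delta+1)$.

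I expect the injectivity argument to be the only subtle point; the rest is routine. It is also worth checking sharpness: the path $P_n$ with $n\ge 3$ has $\Delta=2$ and $\pd=2$, giving equality $3^{2-1}=3=\Delta+1$, and to match the ``with corresponding examples'' part of the statement of \cite{Chappell-2008} one would look for graphs whose closed neighbourhoods realise all $3^{t-1}$ reduced codes.
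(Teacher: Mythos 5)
Your argument is correct: the triangle-inequality bound of three values per coordinate, the deletion of the coordinate of the part containing $v$, and the injectivity of the reduced map $\psi$ on $N[v]$ (where resolvability is used, via the observation that two codes agreeing off the first coordinate would force one vertex into two parts) all hold, and the case $w=v$ causes no trouble. The survey states this theorem without proof, merely citing \cite{Chappell-2008}, and your argument is essentially the standard one from that reference, so there is nothing to correct.
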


\begin{theorem}\emph{\cite{Chappell-2008}}
For any connected graph $G$ with clique number $\omega$, $\pd(G)\ge 1 + \log_2 \omega$.
\end{theorem}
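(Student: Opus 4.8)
The plan is to fix a partition basis and study how the vertices of a maximum clique can be separated by it, exploiting that all distances inside a clique equal $1$.

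First I would let $\Pi=\{P_1,\dots,P_t\}$ be a partition basis of $G$, so that $t=\pd(G)$, and let $K$ be a clique of $G$ with $|K|=\omega$ (the cases $\omega\le 1$ being trivial). For each $i$ put $c_i=\min_{v\in K}d_G(v,P_i)$. Since any two vertices of $K$ lie at distance $1$, the triangle inequality gives $c_i\le d_G(v,P_i)\le c_i+1$ for every $v\in K$, so $b_i(v):=d_G(v,P_i)-c_i\in\{0,1\}$. As the $c_i$ are constants on $K$, two vertices $u,v\in K$ satisfy $r(u|\Pi)=r(v|\Pi)$ if and only if $b_i(u)=b_i(v)$ for all $i$; hence, since $\Pi$ resolves $G$, the map $v\mapsto(b_1(v),\dots,b_t(v))\in\{0,1\}^t$ is injective on $K$. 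This already yields $\omega\le 2^t$, and the remaining work is to gain the extra factor $2$.

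Next I would observe that if $P_i\cap K\neq\varnothing$ then $c_i=0$, so for $v\in K$ we have $b_i(v)=0$ exactly when $v\in P_i$. Call such a $P_i$ \emph{clique-touching}, and let there be $s$ of them, reindexed as $P_1,\dots,P_s$. Since $K$ is nonempty and every vertex of $K$ lies in exactly one part — which then contains a vertex of $K$ and is therefore clique-touching — we get $s\ge 1$, and for each $v\in K$ the prefix $(b_1(v),\dots,b_s(v))$ is a $0/1$-vector with exactly one coordinate equal to $0$ (the one indexing the part containing $v$). Thus this prefix takes at most $s$ values, while the suffix $(b_{s+1}(v),\dots,b_t(v))$ ranges over at most $2^{t-s}$ values; by the injectivity above this forces $\omega=|K|\le s\,2^{t-s}$.

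Finally I would close with the elementary inequality $s\le 2^{s-1}$, valid for every integer $s\ge 1$ (a one-line induction), which gives $\omega\le 2^{s-1}2^{t-s}=2^{t-1}$, i.e. $\pd(G)=t\ge 1+\log_2\omega$. I expect the only real content to be the middle step: recognizing that, because a partition assigns every clique vertex to one (clique-touching) part, the clique-touching coordinates of a representation are extremely constrained — a single ``$0$'' among them — which is exactly what upgrades the naive bound $\pd(G)\ge\log_2\omega$ to $\pd(G)\ge 1+\log_2\omega$. The distance estimate $b_i(v)\in\{0,1\}$ and the counting are routine, and the small-$\omega$ boundary cases are absorbed by the same inequalities.
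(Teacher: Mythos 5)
Your argument is correct. Note that the survey itself states this theorem only as a cited result from Chappell, Gimbel and Hartman and includes no proof, so there is nothing in the paper to compare against; but your proof is sound and is essentially the standard one. The two key observations both check out: the vertex-to-set distance function of each part varies by at most $1$ over a clique (so each coordinate of the representation contributes a binary digit after subtracting the constant $c_i$), and for every clique-touching part $P_j$ the value $b_j(v)$ is forced to be $0$ or $1$ according to whether $v\in P_j$, so the clique-touching block of the vector is determined by the single part containing $v$ and contributes only a factor of $s$ rather than $2^s$. The final step $s\,2^{t-s}\le 2^{t-1}$ via $s\le 2^{s-1}$ correctly upgrades the naive bound $\pd(G)\ge\log_2\omega$ to $\pd(G)\ge 1+\log_2\omega$.
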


The example of a family achieving the equality in the bound above is a remarkable one. It is a graph defined as follows. Given positive integers $p, s$, with $p \ge 2$, let $H(p, s)$ be the graph whose vertex set is formed by those vectors in $\{0, 1, 2, \dots, s\}^p$
that contain exactly one zero, and two vertices are adjacent if they differ by at most one in each coordinate. Now, for all positive integers $p, s$, with $p\ge 2$, it was proved in \cite{Chappell-2008} that $\pd(H(p, s)) = p$. A nice drawing of the graph $H(3,6)$ appears in \cite[Page 3]{Chappell-2008}.

Another bounds for the partition dimension of graphs in terms of the chromatic number and metric dimension were also given in \cite{Chappell-2008}. Moreover, some limits on the number of vertices that can have a graph with a prescribed value in the partition dimension and other related invariants were deduced. We remark the next one.

\begin{theorem}\emph{\cite{Chappell-2008}}
If a graph $G$ has order $n$, partition dimension $p$, and diameter $s$, then $n\le ps^{p-1}$.
\end{theorem}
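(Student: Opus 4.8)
The plan is to run a direct counting argument on partition representations. Fix a partition basis $\Pi=\{P_1,\dots,P_p\}$ of $G$, so that $p=\pd(G)$ and the vectors $r(v\mid\Pi)=(d_G(v,P_1),\dots,d_G(v,P_p))$ are pairwise distinct as $v$ ranges over $V(G)$. The entire proof then reduces to bounding the number of vectors that can possibly occur as such a representation, and comparing that count with $n$.

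First I would note that every coordinate $d_G(v,P_i)$ lies in $\{0,1,\dots,s\}$, since $d_G(v,P_i)=\min_{u\in P_i}d_G(v,u)$ is a minimum of graph distances, each of which is at most the diameter $s$. Next, because $\Pi$ is a partition of $V(G)$ into nonempty parts, each vertex $v$ belongs to exactly one part, say $P_j$; then $d_G(v,P_j)=0$, whereas $d_G(v,P_i)\ge 1$ for all $i\ne j$ since $v\notin P_i$. Hence each representation vector has \emph{exactly one} zero entry, and its remaining $p-1$ entries all lie in $\{1,2,\dots,s\}$.

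The counting step is then immediate: the position of the unique zero entry can be chosen in $p$ ways, and independently each of the other $p-1$ coordinates can take one of $s$ values, so there are at most $p\cdot s^{p-1}$ admissible vectors. Since the $n$ vertices of $G$ yield $n$ pairwise distinct admissible vectors, we conclude $n\le p\,s^{p-1}$.

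I do not expect any genuine obstacle here; the only points requiring a moment of care are the two ``boundary'' observations — that no coordinate exceeds $s$ (immediate from the definition of diameter) and that exactly one coordinate equals $0$ (which uses that the $P_i$ partition $V(G)$ into nonempty classes). Once these are recorded, the bound is simply the cardinality of the set of vectors in $\{0,\dots,s\}^p$ having precisely one zero entry, and the theorem follows.
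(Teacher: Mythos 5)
Your proof is correct, and it is exactly the standard counting argument behind this bound (the survey itself states the theorem without proof, citing Chappell, Gimbel and Hartman, whose argument is the same: each partition representation lies in $\{0,\dots,s\}^p$, has exactly one zero coordinate, and the representations are pairwise distinct). The two boundary observations you single out — every coordinate is at most the diameter, and exactly one coordinate vanishes because the parts are nonempty and partition $V(G)$ — are indeed the only points needing care, and you have handled both.
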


Other similar bounds in the style of the above one were given in \cite{Javaid-2008} while considering special graph families. We remark the following ones.

\begin{theorem}\emph{\cite{Javaid-2008}}
Let $n\ge 2$ and $p$ denote the partition dimension of the Gear graph $G_{2n}$. Then $n < \frac{1}{2}(3p^4(p + 2)2^{p-7}-1)$.
\end{theorem}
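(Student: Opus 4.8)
The plan is to fix a partition basis and show that the representation of each outer (hub‑adjacent) vertex is pinned down by only three pieces of local data, which forces $n$ to grow at most cubically in $p$ — more than enough to beat the stated inequality. First I would recall that $G_{2n}$ is built from the wheel $W_n$ by subdividing every rim edge once: there is a hub $c$ joined to the $n$ original rim vertices $v_1,\dots,v_n$, plus subdivision vertices $u_1,\dots,u_n$ with $u_i$ adjacent to $v_i$ and $v_{i+1}$ (indices mod $n$). The graph is bipartite with parts $\{v_i\}$ and $\{c\}\cup\{u_i\}$, and I would record the handful of distance facts I need: $d(v_i,v_j)=2$ for $i\neq j$, $d(c,v_i)=1$, $d(c,u_i)=2$, $d(v_i,u_j)\in\{1,3\}$ with value $1$ exactly when $j\in\{i-1,i\}$, and $d(u_i,u_j)\in\{2,4\}$ with value $2$ exactly when $i\equiv j\pm1\pmod n$; in particular every distance is at most $4$.

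Next, let $\Pi=\{P_1,\dots,P_p\}$ be a partition basis, say with $c\in P_1$, and let $\mathcal V\subseteq\{1,\dots,p\}$ be the (fixed) set of indices $i$ for which $P_i$ contains an outer vertex. I would then prove that $r(v_j\mid\Pi)$ depends only on the triple $\big(\text{class}(v_j),\text{class}(u_{j-1}),\text{class}(u_j)\big)\in\{1,\dots,p\}^3$. Using the distance list, the $i$‑th entry $d(v_j,P_i)$ equals: $0$ when $i$ is the class of $v_j$; otherwise $1$ when $i\in\{1,\text{class}(u_{j-1}),\text{class}(u_j)\}$ (the classes containing a vertex at distance $1$ from $v_j$, namely $c$, $u_{j-1}$ or $u_j$); otherwise $2$ when $i\in\mathcal V$ (some $v_k$ with $k\neq j$ then lies in $P_i$ at distance $2$); and otherwise $3$. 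Since $\mathcal V$ and the status of $1\in\mathcal V$ are global constants of $\Pi$ and everything else is read off the triple, the claim holds. Because $\Pi$ is resolving, the $n$ outer vertices have pairwise distinct representations, hence at least $n$ distinct triples occur, so $n\le p^3$. (The same idea applied to the $5$‑tuple of classes of $u_j,v_j,v_{j+1},u_{j-1},u_{j+1}$ yields $n\le p^5$, but $n\le p^3$ already suffices.)

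To land the stated inequality I would note that $G_{2n}$ is not a path (it has a vertex of degree $n\ge 2$ and contains cycles), so $p=\pd(G_{2n})\ge 3$, and then check the elementary fact that $2p^3+1<3p^4(p+2)2^{p-7}$ for all $p\ge 3$ — at $p=3$ this reads $55<1215/16$, and for $p\ge 4$ the right‑hand side is exponential in $p$ while the left is cubic — which rearranges exactly to $p^3<\tfrac12\big(3p^4(p+2)2^{p-7}-1\big)$; together with $n\le p^3$ this is the claim. The main obstacle I anticipate is the distance bookkeeping: because the hub creates many distance‑$2$ shortcuts, I must be careful in the four‑way case split for $d(v_j,P_i)$ that no vertex closer to $v_j$ than anticipated has slipped into $P_i$, and that membership of an outer vertex in $P_i$ is genuinely the only feature of $P_i$, beyond the triple, that can influence the entry. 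The tiny cases $n\in\{2,3\}$, where distance $4$ (and for $n=3$ even distance $3$) never occurs, deserve a remark but only shrink the representation count.
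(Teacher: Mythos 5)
Your argument is correct, but note that this survey does not prove the statement at all: it merely quotes the bound from the cited source (Javaid and Shokat, 2008), so there is no in-paper proof to match. Your route is a clean, self-contained counting argument, and it in fact establishes something considerably stronger than what is claimed. The four-way case analysis for $d(v_j,P_i)$ is sound: the vertices at distance at most $1$ from $v_j$ are exactly $v_j,c,u_{j-1},u_j$, the vertices at distance $2$ are exactly the other hub-adjacent vertices $v_k$, and everything else sits at distance $3$, so once the class of $c$ and the set $\mathcal{V}$ are fixed globally, the representation of $v_j$ really is a function of the triple $\bigl(\mathrm{class}(v_j),\mathrm{class}(u_{j-1}),\mathrm{class}(u_j)\bigr)$; indeed only the unordered pair $\{\mathrm{class}(u_{j-1}),\mathrm{class}(u_j)\}$ matters, so you even get $n\le p\binom{p+1}{2}$, not just $n\le p^3$. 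Combined with $p\ge 3$ (the gear graph is not a path; for $n=2$ it is $K_{2,3}$ with partition dimension $3$) and the elementary verification that $2p^3+1<3p^4(p+2)2^{p-7}$ for all $p\ge 3$, the stated inequality follows with a lot of room to spare. The peculiar shape of the bound $\tfrac12\bigl(3p^4(p+2)2^{p-7}-1\bigr)$ strongly suggests that the original argument instead enumerates, more loosely, the admissible partition representations of the degree-two rim vertices (or of all $2n$ rim vertices) over an alphabet bounded by the diameter, which is why it is exponential in $p$ where your count is polynomial; your proof buys a sharper conclusion at the cost of a slightly more delicate distance bookkeeping, which you have handled correctly, including the degenerate cases $n\in\{2,3\}$ where the distance-$3$ branch is vacuous or nearly so.
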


\begin{theorem}\emph{\cite{Javaid-2008}}
Let $n\ge 2$ and $p$ denote the partition dimension of the friendship graph $f_{n}$. Then $n \le \binom{p}{2}$.
\end{theorem}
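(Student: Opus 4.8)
The plan is to exploit the very rigid distance structure of the friendship graph $f_n$, which consists of a center vertex $c$ together with $n$ triangles (``blades'') with vertex sets $\{u_i,v_i,c\}$, and which has diameter $2$. First I would record the basic metric facts: $d(c,x)=1$ for every other vertex $x$, $d(u_i,v_i)=1$, and $d(x,y)=2$ whenever $x$ and $y$ lie in different blades. The fact I want to isolate is that within each blade the two non-central vertices $u_i,v_i$ are true twins, $N[u_i]=N[v_i]=\{u_i,v_i,c\}$, hence $d(u_i,w)=d(v_i,w)$ for every $w\notin\{u_i,v_i\}$.

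Next I would argue that in any resolving partition $\Pi=\{P_1,\dots,P_p\}$ the two vertices $u_i,v_i$ of each blade must lie in different parts: if they both lay in $P_\ell$, then in coordinate $\ell$ both have partition-distance $0$, while for $k\neq\ell$ neither vertex lies in $P_k$, so the twin property forces $d(u_i,P_k)=d(v_i,P_k)$; this yields $r(u_i|\Pi)=r(v_i|\Pi)$, contradicting resolvability. Thus each blade $i$ determines a $2$-element subset $e_i=\{a_i,b_i\}$ of $\{1,\dots,p\}$, where $u_i\in P_{a_i}$ and $v_i\in P_{b_i}$.

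The heart of the proof is to show that $i\mapsto e_i$ is injective, since then $n\le\binom{p}{2}$ is immediate. Suppose $e_i=e_j=\{a,b\}$ with $i\neq j$, and let $m$ be the index of the part containing $c$. Take the vertex $x$ of blade $i$ that lies in $P_a$; I would then compute $r(x|\Pi)$ coordinate by coordinate. It is $0$ in position $a$; it is $1$ in position $b$ (the blade-partner of $x$ is a neighbour lying in $P_b$) and in position $m$ (because $x\sim c$); and it is $2$ in every remaining position, since the only neighbours of $x$ are its blade-partner and $c$ and the diameter is $2$. Hence $r(x|\Pi)$ depends only on the triple $(a,b,m)$. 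Performing the identical computation for the vertex of blade $j$ lying in $P_a$ produces the same vector, contradicting that $\Pi$ resolves $f_n$; this establishes injectivity and the bound.

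The main obstacle — really the only place requiring care — is this coordinate-by-coordinate computation together with checking that the degenerate cases ($m=a$, $m=b$, or a part being the singleton $\{c\}$) do not alter the conclusion; once that bookkeeping is settled, injectivity and $n\le\binom{p}{2}$ follow automatically. I would also remark that the bound is tight, since when $p$ is large enough one can assign the blades pairwise distinct pairs $e_i$ and verify the resulting partition resolves $f_n$, though this direction is not required here.
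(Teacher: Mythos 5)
The survey states this theorem only as a cited result from \cite{Javaid-2008} and reproduces no proof, so there is nothing in the paper to compare against line by line. Your argument is correct and is the natural one: the two non-central vertices of each triangle are true twins, hence must land in distinct parts, and the coordinate computation (representation $0$ at the part containing the vertex, $1$ at the partner's part and at $c$'s part, $2$ elsewhere) shows two triangles cannot use the same unordered pair of parts, giving an injection from the $n$ blades into the $\binom{p}{2}$ pairs. The degenerate cases $m=a$ and $m=b$ are handled exactly as you indicate, so the proof is complete.
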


Other bounds are in general obtained whether special families of graphs are considered, or when extra conditions are added. We remark some of the most remarkable bounds that exists for $\pd(G)$, some of them while trying to solve the problem of determining the partition dimension of the incidence graph of a finite projective plane, other ones for bipartite graphs, trees or unicyclic graphs, among other ones.

\begin{theorem}\emph{\cite{Blazsik-2017}}
\begin{itemize}
  \item The partition dimension of the incidence graph of a projective plane of order $q$ is at least of order $(2+o(1))\log_2q$.
  \item The partition dimension of the incidence graph of a projective plane of order $q$ is at most of order $(4+o(1))\log_2q$.
\end{itemize}
\end{theorem}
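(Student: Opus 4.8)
Both parts exploit the rigid metric structure of the incidence graph $G$ of a projective plane of order $q$: $G$ is bipartite with sides the $N=q^2+q+1$ points and the $N$ lines, it is $(q+1)$-regular, it has girth $6$, and it has diameter $3$; moreover two distinct points (resp.\ two distinct lines) are always at distance $2$, while a point and a line are at distance $1$ if incident and at distance $3$ if not. Hence, for any partition $\Pi=\{P_1,\dots,P_p\}$ and any point $v$, the $i$-th coordinate of $r(v|\Pi)$ is $0$ if $v\in P_i$, is $1$ if $P_i$ contains a line through $v$, and lies in $\{2,3\}$ otherwise (and dually for lines). Two consequences drive the proof: (a) a part made up only of points assigns every point a coordinate in $\{0,2\}$ and every line a coordinate in $\{1,3\}$, so a single such part resolves all point--line pairs at once; (b) two points (or two lines) lying in a common part $P_k$ are resolved by $\Pi$ if and only if some $P_i$ with $i\ne k$ contains a line through exactly one of them (resp.\ a point on exactly one of them).

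\emph{Lower bound.} Fix a partition basis and, for each point $v$, let $f(v)=\{\,i:P_i$ contains a line through $v\,\}$. By (b), inside each part the map $f$ is injective, so each part holds at most $2^p$ points; summing over the $p$ parts,
\[
q^2+q+1\ \le\ p\cdot 2^p .
\]
Since $p=\pd(G)=O(\log q)$ (e.g.\ by the cited bound $n\le ps^{p-1}$ with $s=3$, or by the upper bound below), the term $\log_2 p$ is of lower order, and taking logarithms gives $\pd(G)\ge(2-o(1))\log_2 q$. The dual count on lines yields the same, so no extra hypothesis on the plane is needed.

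\emph{Upper bound.} I would build a resolving partition probabilistically. Fix $\delta>0$, put $\ell=\lceil(\ln 2)\,q\rceil$ and $r=\lceil(4+\delta)\log_2 q\rceil$. Choose uniformly at random pairwise disjoint $\ell$-sets $L_1,\dots,L_r$ of lines and pairwise disjoint $\ell$-sets $Q_1,\dots,Q_r$ of points, let $L_0$ (resp.\ $Q_0$) collect the remaining lines (resp.\ points), and set
\[
\Pi=\{\,L_0,\,Q_0,\,P_1,\dots,P_r\,\},\qquad P_i=L_i\cup Q_i .
\]
By (a) the part $Q_0$ resolves every point--line pair, so only point--point and line--line pairs remain. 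By (b), a point pair $\{v,w\}$ is resolved as soon as some $L_i$ meets the pencil at exactly one of $v,w$; with $\ell\approx(\ln 2)q$ the pencil at $v$ avoids $L_i$ with probability $\approx\tfrac12$, the pencils at $v$ and $w$ overlap only in the line $vw$, so each $P_i$ separates $\{v,w\}$ like a fair coin and essentially independently over $i$ (a routine strengthening to ``separated by at least two of the $L_i$'' covers the case where $v,w$ lie in a common $P_i$). A union bound over the $\binom{N}{2}<q^4$ point pairs, together with the symmetric one using the $Q_i$ over the line pairs, succeeds since $q^4\cdot(\tfrac12+o(1))^r\to0$ once $r>(4+\delta)\log_2 q$; hence some $\Pi$ is resolving and $\pd(G)\le r+2\le(4+\delta)\log_2 q$ for all large $q$. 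Letting $\delta\to0$ gives the asserted $(4+o(1))\log_2 q$.

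\emph{The main obstacle.} Everything in the upper bound turns on calibrating the common size $\ell$ of the classes $L_i,Q_i$: larger than $\approx q$ and each class covers every point and every line, so its ``hole'' is empty and it separates nothing; smaller, and the per-pair separation probability --- and with it the union bound --- collapses; the value $\ell\approx(\ln 2)q$ is exactly what makes each $P_i$ act as a fair coin on each pair. The one genuinely delicate point to make fully rigorous is the near-independence across the $r$ classes once the disjointness constraint is imposed, which one handles by a short conditioning argument or a Chernoff/second-moment estimate. The lower bound, by contrast, is essentially the counting displayed above, and the factor-two gap between the two constants appears to be intrinsic to these two methods.
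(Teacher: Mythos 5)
Your proposal is correct, and it follows what is essentially the argument of the cited source \cite{Blazsik-2017}; note that this survey only states the theorem and contains no proof of it to compare against. Your lower bound is the right one: since any two points are at distance $2$ and any point is at distance $1$ or $3$ from a class according as that class does or does not contain a line through it, resolving two points in a common class forces their incidence traces to differ, whence $q^2+q+1\le p\cdot 2^p$ and $p\ge(2-o(1))\log_2 q$ (strictly speaking the trace should omit the index of the pair's own class, which only sharpens the count to $2^{p-1}$ per class). Your upper bound via random line-classes and point-classes of size $\ell\approx(\ln 2)q$, merged pairwise so that each of the $r\approx 4\log_2 q$ mixed classes acts as a near-fair coin on every point pair and every line pair, together with one all-points class to kill the point--line pairs, is also sound; the one step you rightly flag --- the dependence created by disjointness --- is genuinely routine here because $r\ell=O(q\log q)=o(q^2)$, so with high probability every pencil loses only an $o(1)$ fraction of its $q+1$ lines during the process and each conditional separation probability stays at $\tfrac12+o(1)$, which is all the union bound over the $\sim q^4$ pairs requires.
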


\begin{theorem}\emph{\cite{Chartrand-2000-b}}
Let $G$ be a connected bipartite graph with partite sets of cardinalities $r$ and $s$. Then,
\begin{itemize}
  \item $\pd(G)\le r + 1$, if $r = s$, and
  \item $\pd(G) \le max\{r, s\}$ if $r\ne s$.
\end{itemize}
Moreover, the equalities holds, if and only if $G$ is a complete bipartite graph.
\end{theorem}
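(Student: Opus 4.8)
The plan is to split the statement into three parts: first, produce an explicit resolving partition of the claimed size; second, show that complete bipartite graphs actually require that many classes; third, show that no other connected bipartite graph attains the bound. Everything rests on one structural fact about connected bipartite graphs already used in the study of the local metric dimension: if $u,v$ lie in the same part then $d_G(u,v)$ is even, and if they lie in different parts then $d_G(u,v)$ is odd. Consequently, if a class $C$ of a partition is contained in one part, say in $B$, then $d_G(x,C)$ is odd for every $x\in A$ and even for every $x\in B$; in particular such a $C$ resolves \emph{every} pair with one endpoint in $A$ and one in $B$. Write $A=\{a_1,\dots,a_r\}$ and $B=\{b_1,\dots,b_s\}$ with $r\le s$ (by symmetry).

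\emph{Upper bound.} If $r<s$, I take
\[
\Pi=\bigl\{\,\{b_1\},\dots,\{b_{s-r}\},\ \{a_1,b_{s-r+1}\},\ \{a_2,b_{s-r+2}\},\ \dots,\ \{a_r,b_s\}\,\bigr\},
\]
a partition into exactly $s=\max\{r,s\}$ classes covering all $r+s$ vertices. Vertices in distinct classes are resolved by the class containing either of them (distance $0$ versus positive); the only pairs sitting in a common class are the pairs $\{a_j,b_{s-r+j}\}$, and since $s-r\ge 1$ the singleton $\{b_1\}\subseteq B$ separates each of them by the parity remark. If $r=s$, the analogous partition $\{a_1\},\{b_1\},\{a_2,b_2\},\dots,\{a_r,b_r\}$ has $r+1$ classes, and again $\{b_1\}$ resolves every common-class pair $\{a_j,b_j\}$. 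This yields the two inequalities.

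\emph{Complete bipartite graphs attain the bound.} In $K_{r,s}$ all distances lie in $\{0,1,2\}$, and for any partition $\Pi$ one has $d(v,P_i)=0$ iff $v\in P_i$, $=1$ iff $v\notin P_i$ and $P_i$ meets the part opposite $v$, and $=2$ otherwise. Hence if $u,v$ lie in the same part and the same class $P_i$, then $r(u|\Pi)=r(v|\Pi)$: both coordinates are $0$ at $i$, and for $j\ne i$ the value depends only on $P_j$ and on the common part of $u,v$. So a resolving partition of $K_{r,s}$ puts no two vertices of $A$ (and no two of $B$) in one class, forcing at least $\max\{r,s\}$ classes; and when $r=s$ the only size-$r$ candidate is a perfect matching $\{a_j,b_j\}_j$, which fails because $a_j$ and $b_j$ then get identical representations (value $1$ in every coordinate other than $j$). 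With the upper bound this gives $\pd(K_{r,s})=\max\{r,s\}$ for $r\ne s$ and $\pd(K_{r,r})=r+1$.

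\emph{The converse is where the real work is.} Suppose $G$ is connected bipartite but not complete bipartite; one wants $\pd(G)$ strictly below the bound. Since $G\ne K_{r,s}$ there is a non-edge $a\in A$, $b\in B$, so $d_G(a,b)\ge 3$, and the idea is to merge two classes of the extremal partition above — saving one class — and argue the result is still resolving: a pair the merge keeps together must be separated by some remaining class, and the slack coming from $d_G(a,b)\ge 3$ should let one of the singleton (parity-type) classes do it. The obstacle I expect to dominate the proof is that two false twins in the same part of $G$ \emph{must} land in distinct classes — no vertex, hence no class, can separate them, exactly as in $K_{r,s}$ — so one needs a careful case analysis over the twin-equivalence classes of $G$, using the non-edge $ab$ to absorb a vertex into a class containing a vertex at distance $\ge 3$ from it and verifying resolvability coordinate by coordinate as in the previous step. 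This twin analysis is delicate and is precisely the crux of the ``only if'' direction; it is also the place where one should check whether the characterisation requires a mild extra hypothesis on $G$.
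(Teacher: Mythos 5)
The survey does not reprove this result (it is quoted from Chartrand--Salehi--Zhang), so your argument has to stand on its own; two of its three parts do. The upper bound is correct: the parity observation that a class contained in one partite set has odd distance to every vertex of the other set and even distance to every vertex of its own set is exactly the right tool, and your partitions into $\max\{r,s\}$ (resp.\ $r+1$) classes are valid resolving partitions. The lower bound for $K_{r,s}$ is also correct: in a complete bipartite graph two vertices of the same part lying in the same class get identical partition representations, which forces at least $\max\{r,s\}$ classes, and the perfect-matching analysis rules out $r$ classes when $r=s$.

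The genuine gap is the ``only if'' direction: you must show that a connected bipartite graph that is \emph{not} complete bipartite has partition dimension strictly below the bound, and you never do this --- you only announce a plan (merge two classes of the extremal partition, exploit a non-edge $ab$ with $d_G(a,b)\ge 3$) and then explicitly defer the ``delicate twin analysis,'' even hedging that the characterisation might need an extra hypothesis. As it stands this is not a proof of the stated equivalence but only of one implication. Note also that your plan is not obviously routine to execute: when $s=r+1$ the partition you would merge down to $s-1=r$ classes has no singleton class left inside $B$, so the parity mechanism you rely on to resolve the within-class pairs disappears, and the mixed classes $\{a_j,b_k\}$ must be resolved by a genuinely different argument using the non-edge. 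Until that construction is carried out and verified (including such boundary cases), the ``if and only if'' claim remains unproven.
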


The case of trees and unicyclic graphs have specially centered the attention of some works. To expose this, let $K(G)$ be the set of exterior major vertices of a graph $G$ with terminal degree larger than one, let $\kappa(G)=|K(G)|$, and let $l_i$ be the terminal degree of an exterior major vertex $s_i\in K(G)$. Also, let $\tau(G)=\max_{s_i\in K(G)}\{l_i\}$. With the above notation the following result can be deduced.

\begin{theorem}\emph{\cite{Rodriguez-Velazquez-2014}}
For any tree $T$  which is not a path, $\pd(T)\le \kappa(T)+\tau(T)-1.$
\end{theorem}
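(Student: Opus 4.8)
The plan is to build, explicitly, a resolving partition of $T$ with at most $\kappa(T)+\tau(T)-1$ classes. Write $\kappa=\kappa(T)$ and $\tau=\tau(T)$, and let $s_1,\dots,s_\kappa$ be the exterior major vertices of $T$ with $\ter_T(s_i)=l_i\ge2$, so $\tau=\max_i l_i$. For each $i$ fix an ordering $P^i_1,\dots,P^i_{l_i}$ of the legs of $s_i$ (the paths joining $s_i$ to its terminal vertices, with $s_i$ removed), and let $y_i$ be the terminal vertex of $P^i_1$. First I would set $V_j=\bigcup_{i\,:\,l_i\ge j}P^i_j$ for $2\le j\le\tau$ (the ``branch classes'', each nonempty since some $s_i$ has exactly $\tau$ legs). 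Next come $\kappa-1$ singleton ``markers'': $V_{\tau+i-1}=\{y_i\}$ for $2\le i\le\kappa$. Finally let $V_1$ consist of everything else: writing $T_0$ for the ``trunk'' — the subtree of $T$ obtained by deleting every leg $P^i_j$ with $j\ge2$, so that $T_0$ retains $s_1,\dots,s_\kappa$, the legs at major vertices outside $K(T)$, and all vertices on no leg — put $V_1=V(T_0)\setminus\{y_2,\dots,y_\kappa\}$, which contains $s_1$ and is nonempty. Then $\Pi=\{V_1,\dots,V_\tau\}\cup\{V_{\tau+1},\dots,V_{\tau+\kappa-1}\}$ is a partition into exactly $\kappa+\tau-1$ classes, and it remains to verify it is resolving.

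I would first settle every pair $u\ne v$ with at least one endpoint, say $u$, on a leg $P^i_j$ with $j\ge2$, so $u\in V_j$. If $v\notin V_j$ then $d_T(u,V_j)=0<d_T(v,V_j)$, which in particular resolves any two vertices lying on two distinct legs of one exterior major vertex. If instead $v\in V_j$, say $v\in P^{i'}_j$, set $t=d_T(u,s_i)\ge1$ and $t'=d_T(v,s_{i'})\ge1$. Since $V_1$ avoids all legs $P^i_j$ with $j\ge2$ while containing $s_1,\dots,s_\kappa$, we have $d_T(u,V_1)=t$ and $d_T(v,V_1)=t'$; and for each $\ell\in\{2,\dots,\kappa\}$, $d_T(u,\{y_\ell\})=t+d_T(s_i,s_\ell)+d_T(s_\ell,y_\ell)$ and $d_T(v,\{y_\ell\})=t'+d_T(s_{i'},s_\ell)+d_T(s_\ell,y_\ell)$. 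If $t\ne t'$, $V_1$ resolves the pair; if $t=t'$ but $s_i\ne s_{i'}$, choose $\ell\in\{i,i'\}\setminus\{1\}$ (nonempty, since $i\ne i'$) and the marker $V_{\tau+\ell-1}$ resolves it because $d_T(s_i,s_{i'})\ge1$. So every pair meeting a branch class is resolved; this is the routine part.

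The remaining pairs lie in $V(T_0)$, and this is where the real work sits. For a vertex $x$ on a leg of a major vertex $z\notin K(T)$, or on $P^i_1$, one has $d_T(x,Y)=d_T(x,z)+d_T(z,Y)$ (respectively $d_T(x,s_i)+d_T(s_i,Y)$) for every class $Y$ disjoint from that leg; combined with the fact that the markers $y_2,\dots,y_\kappa$ are singletons lying just beyond $s_2,\dots,s_\kappa$, this should reduce everything to showing that $\{s_1,\dots,s_\kappa\}$ resolves $T_0$ in the classical sense, the distances relative to $s_1$ being recoverable from $V_1$ together with the branch classes. I expect proving that $\{s_1,\dots,s_\kappa\}$ resolves $T_0$ to be the main obstacle, and I would argue it by contradiction: if $u\ne v$ in $T_0$ had the same distance to every $s_i$, then every $s_i$ would meet the $u$–$v$ path at its centre; but walking along a branch of $T_0$ leaving that path on the $u$-side reaches a leaf of $T_0$, and the exterior major vertex owning that leaf then meets the path strictly on the $u$-side, a contradiction. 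The subtlety is that this owner could be $s_1$ itself; one checks that this forces $u$ or $v$ to be the single unmarked terminal vertex $y_1$, and that case is dealt with directly by the branch classes. Finally, when $\kappa=1$ the tree $T$ has just one major vertex, hence is a spider with $\tau$ legs, $T_0$ is a path, and $V_1,\dots,V_\tau$ already resolve $T$, which matches $\tau+1-1=\tau$. The principal difficulty throughout is controlling how the reused branch colours and the markers interact with the internal trunk structure of $T$; the within-leg and leg-versus-leg separations are straightforward.
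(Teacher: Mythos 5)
Your partition is not the one used in \cite{Rodriguez-Velazquez-2014} (there, \emph{every} first leg $S_{i1}$ becomes its own class $A_i$, so that $d(x,A_i)=d(x,s_i)+1$ recovers the individual distance to each of the $\kappa$ vertices $s_i$; only the legs with index $2,\dots,\tau-1$ are merged, and the $\tau$-th legs join the leftover class $A$), and the difference is fatal: your construction does not always yield a resolving partition. The failure is exactly where you locate the ``real work'': for a pair $u,v\in V_1$ the only data your classes provide are $d(x,V_j)=1+\min\{d(x,s_i):l_i\ge j\}$ and $d(x,y_\ell)$ for $\ell\ge 2$, so the individual distance $d(x,s_1)$ is \emph{not} recoverable (contrary to your parenthetical claim that the distances relative to $s_1$ are ``recoverable from $V_1$ together with the branch classes''); consequently, even a correct proof that $\{s_1,\dots,s_\kappa\}$ resolves $T_0$ would not close the argument.

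Concretely, let $T$ have vertices $z,q_1,q_2,w,v,x,s_1,y_1,p_1,s_2,y_2,p_2$ and edges $zq_1$, $q_1q_2$, $zw$, $wv$, $wx$, $xs_1$, $s_1y_1$, $s_1p_1$, $zs_2$, $s_2y_2$, $s_2p_2$. Then $K(T)=\{s_1,s_2\}$, each with two legs of length one, so $\kappa=\tau=2$, and your partition is $V_2=\{p_1,p_2\}$, the marker $\{y_2\}$, and $V_1$ everything else. The vertices $q_2$ (on the leg of $z\notin K(T)$) and $v$ (on the leg of $w\notin K(T)$) both lie in $V_1$ and both get the representation $(0,4,4)$: indeed $d(q_2,\{p_1,p_2\})=\min(6,4)=4=\min(4,4)=d(v,\{p_1,p_2\})$ and $d(q_2,y_2)=4=d(v,y_2)$, even though $d(q_2,s_1)=5\ne 3=d(v,s_1)$ --- precisely the coordinate your partition discards. (Relabelling so that the marker sits at $y_1$ happens to separate this particular pair, but your proof fixes the orderings arbitrarily and never exploits such a choice, and the loss of one of the $\kappa$ individual distances persists in general.) Your Case 1, pairs meeting a branch class, is correct; it is the trunk case that genuinely requires the paper's stronger device of one dedicated class per exterior major vertex.
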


To prove such result, the following partition of $V(T)$ was made.  For a terminal vertex
$s_{ij}$ of a major vertex $s_i\in S$, let $S_{ij}$ be the set of vertices of $T$, different from $s_i$, belonging to the $s_i-s_{ij}$ path. If $l_i< \tau(T)-1$, then it is assumed $S_{ij}=\emptyset$ for every $j\in \{l_i+1,\dots,\tau(T)-1 \}$.
For every $j\in \{2,\dots,\tau(T) -1\}$, let $B_j=\cup_{i=1}^{\kappa(T)}S_{ij}$  and, for every $i\in \{1,...,\kappa(T)\}$, let
$A_i=S_{i1}$. Hence, it is shown in \cite{Rodriguez-Velazquez-2014} that $\Pi=\{A,A_1,A_2,\dots,A_{\kappa(T)},B_2,\dots,B_{\tau(T)-1}\}$ is a
resolving partition of $T$, where $A=V-\left(\displaystyle\left(\cup_{i=1}^{\kappa(T)}A_i\right)\cup \left(\cup_{j=2}^{\tau(T)-1}B_j\right)\right)$.

An improvement to the result above can be made if the next notations are considered. Let $\xi(T)$ be the number of support vertices of $T$ and let $\theta(T)$ be the maximum number of leaves adjacent to a support vertex of $T$.

\begin{theorem}\emph{\cite{Rodriguez-Velazquez-2014}}
For any tree $T$  of order $n\ge 2$,  $\pd(T)\le \xi(T)+\theta(T)-1$.
\end{theorem}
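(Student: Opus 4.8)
The plan is to exhibit an explicit resolving partition of $V(T)$ with at most $\xi(T)+\theta(T)-1$ parts, organised around the support vertices of $T$ rather than around its exterior major vertices (as in the previous theorem). Write $\xi=\xi(T)$ and $\theta=\theta(T)$, let $s_1,\dots,s_\xi$ be the support vertices of $T$, and for each $i$ let $u_{i,1},\dots,u_{i,t_i}$, with $1\le t_i\le\theta$, be the leaves adjacent to $s_i$; since every leaf has a unique neighbour, the leaf-sets of the $s_i$ partition $L(T)$. I would take the parts
\begin{itemize}
\item $A_i=\{u_{i,1}\}$ for $i=1,\dots,\xi$;
\item $B_j=\{\,u_{i,j} : t_i\ge j\,\}$ for $j=2,\dots,\theta-1$;
\item $A=V(T)\setminus\bigl((\bigcup_i A_i)\cup(\bigcup_j B_j)\bigr)$, the set of all remaining vertices (which equals the non-leaves of $T$ together with the $\theta$-th leaf of each $s_i$ having $t_i=\theta$).
\end{itemize}
Discarding $A$ if it is empty (which happens only for $T\cong K_2$), this is a partition $\Pi$ into at most $\xi+\max\{0,\theta-2\}+1=\xi+\theta-1$ nonempty parts, so everything reduces to checking that $\Pi$ resolves $T$. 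Note the \emph{anchor identity}: for every vertex $x\ne u_{i,1}$ we have $d_T(x,A_i)=d_T(x,s_i)+1$, because $s_i$ is the unique neighbour of $u_{i,1}$.

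The one structural fact needed is the following claim: \emph{if $v,w$ are distinct non-leaf vertices of $T$, then $d_T(v,s)\ne d_T(w,s)$ for some support vertex $s$.} To prove it, let $x_0=v,x_1,\dots,x_k=w$ be the $v$--$w$ path. If $k$ is odd, no vertex is equidistant from $v$ and $w$, and $T$ has a support vertex since $n\ge2$. If $k=2\ell$, then for any $s$, $d_T(v,s)=d_T(w,s)$ forces the median of $\{v,w,s\}$ to be $x_\ell$, hence $s=x_\ell$ or $s$ lies in a component of $T-x_\ell$ other than those containing $x_{\ell-1}$ and $x_{\ell+1}$. If every support vertex were of this form, then the component $C$ of $T-x_\ell$ containing $v$ would contain no support vertex; but a leaf of $C$ other than $x_{\ell-1}$ is a leaf of $T$ whose neighbour lies in $C$ and is a support vertex of $T$, so $C$ has at most one leaf, forcing $C=\{v\}$ and $v$ to be a leaf of $T$, a contradiction.

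With the claim available, I would verify resolvability by a case analysis on the parts in which a pair of vertices lies. Two leaves $u_{i,a},u_{i,b}$ of the same $s_i$ automatically fall in distinct parts. For two vertices sharing a part: two leaves $u_{i,j},u_{i',j}$ of distinct support vertices sharing $B_j$ (or sharing $A$, when $j=\theta$) are separated by $A_i$ since $d_T(u_{i,j},A_i)=2$ while $d_T(u_{i',j},A_i)=d_T(s_i,s_{i'})+2\ge3$; two non-leaf vertices of $A$ are separated by some $A_i$ via the anchor identity together with the claim; and a $\theta$-th leaf $u_{i,\theta}\in A$ together with a non-leaf $v\in A$ that is adjacent to $s_i$ are separated by the anchor $A_k$ of any support vertex $s_k$ lying in the component of $T-v$ not containing $s_i$ (such an $s_k$ exists because $v$ is not a leaf), since then $d_T(v,A_k)=d_T(s_i,s_k)<d_T(s_i,s_k)+2=d_T(u_{i,\theta},A_k)$. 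All remaining possibilities (a leaf and a non-leaf inside some $A_i$ or some $B_j$, etc.) do not occur, because those parts consist respectively of a single leaf, or of leaves only.

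The main obstacle is precisely this last case analysis — enumerating exactly which pairs of vertices can land in a common part and pinpointing, for each, the anchor $A_k$ (or block $B_j$) that separates them. The anchor identity and the non-leaf claim do the heavy lifting, but the awkward case is the one where a $\theta$-th leaf has been ``dumped'' into $A$ alongside a nearby internal vertex; making that separation clean requires the small observation about components of $T-v$ used above. Counting the parts and handling the degenerate cases ($T\cong K_2$, and $\theta\le2$ so that no block $B_j$ occurs) is then routine.
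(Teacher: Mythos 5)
The survey gives no proof of this theorem (it only cites \cite{Rodriguez-Velazquez-2014} and sketches the partition used for the analogous bound $\pd(T)\le\kappa(T)+\tau(T)-1$), so your argument must stand on its own; its shape — singleton anchors at one leaf per support vertex, blocks $B_j$ collecting the $j$-th leaves, a residual class $A$ — is the natural adaptation of that sketch. However, your part count has a genuine gap. The identity $\xi+\max\{0,\theta-2\}+1=\xi+\theta-1$ fails when $\theta=1$: then there are no blocks $B_j$, no ``$\theta$-th leaf'' lands in $A$, and your partition has $\xi+1$ nonempty parts while the theorem demands $\xi+\theta-1=\xi$. This is not a vacuous case: it covers every path $P_n$ with $n\ge4$ (where $\xi=2$, $\theta=1$, and you must produce a resolving partition with $2$ parts, not $3$) and, more generally, every tree in which each support vertex has a unique adjacent leaf. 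That case needs a separate argument (for instance $\pd(T)\le\dim(T)+1=l(T)-ex(T)+1\le l(T)=\xi$ for non-paths, plus the trivial path case), or a redesigned partition in which one anchor is absorbed into $A$; the latter is not automatic, because your non-leaf separation claim only guarantees that \emph{some} support vertex distinguishes $v$ and $w$, and it may be exactly the one whose anchor you absorbed.

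A second, more local flaw: in the last separation case you assert that a support vertex $s_k$ exists in a component of $T-v$ not containing $s_i$ ``because $v$ is not a leaf.'' This is false in general: in a double star with $v=s_2$ adjacent to $s_1=s_i$, every component of $T-s_2$ avoiding $s_1$ is a single leaf of $s_2$ and contains no support vertex. The pair $u_{i,\theta},v$ is still resolved there — $v$ is itself a support vertex $s_{i'}$, and $d(v,A_{i'})=1$ while $d(u_{i,\theta},A_{i'})\ge3$ — so the conclusion survives, but your justification does not. You need to split into the subcases where some component of $T-v$ avoiding $s_i$ has at least two vertices (then it does contain a support vertex and your computation applies) and where all such components are singletons (then $v$ is a support vertex and its own anchor separates the pair). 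With these two repairs, together with the easy omitted subcases ($v=s_i$, and $v$ not adjacent to $s_i$, both handled by $A_i$ alone), the argument goes through for $\theta\ge2$; the anchor identity and your non-leaf claim are correct as stated.
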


Since the number of leaves, $l(T),$ of a tree $T$ is bounded below by $\xi(T)+\theta(T)-1$, the result above leads to the following bound. For any tree $T$  of order $n\ge 2$, $\pd(T)\le l(T)$. In \cite{Rodriguez-Velazquez-2014}, it was obtained that for a tree $T$ with $l(T)\ge 4$ leaves,
$\pd(T)=l(T)$ if and only if $T$ is the star graph.

Some other general results concerning trees and block graphs (called generalized trees) were also obtained in \cite{Rodriguez-Velazquez-2014}. Such work has a ``kind of natural'' continuation in the work \cite{Fernau-2014} concerning unicyclic graphs.

\begin{theorem}\emph{\cite{Fernau-2014}}
Let $G$ be a connected unicyclic graph.
\begin{itemize}
  \item If $G$ is a cycle graph or every exterior major vertex of $G$ has terminal degree  one, then $\pd(G)=3.$
  \item If $G$ contains at least an exterior major vertex of terminal degree greater than one, then  $\pd(G)\le
\kappa(G)+\tau(G)+1.$
\end{itemize}
\end{theorem}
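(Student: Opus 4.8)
The plan is to treat the two items separately, in each case by constructing an explicit resolving partition, the lower bounds being essentially free. For the first item, a connected unicyclic graph is never a path, so the characterisation $\pd(G)=2\iff G\cong P_n$ already gives $\pd(G)\ge 3$; it therefore suffices to exhibit a resolving partition with three parts. If $G\cong C_n$ this is the classical fact $\pd(C_n)=3$: writing $C_n=v_0v_1\cdots v_{n-1}v_0$, the partition $\{\{v_0\},\{v_1\},V(C_n)\setminus\{v_0,v_1\}\}$ is resolving, as a short distance computation shows. If $G$ is unicyclic but not a cycle and every exterior major vertex has terminal degree one, I would first record a structural fact: such a $G$ is precisely a cycle $C$ with pendant paths attached, at most one pendant path per vertex of $C$ and with no further branching inside those paths; indeed a second pendant path at a cycle vertex, or any vertex of degree at least three inside an attached tree, would create an exterior major vertex of terminal degree at least two. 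For such $G$ I would start from the resolving $3$-partition of $C$ above and then distribute the vertices of each pendant path among the three parts (typically dropping the first vertex of a pendant path into one of the two singleton classes and the remaining vertices into the large class), checking that all pairs remain resolved --- most delicately a pendant-path vertex against a cycle vertex, and two vertices lying in distinct pendant paths.

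For the second item the idea is to \emph{extend the tree construction} of \cite{Rodriguez-Velazquez-2014} recalled above. Regard $G$ as the cycle $C$ together with the rooted trees hanging from its vertices. For each exterior major vertex $s\in K(G)$, with terminal vertices $s^{(1)},\dots,s^{(l_s)}$ ordered by their distance from $s$, let $S_{s,j}$ denote the set of internal vertices of the path from $s$ to $s^{(j)}$ (with $S_{s,j}=\varnothing$ for the unused indices, exactly as in the tree case). Set $A_s:=S_{s,1}$ for each $s\in K(G)$ and $B_j:=\bigcup_{s\in K(G)}S_{s,j}$ for $j\in\{2,\dots,\tau(G)-1\}$; this produces $\kappa(G)+(\tau(G)-2)$ parts. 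The leftover set $R$ --- which contains all of $C$, the vertices $s$ themselves, the last branches $S_{s,l_s}$, and any tree vertices off the distinguished paths --- is then split into three parts $R_1,R_2,R_3$ in the manner of the $3$-partition of a cycle (two small classes placed on $C$ and everything else in $R_3$). The total number of parts is $\kappa(G)+(\tau(G)-2)+3=\kappa(G)+\tau(G)+1$, which is the claimed bound.

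It remains to verify that $\Pi=\{A_s:s\in K(G)\}\cup\{B_2,\dots,B_{\tau(G)-1}\}\cup\{R_1,R_2,R_3\}$ is a resolving partition, and here lies the real work. Pairs of vertices inside the pendant trees are treated essentially as in \cite{Rodriguez-Velazquez-2014}: each $A_s$ separates the first branch of $s$ from its later branches, the $B_j$ distinguish branches of different lengths, and the distance to $C$ (that is, to $R$) breaks ties between branches rooted at different exterior major vertices. The genuinely new difficulty is every pair in which at least one vertex lies on $C$: since the cycle offers two geodesic directions, distances to a fixed part can coincide for two cycle vertices that a tree would have separated, so the placement of the small classes $R_1,R_2$ on $C$ must be chosen with the parity of $|C|$ and the positions of the attachment points of the trees in mind. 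I expect this cycle-against-everything analysis --- finding a single placement of $R_1,R_2,R_3$ that simultaneously resolves cycle--cycle, cycle--tree and the residual tree--tree pairs --- to be the main obstacle, and it is exactly why the bound is $\kappa(G)+\tau(G)+1$ rather than the $\kappa(G)+\tau(G)-1$ of the tree case: the two extra classes pay for the cycle.
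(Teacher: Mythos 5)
First, a point of reference: the survey does not prove this theorem at all --- it is stated as a cited result from \cite{Fernau-2014}, with only the tree construction of \cite{Rodriguez-Velazquez-2014} recalled nearby as background. So there is no in-paper proof to compare against, and your proposal has to stand on its own. On that footing, your skeleton is reasonable and your preliminary observations check out: $\pd(G)\ge 3$ for any unicyclic $G$ because $G$ is not a path; the structural claim in the first item (no major vertices off the cycle, hence the graph is a cycle with at most one pendant path per cycle vertex) is correct; and in the second item the bookkeeping $\kappa(G)+(\tau(G)-2)+3=\kappa(G)+\tau(G)+1$ is right, with the three residual classes playing the role of the single class $A$ in the tree argument.

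The genuine gap is that in both items the actual proof --- a fully specified partition together with the verification that it resolves every pair --- is missing, and you say so yourself (``typically dropping the first vertex \ldots'', ``the placement \ldots must be chosen with the parity of $|C|$ and the positions of the attachment points in mind'', ``I expect this \ldots to be the main obstacle''). For the first item, the recipe is not even well defined: you do not say which of the two singleton classes receives the first vertex of a given pendant path, nor how that choice depends on the attachment point, and the problematic pairs you correctly identify (two leaves of equal-length pendant paths attached at cycle vertices symmetric about the edge $v_0v_1$, or a pendant vertex versus a cycle vertex at the same distance profile) are exactly the ones a naive uniform choice can fail on. For the second item, the tree verification does not transfer verbatim because distances are now realized through the cycle in two directions, so the claim that ``the distance to $C$ breaks ties between branches rooted at different exterior major vertices'' needs a new argument, and the single placement of $R_1,R_2$ that simultaneously handles cycle--cycle, cycle--tree and residual tree--tree pairs is precisely what must be exhibited. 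Until those constructions are pinned down and the case analysis carried out, this is a plausible plan rather than a proof.
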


Among other interesting results, the work \cite{Fernau-2014} showed a relationship between the partition dimension of trees and unicyclic graphs.

\begin{proposition}
If $T$ is a spanning tree of a unicyclic graph $G$,  then $\pd(G)\le \pd(T)+3.$
\end{proposition}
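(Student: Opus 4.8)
The plan is to start from a partition basis of the spanning tree and ``repair'' it with three extra parts built around the unique non-tree edge. Write $e=xy$ for the unique edge of $G$ not lying in $T$, so that the unique cycle $C$ of $G$ consists of $e$ together with the $x$--$y$ path $x=w_0,w_1,\dots,w_\ell=y$ of $T$; thus $C$ has length $\ell+1$. The first step is to record exactly how distances change: for all $u,v\in V(G)=V(T)$ one has $d_G(u,v)=\min\{d_T(u,v),\,d_T(u,x)+1+d_T(y,v),\,d_T(u,y)+1+d_T(x,v)\}$, so $d_G(u,v)\le d_T(u,v)$, with equality unless every $u,v$-geodesic of $G$ traverses $e$. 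Deleting the edges of $C$ from $G$ splits it into subtrees $T_{w_0},\dots,T_{w_\ell}$, one rooted at each cycle vertex; writing $\rho(v)$ for the root of the subtree containing $v$ and $\delta(v)=d_G(v,\rho(v))$, one gets the clean picture $d_G(u,v)=d_{T_{\rho(u)}}(u,v)$ when $\rho(u)=\rho(v)$, and $d_G(u,v)=\delta(u)+d_C(\rho(u),\rho(v))+\delta(v)$ otherwise, where $d_C$ is the cyclic distance on $C$; the same holds in $T$ with $d_C$ replaced by distance along the path $w_0\cdots w_\ell$. Consequently, for any set $P$ and any $v\in T_{w_i}$, $d_G(v,P)=\min(\alpha_v^P,\ \delta(v)+\beta_i^P)$, where $\alpha_v^P$ is the (unchanged) distance from $v$ to $P\cap T_{w_i}$ inside $T_{w_i}$ and $\beta_i^P$ depends only on $i$ and $P$.

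Now fix a partition basis $\Pi=\{P_1,\dots,P_t\}$ of $T$ with $t=\pd(T)$, and let $z$ be a vertex of $C$ ``opposite'' the edge $e$, i.e. one whose cyclic distance to both $x$ and $y$ is as close as possible to $\lfloor(\ell+1)/2\rfloor$. Form $\Pi'$ from $\Pi$ by deleting $x,y,z$ from whatever parts contain them and adjoining the three singletons $\{x\},\{y\},\{z\}$, discarding any part that becomes empty; then $|\Pi'|\le t+3$. To check that $\Pi'$ resolves $G$, take distinct $u,v$ in a common part of $\Pi'$; then $u,v\notin\{x,y,z\}$ and $u,v$ share a part of $\Pi$, so some $P_j$ has $d_T(u,P_j)\ne d_T(v,P_j)$. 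If $d_G(u,P_j)\ne d_G(v,P_j)$ and the part of $\Pi'$ coming from $P_j$ still realizes the relevant minimum for $u$ or for $v$ (the minimizer in $\Pi$ may have been $x$, $y$ or $z$, but then that vertex's singleton separates $u,v$ directly), we are done. Otherwise $d_G(u,P_j)=d_G(v,P_j)$, and the new singletons take over: from $d_G(v,x)=\delta(v)+d_C(\rho(v),x)$ and $d_G(v,y)=\delta(v)+d_C(\rho(v),y)$ the sign of $d_G(v,y)-d_G(v,x)$ locates $\rho(v)$ in one of the two $x$--$y$ arcs of $C$, and then $d_G(v,z)$ together with $d_G(v,x)$ pins down $\delta(v)$ and $\rho(v)$ exactly (this is where the choice of $z$ matters, to cover the middle range of $C$ where neither arc is uniformly shorter). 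If these three coordinates nonetheless agreed for $u$ and $v$, then $\rho(u)=\rho(v)=w_i$ and $\delta(u)=\delta(v)$; since the cross-cycle term $\delta+\beta_i^{P_j}$ is then common to $u$ and $v$, equality $d_G(u,P_j)=d_G(v,P_j)$ together with the fact that the internal distances $\alpha_u^{P_j},\alpha_v^{P_j}$ are unchanged from $T$ to $G$ should be pushed to contradict $d_T(u,P_j)\ne d_T(v,P_j)$ — forcing the three coordinates to differ somewhere after all.

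The main obstacle is precisely this last case: pinning down when the passage from $T$ to $G$ collapses a value $d_G(\cdot,P_j)$ that was a genuine minimum over the whole set $P_j$ (not over a single vertex), for a pair $u,v$ attached to the same cycle vertex at the same depth. The delicate subcase is $\alpha_u^{P_j}=\delta+\beta_i^{P_j}<\delta+\beta_i'^{P_j}\le\alpha_v^{P_j}$, where $P_j$ separates $u,v$ in $T$ but not in $G$ and the three singletons cannot help; one must either show such configurations are impossible given that $\Pi$ is a (minimum) resolving partition, or argue that some \emph{other} part of $\Pi$ already separates $u$ from $v$ in $G$ via an internal witness in $T_{w_i}$. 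Establishing that, and verifying that $z$ really suffices to recover the attachment point and depth over the entire cycle, is the technical heart; the remaining bookkeeping — emptied parts, minimizers equal to $x,y,z$, and very short cycles such as triangles — is routine.
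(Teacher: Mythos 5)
The survey states this proposition without proof (it is quoted from the cited work of Fernau, Rodr\'iguez-Vel\'azquez and Yero), so there is no in-paper argument to compare yours against; judging the proposal on its own terms, the case you flag at the end as ``the technical heart'' is a genuine gap, and in fact the construction you describe fails there. Concretely, let $G$ consist of the cycle $w_0w_1\cdots w_9w_0$ together with two pendant paths $w_0,u,u_1,u_2,u_3$ and $w_0,v,v_1,v_2,v_3$, and let $T=G-w_0w_9$, so that $x=w_0$, $y=w_9$ and your $z\in\{w_4,w_5\}$. The partition $\Pi=\{P_1,P_2,P_3\}$ with $P_2=\{u_3,w_8\}$, $P_3=\{w_9\}$ and $P_1$ the remaining vertices is a partition basis of $T$ (one checks all vectors are distinct, and $\pd(T)=3$ since $T$ is not a path); the only part separating $u$ from $v$ in $T$ is $P_2$, via $d_T(u,P_2)=3\ne 5=d_T(v,P_2)$, the minima being attained at $u_3$. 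In $G$, however, $d_G(w_0,w_8)=2$ through the new edge, so $d_G(u,P_2)=\min\{3,3\}=3=\min\{5,3\}=d_G(v,P_2)$; the three singletons give $d_G(\cdot,w_0)=1$, $d_G(\cdot,w_9)=2$ and $d_G(\cdot,z)=1+d_G(w_0,z)$ identically for $u$ and $v$; the class coming from $P_3$ vanishes and the one coming from $P_1$ contains both vertices. Hence your $\Pi'$ does not resolve $G$.

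The failure is structural rather than a matter of bookkeeping. Once $\rho(u)=\rho(v)$ and $\delta(u)=\delta(v)$, no vertex outside the common subtree $T_{w_i}$ can separate $u$ from $v$ --- in particular none of $x,y,z$, however $z$ is placed on the cycle --- so separation must come from a part meeting $T_{w_i}$; but the shortcut created by $e$ can terminate at a vertex such as $w_8$ that is adjacent to $y$ on the cycle yet is not among your three singletons, and this neutralizes the unique separating part of $\Pi$. Your two suggested escape routes both fail here: such configurations are not impossible (the example above is a partition basis), and no other part of $\Pi$ separates the pair in $G$. Closing the gap therefore needs a different idea --- for instance controlling the restriction of the partition to each subtree hanging from the cycle, or choosing the three additional classes as something other than singleton vertices --- and as written the proposal does not establish the proposition.
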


Since the authors of \cite{Fernau-2014} were not able to find any unicyclic graph for which the bound above is tight, this work finish with an interesting conjecture which states the following.

\begin{conjecture}
\label{conj:pd-tree-unicyc}
If $T$ is a spanning tree of a unicyclic graph $G$,  then $\pd(G)\le \pd(T)+1.$
\end{conjecture}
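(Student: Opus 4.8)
\medskip\noindent\textbf{A possible approach to Conjecture \ref{conj:pd-tree-unicyc}.}
The plan is to start from a partition basis $\Pi=\{P_1,\dots,P_t\}$ of the spanning tree $T$, where $t=\pd(T)$ and $T=G-e$ for the unique edge $e=v_0v_{g-1}$ of the cycle $C\colon v_0v_1\cdots v_{g-1}v_0$ of $G$ that is missing from $T$, and to build a resolving partition of $G$ using at most one extra class. The first step is to pin down the relation between distances in $T$ and in $G$. Since $G$ is unicyclic, deleting the edges of the path $P=v_0v_1\cdots v_{g-1}$ from $T$ splits $G$ into subtrees $T_{v_0},\dots,T_{v_{g-1}}$, one attached to each cycle vertex; so every vertex $x$ has a well-defined projection $v_{i(x)}$ and height $h(x)=d_G(x,v_{i(x)})=d_T(x,v_{i(x)})$, and for $x,y$ in distinct subtrees one has $d_T(x,y)=h(x)+h(y)+|i(x)-i(y)|$ while $d_G(x,y)=h(x)+h(y)+\min\{|i(x)-i(y)|,\,g-|i(x)-i(y)|\}$, with $d_T(x,y)=d_G(x,y)$ whenever $i(x)=i(y)$. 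In particular $d_G\le d_T$, and the two metrics agree except on pairs whose projections are more than $g/2$ apart along $P$.

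The second step is to classify the \emph{bad} pairs, i.e. the pairs $\{x,y\}$ with $r(x|\Pi)\ne r(y|\Pi)$ computed in $T$ but $r(x|\Pi)=r(y|\Pi)$ computed in $G$. Using the distance formula above I would show that for each class $P_k$ the functions $x\mapsto d_G(x,P_k)$ and $x\mapsto d_T(x,P_k)$ can differ only for vertices whose projection lies within $\lceil g/2\rceil$ of an endpoint of $e$ along $P$; consequently every bad pair is ``localized near $e$'', and the collisions $d_G(x,P_k)=d_G(y,P_k)$ can be written explicitly in terms of $h(x),h(y)$ and the projection indices. This is the step that turns an a priori global problem into a combinatorial analysis of a bounded region of $G$, and the degenerate case $G=C_g$ (where $T=P_g$, $\pd(T)=2$, $\pd(G)=3$) should drop out as a consistency check.

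The third step is the repair: introduce a single new class $W$ built around the endpoints of $e$ — the natural first candidate being $W=\{v_0,v_{g-1}\}$, or, if that is not enough, $W$ a short sub-path of $C$ through $e$ whose length does not depend on $G$ — and replace $\Pi$ by $\Pi'=\{P_1\setminus W,\dots,P_t\setminus W,\,W\}$. One then has to check (a) that every pair already resolved in $G$ by $\Pi$ whose resolution did not use a vertex of $W$ is still resolved, which is immediate since those coordinates are unchanged, and (b) that the coordinate $d_G(\cdot,W)$ simultaneously resolves all bad pairs and all pairs whose $\Pi$-resolution was destroyed by moving vertices into $W$. The heuristic behind (b) is that $d_G(x,W)$ records the cyclic position of $x$ ``as seen from $e$'', which is exactly the information lost when the linear metric of $T$ was replaced by the cyclic metric of $G$, while the surviving coordinates of $\Pi$ still pin down $h(x)$ and the branch of $x$.

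The step I expect to be the main obstacle is (b), and more precisely proving that \emph{one} extra class really suffices rather than two. Carving even a couple of vertices out of the old classes can raise $d_G(y,P_i\setminus W)$ for many $y$ at once, potentially creating fresh collisions among pairs that were previously fine, so one must show that the single coordinate $d_G(\cdot,W)$ absorbs both the genuine bad pairs and all such newly created ones; this is presumably the reason the statement is still only a conjecture. A plausible way around it is to choose $W$ \emph{adaptively} rather than as a fixed set near $e$ — for instance promoting to a singleton a partition-basis vertex lying near $e$, together with at most one companion chosen so that no class of $\Pi$ loses a vertex that was essential for some surviving pair — and to justify the choice by an exchange/uncrossing argument on $\Pi$. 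Making that selection argument rigorous, together with the case analysis in the second step, would be the technical heart of a proof.
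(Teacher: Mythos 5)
This statement is an open conjecture, not a theorem of the paper: the survey records it (from the unicyclic-graphs paper of Fernau, Rodr\'iguez-Vel\'azquez and Yero) precisely because nobody has proved it, and the only bound actually established there is the weaker $\pd(G)\le\pd(T)+3$. So there is no proof in the paper to compare your attempt against, and what you have written is in any case a strategy outline rather than a proof. You identify the decisive step yourself --- item (b), that a \emph{single} new class $W$ simultaneously resolves all pairs whose $T$-resolution is destroyed by the cyclic metric \emph{and} all pairs whose resolution is destroyed by carving $W$ out of the old classes --- and you explicitly leave it open, remarking that it ``is presumably the reason the statement is still only a conjecture.'' That is exactly right: everything before that point is routine (the distance formula $d_G(x,y)=h(x)+h(y)+\min\{|i(x)-i(y)|,\,g-|i(x)-i(y)|\}$ and the localization of bad pairs near $e$ are standard for unicyclic graphs), and everything after it is the actual content of the conjecture. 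As it stands the argument proves nothing beyond what is already known.

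Two concrete reasons the repair step is not a small gap. First, the coordinate $d_G(\cdot,W)$ for $W=\{v_0,v_{g-1}\}$ is the single scalar $\min\{d_G(\cdot,v_0),d_G(\cdot,v_{g-1})\}$; vertices positioned symmetrically with respect to the edge $e$ receive the same value, so this coordinate does \emph{not} record ``the cyclic position as seen from $e$'' in the way your heuristic needs --- it collapses exactly the antipodal-type pairs that the cycle metric confuses. Second, moving vertices of $P_i$ into $W$ raises $d_G(y,P_i\setminus W)$ for potentially many vertices $y$ far from $e$ (whenever their nearest $P_i$-vertex happened to lie in $W$), so the set of pairs your new coordinate must absorb is not localized near $e$ after all; your proposed fix of choosing $W$ ``adaptively'' by an exchange argument is named but not carried out, and it is precisely where a proof would have to live. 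If you want a tractable target, proving the conjecture for unicyclic graphs whose cycle has no attached subtrees of large depth, or improving $+3$ to $+2$ in general, would already be a contribution.
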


Circulant graphs have also attracted the attention of several researches with respect to their partition dimension. There are bounds and closed formulae for this parameter in such graphs classes. Table \ref{tab:pd-values} contains some of these formulae. An interesting bound in this sense, appeared in \cite{Maritz-2018}.

\begin{theorem}\emph{\cite{Maritz-2018}}
If $G$ is the circulant graph $C(n,\pm\{1,2,\dots,j\})$, then if $t \ge 4$ is even, then there exists an infinite set of values of $n$, such that $\pd(C(n,\pm\{1,2,\dots,j\}))\le t/2+4$.
\end{theorem}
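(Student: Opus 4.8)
The plan is to exhibit, for a suitably chosen infinite family of orders $n$, an explicit resolving partition of $G=C(n,\pm\{1,2,\dots,j\})$ with at most $t/2+4$ sets. The cornerstone is the elementary distance formula in such a circulant: writing $\rho(a,b)=\min\{\,|a-b|,\,n-|a-b|\,\}$ for the circular distance in $\mathbb{Z}_n$, one has
\[
d_G(a,b)=\left\lceil \frac{\rho(a,b)}{j}\right\rceil ,
\]
since a single step moves a vertex by any amount in $\{\pm1,\dots,\pm j\}$. Thus the metric of $G$ is a ``$j$-fold contraction'' of the metric of the cycle $C_n$: the diameter is $D=\lceil\lfloor n/2\rfloor/j\rceil$, every nontrivial distance sphere about a vertex is a union of two arcs of at most $j$ consecutive vertices, and the distances from a vertex $x$ to a fixed singleton part determine only the length-$j$ block containing $x$ (and on which side of the singleton it lies). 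Consequently a strategy modelled on the equality $\pd(C_n)=3$ locates a vertex only up to a window of $\Theta(j)$ positions, and the remaining parts must be devoted to resolving vertices \emph{inside} such a window while also killing the rotational/reflective symmetry of the circulant.

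I would then let $n$ run over an arithmetic progression of the shape $n=P\cdot m$, $m=1,2,\dots$, where $P$ is a period built from $t$ and $j$ (this divisibility requirement is exactly what produces the claimed \emph{infinite} set of admissible $n$), and build the partition in two layers. The first layer is a periodic colouring of $\mathbb{Z}_n$ by $t/2$ sets: positions are grouped into consecutive blocks of a fixed length $c$ comparable to $j$, and the $i$-th block receives colour $i\bmod(t/2)$; the length $c$, the period $P=c\,(t/2)$, and the diameter $D$ are to be tuned so that within any window of $j$ consecutive vertices the vector of distances to the $t/2$ colour classes is injective. The second layer adds a bounded number of extra parts — four of them, playing the role of the three-set resolving partition of a cycle together with one further set to cope with the $j$-thickening and the ``seam'' where the periodicity abuts the anchors — typically two singletons $\{u_1\},\{u_2\}$ at circular distance $1$ (to orient position around the ring and to pin the window to one side of $u_1$) plus one or two small clean-up sets near the seam. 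This yields $t/2+4$ parts in total.

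Correctness is then a case analysis. Given distinct $x,y$ lying in a common part, the two anchor distances $d_G(x,u_1),d_G(x,u_2)$ — via the contraction formula — locate $x$ up to a window of at most $j$ consecutive positions, while the seam and anchor sets remove the residual reflection $i\mapsto -i$ and the wrap ambiguity; inside the common window the colour-class distance vector separates $x$ from $y$ by the injectivity arranged in the first layer. Verifying that injectivity, and checking that the periodic pattern never accidentally reproduces a distance vector at some far-away position, reduces to routine manipulations with $\lceil\cdot/j\rceil$, the period $P$, and $D$; the only delicate point is uniformity in $m$, which holds because both layers scale with $n$.

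The main obstacle is precisely the tension between the two scales of the problem: the coarse scale of $\approx n/j$ spheres, which as in $\pd(C_n)=3$ is almost free, versus the fine scale of $\Theta(j)$ positions inside a sphere, which a priori seems to demand $\Theta(j)$ parts and which must instead be handled by only $t/2$ periodic parts. Getting the block length $c$, the period $P$, and the diameter $D$ simultaneously compatible — so that no two vertices anywhere on the ring receive the same full distance vector, including the bookkeeping at the seam and at the two ``poles'' roughly at distance $D$ from $u_1$ — is where the real work lies, and it is exactly this compatibility condition that carves out the infinite family of orders $n$ for which the bound $t/2+4$ is attained.
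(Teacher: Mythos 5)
The survey only cites this result from \cite{Maritz-2018} and reproduces no proof, so your proposal can only be measured against the strategy of the original source, which it does resemble in outline: the known proof likewise builds an explicit resolving partition from a periodic block colouring with roughly $t/2$ classes plus a bounded number of additional anchor classes, and a divisibility condition on $n$ is what produces the infinite family. The problem is that, as written, your text is a plan rather than a proof, and the gap sits exactly where you place ``the real work'': you never specify the block length $c$, the period $P$, or the actual pattern, and you never verify the central claim that the distance vector to the $t/2$ periodic classes separates vertices inside a window of $j$ consecutive positions. This cannot be granted by ``tuning''. Since $\rho(\cdot,P_i)$ is $1$-Lipschitz, over a window of $j$ consecutive vertices the value $\lceil \rho/j\rceil$ for any fixed class ranges over at most two consecutive integers, so each class contributes at most one bit and the $t/2$ classes realize at most $2^{t/2}$ distinct vectors. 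In the extremal case $t=j=4$ this gives exactly $4=t$ patterns with zero slack, so the two periodic classes would have to realize all four patterns in every window simultaneously (or else one must exploit that only same-part pairs need separating, which shifts the burden back onto the anchors for far-apart and mirror-image pairs). Either way, a concrete pattern must be exhibited and checked; the existence of one is the theorem.

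A second concrete weak point is the reflection symmetry. Two singleton anchors $\{u_1\},\{u_2\}$ at circular distance $1$ do not distinguish $x$ from its mirror image about $u_1$ for most residues of $x$ modulo $j$: with $u_1=0$, $u_2=1$ one compares $\lceil (x-1)/j\rceil$ with $\lceil (x+1)/j\rceil$, and these agree for all but two residue classes of $x$ modulo $j$. So the reflection must be killed by making the periodic colouring non-palindromic, and this interacts with the seam of the periodic pattern and with the two antipodal poles at distance roughly $D$ from the anchors; your budget of four extra parts is plausible for this, but the accounting is never carried out. In short, the architecture you describe is the right one and matches the source, but the proof consists precisely of the construction and case analysis you defer, and without them the bound $t/2+4$ is not established.
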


The result above disproved a result from \cite{Grigorious-2017} which stated that $\pd(C(n,\pm\{1,2,\dots,j\}))\ge t+1$.

\subsubsection{Formulas for the partition dimension}

In this subsubsection we are mainly interested into summarizing the main formulas that exist for the partition dimension of several significant families of graphs. This is done in Table which follows the ideas of previous ones for some related parameters.

\begin{table}[ht]
  \centering
  \small{
  \begin{tabular}{|c|c|c|}
    \hline
    \textbf{Graphs} $\mathbf{G}$ & $\mathbf{\pd(G)}$ & \textbf{Reference} \\ \hline
    Grid graphs $P_r\Box P_t$ & $3$ & \cite{Yero-2010} \\ \hline
    Circulant graphs $C(n,\{\pm1, \pm2\})$, $n\ge 9$ odd & $3$ & \cite{Salman-2012} \\ \hline
    Circulant graphs $C(n,\{\pm1, \pm2\})$, $n\ge 12$ even & $3$ & \cite{Grigorious-2014} \\ \hline
    $\begin{array}{c}
      \mbox{Circulant graphs $C(n,\pm\{1,2,\dots,j\})$}\\
      \mbox{$1< j< \lfloor n/2 \rfloor$}\\
      \mbox{$n\ge (j+k)(j+1)$ and $n\equiv k$ (mod $2j$)}\\
      \mbox{when $j$ is even and $gcd(k,2j)=1$, or}\\
      \mbox{when $j$ is odd and $k=2m$, $1\le m\le j$}
    \end{array}$
     & $j+1$ & \cite{Grigorious-2017} \\ \hline
     Generalized M\"obius Ladder $M_{m,n}$ & $\begin{array}{ll}
                                                5, & \mbox{$n,m \equiv 1$, (mod 2), $m-n \ge 4$} \\
                                                4, & \mbox{$n \equiv 0$ and $m \equiv 1$, (mod 2)} \\
                                                4, & \mbox{$n \equiv 1$ and $m \equiv 0$, (mod 2)} \\
                                                5, & \mbox{$n,m \equiv 0$, (mod 2), $m-n \ge 4$}
                                              \end{array}
     $ & \cite{Hussain-2018}\\ \hline
     Fullerene graphs $G_1[n]$, $G_2[n]$ & $3$ & \cite{Mehreen-2018} \\ \hline
     Series parallel graphs & several formulas & \cite{Mohan-2019} \\ \hline
  \end{tabular}
  }
  \caption{Graphs for which their partition dimension has been computed.}\label{tab:pd-values}
\end{table}

\subsubsection{Partition dimension versus product graphs}

The partition dimension has been also studied in connection with some product graphs. We can find, for instance, works dealing with the Cartesian \cite{Yero-2014,Yero-2010}, the strong \cite{Yero-2014}, the lexicographic \cite{Campanelli-2017}, the corona \cite{Darmaji-2012,Rodriguez-Velazquez-2016-a}, and the rooted \cite{Monica-2019} products. We next expose some interesting contributions about this.

\begin{table}[ht]
  \centering
  \small{
  \begin{tabular}{|c|c|c|}
    \hline
    \textbf{Product $G*H$} & $\mathbf{\pd(G*H)}$ & \textbf{Reference} \\ \hline
    $G\Box H$ & $\leq \pd(G)+\pd(H)$ & \cite{Yero-2010} \\ \hline
    $G\Box H$ & $\leq \pd(G)+\pd(H)-1$ & \cite{Yero-2014} \\ \hline
    $G\Box H$ & $\le \pd(G)+\dim(H)$ & \cite{Yero-2010} \\ \hline
    $K_n\Box H$ & $\leq \min\left\{\left\lceil\frac{n}{k}\right\rceil(pd(H)-1)+k\,:\,2\le k\le n-1\right\}$ & \cite{Yero-2014} \\ \hline
    $G\circ H$ & $\begin{array}{c}
                    \le |V(H)|\pd(G) \\
                    \ge \pd(H)+1
                  \end{array}
    $ & \cite{Campanelli-2017} \\ \hline
    $G\circ H$ & $\leq |V(G)| \pd(K_1+H)$ & \cite{Campanelli-2017} \\ \hline
    $G\boxtimes H$ & $\begin{array}{c}
                   \leq \pd(G)\cdot \pd(H) \\
                   \ge 4
                 \end{array}$ & \cite{Yero-2014} \\ \hline
    $P_r\boxtimes P_t$ & $=4$ & \cite{Yero-2014} \\ \hline
    $G\boxtimes K_n$ & $\ge n+2$ & \cite{Yero-2014} \\ \hline
    $G\circ_v H$, $|V(G)|,|V(H)|>2$ & $\begin{array}{c}
                   \leq \pd(G)+\pd(H) \\
                   \ge \pd(H)
                 \end{array}$ & \cite{Monica-2019} \\ \hline
    $P_r\circ_v P_t$, $r,t\ge 4$ & $=3$ & \cite{Monica-2019} \\ \hline
    $C_r\circ_v C_t$, $r,t\ge 4$ & $=4$ & \cite{Monica-2019} \\ \hline
    $G\odot H$, $|V(G)|\ge 2|$ & $\le \frac{1}{|V(G)|}\dim(G\odot H)+\pd(G)+1$ & \cite{Rodriguez-Velazquez-2016-a} \\ \hline
    $G\odot H$ & $\ge \pd(H)$ & \cite{Rodriguez-Velazquez-2016-a} \\ \hline
    $G\odot H$, $D(H)\le 2$ & $\leq \pd(G)+\pd(H)$ & \cite{Rodriguez-Velazquez-2016-a} \\ \hline
    $\begin{array}{c}
       G\odot H \\
       \mbox{$\alpha$, non trivial
components of $H$} \\
       \mbox{$\beta$, trivial
components of $H$}
     \end{array}
    $ & $\begin{array}{ll}
\le \pd(G)+|V(H)|-\alpha, & \textrm{$\alpha\ge 1$ and $\beta\ge 1$,}\\
\le \pd(G)+|V(H)|-\alpha+1, & \textrm{$\alpha\ge 1$ and $\beta = 0$,}\\
\le \pd(G)+|V(H)|, & \textrm{$\alpha =0$.}
\end{array}$ & \cite{Rodriguez-Velazquez-2016-a} \\ \hline
  \end{tabular}
  }
  \caption{Relationships on the partition dimension of product graphs and that of their factors together with other parameters.}\label{tab:pd-products}
\end{table}

We would like to recall that results above are just some representative examples (the most general ones) from the cited references. Each of the references, also contains several other bounds and/or closed formulae when specific families of graphs are considered, or whether extra conditions are introduced in the statements.

Concerning the reference \cite{Campanelli-2017}, we want to remark a mistake which appears there. In \cite[Theorem 3.1]{Campanelli-2017} authors claim that if $H$ is any non trivial connected non complete graph, then, for any integer $n\ge 3$,
$$\pd(K_n\circ H)=n\cdot \pd(H+K_1).$$
The result above is not true, since there is a gap in the proof which is indeed impossible to fix. A corrected result is as follows.

\begin{proposition}
Let $H$ be any non trivial connected non complete graph. Then, for any integer $n\ge 3$,
$$n\cdot \pd(H+K_1)-n\le \pd(K_n\circ H)\le n\cdot \pd(H+K_1).$$
\end{proposition}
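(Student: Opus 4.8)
The plan is to exploit the structure of $K_n\circ H$: it consists of $n$ vertex‑disjoint copies $C_1,\dots,C_n$ of $H$ with all edges present between distinct copies, so $\diam(K_n\circ H)\le 2$, the distance between vertices of different copies equals $1$, and the distance between $(a,h)$ and $(a,h')$ inside one copy $C_a$ equals $\min\{d_H(h,h'),2\}=d_{H+K_1}(h,h')$, where I write $(a,h)$ for the vertex of $C_a$ corresponding to $h\in V(H)$ and $w$ for the apex vertex of $K_1$ inside $H+K_1$. The guiding principle is that, relative to a fixed copy $C_a$, every vertex outside $C_a$ is adjacent to all of $C_a$, hence any part of a partition that meets a copy other than $C_a$ plays, as far as $C_a$ is concerned, exactly the role of the universal vertex $w$ in $H+K_1$. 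Since all distances are at most $2$, I will repeatedly use that for a set $P$ and $x\notin P$ one has $d(x,P)=1$ if $x$ has a neighbour in $P$ and $d(x,P)=2$ otherwise.

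For the upper bound I would start from a partition basis $\Pi'=\{Q_1,\dots,Q_p\}$ of $H+K_1$ with $p=\pd(H+K_1)$ and $w\in Q_1$, and build the partition $\Pi$ of $K_n\circ H$ whose parts are the nonempty sets $R_i^{(a)}=\{(a,h):h\in Q_i\setminus\{w\}\}$ over $a\in\{1,\dots,n\}$ and $i\in\{1,\dots,p\}$; this has at most $np$ parts. To check that it resolves: a pair in two different copies is resolved by the part $R_i^{(a)}$ containing one of the two vertices (distance $0$ versus $1$, as the two vertices are adjacent), while a pair $(a,h),(a,h')$ inside $C_a$ is resolved by the image $R_i^{(a)}$ of whichever $Q_i$ resolves $h,h'$ in $H+K_1$: if $w\notin Q_i$ the relevant distances in $K_n\circ H$ coincide with those in $H+K_1$, and if $w\in Q_i$ then $Q_i$ resolves $h,h'$ only by containing exactly one of them, which is mirrored by $R_i^{(a)}$. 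This gives $\pd(K_n\circ H)\le n\cdot\pd(H+K_1)$.

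For the lower bound I would take an arbitrary resolving partition $\Pi$ of $K_n\circ H$ and, for each copy $C_a$, pull its restriction back to a partition $\Lambda_a$ of $V(H)$ (whose number of blocks equals the number $t_a$ of parts of $\Pi$ meeting $C_a$). The first step is to show that $\Lambda_a\cup\{\{w\}\}$ is a resolving partition of $H+K_1$: the singleton $\{w\}$ separates $w$ from every vertex of $H$, pairs of $H$ in distinct blocks of $\Lambda_a$ are resolved trivially, and if $h,h'$ lie in a common block of $\Lambda_a$ then the part of $\Pi$ resolving $(a,h),(a,h')$ must be contained in $C_a$ — a part meeting another copy is at distance $1$ from both $(a,h)$ and $(a,h')$ — so its trace on $V(H)$ resolves $h,h'$ with respect to $d_{H+K_1}$. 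Hence $p\le t_a+1$, i.e. each copy is met by at least $p-1$ parts of $\Pi$.

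The hard part is to pass from ``every copy is met by at least $p-1$ parts'' to $|\Pi|\ge n(p-1)$, because a part meeting several copies is counted in several of the $t_a$, and this is precisely where the additive loss $-n$ originates. My plan here is to refine the local argument so that each copy provably needs roughly $p-1$ parts \emph{internal} to it. Using that (a) pairs of vertices of $C_a$ lying in a common part of $\Pi$ can only be resolved by parts internal to $C_a$ (from the distance computation above), and (b) two vertices of $C_a$ sharing a part of $\Pi$ must have distinct adjacency traces on the internal parts of $C_a$ (otherwise they are unresolved), I would show that the internal parts of $C_a$, together with one extra block collecting the remaining vertices of $C_a$ grouped by those traces and the block $\{w\}$, again form a resolving partition of $H+K_1$; summing the resulting per‑copy inequalities and using that internal parts of distinct copies are distinct while every non‑internal part meeting two copies forces an internal part in one of them (again by the cross‑copy distance computation) should account for all of $\Pi$ and yield $|\Pi|\ge n(p-1)$. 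The main obstacle is exactly this bookkeeping: controlling how efficiently a part straddling several copies can be ``shared'' and proving the per‑copy loss is at most one; by contrast the upper‑bound construction and the basic per‑copy bound are routine consequences of the facts that distances in $K_n\circ H$ are capped at $2$ and that other copies behave like the apex of $H+K_1$.
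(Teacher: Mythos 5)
Your upper-bound construction is correct and complete (the paper does not reprove this half; it simply invokes Theorem~2.2 of \cite{Campanelli-2017}), and your per-copy estimate is exactly the first step of the paper's lower-bound argument: the partition induced on a copy $C_a$, augmented by a singleton apex, resolves $H+K_1$, whence the number $t_a$ of parts of $\Pi$ meeting $C_a$ is at least $\pd(H+K_1)-1$. Your version of this step is in fact more careful than the paper's, since you restrict the claim ``the resolving part must be internal to $C_a$'' to pairs lying in a \emph{common} part of $\Pi$; for pairs in distinct parts the claim is false, as a straddling part containing exactly one of them resolves them by membership. The step you flag as the hard part is precisely where the paper's proof breaks down: the displayed identity $|\Pi_2|=\sum_{i=1}^n\bigl(|\Pi_2^{(i)}|+1\bigr)-n$ amounts to asserting that every part of a partition basis of $K_n\circ H$ meets exactly one copy of $H$, which is nowhere justified; a part meeting several copies is counted once on the left and once per copy on the right. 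So what is actually established, by you and by the paper alike, is only $\sum_a t_a\ge n\bigl(\pd(H+K_1)-1\bigr)$, which does not bound $|\Pi|$ from below.

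The gap cannot be repaired, because the stated lower bound is false. Take $n=3$ and $H=K_{1,4}$ with centre $c$ and leaves $l_1,\dots,l_4$. Then $H+K_1\cong K_{1,1,4}$, whose four leaves are pairwise false twins and must occupy distinct parts of any resolving partition, so $\pd(H+K_1)=4$ and the claimed bound reads $\pd(K_3\circ K_{1,4})\ge 9$. However, the six parts $B_a=\{(a,l_1)\}$ for $a=1,2,3$ and $B_{3+j}=\{(1,l_{j+1}),(2,l_{j+1}),(3,l_{j+1}),(j,c)\}$ for $j=1,2,3$ form a resolving partition of $K_3\circ K_{1,4}$: each $B_{3+j}$ meets all three copies, so the coordinates $d(v,B_4),d(v,B_5),d(v,B_6)$ of $r(v|\Pi)$ merely record which of these parts contains $v$, while inside each part the coordinates $\bigl(d(v,B_1),d(v,B_2),d(v,B_3)\bigr)$ separate the vertices, because $d\bigl((a,l_i),B_b\bigr)$ equals $2$ when $b=a$ (and $i\ne 1$) and $1$ when $b\ne a$, whereas $d\bigl((a,c),B_b\bigr)=1$ for every $b$. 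Hence $\pd(K_3\circ K_{1,4})\le 6<9$. This example also refutes the refinement you propose (that each copy needs $\pd(H+K_1)-1$ \emph{internal} parts): here copy $1$ owns only the single internal part $B_1$. So your instinct that the per-copy-to-global bookkeeping is the crux was exactly right, but no bookkeeping can save the inequality; only the upper bound and the weaker consequence $\pd(K_n\circ H)\ge \pd(H+K_1)-1$, obtained from a single copy, survive.
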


\begin{proof}
By \cite[Theorem 2.2]{Campanelli-2017} we have that $\pd(K_n\circ H)\le n\cdot \pd(H+K_1)$. On the other hand, let $\Pi_2=\{B_1,B_2,...,B_s\}$ be a partition basis for $K_n\circ H$. Since every two vertices $(u_i,v_j)$ and $(u_i,v_l)$ have the same distance (distance one) to every other vertex $(u_g,v_h)$ where $g\ne i$, it follows that $(u_i,v_j), (u_i,v_l)$ must be resolved by some set $B_f\in \Pi_2$ such that $B_f\cap (\{u_i\}\times V(H))\ne \emptyset$ and $B_f\cap ((V(K_n)-\{u_i\})\times V(H))=\emptyset$. Thus, the restricted partition $\Pi_2^{(i)}$ of $\{u_i\}\times V(H)$ according to $\Pi_2$ is a resolving partition for $^{u_i}H$ such that the resolvability of any two vertices is done throughout different distances which are only taking the values one or two. Thus, the sets in $\Pi_2^{(i)}$ together with a set formed by a singleton vertex adjacent to every vertex of $^{u_i}H$ form a resolving partition for $^{u_i}H+K_1$. As a consequence, $\pd(^{u_i}H+K_1)\le |\Pi_2^{(i)}|+1$. Notice that each extra set in the described resolving partition for $^{u_i}H+K_1$ corresponding to the singleton vertex is counted for other vertex $u_j$ of $K_n$. As a consequence we obtain that
$$\pd(K_n\circ H)=|\Pi_2|=\sum_{i=1}^n\left(|\Pi_2^{(i)}|+1\right)-n\ge \sum_{i=1}^n \pd(H+K_1)-n=n\cdot \pd(H+K_1)-n.$$
Therefore, the proof is completed.
\end{proof}

\subsection{Strong partition dimension}

The notion of the strong metric dimension was born in connection with finding a combination of the two metric parameters partition dimension and strong metric dimension, and in consequence, to give more insight into both parameters.

A set $W$ of vertices of $G$ {\em strongly resolves} two different vertices $x,y\notin W$ if either $d_G(x,W)=d_G(x,y)+d_G(y,W)$ or $d_G(y,W)=d_G(y,x)+d_G(x,W)$.  An ordered vertex partition $\Pi=\left\{U_1,U_2,...,U_k\right\}$ of a graph $G$ is a {\em strong resolving partition} for $G$, if every two distinct vertices of $G$, belonging to the same set of the partition, are strongly resolved by some set of $\Pi$. A strong resolving partition of minimum cardinality is called a {\em strong partition basis}, and its cardinality the \emph{strong partition dimension} of $G$, denoted by $\pd_s(G)$. The strong partition dimension of graphs was introduced in  \cite{Yero-2014-a}. An example of a strong resolving partition in a graph is given in Figure \ref{Fig:strong-resolving-part}.

\begin{figure}[ht]
\centering
\begin{tikzpicture}[scale=.7, transform shape]
\node [draw, shape=circle,fill=red] (a1) at  (-4,-1) {};
\node [draw, shape=circle,fill=blue] (a2) at  (-4,1) {};
\node [draw, shape=circle,fill=blue] (a3) at  (-3,0) {};
\node [draw, shape=circle,fill=blue] (a4) at  (-1,0) {};
\node [draw, shape=circle,fill=blue] (a5) at  (0,-1) {};
\node [draw, shape=circle,fill=orange] (a6) at  (0,1) {};
\node [draw, shape=circle,fill=green] (a10) at  (4,-1) {};
\node [draw, shape=circle,fill=orange] (a9) at  (4,1) {};
\node [draw, shape=circle,fill=orange] (a8) at  (3,0) {};
\node [draw, shape=circle,fill=orange] (a7) at  (1,0) {};

\draw(a3)--(a1)--(a2)--(a3)--(a4)--(a5)--(a7)--(a8)--(a9)--(a10)--(a8);
\draw(a4)--(a6)--(a7);
\end{tikzpicture}
\caption{A strong resolving partition of the smallest possible cardinality is given. Vertices equally colored belong to a same set in the partition.}\label{Fig:strong-resolving-part}
\end{figure}
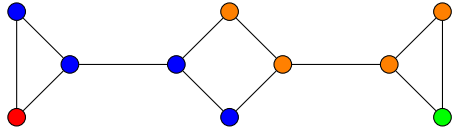

A first natural question on this direction concerns the existence of strong resolving partitions in any connected graphs $G$. This can be easily noted, as in the case of the resolving partitions. If $S=\{v_1,\dots,v_k\}$ is a strong resolving set for $G$, then the partition $\Pi=\{\{v_1\},\dots,\{v_k\},V(G)\setminus S\}$ is readily seen to be a strong resolving partition for $G$. Thus, the following result from \cite{Yero-2014-a} is easy to deduce.

\begin{theorem}{\em \cite{Yero-2014-a}}\label{th:bound-sdim-spd}
For any connected graph $G$, $\pd_s(G)\le \sdim(G)+1$.
\end{theorem}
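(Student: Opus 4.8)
The plan is to mimic the well-known construction which yields the analogous bound $\pd(G) \le \dim(G)+1$ for the classical partition dimension, but now using a strong metric basis in place of an ordinary metric basis. First I would fix a strong metric basis $S = \{v_1, \dots, v_k\}$ of $G$, so that $k = \sdim(G)$, and form the ordered partition $\Pi = \{\{v_1\}, \dots, \{v_k\}, V(G)\setminus S\}$, which has exactly $k+1$ parts (if $S = V(G)$, simply drop the empty last part; the bound is then even easier). It suffices to verify that $\Pi$ is a strong resolving partition, since then $\pd_s(G) \le |\Pi| = k+1 = \sdim(G)+1$.

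To check that $\Pi$ is a strong resolving partition, take any two distinct vertices $x, y$ lying in the same part of $\Pi$. Since each singleton part contains only one vertex, the only way this can happen is $x, y \in V(G)\setminus S$. Because $S$ is a strong resolving set, there is some $v_i \in S$ that strongly resolves $x$ and $y$, i.e. $d_G(v_i, x) = d_G(v_i, y) + d_G(y, x)$ or $d_G(v_i, y) = d_G(v_i, x) + d_G(x, y)$. The key observation is that, since $\{v_i\}$ is a singleton part, $d_G(x, \{v_i\}) = d_G(x, v_i)$ and likewise for $y$; hence the defining inequality for a set strongly resolving a pair, namely $d_G(x, \{v_i\}) = d_G(x,y) + d_G(y, \{v_i\})$ or the symmetric one, holds verbatim. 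Therefore the part $\{v_i\} \in \Pi$ strongly resolves $x$ and $y$, as required.

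The argument is essentially immediate once the right construction is written down, so there is no serious obstacle; the only point that needs a moment of care is matching the two slightly different notions of ``strongly resolves'' — the vertex-version (a vertex $w$ strongly resolves $u,v$) used to define $\sdim$, and the set-version (a set $W$ strongly resolves $u,v$) used to define $\pd_s$ — and observing that for a singleton set $W = \{w\}$ the two coincide because $d_G(\cdot, W) = d_G(\cdot, w)$. One should also note that $x, y \notin \{v_i\}$, which is automatic since $x, y \in V(G)\setminus S$, so the set-version is applicable. This completes the proof.
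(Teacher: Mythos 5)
Your proof is correct and follows exactly the construction the paper uses: take a strong metric basis $S=\{v_1,\dots,v_k\}$ and form the partition $\Pi=\{\{v_1\},\dots,\{v_k\},V(G)\setminus S\}$, which the paper asserts is ``readily seen'' to be a strong resolving partition. Your verification of the details (only pairs in $V(G)\setminus S$ need checking, and for a singleton part the set-version and vertex-version of strong resolution coincide) fills in precisely what the paper leaves implicit.
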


Similarly to the case of the classical partition dimension, no results concerning its complexity is known, although it seems to be a more tractable problem than that of the partition dimension. For instance, while finding the partition dimension of trees seems to be challenging problem, the strong partition dimension of trees can be easily found, and it is indeed equal to the number of its leaves minus one. This result and several other ones more, that can be easily deduced, are mainly related with the tool of the strong resolving graph of a graph, which is also very powerful for studying the strong metric dimension of graphs, although for the strong partition case, there is not an equality as the one in Theorem \ref{th:strong-dim-cover}. This is based on the following bounds given in \cite{Yero-2014-a} where $\omega(G)$ represents the \emph{clique number} of $G$, \emph{i.e.}, the largest set that induces a complete graph in $G$.

\begin{theorem}{\em \cite{Yero-2014-a}}\label{th:bound-clique-pd-cover}
For any connected graph $G$, $\omega(G_{SR})\le \pd_s(G)\le \alpha(G_{SR})+1$.
\end{theorem}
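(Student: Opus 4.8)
The plan is to prove the two inequalities separately; the right-hand one is essentially free, while the left-hand one rests on a short structural observation about mutually maximally distant vertices.

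For $\pd_s(G)\le\alpha(G_{SR})+1$ I would simply combine two results already at hand: Theorem~\ref{th:bound-sdim-spd} gives $\pd_s(G)\le\sdim(G)+1$, and Theorem~\ref{th:strong-dim-cover} gives $\sdim(G)=\alpha(G_{SR})$, so chaining them yields the bound in one line.

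For $\omega(G_{SR})\le\pd_s(G)$, the first step I would isolate is the following ``set versus vertex'' fact: if a set $W\subseteq V(G)$ strongly resolves a pair $x,y\in V(G)\setminus W$ in the partition sense, then some single vertex $w\in W$ strongly resolves $x,y$ in the usual (vertex) sense. Assuming say $d_G(x,W)=d_G(x,y)+d_G(y,W)$, I would take $w\in W$ with $d_G(y,w)=d_G(y,W)$ and squeeze $d_G(x,w)$ between $d_G(x,y)+d_G(y,w)$ (triangle inequality) and $d_G(x,W)\le d_G(x,w)$, forcing $d_G(x,w)=d_G(x,y)+d_G(y,w)$. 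With this in hand, let $C$ be a clique of $G_{SR}$ with $|C|=\omega(G_{SR})$; by definition of $G_{SR}$ every two distinct vertices of $C$ are mutually maximally distant in $G$, hence, as recalled in Section~\ref{Sec:strong-dim}, are strongly resolved only by themselves. Given any strong resolving partition $\Pi=\{U_1,\dots,U_k\}$, I would then show that no two vertices of $C$ can lie in a common part: if $u,v\in C$ with $u\ne v$ both belonged to $U_i$, then since $\Pi$ is strong resolving some part $U_j$ would strongly resolve $u,v$, necessarily with $j\ne i$ (so $u,v\notin U_j$), whence by the observation some $w\in U_j$ strongly resolves $u,v$; but then $w\in\{u,v\}\subseteq U_i$, contradicting $U_i\cap U_j=\varnothing$. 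Hence $C$ meets each part of $\Pi$ in at most one vertex, so $k\ge|C|=\omega(G_{SR})$, and since $\Pi$ was arbitrary, $\pd_s(G)\ge\omega(G_{SR})$.

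The only step that involves any real (though routine) metric computation is the ``set versus vertex'' fact, and the single subtlety there is to pick the vertex of $W$ at which the relevant distance $d_G(\cdot,W)$ is attained; everything else is bookkeeping with the definitions of strong resolving partition and of the strong resolving graph, together with the already-recorded property that mutually maximally distant vertices are strongly resolved only by themselves.
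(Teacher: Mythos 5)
Your proposal is correct and follows essentially the same route as the paper: the upper bound is obtained by chaining $\pd_s(G)\le\sdim(G)+1$ with $\sdim(G)=\alpha(G_{SR})$, and the lower bound rests on the observation that any two mutually maximally distant vertices must lie in different parts of any strong resolving partition, so a maximum clique of $G_{SR}$ meets each part at most once. Your ``set versus vertex'' reduction is simply an explicit justification of that observation, which the paper leaves implicit.
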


The upper bound above follows from Theorems \ref{th:strong-dim-cover} and \ref{th:bound-sdim-spd}. Also, the lower bound is obtained since any two mutually maximally distant vertices of a graph $G$ must belong to two different sets in any strong resolving partition for $G$.

Realization results concerning $\sdim(G)$ and $\pd_s(G)$ were first presented in \cite{Yero-2014-a}. There was proved that for any integers $r,t,n$ such that $3\le r\le t\le \frac{n+r-2}{2}$ there exists a connected graph $G$ of order $n$ with $\pd_s(G)=r$ and $\sdim(G)=t$. In connection with this, author of \cite{Yero-2014-a} asked if it is true that $\sdim(G)\le \displaystyle\frac{\pd_s(G)+n-2}{2}$ for every nontrivial connected graph $G$ of order $n$. A negative answer to this question was given in \cite{Kuziak-2020} where it was proved the following.

\begin{theorem}\emph{\cite{Kuziak-2020}}
\label{th:realization-pds-dims}
For any integers $r,t,n$ such that $2\le r\le t\le 2r-3$, there exists a connected graph $G$ of order $n$ with $\pd_s(G)=r$ and $\sdim(G)=t$.
\end{theorem}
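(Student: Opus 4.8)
The plan is to realize the pair $(\pd_s(G),\sdim(G))=(r,t)$ by building a graph $G$ whose strong resolving graph is completely under control, then reading $\sdim(G)$ off Theorem~\ref{th:strong-dim-cover} and squeezing $\pd_s(G)$ between the two bounds of Theorem~\ref{th:bound-clique-pd-cover}. Observe first that $r\le t\le 2r-3$ forces $r\ge 3$. Set $a=t-r+2$ and $b=r$; the hypotheses translate exactly into $2\le a\le b-1$, and in particular the strict inequality $a<b$ is equivalent to $t\le 2r-3$ (this is what will supply a ``spare'' part later). Take the complete bipartite graph $K_{a,b}$ with parts $A=\{a_1,\dots,a_a\}$ and $B=\{b_1,\dots,b_b\}$; since $(K_{a,b})_{SR}\cong K_a\cup K_b$, this already has strong metric dimension $(a-1)+(b-1)=t$. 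To reach a prescribed order $n$ (necessarily $n\ge t+2$, since $\sdim(G)=t$ together with $\pd_s(G)=r<t+1$ already rules out $n=t+1$), attach to $b_1$ a pendant path $b_1-w_1-\cdots-w_k$ with $k=n-t-2\ge 0$ and let $G$ be the result, so $|V(G)|=n$.

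The first step is to pin down $G_{SR}$. Checking which vertices are maximally distant from which, one sees that the vertices of $A$ remain pairwise mutually maximally distant and are mutually maximally distant with nothing else; that the leaf $w_k$ is mutually maximally distant with exactly $b_2,\dots,b_b$ (not with $b_1$, because of the edge $b_1w_1$); that $b_2,\dots,b_b$ are still pairwise mutually maximally distant; and that $b_1,w_1,\dots,w_{k-1}$ leave the boundary (an interior vertex of a pendant path is never maximally distant, and the pendant edge at $b_1$ keeps $b_1$ from being maximally distant from the other $b_j$). Hence $G_{SR}\cong K_a\cup K_b\cup\overline{K_k}$, the second clique spanned by $\{b_2,\dots,b_b,w_k\}$ and the $k$ isolated vertices being $b_1,w_1,\dots,w_{k-1}$ (for $k=0$ this is just $(K_{a,b})_{SR}$). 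Theorem~\ref{th:strong-dim-cover} then gives $\sdim(G)=\alpha(G_{SR})=(a-1)+(b-1)=t$.

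For $\pd_s(G)$, Theorem~\ref{th:bound-clique-pd-cover} gives $\pd_s(G)\ge\omega(G_{SR})=\max\{a,b\}=b=r$ at once. For the reverse inequality I would exhibit a strong resolving partition $\Pi=\{P_1,\dots,P_r\}$: put $a_1$, $w_k$ and all of $b_1,w_1,\dots,w_{k-1}$ into $P_1$; put $a_i$ into $P_i$ for $2\le i\le a$; and put $b_i$ into $P_i$ for $2\le i\le b$. Each mutually-maximally-distant pair then lies in two different parts (here one uses that $w_k$ may legitimately share $P_1$ with $b_1$, and that the clique $\{b_2,\dots,b_b,w_k\}$ has exactly $b=r$ vertices, so it fills the $r$ parts with none to spare while $A$, having only $a<b$ vertices, leaves the parts $P_{a+1},\dots,P_b$ as pure singletons $\{b_\ell\}$). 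So it remains only to strongly resolve the monochromatic pairs that are not mutually maximally distant, namely the pairs $\{a_i,b_i\}$ with $2\le i\le a$ and the pairs inside $P_1$. For every one of these a short distance computation shows that some singleton part $P_\ell=\{b_\ell\}$ with $\ell>a$ strongly resolves it; such an $\ell$ exists precisely because $a<b$, and the checks are uniform since $d_G(x,b_\ell)$ equals $1$ for every $x\in A$ and equals $d_G(x,b_1)+2$ for every $x\in\{b_1,w_1,\dots,w_k\}$. Thus $\pd_s(G)\le r$, and combined with the lower bound we get $\pd_s(G)=r$ and $\sdim(G)=t$, as desired.

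The step I expect to be the main obstacle is not any single inequality but the combined bookkeeping of the last two paragraphs: nailing down precisely how the pendant path reshapes the boundary and the mutually-maximally-distant relation (in particular that attaching it at $b_1$ simultaneously expels $b_1$ from $\partial(G)$ and pulls $w_k$ into the clique formerly carried by $B$), and then verifying the strong-resolution condition across all shapes of monochromatic pairs and all positions $w_i$ on the pendant path. Nothing here is conceptually deep, but it demands a careful, uniform treatment of the distance function of $G$; the cleanest way to make the verification mechanical is to record the two facts ``$d_G(x,b_\ell)=1$ for $x\in A$'' and ``$d_G(x,b_\ell)=d_G(x,b_1)+2$ for $x\in\{b_1,w_1,\dots,w_k\}$'' once and for all, and then reduce every strong-resolution check to an arithmetic identity among these quantities.
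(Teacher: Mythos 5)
Your construction---the complete bipartite graph $K_{a,b}$ with $a=t-r+2<b=r$ together with a pendant path attached to one vertex, padding the order up to $n$---is precisely the graph $G_{a,b,c}$ described in the paper as the one used in \cite{Kuziak-2020}, and your route to the two values (compute $G_{SR}\cong K_a\cup K_b\cup\overline{K_k}$, read off $\sdim(G)=\alpha(G_{SR})=t$ from Theorem~\ref{th:strong-dim-cover}, get $\pd_s(G)\ge\omega(G_{SR})=r$ from Theorem~\ref{th:bound-clique-pd-cover}, and exhibit an explicit $r$-part strong resolving partition using the singleton parts $\{b_\ell\}$, $\ell>a$, whose existence is exactly what $t\le 2r-3$ guarantees) is the standard argument for this construction. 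The distance bookkeeping and all the strong-resolution checks are correct, so this is a sound proof taking essentially the same approach as the paper.
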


To prove the result above authors constructed a graph $G_{a,b,c}$ as follows. Begin with a complete bipartite graph $K_{b,c}$ with bipartition sets $B,C$ such that $|B|=b$ and $|C|=c$. Now, if $a>0$, then construct the graph $G_{a,b,c}$ by adding a path $P_a$ and joining with an edge one of the leaves of $P_a$ with one vertex of the set $C$. If $a=0$, then $G_{a,b,c}$ is simply taken as $K_{b,c}$.

Clearly, Theorem \ref{th:realization-pds-dims} raised up a question on whether $\sdim(G)\le 2\pd_s(G)-3$ for every nontrivial connected graph $G$. Moreover, one could consider the question on whether $\sdim(G)$ can be bounded by a constant factor of $\pd_s(G)$.

The results on the strong partition dimension of graphs are not much, and one can indeed find in the literature only three works dealing with this topic: the seminal article \cite{Yero-2014-a} and \cite{Kuziak-2020,Yero-2016}. The main part of results on this parameter are centered into two directions: bounds and closed formulae for specific families of graph, and the strong partition dimension on some product graphs.

With respect to bounding $\pd_s(G)$, the first bounds that we may consider are the trivial ones. That is, for any connected graph $G$ of order $n$, $2\le \pd_s(G)\le n$. The limit case of this trivial bounds can be easily characterized as follows.

\begin{itemize}
  \item $\pd_s(G)=2$ if and only if $G$ is a path $P_n$.
  \item $\pd_s(G)=n$ if and only if $G$ is a complete graph $K_n$.
\end{itemize}

A characterization of graphs $G$ for which $\pd_s(G)=n$ was given in \cite{Yero-2014-a}. That is, $\pd_s(G)=n-1$ if and only if $G\cong P_3$, $G\cong C_4$, $G\cong K_n-e$\footnote{$K_n-e$ is the graph obtained from $K_n$ by deleting one edge.} or $G\cong K_1+\bigcup_{i}K_{n_i}$, $i>1$, $n_i\ge 1$ for every $i$ and $\sum_{i}n_i=n-1$.

For some other interesting results on $\pd_s(G)$, Table \ref{tab:spd-values-bounds} shows a few of them.

\begin{table}[ht]
  \centering
  \small{
  \begin{tabular}{|c|c|c|}
    \hline
    \textbf{Graphs} $\mathbf{G}$ & $\mathbf{\pd_s(G)}$ & \textbf{Reference} \\ \hline
    Complete graph $K_n$ & $=n$ & \cite{Yero-2014-a} \\ \hline
    Cycle graph $C_n$ & $=3$ & \cite{Yero-2014-a} \\ \hline
    Tree $T$, $l(T)$ leaves & $=l(T)-1$ & \cite{Yero-2014-a} \\ \hline
    Grid graph $P_r\Box P_t$, $r,t\ge 2$ & $=3$ & \cite{Yero-2014-a} \\ \hline
    $\begin{array}{c}
      \mbox{Complete bipartite graph $K_{r,t}$} \\
      1\le r\le t
    \end{array}$ & $\begin{array}{ll}
                         =t, & \mbox{if $r<t$} \\
                         =t+1, & \mbox{if $r=t$}
                       \end{array}$ & \cite{Kuziak-2020} \\ \hline
    Wheel graph $W_{1,r}$, $r\ge 4$ & $\begin{array}{ll}
                         =3, & \mbox{if $r=4$} \\
                         =\left\lceil\frac{r}{2}\right\rceil, & \mbox{if $r\ge 5$}
                       \end{array}$ & \cite{Kuziak-2020} \\ \hline
    Unicyclic graph $G$, $l(G)=1$ & $=3$ & \cite{Yero-2014-a} \\ \hline
    Unicyclic graph $G$, $l(G)\ge 2$ & $\begin{array}{c}
                                                            \ge l(G) \\
                                                            \le l(G)+2
                                                          \end{array}$ & \cite{Yero-2014-a} \\ \hline
    Corona graph $G\odot H$ & $\ge |V(G)| \pd_s(H)$ & \cite{Kuziak-2020} \\ \hline
    Corona graph $G\odot H$, $D(H)=2$ & $=|V(G)| \pd_s(H)$ & \cite{Kuziak-2020} \\ \hline
    Strong product graph $G\boxtimes H$, & $\begin{array}{l}
                                                            \ge \omega(G_{SR})\omega(H_{SR}) \\
                                                            \le |V(G)|\alpha(H_{SR})+|V(H)|\alpha(G_{SR})\\
                                                           \hspace*{0.35cm}-\alpha(G_{SR})\alpha(H_{SR})+1
                                                          \end{array}$ & \cite{Yero-2016} \\ \hline
    Strong product graph $G\boxtimes H$ & $\le \pd_s(G)\pd_s(H)$ & \cite{Yero-2016} \\ \hline
    Strong product of trees $T_1\boxtimes T_2$ & $=l(T_1)l(T_2)$ & \cite{Yero-2016} \\ \hline
    Cartesian product graph $G\Box H$, & $\begin{array}{l}
                                                            \ge \min\{\omega(G_{SR}),\omega(H_{SR})\} \\
                                                            \le \min \{\alpha(G_{SR})|\partial(H)|,|\partial(G)|\alpha(H_{SR})\}+1
                                                          \end{array}$ & \cite{Yero-2016} \\ \hline
  \end{tabular}}
  \caption{Bounds and closed formulae for the strong partition dimension of some graphs.}\label{tab:spd-values-bounds}
\end{table}

\subsection{Local partition dimension}

The local partition dimension of graphs was first introduced under the name of metric chromatic number, and it is indeed the partition version of the local metric dimension of graphs, although the local metric dimension appeared later than the metric chromatic number. It was first introduced in \cite{Chartrand-2009}, and it has been also recently rediscovered under the name of local partition dimension in \cite{Alfarisi-2020}. We need to be careful to not confuse this parameter with a related one called locating-chromatic number first presented in \cite{Chartrand-2002}.

Suppose that $c : V(G) \rightarrow \{1,\dots,k\}$ is a $k$-coloring of a connected graph $G$ for some positive integer $k$ where
adjacent vertices may have the same color and let $V_1,\dots,V_k$ be the resulting color classes. For every vertex $v\in V(G)$, we associate a $k$-vector denoted as $code(v)=(a_1, \dots, a_k)$ and call it the \emph{metric color code} of $v$, such that for every $i\in\{1,\dots,k\}$, we have $a_i = d_G(v, V_i)$. If $code(u)\ne code(v)$ for every two adjacent vertices $u, v\in V(G)$, then $c$ is called a \emph{metric} $k$-\emph{coloring} of $G$. The smallest value $k$ for which $G$ has a metric $k$-coloring is called the \emph{metric chromatic number} of $G$ and was denoted by $\mu(G)$ in \cite{Chartrand-2009}.

Notice that the parameter above can be defined also in the following way. Let $\Pi=\{P_1,\dots,P_k\}$ be an ordered vertex partition of $V(G)$. Then $\Pi$ is a metric $k$-coloring (or a \emph{local resolving partition}) for $G$, if for every pair of adjacent vertices $u,v\in V(G)$ there is a set $P_i\in \Pi$ such that $d_G(u,P_i)\ne d_G(v,P_i)$. In such case, it is said that $u,v$ are \emph{locally resolved} by $P_i$. The cardinality of a smallest local resolving partition for $G$ is the \emph{metric chromatic number} (or the \emph{local partition dimension}) of $G$. In order to be consequent with the remaining terminology on resolving partitions for graphs, from now on, we shall use the terms local resolving partitions and local partition dimension, and denote the parameter as $\pd_l(G)$. Figure \ref{Fig:local-resolving-part} shows a graph with a local resolving partition of minimum cardinality (3), where vertices equally colored belong to a same set of the partition.

\begin{figure}[ht]
\centering
\begin{tikzpicture}[scale=.7, transform shape]
\node [draw, shape=circle,fill=red] (a1) at  (-4,-1) {};
\node [draw, shape=circle,fill=blue] (a2) at  (-4,1) {};
\node [draw, shape=circle,fill=red] (a3) at  (-3,0) {};
\node [draw, shape=circle,fill=blue] (a4) at  (-1,0) {};
\node [draw, shape=circle,fill=blue] (a5) at  (0,-1) {};
\node [draw, shape=circle,fill=blue] (a6) at  (0,1) {};
\node [draw, shape=circle,fill=green] (a10) at  (4,-1) {};
\node [draw, shape=circle,fill=blue] (a9) at  (4,1) {};
\node [draw, shape=circle,fill=green] (a8) at  (3,0) {};
\node [draw, shape=circle,fill=blue] (a7) at  (1,0) {};

\draw(a3)--(a1)--(a2)--(a3)--(a4)--(a5)--(a7)--(a8)--(a9)--(a10)--(a8);
\draw(a4)--(a6)--(a7);
\end{tikzpicture}
\caption{A local resolving partition of the smallest possible cardinality is given. Vertices equally colored belong to a same set in the partition.}\label{Fig:local-resolving-part}
\end{figure}
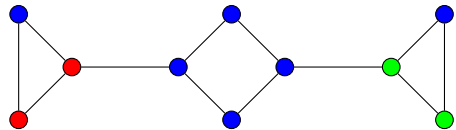

Similarly to the case of the related resolving partitions, one can easily construct a local resolving partition for any connected graph $G$, by just considering any local resolving set $S=\{v_1,\dots,v_k\}$ and making the the partition $\Pi=\{\{v_1\},\dots,\{v_k\},V(G)\setminus S\}$. This clearly leads to an analogous result to \eqref{eq:pd-dim} and Theorem \ref{th:bound-sdim-spd}.

\begin{theorem}
For any connected graph $G$, $\pd_l(G)\le \dim_{\ell}(G)+1$.
\end{theorem}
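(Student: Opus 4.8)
The plan is to transcribe, essentially verbatim, the construction sketched in the paragraph immediately preceding the statement, which already produced the analogous bounds \eqref{eq:pd-dim} for $\pd(G)$ and Theorem~\ref{th:bound-sdim-spd} for $\pd_s(G)$. First I would fix a local metric basis $S=\{v_1,\dots,v_k\}$ of $G$, so that $k=\dim_{\ell}(G)$, and define $\Pi=\{P_1,\dots,P_k,P_{k+1}\}$ by $P_i=\{v_i\}$ for $1\le i\le k$ and $P_{k+1}=V(G)\setminus S$. Since $\dim_{\ell}(G)\le \dim(G)\le n-1$, the class $P_{k+1}$ is nonempty, so $\Pi$ is a genuine ordered vertex partition into $k+1=\dim_{\ell}(G)+1$ classes; it then remains only to verify that $\Pi$ is a local resolving partition.

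For the verification, take any two adjacent vertices $u,v\in V(G)$. If both lie outside $S$, then, $S$ being a local resolving set, there is an index $j$ with $d_G(u,v_j)\ne d_G(v,v_j)$, whence for the singleton class $P_j=\{v_j\}$ we get $d_G(u,P_j)=d_G(u,v_j)\ne d_G(v,v_j)=d_G(v,P_j)$, so $P_j$ locally resolves $u,v$. If instead one endpoint, say $u$, equals some $v_i\in S$, then $d_G(u,P_i)=0$ while $d_G(v,P_i)=d_G(v,v_i)\ge 1$ because $v\ne v_i$, so again $P_i$ locally resolves the pair. In every case some class of $\Pi$ distinguishes the adjacent pair $u,v$, hence $\Pi$ is a local resolving partition and $\pd_l(G)\le|\Pi|=\dim_{\ell}(G)+1$.

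I do not anticipate any genuine obstacle: this is the standard ``metric-basis-to-partition'' trick, and the only two points needing a word of care are that the leftover class $V(G)\setminus S$ is nonempty (which follows from $\dim_{\ell}(G)\le n-1$, a bound recorded earlier in the excerpt) and that adjacent pairs with an endpoint inside $S$ are resolved as well (immediate, since a singleton class assigns distance $0$ precisely to its own vertex). If desired, the two cases can be collapsed into one by noting that for distinct $u,v$ and any $j$ one has $d_G(u,\{v_j\})\ne d_G(v,\{v_j\})$ exactly when $d_G(u,v_j)\ne d_G(v,v_j)$, so the local resolvability of adjacent pairs witnessed by $S$ transfers directly to the singleton classes of $\Pi$.
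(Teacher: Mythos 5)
Your proposal is correct and follows exactly the paper's own argument: the paper obtains this bound by taking a local metric basis $S=\{v_1,\dots,v_k\}$ and forming the partition $\{\{v_1\},\dots,\{v_k\},V(G)\setminus S\}$, just as you do. Your write-up merely spells out the (easy) verification that the paper leaves implicit, so there is nothing to add or correct.
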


Thinking into the metric coloring terminology it is natural to consider the relationship between $\pd_l(G)$ and the chromatic number $\chi(G)$. One can readily see that any proper coloring of a graph $G$ induces a local resolving partition since any two adjacent vertices have different colors, and thus the color classes which they belong resolve them. On the other hand, it is clear that any resolving partition is also a local resolving partition. These facts mean that for any connected graph $G$,
\begin{equation}\label{eq:chromatic-lpd}
\pd_l(G)\le \min\{\pd(G),\chi(G)\}\;\;\mbox{\cite{Chartrand-2009}}
\end{equation}
A natural question would be then characterizing the families of graphs $G$ for which $\pd_l(G)=\pd(G)$ or $\pd_l(G)=\chi(G)$. It is also remarkable the fact that no computational results are known for this parameter, although for some basic families of graphs it is known to be easier to manage. For instance, the case of bipartite graphs is an interesting one, since we can take a partition into two independent sets that will clearly form a local resolving partition, and so $\pd_l(G)=2$ for any bipartite graph $G$. Indeed, these are the only graphs having local partition dimension two.  This was proved in \cite{Chartrand-2009}.

\begin{proposition}\emph{\cite{Chartrand-2009}}
\label{prop:pdl-2}
A nontrivial connected graph $G$ has local partition dimension 2 if and only if $G$ is bipartite.
\end{proposition}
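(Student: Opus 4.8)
The plan is to prove the two implications separately, each by an explicit construction.

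For the ``if'' direction, assume $G$ is bipartite with bipartition $(X,Y)$ and take the partition $\Pi=\{X,Y\}$. Every edge $uv$ of $G$ has one endpoint in $X$ and one in $Y$; the endpoint in $X$ has distance $0$ to $X$ while the endpoint in $Y$ has distance at least $1$ to $X$, so the metric color codes $code(u)$ and $code(v)$ differ in the first coordinate. Hence $\Pi$ is a local resolving partition and $\pd_l(G)\le 2$. (This inequality can also be read off directly from $\pd_l(G)\le\dim_{\ell}(G)+1$ together with the earlier fact that $\dim_{\ell}(G)=1$ for bipartite $G$, or from $\pd_l(G)\le\chi(G)$ in \eqref{eq:chromatic-lpd}.) Since $G$ is nontrivial and connected it contains an edge, which the one-part partition $\{V(G)\}$ fails to resolve (every vertex gets the code $(0)$), so $\pd_l(G)\ge 2$; therefore $\pd_l(G)=2$.

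For the ``only if'' direction, suppose $\pd_l(G)=2$ and fix a local resolving partition $\Pi=\{P_1,P_2\}$, both parts nonempty. First I would simplify the resolvability condition. For any vertex $v$ the code $code(v)=(d_G(v,P_1),d_G(v,P_2))$ has a $0$ in the coordinate indexed by the part containing $v$; consequently an edge whose endpoints lie in distinct parts is automatically resolved (its codes are $(0,1)$ and $(1,0)$, using that adjacency forces the distance to the other part to equal $1$), and the only genuine constraint is that every edge $uv$ with both endpoints in a common part $P_i$ satisfies $d_G(u,P_j)\neq d_G(v,P_j)$, where $\{i,j\}=\{1,2\}$. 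Since the triangle inequality along the edge gives $|d_G(u,P_j)-d_G(v,P_j)|\le 1$, these two distances differ by exactly $1$, hence have opposite parity.

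With this in hand I would produce a proper $2$-colouring of $G$, which shows it is bipartite. Define $c:V(G)\to\{0,1\}$ by letting $c(v)$ be the parity of $d_G(v,P_2)$ when $v\in P_1$, and the \emph{complementary} parity of $d_G(v,P_1)$ when $v\in P_2$. For an edge inside $P_1$ or inside $P_2$ the endpoints receive different colours by the parity statement above; for an edge $uv$ with $u\in P_1$ and $v\in P_2$ adjacency forces $d_G(u,P_2)=d_G(v,P_1)=1$, so $c(u)=1$ and $c(v)=0$. Thus $c$ is proper and $G$ is bipartite. I do not anticipate a real obstacle; the only point needing care is the parity offset between $P_1$ and $P_2$ in the definition of $c$ --- using the same convention on both parts would break properness on the edges joining the two parts --- together with stating the reduction of the local resolving condition to same-part edges accurately.
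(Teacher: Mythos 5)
Your proof is correct. The ``if'' direction coincides with what the survey itself sketches: the bipartition $\{X,Y\}$ is a local resolving partition because every edge is a cross-edge with codes $(0,1)$ and $(1,0)$, and the trivial one-part partition cannot work, so $\pd_l(G)=2$.

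For the ``only if'' direction the survey gives no argument at all --- it simply delegates the converse to \cite{Chartrand-2009} --- so there is nothing in the paper to match your proof against; what you supply is a complete, self-contained replacement. Your two reductions are sound: cross-part edges are automatically resolved (each endpoint has a $0$ exactly in its own coordinate and a $1$ in the other, since adjacency to the other part forces distance exactly $1$), and a same-part edge $uv\subseteq P_i$ must be resolved by $P_j$, where the triangle inequality along the edge forces $|d_G(u,P_j)-d_G(v,P_j)|=1$, hence opposite parities. The explicit $2$-colouring (parity of $d(\cdot,P_2)$ on $P_1$, complementary parity of $d(\cdot,P_1)$ on $P_2$) is then proper on all three edge types --- the parity offset you flag is indeed exactly the point that needs care, and you handle it correctly, since on a cross-edge both relevant distances equal $1$ and the offset separates the colours. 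This global ``build a proper $2$-colouring'' formulation is arguably cleaner than the more common local variant (assume an odd cycle and derive a parity contradiction by walking around it); it buys a direct certificate of bipartiteness rather than a proof by contradiction, at no extra cost. No gaps.
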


An immediate consequence of the Proposition above and \eqref{eq:chromatic-lpd} is as follows. If $G$ is a connected graph with $\chi(G)=3$, then $\pd_l(G)=3$.

Trivial lower and upper bounds for the local partition dimension of a graph $G$ of order $n$ are clearly 2 and $n$, respectively. The graphs with $\pd_l(G)=2$ are characterized in Proposition \ref{prop:pdl-2}, while it is obvious that a connected graph $G$ of order $n$ has local partition dimension $n$ if and only if $G$ is the complete graph $K_n$. In connection with this latter fact, the graphs $G$ for which $\pd_l(G)=n-1$ where also characterized in \cite{Chartrand-2009}. Other bounds for the local partition dimension of graphs are given in Table \ref{tab:bounds-lpd}.

\begin{table}[ht]
  \centering
  \begin{tabular}{|c|c|c|}
    \hline
    \textbf{Graphs} $\mathbf{G}$ & $\mathbf{\pd_l(G)}$ & \textbf{Reference} \\ \hline
    Graph $G$, clique number $\omega(G)$  & $\ge 1+\log_2\omega(G)$ & \cite{Chartrand-2009} \\ \hline
    Graph $G$, order $n$, diameter $d$  & $\le n-d+1$ & \cite{Chartrand-2009} \\ \hline
  \end{tabular}
  \caption{Bounds for the local partition dimension of graphs.}\label{tab:bounds-lpd}
\end{table}

Realization results concerning the existence of graphs with given value of the local partition dimension were presented in \cite{Chartrand-2009}, where was proved for instance that for each pair $k, n$ of integers with $2\le k \le n$, there is a connected graph $G$ of order $n$ with $\pd_l(G) = k$. Another related result from \cite{Chartrand-2009}, which now combines the local partition dimension with the chromatic number is as follows.

\begin{theorem}\emph{\cite{Chartrand-2009}}
For each pair $a, b$ of integers with $2 \le a \le b \le 2^{a-1}$, there exists a connected graph $G$ with $\pd_l(G) = a$ and $\chi(G) = b$.
\end{theorem}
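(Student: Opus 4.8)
The plan is to realise each admissible pair by an explicit graph $G_{a,b}$ and then verify $\chi(G_{a,b})=b$ and $\pd_l(G_{a,b})=a$ separately. The guiding principle is that the chromatic number will be produced by a clique $K_b$, while the (small) local partition dimension will be produced by attaching a few auxiliary vertices that make that clique cheaply resolvable. When $a=b$ the clique $K_b$ itself already works, since $\pd_l(K_b)=\chi(K_b)=b$; so assume $a<b$. In general, take a clique $K$ on vertices $v_1,\dots,v_b$, fix an integer $k$ with $1\le k\le a-1$ (chosen according to where $b$ lies in $[a+1,2^{a-1}]$), and add pairwise non-adjacent auxiliary vertices $w_1,\dots,w_k$, joining $w_i$ to a prescribed proper nonempty subset $S_i\subseteq\{v_1,\dots,v_b\}$. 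The two extreme cases are $k=1$ with $|S_1|=b-a+1$ (used when $b$ is close to $a$) and $k=a-1$ with the $S_i$ forming a binary ``bit'' code of the indices $1,\dots,b$ (used when $b$ is close to $2^{a-1}$), the intermediate cases interpolating between these. If necessary, a single extra pivot vertex $z$ adjacent to $v_1$ and to all $w_i$ is added so that $G_{a,b}$ is connected and has diameter $2$.

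The chromatic number is easy. We have $K_b\subseteq G_{a,b}$ and no larger clique appears (each $w_i$ misses a vertex of $K$ and the $w_i$ are independent), so $\omega(G_{a,b})=b$ and $\chi(G_{a,b})\ge b$; conversely, colour $v_j$ by colour $j$ and colour every $w_i$ (and $z$) by the colour of some clique vertex it is not adjacent to, which exists because $S_i\ne\{v_1,\dots,v_b\}$. Hence $\chi(G_{a,b})=b$. For the upper bound $\pd_l(G_{a,b})\le a$ I would exhibit one explicit partition: the singletons $\{w_1\},\dots,\{w_k\}$, padded out by singletons of clique vertices until there are exactly $a$ classes, together with all remaining vertices as the last class. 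Since $\diam(G_{a,b})=2$ every metric colour code has entries in $\{0,1,2\}$, and one need only check that \emph{adjacent} pairs get distinct codes; the only nontrivial pairs are two clique vertices $v_j,v_{j'}$ in the same class, and they are separated because their codes record the distinct traces $\{i:v_j\in S_i\}$ (in the bit-code case), or because exactly one of them is adjacent to $w_1$ (in the one-auxiliary-vertex case).

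The hard direction is the matching lower bound $\pd_l(G_{a,b})\ge a$, and it is here that the hypothesis $b\le 2^{a-1}$ (equivalently $a\ge 1+\log_2 b$) is used. If $b$ is large, i.e. $2^{a-2}<b\le 2^{a-1}$, it is free: the clique bound $\pd_l(G)\ge 1+\log_2\omega(G)$ recorded in Table~\ref{tab:bounds-lpd} already gives $\pd_l(G_{a,b})\ge\lceil 1+\log_2 b\rceil=a$. For smaller $b$ the clique bound is too weak and one must argue structurally: the clique vertices split into at most $2^{k}$ \emph{true-twin classes} according to which of the $S_i$ contain them, a true-twin class of size $s$ forces $s$ distinct partition classes, and, in addition, whenever a clique vertex adjacent to some $w_i$ shares a partition class with a clique vertex not adjacent to $w_i$, that pair can only be separated by a class disjoint from $K$ that contains $w_i$, which costs a further class.

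Tuning the sizes $|S_i|$ (and the value of $k$) so that these contributions add up to \emph{exactly} $a$ — neither less, which is already excluded by the partition of the second paragraph, nor more — is the delicate combinatorial bookkeeping, and I expect this to be the main obstacle. It is also precisely what determines, for each $k$, which values of $b$ are covered, their union over $1\le k\le a-1$ being exactly $\{a+1,\dots,2^{a-1}\}$. Once that calibration is in place, the two bounds meet and we conclude $\pd_l(G_{a,b})=a$ and $\chi(G_{a,b})=b$, which proves the theorem.
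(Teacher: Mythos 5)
The survey itself offers no proof of this theorem (it is quoted verbatim from \cite{Chartrand-2009}), so your proposal has to stand on its own, and as written it does not: the matching lower bound $\pd_l(G_{a,b})\ge a$ is genuinely proved only at the two ends of the range. Your treatment of $\chi(G_{a,b})=b$, of the upper bound $\pd_l(G_{a,b})\le a$, and of the lower bound when $2^{a-2}<b\le 2^{a-1}$ (via $\pd_l\ge 1+\log_2\omega$) is fine, and your two structural observations are correct: true twins must lie in distinct classes, and two adjacent clique vertices sharing a class can only be separated by a class disjoint from the clique, via the trace of which auxiliary vertices in that class see them. Worked out, these do settle the case $k=1$, $|S_1|=b-a+1$, which covers $a<b\le 2a-2$. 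But for $2a-2<b\le 2^{a-2}$ (the first such instances are $a=6$, $b\in\{11,\dots,16\}$) you never specify $k$ or the sets $S_i$, and you never assemble the observations into a count valid for \emph{every} partition into $a-1$ classes; you explicitly defer this as ``the main obstacle.'' That deferred step is the theorem.

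The gap is not cosmetic, because a natural instance of your own construction fails without the calibration. Take $a=6$, $b=12$, $k=2$, and choose $S_1,S_2$ so that the four twin classes (vertices in neither, only $S_1$, only $S_2$, both) have sizes $3,3,3,3$. Then $\{w_1\}$, $\{w_2\}$, together with three classes each containing one clique vertex from each twin class, is a local resolving partition with only five classes: within each clique class the four vertices have pairwise distinct distance pairs to $(\{w_1\},\{w_2\})$, namely $(2,2),(1,2),(2,1),(1,1)$, and all cross-class pairs are resolved automatically. Hence $\pd_l\le 5<a$. (The unbalanced choice with twin-class sizes $4,4,4,0$ does give $\pd_l=6$, but verifying that requires checking every shape $(t,\,a-1-t)$ of a putative $5$-class partition and every way of grouping $w_1,w_2$ into auxiliary classes.) So what is missing is exactly the combinatorial design problem you flagged: for each pair $(a,b)$ in the middle range, exhibit $k$ and twin-class sizes such that every coarsening of the trace map induced by $t$ auxiliary classes leaves some fibre of size at least $a-t$, together with the minimisation over all partition shapes showing no $(a-1)$-class partition works. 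Until that is done, the argument is a plausible programme rather than a proof.
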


In addition to the above mentioned results, there are some studies on computing the value of the local partition dimension of some families of graphs. Remarkable contributions on this are given in Table \ref{tab:formulas-lpd}.

\begin{table}[ht]
  \centering
  \begin{tabular}{|c|c|c|}
    \hline
    \textbf{Graphs} $\mathbf{G}$ & $\mathbf{\pd_l(G)}$ & \textbf{Reference} \\ \hline
    Cycle graphs $C_n$ & $\begin{array}{cc}
                            2, & \mbox{if $n$ is even}\\
                            3, & \mbox{if $n$ is odd}
                          \end{array}
    $ & \cite{Chartrand-2009} \\ \hline
    Complete $k$-partite graphs $G$, $k\ge 2$ & $k$ & \cite{Chartrand-2009} \\ \hline
  \end{tabular}
  \caption{Exact values of the local partition dimension of some graphs.}\label{tab:formulas-lpd}
\end{table}

\subsection{Some open problems}

\begin{itemize}
  \item Which is the complexity of computing the (strong) partition dimension of graphs?
  \item Is it the case that computing the partition dimension of trees is NP-hard? (notice that the strong partition dimension and the metric chromatic number of trees can be polynomially computed).
  \item Since the partition dimension of Cartesian, lexicographic and strong products of graphs has already been studied, it would be interesting to consider the partition dimension of the last standard product, namely, the direct product of graphs.
  \item Is it true that for every spanning tree $T$ of a unicyclic graph $G$, it follows $\pd(G)\le \pd(T)+1$? (Conjecture \ref{conj:pd-tree-unicyc})
  \item Is it true that $\sdim(G)\le 2\pd_s(G)-3$ for every nontrivial connected graph $G$?
  \item Characterize the families of graphs $G$ for which $\pd_l(G)=\pd(G)$ or $\pd_l(G)=\chi(G)$.
\end{itemize}

\section{Identification of edges versus metric dimension}

The idea of uniquely identifying the edges of a graph was born in connection with watching the lines that could connect pairs of points in a network. It is widely known (and indeed used) that a resolving set has the capability of uniquely identifying the vertices of a network under a distance vertex framework. In \cite{Kelenc-2018}, authors wondered on whether such resolving sets are also able to uniquely recognizing the connections between the vertices. The answer to such question was negative, and for instance they gave the following example. The graph of Figure \ref{example1} shows a graph, where no metric basis uniquely recognizes all the edges.

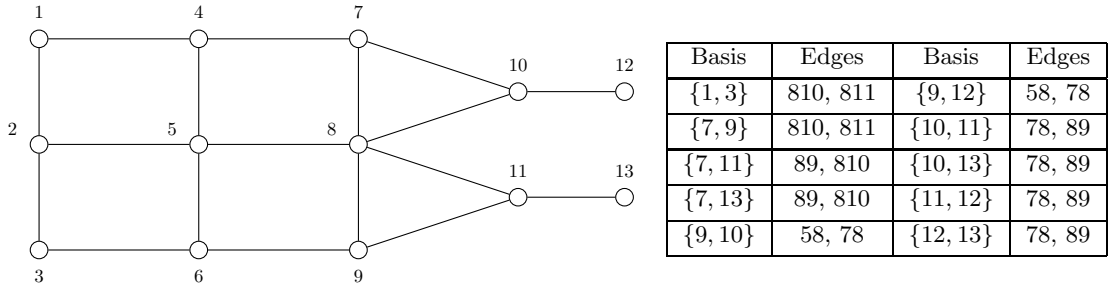
\begin{figure}[ht]
\begin{minipage}[b]{.4\linewidth}
\centering
\begin{tikzpicture}[scale=.7, transform shape]
\node [draw, shape=circle] (a3) at  (0,0) {};
\node [draw, shape=circle] (a2) at  (0,2) {};
\node [draw, shape=circle] (a1) at  (0,4) {};
\node [draw, shape=circle] (a6) at  (3,0) {};
\node [draw, shape=circle] (a5) at  (3,2) {};
\node [draw, shape=circle] (a4) at  (3,4) {};
\node [draw, shape=circle] (a9) at  (6,0) {};
\node [draw, shape=circle] (a8) at  (6,2) {};
\node [draw, shape=circle] (a7) at  (6,4) {};
\node [draw, shape=circle] (a11) at  (9,1) {};
\node [draw, shape=circle] (a10) at  (9,3) {};
\node [draw, shape=circle] (a13) at  (11,1) {};
\node [draw, shape=circle] (a12) at  (11,3) {};

\draw(a1)--(a2)--(a3)--(a6)--(a5)--(a4)--(a7)--(a8)--(a9)--(a11)--(a8)--(a10)--(a7);
\draw(a2)--(a5)--(a8);
\draw(a1)--(a4);
\draw(a6)--(a9);
\draw(a10)--(a12);
\draw(a11)--(a13);

\node at (0,-0.5) {$3$ };
\node at (-0.5,2.3) {$2$ };
\node at (0,4.5) {$1$ };
\node at (3,-0.5) {$6$ };
\node at (2.5,2.3) {$5$ };
\node at (3,4.5) {$4$ };
\node at (6,-0.5) {$9$ };
\node at (5.5,2.3) {$8$ };
\node at (6,4.5) {$7$ };
\node at (9,1.5) {$11$ };
\node at (9,3.5) {$10$ };
\node at (11,1.5) {$13$ };
\node at (11,3.5) {$12$ };
\end{tikzpicture}
\par\vspace{0pt}
\end{minipage}
\begin{minipage}[b]{.75\linewidth}
\centering
\small{
\begin{tabular}{|c|c|c|c|}
  \hline
 Basis & Edges & Basis & Edges \\[2pt]  \hline
 $\{1,3\}$ & $810$, $811$ & $\{9,12\}$ & $58$, $78$ \\[2pt] \hline
 $\{7,9\}$ & $810$, $811$ & $\{10,11\}$ & $78$, $89$ \\[2pt]  \hline
 $\{7,11\}$ & $89$, $810$ & $\{10,13\}$ & $78$, $89$ \\[2pt]  \hline
 $\{7,13\}$ & $89$, $810$ & $\{11,12\}$ & $78$, $89$ \\[2pt]  \hline
 $\{9,10\}$ & $58$, $78$ & $\{12,13\}$ & $78$, $89$ \\[2pt]  \hline
\end{tabular}}
\par\vspace{12pt}
\end{minipage}
\caption{A graph where any metric basis does not recognizes all edges and a table with all metric bases and two edges which are not recognized by the corresponding metric basis.}
\label{example1}
\end{figure}

On the other hand, one could think on the opposite way: that having a set of vertices of a graph that uniquely identifies the edges will also uniquely identify the vertices. However, this is far from reality, and just a simple example of this is the hypercube graph $Q_4$ that can recognize all the edges with just 3 vertices, and whose metric dimension is 4.

In this sense, the idea of identifying the edges of the graphs and the introduction of the concept of edge metric dimension seemed to be worth of studying them, and from the seminal paper \cite{Kelenc-2018} on, the edge metric dimension has indeed become into one of the most popular topics on metric dimension in graphs. In fact, there have appeared several variations of it which has enriched the theory of identifying the edges of graphs. Some examples are for instance, the mixed metric dimension \cite{Kelenc-2017}, the incidence dimension \cite{Bozovic-2018}, the local edge metric dimension \cite{Adawiyah-2020}, the fault tolerant edge metric dimension \cite{Liu-2021}, and the fractional edge metric dimension \cite{Yi-2021}. We must remark that the concept of edge metric dimension from \cite{Kelenc-2018} is not the same as that one appeared in \cite{Nasir-2019} which is nothing more but the classical metric dimension of the line graph of a graph, that is, the notion of edges identifying edges. The parameter defined in \cite{Nasir-2019} was first called edge metric dimension, but after renamed as edge version of metric dimension in \cite{Liu-2018}.

\subsection{Edge metric dimension}

Formally, given a connected graph $G$, a vertex $v\in V$ and an edge $e=xy\in E(G)$, the \emph{distance} between $v$ and $e$ is $d_G(e,v)=\min\{d_G(x,v),d_G(y,v)\}$. Now, a given set of vertices $S\subset V(G)$ is an \emph{edge resolving set} of $G$ if for any pair of distinct edges $e,f\in E(G)$ there is a vertex $v\in S$ such that $d_G(e,v)\ne d_G(f,v)$. An edge resolving set having the smallest possible cardinality is called an edge metric basis and the edge metric dimension of $G$ is the cardinality of any edge metric basis, denoted as $\edim(G)$. Concepts above were first defined in \cite{Kelenc-2018} where edge resolving sets were called \emph{edge metric generators}. Figure \ref{Fig:edge-dim} shows an example of a graph with two edge resolving sets colored, one in red color, a second in blue. This sample graph was computationally found in \cite{Knor-2021}, which is indeed a subdivision of the graph $K_4$ (the complete graph $K_4$ with all the edges subdivided by a vertex). We moreover note that any two vertices of degree two having a common neighbor form an edge metric basis for such graph.

\begin{figure}[h]
\centering
\begin{tikzpicture}[scale=.7, transform shape]
\node [draw, shape=circle] (a1) at  (0,0) {};
\node [draw, shape=circle,fill=blue] (a5) at  (0,1.5) {};
\node [draw, shape=circle] (a9) at  (0,3) {};
\node [draw, shape=circle,fill=red] (a2) at  (4.5,0) {};
\node [draw, shape=circle] (a6) at  (2.5,1.5) {};
\node [draw, shape=circle,fill=blue] (a10) at  (4.5,3) {};
\node [draw, shape=circle] (a7) at  (6.5,1.5) {};
\node [draw, shape=circle] (a4) at  (9,0) {};
\node [draw, shape=circle,fill=red] (a8) at  (9,1.5) {};
\node [draw, shape=circle] (a12) at  (9,3) {};

\draw(a1)--(a2)--(a4)--(a8);
\draw(a1)--(a5)--(a9)--(a10)--(a12)--(a8);
\draw(a9)--(a6)--(a4);
\draw(a1)--(a7)--(a12);
\end{tikzpicture}
\caption{Two edge resolving sets of the smallest possible cardinality are drawn: one red colored and a second one blue colored.}\label{Fig:edge-dim}
\end{figure}
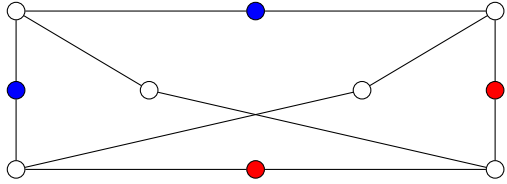

The decision problem concerning computing the edge metric dimension of graphs was proved to be NP-complete in \cite{Kelenc-2018}. The proof of this was highly influenced by the NP-completeness reduction from \cite{Khuller-1996} which uses the 3-SAT problem to achieve the result. On the other hand, it was also proved in \cite{Kelenc-2018}, that the problem of computing the edge metric dimension of graphs can be approximated within a factor of $O(\log m)$ in polynomial time, where $m$ is the number of edges of the graph, by reducing such problem to an instance of the set cover problem. Other computational aspects of computing the edge metric dimension of graphs are given in \cite{Huang-2021}. In such work, authors established a potential function and gave a corresponding greedy algorithm with approximation ratio $1 + \ln n + \ln(\log_2 n)$ for approximating $\edim(G)$, where $n$ is the number of vertices of the graph $G$.

\subsubsection{Bounding the edge metric dimension}

Based on the NP-hardness of finding the edge metric dimension of graphs, it is natural to consider bounding it for general graphs, as well as, for particular classes, or by introducing some restrictions on the graphs. The first trivial bounds are clearly as follows (given in \cite{Kelenc-2018}). For any graph $G$ of order $n$,
$$1\le \edim(G)\le n-1.$$
Similarly to the case of the classical metric dimension, it is easy to deduce that $\edim(G)=1$ if and only if $G$ is a path of order at least two. In contrast to this fact, characterizing the graphs $G$ for which $\edim(G)=n-1$ turned out to be a challenging problem (note that graphs of order $n$ with metric dimension $n-1$ are only the complete graphs - see \cite{Chartrand-2000-a}). In \cite{Kelenc-2018}, only a few independent necessary or sufficient conditions for having $\edim(G)=n-1$ were given. For instance, there was proved that if $G$ is a connected graph of order $n$ and $\edim(G)=n-1$, then for every $u, v \in V(G)$, $u \neq v$, it holds $N_G(u) \cap N_G(v) \neq \emptyset$. Also, for the graph $G$, if there is a vertex $v \in V(G) $ of degree $n-1$, then either $\edim(G) = n-1$ or $\edim(G)=n-2$. Moreover, if there are two distinct vertices $u, v \in V(G) $ of degree $n-1$, then $\edim(G) = n-1$ as well. In the same direction, the following was shown in \cite{Zubrilina-2018}.

\begin{theorem}\emph{\cite{Zubrilina-2018}}
Let $G$ be a graph of order $n$. Then $\edim(G)=n-1$ if and only if for any distinct $u,v\in V(G)$ there exists $w\in V(G)$ such that $uw,vw\in E(G)$ and $w$ is adjacent to all non-mutual neighbors of $u,v$.
\end{theorem}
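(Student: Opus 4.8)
The plan is to funnel both implications through a single distance computation about pairs of edges that share a vertex. I would first fix distinct vertices $u,v\in V(G)$ together with a common neighbour $w$ of $u$ and $v$, so that $uw$ and $vw$ are two distinct edges. Since $w$ is adjacent to both $u$ and $v$, for every vertex $z$ we have $|d_G(u,z)-d_G(w,z)|\le 1$ and $|d_G(v,z)-d_G(w,z)|\le 1$, and hence $d_G(uw,z)=\min\{d_G(u,z),d_G(w,z)\}$ equals $d_G(w,z)-1$ precisely when $d_G(u,z)=d_G(w,z)-1$ and equals $d_G(w,z)$ otherwise, with the analogous statement for $vw$. It follows that a vertex $z$ resolves $\{uw,vw\}$ if and only if exactly one of ``$d_G(u,z)<d_G(w,z)$'' and ``$d_G(v,z)<d_G(w,z)$'' holds; in particular $u$ and $v$ themselves always resolve this pair. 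The heart of the argument would then be the equivalence
\[
w \text{ is adjacent to every vertex of } N_G(u)\triangledown N_G(v) \iff \text{no vertex outside } \{u,v\} \text{ resolves } \{uw,vw\}.
\]

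To prove ``$\Rightarrow$'' of this equivalence I would take $z\notin\{u,v\}$ with $d_G(u,z)<d_G(w,z)$; then $d_G(u,z)\ge 1$, so the vertex $u'$ following $u$ on a shortest $u$--$z$ path satisfies $u'\in N_G(u)$ and $d_G(u',z)=d_G(u,z)-1$. If $u'$ belonged to $N_G(u)\triangledown N_G(v)$, the hypothesis would give $w\sim u'$ and hence $d_G(w,z)\le 1+d_G(u',z)=d_G(u,z)$, a contradiction; so $u'\in N_G(u)\cap N_G(v)$, and therefore $d_G(v,z)\le 1+d_G(u',z)=d_G(u,z)<d_G(w,z)$. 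By the symmetric argument, ``$d_G(u,z)<d_G(w,z)$'' and ``$d_G(v,z)<d_G(w,z)$'' hold together or not at all for every $z\notin\{u,v\}$, so no such $z$ resolves $\{uw,vw\}$. For ``$\Leftarrow$'' I would take $z\in N_G(u)\triangledown N_G(v)$, say $z\in N_G(u)\setminus N_G(v)$ (the other case being symmetric, since the whole setup is symmetric in $u,v$); then $z\ne w$ because $w\in N_G(v)$, and if $z\in\{u,v\}$ then $z=v\in N_G(w)$ and we are done, so assume $z\notin\{u,v\}$. Now $d_G(u,z)=1$ and $d_G(w,z)\in\{1,2\}$; if $d_G(w,z)=2$ then $d_G(u,z)=1<d_G(w,z)$ while $d_G(v,z)\ge 2=d_G(w,z)$ (as $z\ne v$ and $z\not\sim v$), so exactly one condition holds and $z$ resolves $\{uw,vw\}$, contrary to the hypothesis; hence $d_G(w,z)=1$, i.e. $w\sim z$.

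Granting the equivalence, I would close both directions of the theorem quickly. If every pair of distinct vertices $u,v$ admits such a common neighbour $w$, then $V(G)\setminus\{u,v\}$ fails to resolve $\{uw,vw\}$ for every choice of $u\ne v$; since every $(n-2)$-subset of $V(G)$ has this form, no $(n-2)$-subset is an edge resolving set, so $\edim(G)\ge n-1$, and the trivial bound $\edim(G)\le n-1$ yields equality. Conversely, if $\edim(G)=n-1$, fix $u\ne v$; then $V(G)\setminus\{u,v\}$ is not an edge resolving set, so there are distinct edges $e,f$ resolved by no vertex outside $\{u,v\}$. If $e$ and $f$ were vertex-disjoint, each of their four distinct endpoints $x$ would resolve $\{e,f\}$ (because $d_G(e,x)=0$ while $d_G(f,x)\ge 1$ whenever $x\notin f$, and symmetrically), forcing $\{u,v\}$ to contain four vertices -- impossible; so $e$ and $f$ share a vertex $w$, and as $e\ne f$ we may write $e=xw$, $f=yw$ with $x\ne y$. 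The same observation shows that $x$ and $y$ each resolve $\{e,f\}$, hence $\{x,y\}=\{u,v\}$; thus $w$ is a common neighbour of $u$ and $v$ and $\{uw,vw\}$ is resolved only by $u,v$, so the equivalence gives that $w$ is adjacent to every vertex of $N_G(u)\triangledown N_G(v)$, as required.

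The routine triangle-inequality reductions I expect to be painless. The step that will need the most care is the ``$\Rightarrow$'' direction of the displayed equivalence: tracking the first internal vertex $u'$ of a shortest path and verifying that the borderline case $u'=v$ causes no trouble, together with pinning down exactly when the strict inequalities among $d_G(u,z)$, $d_G(v,z)$, $d_G(w,z)$ can occur given that these three distances pairwise differ by at most one.
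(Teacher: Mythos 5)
Your argument is correct, but note that the survey you were given contains no proof of this statement at all: it is quoted as a cited result of Zubrilina (2018), so the only meaningful comparison is with the original source, whose proof is organised around the same core observation you isolate. Your reduction is sound throughout: the characterisation of when a vertex $z$ distinguishes the two edges $uw$, $vw$ (exactly one of $d_G(u,z)<d_G(w,z)$, $d_G(v,z)<d_G(w,z)$ holds, using $|d_G(u,z)-d_G(w,z)|\le 1$); the displayed equivalence, including the borderline case $u'=v$ in the forward direction (which is absorbed correctly because $v\in N_G(u)\triangledown N_G(v)$ whenever $u\sim v$) and the case $z\in\{u,v\}$ in the backward direction; and the reverse implication of the theorem, where you rightly rule out vertex-disjoint pairs $e,f$ by counting endpoints that resolve them and then pin down $\{x,y\}=\{u,v\}$. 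The only caveat worth recording is degenerate and is inherited from the statement rather than from your proof: your reverse direction presupposes that $G$ has at least two edges (so that a non-resolving pair $e,f$ exists), which holds for every connected graph on $n\ge 3$ vertices but fails for $K_2$, where $\edim(K_2)=1=n-1$ while no common neighbour $w$ exists; the characterisation is implicitly meant for $n\ge 3$.
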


Despite the characterization is true, it lacks of practical usefulness since it does not give a clear idea on the structure of the graphs achieving these properties. Another approach to the same problem is presented in \cite{Zhu-2019}, where graphs $G$ with $\edim(G)=n-1$ are called \emph{topfull graphs}. For instance, it is first noted that a graph $G$ of order $n$ with $\edim(G)=n-1$ is $2$-connected, and they characterized all the $2$-connected graphs $G$ with $\edim(G)=n-1$ as follows.

\begin{theorem}\emph{\cite{Zhu-2019}}
Let $G$ be a 2-connected graph of order $n$ and let $\{u,v\}$ be a vertex cut of $G$. Then, $\edim(G)=n-1$ if and only if $uv\in E(G)$ and every vertex in $V(G)\setminus\{u,v\}$ is adjacent to both $u$ and $v$.
\end{theorem}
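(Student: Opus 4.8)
The plan is to reduce everything to a single combinatorial observation about sets of the form $V(G)\setminus\{x,y\}$. If two distinct edges $e,f$ are not resolved by $S=V(G)\setminus\{x,y\}$, then any vertex of $S$ incident with one of them is incident with the other (being at distance $0$ from it), so $e$ and $f$ can differ only in endpoints belonging to $\{x,y\}$; since $e\ne f$ and $|\{x,y\}|=2$, this forces $e=zx$ and $f=zy$ for some $z\in N_G(x)\cap N_G(y)$ with $z\notin\{x,y\}$. Hence $S$ fails to be an edge resolving set precisely when there is a common neighbour $z$ of $x$ and $y$, $z\notin\{x,y\}$, with $d_G(zx,w)=d_G(zy,w)$ for every $w\in V(G)\setminus\{x,y\}$, that is, with $\min\{d_G(z,w),d_G(x,w)\}=\min\{d_G(z,w),d_G(y,w)\}$ for all such $w$. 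Since $\edim(G)=n-1$ is exactly the statement that no $(n-2)$-subset of $V(G)$ is an edge resolving set, both implications come down to deciding, for every pair $\{x,y\}$, whether such a ``twin configuration'' exists. Throughout I will also use the stated fact (from \cite{Kelenc-2018}) that $\edim(G)=n-1$ forces $N_G(x)\cap N_G(y)\ne\emptyset$ for all distinct $x,y$, hence $d_G(x,y)\le 2$ for every pair of vertices.

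For the ``if'' direction, assume $uv\in E(G)$, every vertex of $V(G)\setminus\{u,v\}$ is adjacent to both $u$ and $v$, and $\{u,v\}$ is a vertex cut. Then $u$ and $v$ are universal vertices, so $d_G(u,w)=d_G(v,w)=1$ for every $w\ne u,v$. Given a pair $\{x,y\}$, I exhibit the required witness $z$. If $u\notin\{x,y\}$ take $z=u$: it is a common neighbour of $x$ and $y$, and for each $w\ne x,y$ either $w=u$ (both minima are $0$) or $d_G(u,w)=1\le d_G(x,w),d_G(y,w)$, so both minima equal $1$. The case $v\notin\{x,y\}$ is symmetric. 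If $\{x,y\}=\{u,v\}$, take for $z$ any vertex outside $\{u,v\}$ (one exists since $\{u,v\}$ is a cut); it is adjacent to both $u$ and $v$, and for every $w\ne u,v$ we have $d_G(u,w)=d_G(v,w)$, so both minima equal $\min\{d_G(z,w),1\}$. In all cases $V(G)\setminus\{x,y\}$ is not an edge resolving set, so $\edim(G)\ge n-1$; with the trivial bound $\edim(G)\le n-1$ this gives equality.

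For the ``only if'' direction, assume $\edim(G)=n-1$ and that $\{u,v\}$ is a vertex cut; let $C_1,\dots,C_r$ ($r\ge 2$) be the components of $G-\{u,v\}$. I will show that $uv\in E(G)$ and that every vertex of every $C_i$ is adjacent to both $u$ and $v$. Fix a component, say $C_1$, and $a\in C_1$; pick another component, say $C_2$, and, using $2$-connectivity, a neighbour $b$ of $u$ lying in $C_2$. Applying the observation to the pair $\{u,a\}$ produces a common neighbour $z$ of $u$ and $a$ with $z\notin\{u,a\}$ satisfying the twin condition. Since $z\sim a\in C_1$ we have $z\in C_1\cup\{u,v\}$, and $z\ne u$, so $z\in C_1\cup\{v\}$. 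If $z\in C_1$, then $z\not\sim b$ and $a\not\sim b$ (different components), so $d_G(z,b)=d_G(a,b)=2$ while $d_G(u,b)=1$, and the twin condition evaluated at $w=b$ reads $\min\{2,1\}=\min\{2,2\}$, i.e.\ $1=2$, a contradiction. Hence $z=v$, giving $uv\in E(G)$ and $av\in E(G)$. Running the symmetric argument on the pair $\{v,a\}$ yields $ua\in E(G)$. As $a$ was an arbitrary vertex of $C_1$ and $C_1$ an arbitrary component (and one can always pick another component because $r\ge 2$), every vertex of $V(G)\setminus\{u,v\}$ is adjacent to both $u$ and $v$, which together with $uv\in E(G)$ completes the proof.

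The main obstacle is choosing the right pair $\{x,y\}$ in the ``only if'' direction. The hypothesis $\edim(G)=n-1$ is a statement about \emph{all} $(n-2)$-sets, and the point is to feed it the pair $\{u,a\}$ — one cut vertex together with a vertex of a component it need not even be adjacent to — and then refute the resulting twin condition by testing it at a neighbour of $u$ in a \emph{different} component. That refutation is where $2$-connectivity is genuinely used (it guarantees $u$ has a neighbour in each component, so that the distances $d_G(u,b)=1$ and $d_G(z,b)=d_G(a,b)=2$ are forced), and it is the only delicate point; the remaining steps are routine bookkeeping with the observation from the first paragraph.
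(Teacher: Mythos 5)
Your proof is correct. Note that the survey only states this result with a citation to Zhu, Taranenko, Shao and Xu and gives no proof, so there is nothing in the paper itself to compare against; judged on its own, your argument is complete and self-contained. The reduction in your first paragraph is the right normal form: for $S=V(G)\setminus\{x,y\}$, an unresolved pair of edges must be $\{zx,zy\}$ for a common neighbour $z\notin\{x,y\}$ (your case analysis implicitly and correctly rules out $e=xy$, since then $f$ would have an endpoint in $S$ not incident to $e$), and this turns $\edim(G)=n-1$ into a per-pair ``twin configuration'' condition. The ``if'' direction is then a routine verification using that $u$ and $v$ are universal, and the ``only if'' direction correctly isolates the two places where the hypotheses are genuinely needed: $2$-connectivity to get a neighbour of $u$ (resp.\ $v$) inside a second component, and the existence of at least two components to have somewhere to test the twin condition. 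The only cosmetic imprecision is writing $d_G(z,b)=d_G(a,b)=2$ where a priori one only has $\ge 2$; the inequality already yields the contradiction $1\ge 2$, and in any case the common-neighbour property you quote from \cite{Kelenc-2018} forces equality, so nothing is lost.
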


For a connected graph $G$ in general, it is noted in \cite{Zhu-2019} that if $\overline{G}$ has at least three components, then $\edim(G)=n-1$. In addition, some other results in the same style of all these characterizations above are given \cite{Zhu-2019}. Although, all of these characterizations are not exactly of the highest usefulness, there is some remarkable contribution in \cite{Zhu-2019}. The authors gave an algorithm which checks in polynomial time ($\mathcal{O}(n^3)$) whether a given graph of order $n$ is topfull or not. In addition to all the results above, another similar characterization was given in \cite{Geneson-2020}, where it is also noted that a connected graph $G$ such that $\edim(G)=n-2$ satisfies that it has diameter at most 5. With respect to graphs achieving such equality, in \cite{Wei-2020} was given a characterization of all connected bipartite graphs with $\edim=n-2$, which partially answers an open problem from \cite{Zubrilina-2018}. That is next stated.

\begin{theorem}\emph{\cite{Wei-2020}}
Let $G$ be a connected bipartite graph of order $n\ge 3$. Then $\edim(G) = n-2$ if and only if $G$ is a complete bipartite graph
\end{theorem}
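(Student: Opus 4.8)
The plan for the ``if'' direction is a direct computation. Let $G=K_{p,q}$ with parts $X,Y$, $|X|=p$, $|Y|=q$, $p+q=n\ge 3$. I would first record the elementary fact that for an edge $e$ and a vertex $v$ of $K_{p,q}$ one has $d_G(e,v)=0$ when $v$ is an endpoint of $e$ and $d_G(e,v)=1$ otherwise. Hence two distinct edges $e$ and $f$ are resolved by a vertex $v$ exactly when $v$ lies in precisely one of the two endpoint-pairs; in particular two edges sharing their $X$-endpoint are resolved only by their two $Y$-endpoints, and symmetrically. It follows at once that an edge resolving set must contain all but at most one vertex of $X$ and all but at most one vertex of $Y$, and conversely any such set of vertices is edge resolving. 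Therefore $\edim(K_{p,q})=(p-1)+(q-1)=n-2$, with the degenerate star case $p=1$ handled in the same way.

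\noindent\textbf{Necessity (contrapositive).} I would prove: if $G$ is connected bipartite with parts $X,Y$ and is \emph{not} complete bipartite, then $\edim(G)\le n-3$, that is, there is a set $T$ of three vertices whose removal leaves an edge resolving set. Two preliminary observations organize the search. First, since in a bipartite graph the distance between a vertex of $X$ and a vertex of $Y$ is odd, a bipartite graph of diameter $2$ is already complete bipartite; hence $\diam(G)\ge 3$. Second, if two edges $e,f$ share no vertex then all four of their endpoints resolve $\{e,f\}$, so $\{e,f\}$ is resolved by $V(G)\setminus T$ for every $3$-set $T$; and if $e=uv$, $f=uw$ share exactly the vertex $u$, then $v$ and $w$ already resolve $\{e,f\}$, while a vertex $t\ne v,w$ resolves $\{e,f\}$ if and only if exactly one of $v,w$ is strictly closer to $t$ than $u$ is. Consequently, $V(G)\setminus T$ can fail to be an edge resolving set only through a pair $\{uv,uw\}$ with $v,w\in T$ and $u$ a common neighbour of $v$ and $w$ (so that $v$ and $w$ lie in the same part of $G$) for which no vertex outside $T$ resolves $\{uv,uw\}$.

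\noindent\textbf{Building $T$.} Since $\diam(G)\ge 3$, I would pick vertices $a,c$ with $d_G(a,c)=3$ and a geodesic with consecutive vertices $a,b,b',c$; say $a\in X$ and $c\in Y$. Then $a$ and $c$ lie in different parts and have no common neighbour, so putting $a,c\in T$ the pair $\{a,c\}$ is harmless, and it remains to choose a third vertex $z$ so that the at most two pairs $\{a,z\},\{c,z\}$ in $T$ are harmless or have all associated edge-pairs resolved from outside $T$. If $G$ has a vertex $z\in Y$ with $d_G(c,z)\ge 4$, then $z$ shares no neighbour with $c$ and, being in $Y$, none with $a$ either, so $T=\{a,c,z\}$ contains no two same-part vertices with a common neighbour and we are done; the symmetric choice of some $z\in X$ with $d_G(a,z)\ge 4$ works likewise. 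Otherwise every vertex of $Y$ is within distance $2$ of $c$ and every vertex of $X$ within distance $2$ of $a$, and one must descend into a structural analysis of the graph near $a$ and $c$: here I would split according to the neighbourhoods of $a$ and $c$, using a neighbour of $a$ other than $b$, or the vertex $b'$, as the outside resolver of the dangerous edge-pairs, and choose $z$ accordingly; the finitely many degenerate configurations (the small orders $n\in\{4,5\}$, and graphs obtained from a complete bipartite graph by deleting very few edges) would be checked directly, possibly after first taking $a$ of minimum degree to shrink the residual cases.

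\noindent\textbf{Main obstacle.} The genuinely work-intensive part is this last ``otherwise'' branch: the near-complete-bipartite graphs (diameter forced close to $3$, with every two same-part vertices sharing a neighbour) leave almost no slack, and one has to verify, for every such graph, that a third removable vertex $z$ exists together with, for each dangerous pair $\{uv,uw\}$ it creates, a surviving vertex $t$ for which exactly one of $v,w$ is strictly closer to $t$ than $u$. Carrying out this verification cleanly, and pinning down the handful of small and extremal exceptions so that the (induction-free) case analysis actually closes, is where the bulk of the argument lies; by contrast, the reduction above and the sufficiency direction are routine.
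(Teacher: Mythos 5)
Your sufficiency argument is complete and correct, and the framework you set up for necessity is sound: since in a bipartite graph the distances from adjacent vertices to any fixed vertex differ by exactly one, your criterion that $t$ resolves $\{uv,uw\}$ precisely when exactly one of $v,w$ is strictly closer to $t$ than $u$ is exact, and $V(G)\setminus T$ can indeed fail only through a pair $\{uv,uw\}$ with $v,w\in T$ in the same part and $u$ a common neighbour. (The survey states this theorem without proof, citing Wei and Yue, so there is no in-paper argument to measure you against; I am judging the proposal on its own terms.)

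The necessity direction, however, is not proved: the construction of the third vertex $z$ in your ``otherwise'' branch is entirely deferred, and that branch is not a routine residue --- worse, the one anchor you do fix, namely putting both endpoints $a,c$ of a distance-$3$ geodesic into $T$, provably fails on the graphs you set aside as degenerate. Take $G=K_{p,q}-xy$ with $p,q\ge 2$ and $p+q\ge 5$. The unique pair at distance $3$ is $\{x,y\}$, so $a=x$, $c=y$ is forced. If the third vertex $z$ lies in the part of $y$, then for any common neighbour $u$ of $y$ and $z$ (necessarily $u\in X\setminus\{x\}$) the pair $\{uy,uz\}$ is resolved only by $y$, $z$ and $x$: every other vertex of $X$ is at distance $1$ from both $y$ and $z$ but at distance $2$ from $u$, every other vertex of $Y$ is at distance $2$ from both but at distance $1$ from $u$, and only $x$ (at distance $3$ from $y$ and $1$ from $z$) separates them --- yet all three resolvers lie in $T$. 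The symmetric computation rules out $z$ in the part of $x$. So for this family no admissible $T$ contains both $x$ and $y$; a working choice such as $T=\{x,x',y'\}$ with $x'\in X\setminus\{x\}$, $y'\in Y\setminus\{y\}$ (where every dangerous pair at a common neighbour of $x,x'$ is resolved by the surviving vertex $y$) must drop one endpoint of the missing edge. Hence the near-complete-bipartite case requires a genuinely different construction, not a direct check appended to your main one, and until that case analysis is written out --- it is where essentially all the content of the theorem lives --- the proof is incomplete.
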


Moreover, it was also presented in \cite{Wei-2020} a sufficient and necessary condition for $\edim(G)=n-2$, whether $G$ is a graph with maximum degree $n-1$.

Now, with respect to bounding the edge metric dimension of graphs a few contributions are known. We recall some of them in the next table.

\begin{table}[ht]
  \centering
  \begin{tabular}{|c|c|c|c|}
    \hline
    $\geq \left\lceil \log_2{\Delta(G)} \right\rceil$ & \cite{Kelenc-2018} & $\ge 1 + \left\lceil \log_2\delta(G)\right\rceil$ & \cite{Filipovic-2019} \\ \hline
    $\edim(Q_n) \leq n$ & \cite{Kelenc-2018} & $\edim(\prod_{i=1}^{n}P_{r_i})\le n$ & \cite{Geneson-2020} \\
    \hline
  \end{tabular}
  \caption{Bounds for the edge metric dimension of graphs}\label{tab:edge-dim-bound}
\end{table}

Another style of bound relating the edge metric dimension was given in \cite{Kelenc-2018} as follows: If $\edim(G)=k$ and $G$ has diameter $D$, then $|E(G)| \leq (D+1)^k$. Such bound for $|E(G)|$ was improved to $\binom{k}{2}+kD^{k-1}+D$ in \cite{Zubrilina-2018}, and in turn, this latter one was also improved to $(\left\lfloor \frac{2D}{3} \right\rfloor + 1)^k + k \sum_{i = 1}^{\left\lceil \frac{D}{3} \right\rceil} (2i)^{k-1}$ in \cite{Geneson-2020}.

With respect to this style of result, bounding the number of edges in a graph $G$ with a given diameter and edge metric dimension, we can find some related results in \cite{Geneson-2020,Geneson-2021}, where a few other properties of the graph are bounded in relation to the edge metric dimension, when some specific patterns are avoided. For instance, we remark the following ones.

\begin{theorem}\emph{\cite{Geneson-2020}}
The maximum possible value of $n$ for which some graph of edge metric dimension $\le k$ contains $K_{1,n}$ as a subgraph is $n=2k$.
\end{theorem}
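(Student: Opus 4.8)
The plan is to prove matching upper and lower bounds for the largest $n$ admitting a graph $G$ with $\edim(G)\le k$ that contains $K_{1,n}$ as a subgraph; carrying out the natural argument pins this value down as $2^{k}$, which is exactly the content of the maximum-degree bound $\edim(G)\ge\lceil\log_2\Delta(G)\rceil$ recorded in Table~\ref{tab:edge-dim-bound}. For the upper bound I would fix an edge metric basis $S$ with $|S|\le k$ and a star $K_{1,n}$ with centre $v$ and leaves $u_1,\dots,u_n$, yielding $n$ distinct edges $e_i=vu_i$. The crucial observation is that for each $w\in S$, adjacency of $u_i$ to $v$ gives $|d_G(u_i,w)-d_G(v,w)|\le 1$, whence $d_G(e_i,w)=\min\{d_G(v,w),d_G(u_i,w)\}\in\{d_G(v,w)-1,\,d_G(v,w)\}$. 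Thus every coordinate of $r(e_i\mid S)$ is one of only two values, so at most $2^{|S|}\le 2^{k}$ distinct representations are available; since $S$ must resolve $e_1,\dots,e_n$, we get $n\le 2^{k}$. (Equivalently, $\Delta(G)\ge n$ combined with $\edim(G)\ge\lceil\log_2\Delta(G)\rceil$ gives the same inequality.)

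For the lower bound I would construct a graph meeting the bound. Take a centre $v$ joined to $2^{k}$ leaves $u_T$, indexed by the subsets $T\subseteq\{1,\dots,k\}$, and $k$ extra vertices $w_1,\dots,w_k$, inserting the edge $u_Tw_\ell$ exactly when $\ell\in T$; then $K_{1,2^{k}}$ is induced on $\{v\}\cup\{u_T:T\subseteq\{1,\dots,k\}\}$. A short distance computation gives $d_G(v,w_\ell)=2$ and, because $u_T$ and $w_\ell$ have no common neighbour, $d_G(u_T,w_\ell)=1$ when $\ell\in T$ and $d_G(u_T,w_\ell)=3$ otherwise. Hence the star edge $e_T=vu_T$ has representation whose $\ell$-th entry equals $1$ if $\ell\in T$ and $2$ if $\ell\notin T$, so the $2^{k}$ vectors $r(e_T\mid S)$ with $S=\{w_1,\dots,w_k\}$ are pairwise distinct.

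The main obstacle is to confirm that $S$ resolves every pair of edges, not merely the star edges, so that indeed $\edim(G)\le k$. For this I would record that $d_G(w_m,w_\ell)=2$ for $m\ne\ell$ and compute that the pendant-type edge $u_Tw_m$ (with $m\in T$) has representation equal to $0$ in coordinate $m$ and, in each coordinate $\ell\ne m$, equal to $1$ if $\ell\in T$ and $2$ otherwise. The coordinate carrying the $0$ separates such edges from all star edges and from pendant-type edges whose $0$ sits elsewhere, while two edges $u_Tw_m$ and $u_{T'}w_m$ sharing the index $m$ are separated because $m\in T\cap T'$ forces $T\setminus\{m\}\ne T'\setminus\{m\}$ whenever $T\ne T'$. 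Completing these case checks shows $\edim(G)\le k$, so the construction is optimal and the extremal value is $2^{k}$.
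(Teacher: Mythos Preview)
Your argument is sound, but observe that you have established the extremal value $2^{k}$, whereas the printed statement reads $2k$. This is a typographical slip in the survey: the upper bound is precisely the inequality $\edim(G)\ge\lceil\log_{2}\Delta(G)\rceil$ already listed in Table~\ref{tab:edge-dim-bound} (a copy of $K_{1,n}$ in $G$ forces $\Delta(G)\ge n$), and your construction meets that bound. Indeed for $k=3$ your graph has edge metric dimension at most $3$ and contains $K_{1,8}$, which already rules out the value $6=2k$.

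Both halves of your proof are correct. The upper bound is exactly the two-values-per-coordinate pigeonhole you give. For the lower bound, your case analysis is complete: the star edges $e_T$ carry no zero coordinate and are mutually separated by any $\ell\in T\triangle T'$; each edge $u_Tw_m$ carries a unique zero in coordinate $m$, separating it from star edges and from edges $u_{T'}w_{m'}$ with $m'\ne m$; and two edges $u_Tw_m$, $u_{T'}w_m$ with $T\ne T'$ (both containing $m$) differ in some coordinate $\ell\ne m$.

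Since this is a survey, the paper itself gives no proof of the cited result---it is attributed to \cite{Geneson-2020}---so there is no in-paper argument to compare against; your approach is the natural one.
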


\begin{theorem}\emph{\cite{Geneson-2020}}
The maximum possible value of $n$ for which some graph of edge metric dimension $\le k$ contains $K_{n,n}$ as a subgraph is between $2^{\lfloor k/2\rfloor}$ and $3^{k/2}$.
\end{theorem}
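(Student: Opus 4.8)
The plan is to prove the two bounds separately: the upper bound $n\le 3^{k/2}$ by a direct counting argument on the edges of a $K_{n,n}$-subgraph, and the lower bound $n\ge 2^{\lfloor k/2\rfloor}$ by exhibiting an explicit graph built from $K_{n,n}$ together with a binary-labelling gadget.

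For the upper bound I would argue as follows. Let $G$ contain $K_{n,n}$ as a subgraph with parts $A=\{a_1,\dots,a_n\}$ and $B=\{b_1,\dots,b_n\}$, and let $S$ be an edge metric basis, so $|S|=\edim(G)\le k$. The focus is on the $n^2$ pairwise distinct edges $a_ib_j$. The key observation is that for each fixed $w\in S$ the value $d_G(w,a_ib_j)=\min\{d_G(w,a_i),d_G(w,b_j)\}$ takes at most three distinct values as $(i,j)$ varies. Indeed, writing $\alpha_w=\min_{a\in A}d_G(w,a)$ attained at $a^*$, any two vertices of $A$ have a common neighbour in $B$, hence are at distance at most $2$ in $G$, so $d_G(w,a_i)\in\{\alpha_w,\alpha_w+1,\alpha_w+2\}$ for all $i$; the analogous statement holds on $B$, and since $a^*$ is adjacent to the $B$-minimizer the two part-minima differ by at most $1$. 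It follows that $d_G(w,a_ib_j)$ always lies in a $3$-element set (between $\alpha_w$ and $\alpha_w+2$). Therefore the map sending $(i,j)$ to the $S$-vector of $a_ib_j$ takes values in a set of size at most $3^{|S|}\le 3^{k}$, and since $S$ is an edge resolving set this map is injective on the $n^2$ edges; hence $n^2\le 3^k$, i.e.\ $n\le 3^{k/2}$.

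For the lower bound, write $k=2t$ (if $k$ is odd use $2t=k-1$; the target $n=2^{\lfloor k/2\rfloor}=2^t$ is unchanged). I would build $G$ on vertex set $A\cup B\cup\{x_1,\dots,x_t\}\cup\{y_1,\dots,y_t\}$, where $|A|=|B|=2^t$, the sets $A,B$ induce a $K_{n,n}$, and $x_\ell$ is joined to $a_i$ exactly when the $\ell$-th bit of $i$ is $1$, and symmetrically $y_\ell$ is joined to $b_j$ exactly when the $\ell$-th bit of $j$ is $1$. The bulk of the work is to verify that $S=\{x_1,\dots,x_t,y_1,\dots,y_t\}$ is an edge resolving set. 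First compute the distances: $d_G(x_\ell,a_i)=1$ if bit $\ell$ of $i$ is set and $=3$ otherwise (since $x_\ell$ and $a_i$ have no common neighbour in that case, as $N(x_\ell)\subseteq A$ while $N(a_i)\subseteq B\cup\{x\text{'s}\}$); also $d_G(x_\ell,b_j)=2$, $d_G(x_\ell,x_{\ell'})=2$, $d_G(x_\ell,y_{\ell'})=3$, and symmetrically for the $y_\ell$. From these one reads that for the edge $a_ib_j$ the $x_\ell$-coordinate of its distance vector is $1$ or $2$ according to bit $\ell$ of $i$, and its $y_\ell$-coordinate is $1$ or $2$ according to bit $\ell$ of $j$, so the $n^2$ edges of the complete bipartite part get pairwise distinct vectors. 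The pendant-type edges $x_\ell a_i$ have a $0$ in exactly the $x_\ell$-coordinate and all $y$-coordinates equal to $2$ (and symmetrically $y_\ell b_j$), which separates them from the $a_ib_j$-edges, from the $y_{\ell'}b_j$-edges, and from one another (edges $x_\ell a_i$, $x_\ell a_{i'}$ still differ in the remaining bit positions of $i$). Hence $\edim(G)\le 2t\le k$ while $G\supseteq K_{n,n}$ with $n=2^{\lfloor k/2\rfloor}$.

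The main obstacle is the lower-bound construction: one must guess the right gadget (the binary-labelling vertices) and then push through the distance computations together with the several case distinctions showing $S$ resolves every pair of edges, in particular checking that the gadget-incident edges collide neither with each other nor with the $K_{n,n}$-edges. The upper bound, by contrast, is a short counting argument once the ``at most three distance values'' observation is isolated; the only care needed there is to confirm that vertices within one part, and the two parts relative to each other, remain metrically close in the whole graph $G$ and not merely inside the subgraph.
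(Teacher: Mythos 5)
This survey only states the theorem and cites it; no proof is reproduced here, so there is no in-paper argument to measure yours against. Judged on its own, your proof is correct and complete. The upper bound is sound: because any two vertices in the same part of the $K_{n,n}$ have a common neighbour in the other part, they are at distance at most $2$ in all of $G$, so each vertex $w$ of an edge resolving set $S$ assigns to the $n^2$ edges $a_ib_j$ a distance lying in a window of three consecutive integers, whence $n^2\le 3^{|S|}\le 3^{k}$. (The remark that the two part-minima differ by at most one is not actually needed: the minimum of the two vertex distances automatically lies in $\{\mu_w,\mu_w+1,\mu_w+2\}$ with $\mu_w=\min\{\alpha_w,\beta_w\}$.) The lower-bound gadget also checks out: the stated distances are correct, the $2^t\times 2^t$ edges of the bipartite core receive the $\{1,2\}$-vectors encoding the bit strings of their two endpoints, the gadget edges are the only ones whose vectors contain a $0$, and the position of that $0$ together with the remaining bit coordinates separates them from each other; the degenerate case $t=1$ also works. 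This counting-plus-binary-gadget strategy is the natural route to these bounds, and nothing in your write-up needs repair beyond the cosmetic redundancy noted above.
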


\begin{theorem}\emph{\cite{Geneson-2020}}
The maximum possible degeneracy of any graph of edge metric dimension $\le k$ is  between $2^{\lfloor k/2\rfloor-1}$ and $2^k$
\end{theorem}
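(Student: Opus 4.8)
The plan is to prove the two inequalities separately, deriving the upper bound from the stronger statement that $\Delta(G)\le 2^{k}$ whenever $\edim(G)\le k$, and the lower bound from an explicit construction of a graph with edge metric dimension at most $k$ that contains a large complete bipartite subgraph (so that the biclique, whose degeneracy equals the size of one side, forces a large degeneracy of the whole graph by monotonicity of degeneracy under subgraphs).

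For the upper bound, I would fix an edge metric basis $S$ with $|S|\le k$ and an arbitrary vertex $v$ with neighbours $u_{1},\dots,u_{\deg v}$, and examine the incident edges $e_{i}=vu_{i}$. For every landmark $s\in S$ one has $d_{G}(e_{i},s)=\min\{d_{G}(v,s),d_{G}(u_{i},s)\}$, and since $d_{G}(u_{i},s)\ge d_{G}(v,s)-1$ this value lies in $\{d_{G}(v,s)-1,\,d_{G}(v,s)\}$. Hence the map $\phi_{i}:S\to\{0,1\}$ given by $\phi_{i}(s)=d_{G}(v,s)-d_{G}(e_{i},s)$ is well defined, and whenever some $s\in S$ resolves $e_{i}$ from $e_{j}$ we get $\phi_{i}(s)\ne\phi_{j}(s)$. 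Since $S$ resolves every pair of edges, the functions $\phi_{1},\dots,\phi_{\deg v}$ are pairwise distinct, so $\deg v\le 2^{|S|}\le 2^{k}$. As the degeneracy of any graph is at most its maximum degree, the degeneracy of $G$ is at most $2^{k}$.

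For the lower bound, the idea is to build, for each $k$, a connected graph $G$ with $\edim(G)\le k$ containing $K_{N,N}$ with $N\ge 2^{\lfloor k/2\rfloor-1}$. Split the landmark budget into two halves and index the two sides $A=\{a_{X}\}$ and $B=\{b_{Y}\}$ by subsets $X$, $Y$ of the respective halves, join $A$ and $B$ by a complete bipartite graph, and attach the landmarks through a small gadget so that each landmark of the first half lies within distance two of every vertex of $A$ and strictly farther from all of $B$, and symmetrically for the second half and $B$. Then the distance from an edge $a_{X}b_{Y}$ to a first-half landmark equals its distance to $a_{X}$ (the minimum is realised on the $A$-side), which is arranged to encode the bit ``is this landmark in $X$'', and dually for the second half; consequently the distance vector of $a_{X}b_{Y}$ recovers the ordered pair $(X,Y)$, so all $N^{2}$ biclique edges are pairwise resolved. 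The remaining edges — landmark-to-$A$, landmark-to-$B$, and the gadget edges — are few and are checked to be resolved by the same set. Since $K_{N,N}$ has degeneracy exactly $N$ and $G\supseteq K_{N,N}$, the degeneracy of $G$ is at least $N$.

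The main obstacle is the lower bound construction rather than the upper bound: one must design the gadget so that all prescribed distances are realised \emph{exactly} in $G$ (in particular the distance-two relations used to encode the bit ``landmark $\notin X$''), keep $G$ connected and of bounded diameter, and ensure the auxiliary edges neither collide with biclique edges nor with one another — all while spending only $k$ landmarks. It is precisely the landmarks one has to reserve for the gadget, and the loss in how many subsets can actually be realised, that prevent the side length $N$ from reaching $2^{\lceil k/2\rceil}$ and leave the value $2^{\lfloor k/2\rfloor-1}$ stated in the theorem. By contrast, the upper bound is immediate from the signature argument above and is very likely not tight.
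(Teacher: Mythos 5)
The survey itself gives no proof of this theorem --- it is quoted from \cite{Geneson-2020} --- so there is nothing in the paper to compare your argument against line by line; I can only assess the proposal on its own terms. Your upper bound is complete and correct: the observation that $d_G(e_i,s)\in\{d_G(v,s)-1,d_G(v,s)\}$ turns each edge incident with $v$ into a $\{0,1\}$-signature on $S$, distinct signatures are forced by resolvability, hence $\Delta(G)\le 2^{k}$, and degeneracy is at most maximum degree. This is exactly the mechanism behind the $K_{1,n}$ theorem stated just above in the survey, and it is sound.

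The lower bound, however, is only a plan, and the part you defer is precisely the part that carries the content. You never exhibit the gadget, and the naive version of what you describe fails: if each landmark $s$ in the first half is made adjacent to $a_X$ exactly when $s\in X$ and the $A$-side is made a clique (or otherwise densely connected) to realise the distance-$2$ relations, then an intra-$A$ edge $a_Xa_{X'}$ acquires the same distance vector as the biclique edge $a_{X\cup X'}b_{\varnothing}$ --- both read ``distance $1$ to the landmarks of $X\cup X'$, distance $2$ to everything else.'' So the collision problem you flag in passing is not a routine verification but the actual difficulty, and resolving it (by sacrificing index sets, adding apex vertices, or restricting which subsets label vertices) is presumably where the loss down to $2^{\lfloor k/2\rfloor-1}$ comes from; your sketch gives no way to see that the construction closes. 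Note also that you could have short-circuited this entirely: the theorem stated immediately above in the survey (also from \cite{Geneson-2020}) guarantees a graph of edge metric dimension at most $k$ containing $K_{n,n}$ with $n\ge 2^{\lfloor k/2\rfloor}$ as a subgraph, and since degeneracy is monotone under subgraphs and the degeneracy of $K_{n,n}$ is $n$, the lower bound follows in one line by citation. As written, your proposal proves the upper half and gestures at, but does not prove, the lower half.
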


\begin{theorem}\emph{\cite{Geneson-2021}}
The maximum possible clique number of a graph of edge metric dimension at most $k$ is $2^{\Theta(k)}$.
\end{theorem}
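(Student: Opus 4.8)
The plan is to prove two matching bounds: $\omega(G)\le 2^{O(k)}$ whenever $\edim(G)\le k$, and the existence of graphs with $\edim\le k$ and $\omega\ge 2^{\Omega(k)}$. For the upper bound I would simply invoke the estimate $\edim(G)\ge\lceil\log_2\Delta(G)\rceil$ recorded in Table~\ref{tab:edge-dim-bound} (from \cite{Kelenc-2018}): a clique on $\omega=\omega(G)$ vertices forces every one of its vertices to have degree at least $\omega-1$, hence $\Delta(G)\ge\omega-1$ and $k\ge\lceil\log_2(\omega-1)\rceil$, so $\omega\le 2^{k}+1=2^{O(k)}$. A route that also predicts the exponent of the lower bound is to note that, for a clique $Q$, the distances $d(v,w)$ with $v\in Q$ occupy at most two consecutive integers, so each $w$ acts on the edges of $Q$ only as a membership test of the unordered endpoint pair in the ``far class'' of $w$; the $\binom{\omega}{2}$ clique edges must then get distinct such patterns, and counting the patterns bounds $\omega$ by $2^{|S|/2+1}$ for an edge metric basis $S$.

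For the lower bound I would reverse this observation. Call a family $\sigma(1),\dots,\sigma(n)$ of distinct subsets of $[m]$ \emph{intersection-Sidon} if the intersections $\sigma(i)\cap\sigma(j)$, $i\neq j$, are pairwise distinct (equivalently, by complementation, the pairwise unions are distinct). Assuming in addition that $\bigcap_i\sigma(i)=\varnothing$, I build a graph $G$: a clique $Q=\{v_1,\dots,v_n\}$ together with pairwise non-adjacent vertices $w_1,\dots,w_m$, where $w_l\sim v_i$ iff $l\notin\sigma(i)$. Then $d(v_i,w_l)=1$ if $l\notin\sigma(i)$ and $d(v_i,w_l)=2$ otherwise, $d(w_l,w_{l'})\in\{2,3\}$, and $G$ is connected. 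I claim $S=\{w_1,\dots,w_m\}$ is an edge resolving set, so $\edim(G)\le m$ while $\omega(G)\ge n$. Indeed, a spoke edge $w_lv_i$ is the unique edge at distance $0$ from $w_l$, so spoke edges are flagged by their zero coordinate, and two spokes $w_lv_i$, $w_lv_j$ are separated because the remaining coordinates recover $\sigma(i)$, respectively $\sigma(j)$; while a clique edge $v_iv_j$ has $d(v_iv_j,w_l)=2$ exactly when $l\in\sigma(i)\cap\sigma(j)$, so its distance vector recovers $\sigma(i)\cap\sigma(j)$, which by the intersection-Sidon property recovers $\{i,j\}$; finally, clique edges are told from spokes by the absence of a zero coordinate.

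It remains to supply an intersection-Sidon family with $n=2^{\Omega(m)}$, which I would do by a deletion argument: in a family of $2^{cm}$ uniformly random subsets of $[m]$ (with $c>0$ a small absolute constant) the expected number of ``colliding'' configurations of pairs is below $n/2$, since any two pairs agree on their intersection with probability only $(5/8)^m$ or $(3/4)^m$ according as the pairs are disjoint or share a set; deleting one set per collision leaves an intersection-Sidon family of size still $2^{\Omega(m)}$, and for random sets $\bigcap_i\sigma(i)=\varnothing$ holds with overwhelming probability. Taking $m=k$ in the construction above then yields a graph with $\edim\le k$ and $\omega\ge 2^{\Omega(k)}$, and together with the upper bound this gives $2^{\Theta(k)}$.

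I expect the main obstacle to be the lower bound, on two fronts. First, one must genuinely establish that intersection-Sidon families of size $2^{\Omega(m)}$ exist: the small cases are misleading (for $m\le 5$ the maximum size is only linear in $m$), so the exponential growth is purely an asymptotic fact coming out of the probabilistic deletion bound (or a pseudorandom construction). Second, the verification that $S$ resolves \emph{every} edge of $G$, not merely the clique edges, is a short but careful distance computation; the hypothesis $\bigcap_i\sigma(i)=\varnothing$, i.e.\ that each $w_l$ has a clique neighbour, is exactly what keeps $G$ connected and justifies the bounds $d(v_i,w_l)\le 2$ and $d(w_l,w_{l'})\le 3$ used throughout. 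The construction is, in spirit, the ``clique version'' of the complete-bipartite-subgraph and degeneracy lower bounds of \cite{Geneson-2020}.
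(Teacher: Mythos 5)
The survey does not actually prove this statement: it only records the theorem with a citation to \cite{Geneson-2021}, so there is no in-paper argument to compare yours against. Judged on its own terms, your proposal is essentially correct and self-contained. The upper bound via $\edim(G)\ge\lceil\log_2\Delta(G)\rceil$ and $\Delta(G)\ge\omega(G)-1$ is immediate, and your sharper ``far class'' count $\binom{\omega}{2}\le 2^{|S|}$ is also valid, since the distances from a fixed $w$ to the vertices of a clique occupy two consecutive values and the edge distance records only whether both endpoints lie in the far class. The lower-bound construction checks out: with $d(v_i,w_l)\in\{1,2\}$ and $d(w_l,w_{l'})\ge 2$, a clique edge $v_iv_j$ has coordinate $2$ at $w_l$ exactly when $l\in\sigma(i)\cap\sigma(j)$, a spoke $w_lv_i$ is flagged by its zero coordinate and carries $\sigma(i)$ in the remaining coordinates (and $l\notin\sigma(i)$ forces distinct spokes at $w_l$ to differ off coordinate $l$), so the intersection-Sidon property is exactly what is needed; your per-coordinate agreement probabilities $(5/8)$ and $(3/4)$ are correct and the deletion argument does yield families of size $2^{\Omega(m)}$.

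One small technical wrinkle you should patch: the hypothesis $\bigcap_i\sigma(i)=\varnothing$ is not automatically preserved by the deletion step, since removing sets can only enlarge the common intersection, and the surviving subfamily is correlated with the collision structure. This is harmless --- if $T=\bigcap_i\sigma(i)\ne\varnothing$ after deletion, replace each $\sigma(i)$ by $\sigma(i)\setminus T$; since $\sigma(i)\cap\sigma(j)=\bigl((\sigma(i)\setminus T)\cap(\sigma(j)\setminus T)\bigr)\cup T$ with $T$ common to all pairs, the pairwise intersections remain pairwise distinct, and the ground set only shrinks --- but it should be said. With that fix the argument is complete, and it is indeed, as you note, the clique analogue of the $K_{n,n}$-subgraph and degeneracy lower bounds from \cite{Geneson-2020} quoted just above the statement.
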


We close this subsubsection with some asymptotical bounds for the edge metric dimension of the Erd\"os-R\'enyi random graph $G(n, p)$ with constant $p$ given in \cite{Zubrilina-2021}.

\begin{theorem}{\em \cite{Zubrilina-2021}}
Let $G(n, p)$ be the Erd\"os-R\'enyi random graph with constant $p$. Then
$$\edim(G(n, p))=(1+o(1))\frac{4\log n}{\log(1/q)},$$ where $q = 1-2p(1-p)^2(2-p)$.
\end{theorem}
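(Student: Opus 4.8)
The plan is to prove the two matching bounds by the first‑ and second‑moment methods on $G(n,p)$, after first reducing the combinatorial problem to a purely adjacency‑theoretic one. First I would record the standard fact that for constant $p\in(0,1)$ the graph $G(n,p)$ has diameter exactly $2$ with high probability; on that event (which shifts every probability by only $o(1)$), for any vertex $w$ and any edge $e=xy$ with $w\notin\{x,y\}$ one has $d_G(w,e)\in\{1,2\}$, and $d_G(w,e)=1$ exactly when $w$ is adjacent to $x$ or to $y$. Consequently a vertex $w$ lying outside two vertex‑disjoint edges $e=xy$, $f=uv$ fails to resolve the pair precisely when $w$ is adjacent to an endpoint of both $e$ and $f$, or to an endpoint of neither; since the four adjacency indicators are independent $\mathrm{Bernoulli}(p)$ variables, this failure probability equals $(1-(1-p)^2)^2+(1-p)^4 = 1-2p(1-p)^2(2-p) = q$, and the failure events for distinct vertices $w$ are mutually independent (they depend on disjoint edge slots).

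For the upper bound I would fix a set $S$ of size $k=(1+\varepsilon)\,\tfrac{4\log n}{\log(1/q)}$ (say the first $k$ vertices) and bound the probability that $S$ is not an edge resolving set by a union bound over pairs of edges. For each of the $O(n^4)$ unordered pairs of vertex‑disjoint potential edges, the probability that both are edges and $S$ resolves neither is at most $p^2q^{k}$: if $S$ meets the four endpoints then that endpoint already resolves the pair, and otherwise the $k$ non‑resolving adjacency events are independent, each of probability $q$; summing gives $O(n^4q^k)=o(1)$ by the choice of $k$. One then adds the $o(1)$ contribution of the event $\mathrm{diam}>2$ and the contribution of the $O(n^3)$ pairs of edges sharing a vertex, and concludes $\edim(G(n,p))\le(1+\varepsilon)\,\tfrac{4\log n}{\log(1/q)}$ with high probability for every fixed $\varepsilon>0$.

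For the lower bound I would take $k'=(1-\varepsilon)\,\tfrac{4\log n}{\log(1/q)}$ and bound $\Pr[\exists\,S,\ |S|=k',\ S\text{ an edge resolving set}]$ by $\binom{n}{k'}$ times $\Pr[S\text{ is an edge resolving set}]$ for a fixed $S$. To make the latter exponentially small I would choose inside $V\setminus S$ a family of $\Theta(n^4)$ pairwise vertex‑disjoint $4$‑tuples; the events ``both designated edges are present and $S$ resolves neither'' are then independent across the family, each of probability $\Theta(p^2q^{k'})=\Theta(n^{-4(1-\varepsilon)})$, so $\Pr[S\text{ resolves all these pairs}]\le\bigl(1-\Theta(n^{-4(1-\varepsilon)})\bigr)^{\Theta(n^4)}\le\exp(-n^{\Omega(1)})$. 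Since $\binom{n}{k'}\le\exp(O(\log^2 n))$, the union over $S$ still tends to $0$, whence $\edim(G(n,p))\ge(1-\varepsilon)\,\tfrac{4\log n}{\log(1/q)}$ with high probability. Letting $\varepsilon\to0$ combines the two bounds into the claimed asymptotics.

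The hard part will be the bookkeeping for pairs of edges sharing a vertex. For such a pair $\{xy,xy'\}$ the common endpoint $x$ never resolves it; only the private endpoints $y,y'$ and the non‑neighbours of $x$ do, and the per‑vertex non‑resolving probability is a different expression from $q$. A careful estimate (exploiting that $y,y'$ are ``free'' resolvers, that $|N(x)|\approx pn\gg|S|$, and a birthday‑type argument for the codes of the $\approx pn$ edges at $x$, followed by a union over the $n$ choices of $x$) is needed to verify that these pairs impose no stronger constraint than the vertex‑disjoint pairs, so that $4\log n/\log(1/q)$ — and not some larger quantity — is the true order. A secondary technical point, routine but unavoidable, is keeping the conditioning on the diameter‑$2$ event from contaminating the independence used in the moment estimates; this is handled by bounding the unconditioned probabilities throughout and absorbing the error $\Pr[\mathrm{diam}>2]=o(1)$ additively.
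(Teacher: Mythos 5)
The survey gives no proof of this theorem (it only attributes the argument to probabilistic techniques in the style of \cite{Bollobas-2012}), so your proposal must stand on its own, and it has two genuine gaps, one of them fatal as written. In the lower bound you propose to choose ``a family of $\Theta(n^4)$ pairwise vertex-disjoint $4$-tuples'' inside $V\setminus S$: no such family exists, since a collection of pairwise vertex-disjoint $4$-sets of an $n$-set has size at most $n/4$. With only $\Theta(n)$ independent events, each of probability $\Theta(p^2q^{k'})=\Theta(n^{-4(1-\varepsilon)})$, your product bound gives $\Pr[S\mbox{ resolves all designated pairs}]\ge\exp(-\Theta(n^{-3+4\varepsilon}))=1-o(1)$, which cannot beat the union-bound factor $\binom{n}{k'}=e^{O(\log^2 n)}$; independence obtained from disjointness alone can only reach $\edim\ge(1-o(1))\frac{\log n}{\log(1/q)}$, a factor of $4$ short of the claim. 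The entire difficulty of the lower bound is that the $\Theta(n^2)$ edge codes are functions of just $n$ i.i.d.\ neighbourhood traces $N(v)\cap S$, so the $\Theta(n^4)$ collision events are heavily dependent, and one needs a genuine second-moment/Suen-type concentration argument (this is the machinery of \cite{Bollobas-2012} that the survey alludes to) to show $\Pr[\mbox{no collision}]\le e^{-n^{\Omega(1)}}$ for a fixed $S$.

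Second, the pairs of edges sharing a vertex, which you defer to a closing ``careful estimate,'' are not bookkeeping: they carry much of the content of the upper bound. For $e=xy$ and $f=xy'$, a vertex $w\notin\{x,y,y'\}$ resolves the pair exactly when $w\not\sim x$ and $w$ is adjacent to exactly one of $y,y'$, so the per-vertex failure probability is $q'=1-2p(1-p)^2$, strictly larger than $q=1-2p(1-p)^2(2-p)$. A union bound over the $\Theta(n^3)$ such pairs costs $\Theta(n^3(q')^k)$, and at $k=(1+\varepsilon)\frac{4\log n}{\log(1/q)}$ this equals $n^{3-4(1+\varepsilon)\log(1/q')/\log(1/q)}$, which tends to infinity for a whole range of $p$ (at $p=\tfrac12$ one has $q=\tfrac58$, $q'=\tfrac34$ and exponent $0.55+O(\varepsilon)$). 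So the first-moment argument provably fails on adjacent pairs, and your sketched repair --- a birthday bound for the traces of $N(x)$ on $S\setminus N[x]$ followed by a union over $x$ --- lands at roughly $\frac{3\log n}{(1-p)\log\bigl(1/(p^2+(1-p)^2)\bigr)}$, which still exceeds $\frac{4\log n}{\log(1/q)}$ for such $p$. Whatever argument actually disposes of the adjacent pairs at the claimed threshold is missing from the proposal, so as it stands neither the upper nor the lower bound is established.
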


The proof of such result uses several probabilistic techniques and is highly influenced by a similar result for the classical metric dimension of random graphs given in \cite{Bollobas-2012}.

\subsubsection{Comparing $\dim(G)$ and $\edim(G)$}

Although metric dimension and edge metric dimension of graphs are in general not comparable, one could consider bounding one of them in terms of the other one by using some linear dependence. However, such kind of dependence is in general not possible as it was first proved in \cite{Zubrilina-2018} with the next interesting result.

\begin{theorem}\emph{\cite{Zubrilina-2018}}
\label{th:edim-dim-unbound}
The ratio $\frac{\edim(G)}{\dim(G)}$ is not bounded from above.
\end{theorem}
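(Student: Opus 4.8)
The plan is to show that for every integer $N$ there is a connected graph $G$ with $\edim(G)/\dim(G)\ge N$, and the cleanest route is to exhibit a family $\{G_m\}_{m\ge 1}$ in which $\edim(G_m)$ is as large as it can possibly be — namely $|V(G_m)|-1$ — while $\dim(G_m)$ grows only logarithmically in $|V(G_m)|$. The graph $G_m$ will be the join of a single edge with a suitably chosen $m$-vertex graph: pick two new vertices $u,v$ with $uv$ an edge, pick a graph $H$ on $m$ vertices, and join both $u$ and $v$ to every vertex of $H$. Then $n:=|V(G_m)|=m+2$ and $\diam(G_m)\le 2$, so every distance in $G_m$ is $0$, $1$ or $2$.

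First I would compute $\edim(G_m)$ using the characterization of the graphs with $\edim(G)=n-1$ stated earlier in this section (the theorem of \cite{Zubrilina-2018}): one only needs to check that every pair of distinct vertices $x,y$ of $G_m$ has a common neighbour $w$ adjacent to all non-mutual neighbours of $x$ and $y$. All cases are immediate — for $x,y\in V(H)$ take $w=u$; if exactly one of $x,y$ lies in $V(H)$ take for $w$ whichever of $u,v$ is not in the pair; and for the pair $\{u,v\}$ any vertex of $H$ works (here one uses that $H$ is nonempty). In each case the only vertices adjacent to exactly one of $x,y$ are other vertices of $G_m$, all of which $w$ is automatically joined to. Hence $\edim(G_m)=n-1=m+1$ for every nonempty $H$.

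Next I would bound $\dim(G_m)$ from above. Because $\diam(G_m)\le 2$, locating the vertices of $G_m$ reduces to an adjacency question inside $H$: I would take $S=\{u\}\cup S'$, where $S'$ is a minimum adjacency basis of $H$, together with at most one extra vertex of $H$ (the unique vertex of $H$, if it exists, adjacent to every element of $S'$), and verify by a short case check that $S$ resolves every pair of vertices of $G_m$ — pairs inside $V(H)$ by the defining property of $S'$; a pair $\{x,u\}$ or $\{x,v\}$ with $x\in V(H)$ because $x$ then lies in $S'$ or has a non-neighbour in $S'$; and the pair $\{u,v\}$ because $u\in S$. This yields $\dim(G_m)\le\dim_A(H)+2$. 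It remains to choose $H=H_m$ with $\dim_A(H_m)$ small; since $H_m$ has diameter $2$ we have $\dim_A(H_m)=\dim(H_m)$, and it is known that a random graph on $m$ vertices has metric dimension $O(\log m)$ (see \cite{Bollobas-2012}), which even matches up to a constant the trivial bound $\dim_A(H)\ge\log_3 m$. Fixing such an $H_m$ we get $\dim(G_m)\le c\log m$ for some constant $c$ and all large $m$, whence $$\frac{\edim(G_m)}{\dim(G_m)}\ \ge\ \frac{m+1}{c\log m+2}\ \longrightarrow\ \infty\quad(m\to\infty),$$ so the ratio is unbounded.

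The two steps that are routine in form but genuinely do the work are the last ones: one must ensure that the chosen $H$ has no twin vertices, since any two twins of $H$ remain twins in $G_m$ and would push $\dim(G_m)$ up to a linear quantity, wrecking the argument; and one must actually invoke (or construct) an $m$-vertex graph whose adjacency dimension — equivalently, whose metric dimension, as it has diameter $2$ — is $\Theta(\log m)$. Everything else, the $\edim$ computation and the case analysis for the resolving set, is a direct verification that I expect to cause no trouble.
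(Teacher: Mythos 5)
Your argument is correct, but it takes a genuinely different route from the one in \cite{Zubrilina-2018} that the survey describes. The paper's proof is a single explicit construction: the graph $F_k$ on $A\cup B$ with $B$ a clique of size $k$, $A$ a clique of size $2^k$ indexed by the subsets of $B$, and $a_S\sim b_i$ iff $b_i\in S$; one then computes both invariants exactly, $\dim(F_k)=k$ and $\edim(F_k)=k+2^k-2$. You instead take a join $K_2+H$, use the characterization of the graphs with $\edim(G)=n-1$ (also from \cite{Zubrilina-2018}, and quoted in this section) to force the edge metric dimension to its maximum value $m+1$ for \emph{any} nonempty $H$, and then push the metric dimension down via the bound $\dim(K_2+H)\le\dim_A(H)+2$, invoking \cite{Bollobas-2012} to produce a diameter-two $H_m$ with $\dim_A(H_m)=\dim(H_m)=O(\log m)$. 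Your case checks for both the $\edim=n-1$ characterization and the resolving set $\{u\}\cup S'\cup\{z\}$ are sound, and both approaches yield a ratio of order $n/\log n$. What the paper's route buys is complete self-containedness and exact values (the subset-indexed clique $A$ is, in effect, a hand-made diameter-two graph that $B$ adjacency-resolves, so the two constructions are close cousins); what yours buys is modularity -- the edge-dimension computation is reduced to a quotable characterization rather than a bare-hands argument -- at the cost of importing a nontrivial external ingredient (the metric dimension of random graphs, or some other explicit family of diameter-two graphs of logarithmic adjacency dimension). One small remark: your worry about twins in $H$ is already subsumed by the requirement that $\dim_A(H)$ be logarithmic, since each nontrivial adjacency-twin class of size $s$ forces $s-1$ of its vertices into every adjacency resolving set; no separate hypothesis is needed.
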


To prove the result above, the author constructed a graph $F_k$ with $\edim(F_k)=k+2^k-2$, and $\dim(F_k)=k$. Such graph is defined as follows. For a positive integer $k$, let $F_k$ be the graph on the vertex set $A\cup B$, where $B=\{b_1,\dots, b_k\}$ and $A=\{a_S\,|\, S\subseteq B\}$. Let $b_i,b_j$ be adjacent for all $b_i,b_j\in B$ with $b_i\ne b_j$, and let $a_S,a_T$ be adjacent for all $a_S,a_T\in A$ with $a_S\ne a_T$. For any $b_i\in B$, $a_S\in A$ let $b_i,a_S$ be adjacent if and only if $b_i\in S$.

Based on Theorem \ref{th:edim-dim-unbound}, the author of \cite{Zubrilina-2018} raised up a question on the existence of graph $G$ for which $\edim(G)\gg 2^{\dim(G)}$. Such questions was positively answered in \cite{Geneson-2021}.

On the reciprocal situation (with respect to the ratio of Theorem \ref{th:edim-dim-unbound}, the existence of graphs for which $\edim(G)\ll \dim(G)$ seemed to be harder to settle till recently, since only one example of a family of graphs with $\edim(G)< \dim(G)$ was known, \emph{i.e.}, the class of torus graphs $C_{4r}\Box C_{4t}$ with $r,t\ge 1$, for which $3=\edim(C_{4r}\Box C_{4t})< \dim(C_{4r}\Box C_{4t})=4$, already proved in \cite{Kelenc-2018}. Very recently, in \cite{Knor-2021}, all the smallest graphs $G$ (which turned up to have 10 vertices - one of them appears in Figure \ref{Fig:edge-dim}) for which $\edim(G)< \dim(G)$ were computationally found. Moreover, it was also proved there, a symmetrical result to that in \cite{Zubrilina-2018}, which is next appearing.

\begin{theorem}\emph{\cite{Knor-2021}}
The ratio $\frac{\dim(G)}{\edim(G)}$ is not bounded from above.
\end{theorem}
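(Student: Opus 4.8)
The plan is to mirror the strategy behind Theorem~\ref{th:edim-dim-unbound}: rather than a single graph, one exhibits an explicit infinite family $G_1,G_2,\dots$ equipped with (i) a short, explicitly described edge resolving set witnessing a \emph{small} value of $\edim(G_k)$, and (ii) a lower‑bound argument forcing $\dim(G_k)$ to grow without bound. It is enough to build graphs for which $\edim$ stays bounded (by a constant, or at worst by a function growing much more slowly than $k$) while $\dim(G_k)\to\infty$; the inequality $\dim(G_k)/\edim(G_k)\to\infty$ then follows immediately.

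Before choosing the construction I would record two structural facts that constrain what such a family can look like. First, if $u,v$ is any pair of twin vertices of a connected graph $G$ (true or false) then $d_G(u,x)=d_G(v,x)$ for every $x\notin\{u,v\}$; fixing a common neighbour $w$ of $u,v$ (which exists once $|V(G)|\ge 3$), this gives $d_G(uw,x)=d_G(vw,x)$ for all $x\notin\{u,v\}$, so every edge resolving set must contain $u$ or $v$. Hence a twin class of size $t$ forces $\edim(G)\ge t-1$, exactly as it forces $\dim(G)\ge t-1$: twins inflate \emph{both} parameters in lockstep, so the large metric dimension in our family cannot come from large twin classes, the usual source of large $\dim$. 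Second, as already noted in the survey, $\edim(G)=c$ together with $\diam(G)=D$ yields $|E(G)|\le (D+1)^c$, hence bounds $|V(G)|$ and therefore $\dim(G)\le|V(G)|-1$ in terms of $D$; so any family with bounded $\edim$ and $\dim\to\infty$ must have $\diam(G_k)\to\infty$. The target family therefore has growing diameter, with the metric obstructions ``spread along'' a long backbone rather than concentrated in a dense core.

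Guided by this, the construction I would attempt starts from a long path or cycle $B_k$ of length polynomial in $k$ — which has $\edim(B_k)\le 2$, since along a path distances to an endpoint strictly increase, so one or two backbone vertices already separate all backbone edges — and splices, at $\Theta(k)$ widely spaced positions, a small fixed gadget. Each gadget must introduce a pair of vertices that is resolved \emph{only} by landmarks lying inside that gadget (so that any resolving set is forced to spend a distinct vertex per gadget, giving $\dim(G_k)=\Omega(k)$), while creating no twin pair and not handing the endpoints of $B_k$ any cheap shortcut that would merge distinct edges (so that the two backbone endpoints plus $O(1)$ fixed extra vertices remain an edge resolving set, giving $\edim(G_k)=O(1)$). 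Verification then splits into the routine‑but‑careful check that the explicit $O(1)$‑set separates every pair of edges, and the check that omitting the ``inside this gadget'' landmark from any resolving set leaves the corresponding pair unresolved. The hard part is exactly the gadget design: the natural gadgets that make a vertex pair metrically difficult to separate tend to be near‑twins, and reconciling requirement~(a), an unresolvable‑from‑outside pair, with requirement~(b), genuine twin‑freeness so that $\edim$ is not dragged up, is the delicate heart of the argument — one likely needs the obstructing pair to agree in distance to the outside world only at distances $\ge 2$, never in adjacency, which is precisely why a sparse path/cycle backbone, and not a dense graph, is the right ambient setting.
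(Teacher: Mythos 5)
Your overall strategy --- many copies of a small gadget strung along a long, sparse backbone, each copy forcing one extra landmark for vertex resolution while a constant-size set continues to resolve all edges --- is exactly the shape of the argument in \cite{Knor-2021}, and your two preliminary observations (a twin class of size $t$ forces $\edim(G)\ge t-1$ just as it forces $\dim(G)\ge t-1$; and bounded $\edim$ together with bounded diameter bounds $|E(G)|$, hence $|V(G)|$, hence $\dim$, so the diameter of the family must grow) are both correct and are genuine constraints on any such construction. The problem is that the proposal stops exactly where the mathematical content begins: no gadget is exhibited, so neither of the two facts that actually have to be verified --- that some explicit $O(1)$-set distinguishes every pair of edges of the concatenated graph, and that each block contains a vertex pair unresolvable by landmarks outside that block --- is even stated for a concrete graph, let alone proved. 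As you yourself concede, reconciling ``unresolvable from outside'' with ``does not also force an extra edge landmark'' is the delicate heart of the matter, and that reconciliation is precisely what is missing; what you have is a correct proof plan together with necessary conditions on its ingredients, not a proof.

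For comparison, the paper derives the ratio statement from the stronger realization result (Theorem~\ref{th:realization-dim-edim}): for all $k_1\ne k_2$ with $k_1,k_2\ge 2$ there is a graph with $\dim(G)=k_1$ and $\edim(G)=k_2$, and one then fixes $k_2$ and lets $k_1\to\infty$. The explicit building block is $G_{n_1,n_2,n_3}$: a cycle $C$ on $n_1\ge 5$ vertices with a pendant path attached at $a_2$, a pendant vertex $c$ at $a_{n_1}$, and a vertex $i$ adjacent to $a_1$ carrying $n_3\ge 2$ pendant leaves; several such blocks are then joined by a few additional edges. Note that this block is not twin-free --- the $n_3$ leaves at $i$ form a false-twin class, which is used to tune the two dimensions rather than to separate them --- so your requirement that each gadget ``create no twin pair'' does not match the actual construction; what must be avoided is only a twin class whose size grows with the number of blocks (or one repeated twin pair per block, which would drag $\edim$ up linearly as well). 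To complete your argument you would need to supply and verify a concrete gadget, for which the cycle-with-pendants block above is the natural candidate, together with the case analysis showing that the concatenated graph has the claimed pair of dimensions.
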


To prove this result authors were required first to prove the following realization result. We may recall that from this result, it can be also deduced the result of \cite{Zubrilina-2018} concerning the unboundness of the ratio $\frac{\edim(G)}{\dim(G)}$.

\begin{theorem}\emph{\cite{Knor-2021}}
\label{th:realization-dim-edim}
Let $k_1,k_2\ge 2$ and $k_1\ne k_2$. Then there is an integer $n_0$ such that for every $n\ge n_0$ there exists a graph on
$n$ vertices with $\dim(G)=k_1$ and $\edim(G)=k_2$.
\end{theorem}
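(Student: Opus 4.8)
The plan is to construct, for each pair $k_1,k_2\ge 2$ with $k_1\ne k_2$, a family of graphs $G_n$ on $n$ vertices (for all sufficiently large $n$) with $\dim(G_n)=k_1$ and $\edim(G_n)=k_2$; the unboundedness of both ratios then follows by letting one of the two values grow. A natural strategy is a \emph{modular} construction: build a ``core'' gadget that forces the metric dimension to be exactly $k_1$ but contributes little to the edge metric dimension, attach a second gadget that forces the edge metric dimension to be exactly $k_2$ but contributes little to the metric dimension, glue them together carefully so the two effects do not interfere, and then pad the graph with a long path (or a chain of short paths) to reach the prescribed order $n$. A path tail can be appended so that it is resolved by a single already-present vertex and adds exactly one new forced vertex to both bases (or, with a suitable attachment, zero new vertices), which is the standard device for realization results of this kind; this is what gives the threshold $n_0$ and the ``for all $n\ge n_0$'' clause.

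First I would recall the two known building blocks. The graph $F_k$ from \cite{Zubrilina-2018} has $\dim(F_k)=k$ and $\edim(F_k)=k+2^k-2$, so it already realizes pairs where the edge dimension vastly exceeds the metric dimension; symmetric ideas (for instance attaching many pendant paths of equal length to a common vertex, which are cheap to resolve for edges but expensive for vertices, or using the torus-type behaviour $\edim(C_{4r}\Box C_{4t})<\dim$ observed in \cite{Kelenc-2018}) give the reverse direction. The key step is to calibrate these so the values land on the nose at $k_1$ and $k_2$ rather than merely being ``large'' or ``small'': one takes a base graph whose metric dimension is controllably tunable (say by adding or removing twin-like pendant vertices, each of which raises $\dim$ by exactly one once a base threshold is passed) and, separately, a local configuration of vertices of small degree-two-neighbourhood type whose presence raises $\edim$ by a controlled amount (cliques or bipartite blowups, exploiting the $K_{1,n}$-subgraph bound $n=2k$ and the clique-number bound $2^{\Theta(k)}$ from \cite{Geneson-2020,Geneson-2021}). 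Then I would prove two lemmas: (i) in the glued graph, a metric basis is obtained by combining a metric basis of the first gadget with a fixed small set from the second, and no smaller set works, giving $\dim(G_n)=k_1$; (ii) symmetrically $\edim(G_n)=k_2$. The proofs of (i) and (ii) are distance bookkeeping: one checks that vertices (resp.\ edges) lying in different gadgets are automatically resolved by the ``junction'' vertices, so it suffices to resolve pairs within each gadget, which reduces to the known behaviour of the two building blocks.

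The main obstacle I expect is the \emph{non-interference} of the two gadgets: when two subgraphs are glued, shortest paths can now pass through the junction, which can both create new unresolved pairs (distances that coincided before may still coincide, or new coincidences appear) and, conversely, give extra resolving power that lowers one of the parameters below the target. Controlling this requires choosing the gluing so that the gadgets are ``metrically far apart'' or attached at a cut vertex through which all cross-distances factor in a predictable way; the torus example and the $F_k$ example already show these constructions are delicate. A secondary difficulty is the lower bounds $\dim(G_n)\ge k_1$ and $\edim(G_n)\ge k_2$: upper bounds come for free from exhibiting a resolving set, but the lower bounds need an argument that every small set leaves some pair (of vertices, resp.\ of edges) unresolved, typically via a counting/pigeonhole argument on the number of distinct distance vectors achievable with few landmarks, combined with the structural features (twins, large bipartite subgraphs) deliberately built into the gadgets. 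Once both lemmas are in place, appending a path of the right length to adjust the order to $n$ finishes the proof, and reading off $\edim/\dim\to\infty$ (take $k_1=2$, $k_2\to\infty$) and $\dim/\edim\to\infty$ (take $k_2=2$, $k_1\to\infty$) yields the stated unboundedness.
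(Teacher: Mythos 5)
Your proposal is a strategy outline rather than a proof: it names the right difficulties (non-interference of glued gadgets, the lower-bound arguments) and then explicitly defers them, so as it stands there is a genuine gap. The most serious missing piece is the gadget that makes $\edim$ strictly \emph{smaller} than $\dim$ in a tunable way. The two mechanisms you suggest for this direction do not work: attaching many pendant paths of equal length to a common vertex produces a tree-like structure, and for trees one has $\dim(T)=\edim(T)=l(T)-ex(T)$, so such legs raise both parameters in lockstep rather than separating them; and the torus graphs $C_{4r}\Box C_{4t}$ realize only the single fixed pair $(\dim,\edim)=(4,3)$ and cannot be calibrated to an arbitrary target $k_1>k_2$. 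Since the survey itself stresses that graphs with $\edim(G)<\dim(G)$ were hard to find at all (the smallest ones have $10$ vertices and were located computationally), the existence of a gadget whose concatenation drives $\dim-\edim$ up by a controlled amount is precisely the content of the theorem, and your proposal does not supply it. The secondary gaps --- verifying that the junction does not create or destroy resolving power, and proving that no smaller vertex set resolves all vertices (resp.\ edges) --- are also left as ``distance bookkeeping'' without being carried out.

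For comparison, the proof in \cite{Knor-2021} is built on one concrete gadget $G_{n_1,n_2,n_3}$: a cycle $C$ on $n_1\ge 5$ vertices with a pendant path of $n_2$ vertices attached at $a_2$, a pendant vertex $c$ at $a_{n_1}$, and a vertex $i$ adjacent to $a_1$ carrying $n_3\ge 2$ pendant leaves $j_1,\dots,j_{n_3}$. The asymmetry between vertex resolution and edge resolution is engineered inside this single block, and several copies are then concatenated by adding a few edges between them; the free parameters $n_1,n_2,n_3$ and the number of copies are what let one hit $\dim=k_1$ and $\edim=k_2$ exactly and adjust the order to every $n\ge n_0$. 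Your modular philosophy is in the same spirit as that concatenation, but without an explicit, verified building block the argument cannot be completed.
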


The result above required to construct the following family of graphs. We begin with a cycle $C$ on $n_1$ vertices, where $n_1\ge 5$.
We denote the vertices of $C$ consecutively by $a_1,a_2,\dots, a_{n_1}$. Further, take a path $P$ on $n_2$ vertices denoted consecutively by
$b_1,b_2,\dots,b_{n_2}$, where $n_2\ge 1$, and join $P$ to $C$ by the edge $a_2b_1$. Then take vertices $c$ and $i$ and connect them by edges to $a_{n_1}$ and $a_1$, respectively. Finally, take $n_3$ vertices $j_1,j_2,\dots,j_{n_3}$, where $n_3\ge 2$, and join them by edges to the vertex $i$. We denote the resulting graph by $G_{n_1,n_2,n_3}$. With ``several'' of such graphs, authors of \cite{Knor-2021} used a ``kind of concatenation'' of them, by adding a few edges between them, in order to finally construct a graph with the necessary requirements of the result.

We must remark that the graphs constructed in \cite{Knor-2021} have cut vertices. In this sense, the work \cite{Knor-2021+} was then centered into making some construction of graphs $G$ that also realize metric dimension and edge metric dimension (like in Theorem \ref{th:realization-dim-edim}) but for $2$-connected graphs. Along the way, the (edge) metric dimension of subdivisions graphs\footnote{A subdivision graph $S(G)$ is a graph obtained from $G$ by subdividing all its edges once.} $S(G)$, with emphasis in the subdivision graph of complete graphs minus a matching was also studied in \cite{Knor-2021+}. For instance, the next bound was given there.

\begin{theorem}\emph{\cite{Knor-2021+}}
Let $G$ be a graph on $n$ vertices.
If $G$ contains $\lfloor\frac{n-1}3\rfloor$ vertex-disjoint paths of length
$2$, then $\edim(S(G))\le\lceil\frac{2n-2}3\rceil$.
\end{theorem}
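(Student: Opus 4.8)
The plan is to argue directly inside $S(G)$. Recall that $V(S(G))=V(G)\cup\{x_e:e\in E(G)\}$ and that every edge of $S(G)$ has the form $vx_e$ with $v$ an endpoint of $e$, so that the edges of $S(G)$ are in bijection with the incidences $(v,e)$ of $G$. I would first record the elementary distances in $S(G)$: $d_{S(G)}(u,v)=2\,d_G(u,v)$ for $u,v\in V(G)$, and $d_{S(G)}(u,x_e)=2\,d_G(u,e)+1$, where $d_G(u,e)=\min_{y\in e}d_G(u,y)$. From these one obtains the formula that drives the whole proof: if $st\in E(G)$ and $e=vv'\in E(G)$ with $e\ne st$, then
\[
d_{S(G)}(x_{st},\,vx_e)\;=\;2\,d_G(\{s,t\},e)\;+\;
\begin{cases}
1,&\text{if }d_G(\{s,t\},v)\le d_G(\{s,t\},v'),\\
2,&\text{if }d_G(\{s,t\},v)> d_G(\{s,t\},v'),
\end{cases}
\]
and, analogously, for a sensor $w\in V(G)$, $d_{S(G)}(w,vx_e)$ equals $2\,d_G(w,e)$ or $2\,d_G(w,e)+1$ according to whether $d_G(w,v)\le d_G(w,v')$ or not. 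The key feature is that the parity of the distance from a sensor to a half-edge $vx_e$ records which endpoint of $e$ is closer to the sensor, and its magnitude records that distance in $G$.

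For the construction, fix vertex-disjoint paths $P_1,\dots,P_k$ of length two with $k=\lfloor(n-1)/3\rfloor$, say $P_i=a_ib_ic_i$, and set $S_0=\{x_{a_ib_i},x_{b_ic_i}:1\le i\le k\}$, a family of $2k$ subdivision vertices (each pair $x_{a_ib_i},x_{b_ic_i}$ shares the common neighbour $b_i$ in $S(G)$; this is the natural analogue of the edge metric basis of $S(K_n)$ mentioned earlier in the paper). The paths miss exactly $n-3k$ vertices of $G$, and since $\lceil(2n-2)/3\rceil=(n-1)-\lfloor(n-1)/3\rfloor$, the remaining budget is $\lceil(2n-2)/3\rceil-2k=(n-3k)-1$. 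I would therefore enlarge $S_0$ to a set $S$ by adding, for all but one of the uncovered vertices, the subdivision vertex of one incident edge; then $|S|=2k+(n-3k-1)=\lceil(2n-2)/3\rceil$, and it suffices to show that $S$ is an edge resolving set of $S(G)$, for then $\edim(S(G))\le|S|$.

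To verify this I would take two distinct edges $vx_e\ne v'x_{e'}$ of $S(G)$ and distinguish three cases. If $e=e'=vv'$ (two halves of one edge), then by the parity remark any sensor $x_{st}\in S$ with $d_G(\{s,t\},v)\ne d_G(\{s,t\},v')$ separates them, so one needs that every edge of $G$ has its two endpoints in different BFS layers from some chosen sensor set. If $e=vu$ and $e'=vu'$ share the endpoint $v$ (with $u\ne u'$), one exhibits a sensor lying on a path that ``reaches into'' one of $e,e'$ but not the other, so that the bulk terms $d_G(\cdot,e),d_G(\cdot,e')$ differ or a parity mismatch occurs. Finally, if $v\ne v'$ and $e\ne e'$, then typically some chosen sensor already has $d_G(\{s,t\},e)\ne d_G(\{s,t\},e')$; the only obstruction is that $e$ and $e'$ sit symmetrically with respect to every chosen sensor, which is disposed of exactly as in the first case.

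The main obstacle is the first case (together with the degenerate part of the third): one must rule out an edge $vv'$ of $G$ whose endpoints are equidistant in $G$ from every chosen sensor set. The worrying configuration is a pair of (near-)twin adjacent vertices lying outside $\bigcup_i P_i$ and beyond the reach of the extra sensors, and this is precisely where the hypothesis that $G$ admits as many as $\lfloor(n-1)/3\rfloor$ vertex-disjoint paths of length two is used. A counting/packing argument shows that such a twin pair cannot be missed by all of the $P_i$: if it were, the rest of the graph would be too small to carry a packing of that size (the argument is tightest, and cleanest, when $n\equiv 1\pmod 3$, where no extra sensors are available); in the other residue classes the one or two extra sensors, placed on edges incident to the offending vertices, complete the argument. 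Turning this packing heuristic into a clean lemma, and checking the handful of boundary configurations created by the single ``free'' uncovered vertex, is the substantive part of the proof; the remainder is the lengthy but routine case bookkeeping with the displayed distance formula.
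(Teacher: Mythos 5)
The survey you are being checked against does not actually contain a proof of this theorem; it only states the bound and cites \cite{Knor-2021+}. So your proposal can only be assessed on its own terms. The preparatory material is correct: the distance formulas in $S(G)$ (parity recording the closer endpoint, magnitude recording the distance in $G$) are right, the identity $\lceil(2n-2)/3\rceil=(n-1)-\lfloor(n-1)/3\rfloor$ and the count $|S|=2k+(n-3k-1)$ check out, and placing the two subdivision vertices $x_{a_ib_i},x_{b_ic_i}$ on each path plus one subdivision vertex per uncovered vertex save one is a plausible candidate set (it reproduces the basis of $S(K_4)$ when $n=4$).

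The genuine gap is precisely where you locate ``the substantive part,'' and the one argument you offer there does not work. You claim a counting/packing argument shows that an edge $vv'$ whose endpoints are equidistant from every sensor ``cannot be missed by all of the $P_i$.'' But the packing is given by hypothesis, not chosen by you, so there is nothing to count against: an adjacent twin pair can perfectly well lie entirely outside $\bigcup_i P_i$ (take $G=K_n$ and two vertices off the paths). In that case everything rests on the extra sensors, which you only describe as ``the subdivision vertex of one incident edge'' with no rule for which edge. The choice matters: if the free vertex is $v$, the uncovered vertex $v'$ is adjacent to $v$, and you happen to place the sensor for $v'$ on the edge $v'v$ itself, then $x_{vv'}$ is at distance $0$ from both of its own incident edges $vx_{vv'}$ and $v'x_{vv'}$, every path sensor in $K_n$ sees $v$ and $v'$ at distance $1$, and if the remaining extra sensors also avoid $v,v'$ this pair of edges of $S(G)$ is unresolved. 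A complete proof must fix a placement rule (e.g.\ never aim an extra sensor at the free vertex when the degree permits), show that every edge of $G$ then has an endpoint lying on some $P_i$ or carrying a suitably oriented sensor, and carry out the case analysis for half-edges of distinct edges. None of this is done; as written, the resolving property of $S$ is asserted rather than proved.
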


An interesting relationship between metric dimension and edge metric dimension was given in \cite{Kelenc-2021}. There was proved that if $G$ is a connected bipartite graph, then every resolving set for $G$ is also an edge resolving set for $G$, and where a crucial property that allowed to make the proof is based on the non existence of closed walk of odd length in a bipartite graph. As a consequence of this interesting property, it is deduced that for any connected bipartite graph $G$,
\begin{equation}
\label{eq:edim-dim-bipartite}
  \edim(G)\le \dim(G).
\end{equation}

We end this subsubsection by including some examples of graphs $G$ for which $\edim(G)=\dim(G)$. Table \ref{tab:dim-edim-equal} contains them, together with the references where they were published.

\begin{table}[ht]
  \centering
  \small{
  \begin{tabular}{|c|c|c|c|}
    \hline
    Cycles & \cite{Kelenc-2018} & Complete graphs & \cite{Kelenc-2018} \\ \hline
    Trees & \cite{Kelenc-2018} & Grid graphs $P_r\Box P_t$ & \cite{Kelenc-2018} \\ \hline
    Complete bipartite graphs & \cite{Kelenc-2018} & Hypercubes $Q_{2t+1}$, $t\ge 1$ & \cite{Kelenc-2021} \\ \hline
    Circulant graphs $C_n(1,3)$, $n\equiv 0,3,4$ (mod 6) & \cite{Ahsan-2020} & Some families of unicyclic graphs & \cite{Sedlar-2021-c} \\
    \hline
  \end{tabular}}
  \caption{Graphs having equal values in their metric and edge metric dimensions.}\label{tab:dim-edim-equal}
\end{table}

With respect to the hypercube $Q_d$, as stated in Table \ref{tab:dim-edim-equal}, there are cases in which $\dim(Q_d)=\edim(Q_d)$ (when $d$ is odd). For the remaining cases ($d$ is even), it was noted in \cite{Kelenc-2021} that
\begin{equation}\label{eq:dim-edim-Q_d}
  \edim(Q_d)\le\dim(Q_d)\le\edim(Q_d)+1.
\end{equation}
However, it is only known one case in which $\dim(Q_d)=\edim(Q_d)+1$, that is whether $d=4$. Small even values of $d$ (6 and 8) satisfy also that $\edim(Q_d)=\dim(Q_d)$, which has been computationally checked.

\subsubsection{Edge metric dimension of some graphs}

We now center our attention into those investigations centered into computing the value of the edge metric dimension of some graphs. This is summarized in Table \ref{tab:edim-values}, where we include the references where the corresponding graphs have been studied.

\begin{table}[ht]
  \centering
  \small{
  \begin{tabular}{|c|c|c|}
    \hline
    \textbf{Graphs} $\mathbf{G}$ & $\mathbf{\edim(G)}$ & \textbf{Reference} \\ \hline
    Complete bipartite graph $K_{r,t}$ & $r+t-2$ & \cite{Kelenc-2018} \\ \hline
    Tree $T$ & $l(T)-ex(T)$ & \cite{Kelenc-2018} \\ \hline
    Grid graph $P_r\Box P_t$ & 2 & \cite{Kelenc-2018} \\ \hline
    $d$-dimensional grid $P_n^d$, $n\ge d^{d-1}$ & d & \cite{Geneson-2021} \\ \hline
    Torus graph $C_{4r}\Box C_{4t}$ & 3 & \cite{Kelenc-2018} \\ \hline
    Web graph $\mathbb{W}_n$ & 3 & \cite{Zhang-2020} \\ \hline
    Convex polytope $\mathbb{D}_n$ & 3 & \cite{Zhang-2020} \\ \hline
    Other convex polytope related graphs & Several formulas & \cite{Ahsan-2021} \\ \hline
    M\"obius ladder network $MB_{\Psi}$, $\Psi\ge 3$ & 4 & \cite{Deng-2021} \\ \hline
    Hexagonal M\"obius ladder network $HMB_{\Psi}$, $\Psi\ge 2$ & 3 & \cite{Deng-2021} \\ \hline
    Jahangir graph $\mathcal{J}_{2n}$ & $\left\lfloor\frac{2n}{3}\right\rfloor$ & \cite{Yang-2019} \\ \hline
    Helm, Sunflower and Friedship graphs & Several formulas  & \cite{Yang-2019} \\ \hline
    Wheel graph $W_{1,n}$ & $\begin{array}{ll} n, & n = 3, 4,\\ n-1, & n \geq 5.  \end{array}$ & \cite{Kelenc-2018} \\ \hline
    Graph $G+K_1$, $|V(G)|=n$ & $\begin{array}{l} \le n, \\ \ge n-1  \end{array}$ & \cite{Zubrilina-2018} \\ \hline
    Join graph $G+H$ & $\begin{array}{l} \le |V(G)|+|V(H)|-1, \\ \ge |V(G)|+|V(H)|-2  \end{array}$ & \cite{Peterin-2020} \\ \hline
    Complete multipartite graph $K_{r_1,\ldots,r_t}$ & $\begin{array}{ll}
r_1+r_2-2, & \mbox{if $t=2$} \\
\sum_{i=1}^tr_i-1, & \mbox{if $t>2$} \\ %
\end{array}$ & \cite{Peterin-2020} \\ \hline
    Corona graph $G\odot H$, $|V(H)|\geq 2$ & $|V(G)|\cdot(|V(H)|-1)$ & \cite{Peterin-2020} \\ \hline
    Antiprism graph $A_n$, $n\ge 3$ & $\begin{array}{ll}
4, & \mbox{if $n$ is even} \\
5, & \mbox{if $n$ is odd} \\ %
\end{array}$ & \cite{Zhang-2020} \\ \hline
    Prism related graph $D^*_n$, $n\ge 3$ & $\begin{array}{ll}
4, & \mbox{if $n=3$ or $n=4$} \\
\left\lceil n/2\right\rceil+1, & \mbox{otherwise} \\ %
\end{array}$ & \cite{Zhang-2020} \\ \hline
    Generalized Petersen graph $G(n,1)$ & 3 & \cite{Filipovic-2019} \\ \hline
    Generalized Petersen graph $G(n,2)$ & $\begin{array}{ll} 3, & n = 8\mbox{ or }n \ge 10\\ 4, & n \in \{5, 6, 7, 9\}  \end{array}$ & \cite{Filipovic-2019} \\ \hline

  \end{tabular}
  }
  \caption{Graphs for which their edge metric dimension has been studied.}\label{tab:edim-values}
\end{table}

\subsubsection{Miscellaneous results}

Another topic of interest concerns the existence of a linear programming model for the edge metric dimension. This was first presented in the Ph. D. dissertation \cite{Kelenc-2019}, and rediscovered in \cite{Klavzar-2021++}. A similar model for the metric dimension is known from \cite{Chartrand-2000-a}.

Let $G$ be a graph of order $n$ and size $m$ with vertex set $V=\{v_1,\dots,v_n\}$ and edge set $E=\{e_1,\dots,e_m\}$. We consider the matrix $D=[d_{ij}]$ of order $m\times n$ such that $d_{ij}=d_G(x_i,x_j)$, $x_i\in V$ and $x_j\in E$. Now, given the variables $y_j\in \{0,1\}$ with $j\in \{1,2,\dots,n\}$ we define the following function:
$$\mathcal{F}(y_1,y_2,\dots,y_{n})=y_1+y_2+\dots+y_{n}.$$
Clearly, minimizing the function $\mathcal{F}$ subject to the following constraints
$$\sum_{i=1}^{n}|d_{ji}-d_{li}|y_i\ge 1\;\;\mbox{for every $1\le j<l\le m$},$$
is equivalent to finding an edge metric basis of $G$, since the solution for $y_{1}, y_{2},\dots, y_{n}$ represents a set of values for which the function $\mathcal{F}$ achieves the minimum possible.

The model above was applied in \cite{Klavzar-2021++} for computing the edge metric dimension of some interesting graphs, like for instance the bridge-cycle graphs, and some chemical graphs, like some fullerenes.

Some other interesting results on the edge metric dimension of graphs concerns that one on product graphs. Although, there is not much on this direction, we recall the following ones from \cite{Peterin-2020} for the lexicographic product graphs $G\circ H$.

\begin{theorem}\emph{\cite{Peterin-2020}}
Let $G$ be any graph with at least three vertices in every component and let $H\ncong K_1$ be a graph. Then
$$\edim(G\circ H)\ge |V(G)|(|V(H)|-1)+f'(G)+t'(G)+q'(G').$$
Moreover, if $H\notin\mathcal G$, then
$$\edim(G\circ H)= |V(G)|(|V(H)|-1)+f'(G)+t'(G)+q'(G').$$
\end{theorem}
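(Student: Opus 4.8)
The plan is to prove the inequality by localizing an arbitrary edge resolving set to the copies of $H$ inside $G\circ H$, and to match it with an explicit construction under the hypothesis $H\notin\mathcal G$. Throughout write $V(G\circ H)=V(G)\times V(H)$ and, for $g\in V(G)$, let ${}^{g}H$ denote the copy $\{g\}\times V(H)$; recall that in a lexicographic product $d\big((g,h),(g,y)\big)\in\{0,1,2\}$ equals the adjacency distance of $h$ and $y$ in $H$ as soon as $g$ has a neighbour in $G$ — which happens for every $g$, since each component of $G$ has at least three vertices.

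For the lower bound I would first show that every edge resolving set $S$ satisfies $|S\cap V({}^{g}H)|\ge |V(H)|-1$ for each $g$. Fix $g$, pick a neighbour $g'$ of $g$, pick any $h'\in V(H)$, and for distinct $h_1,h_2\in V(H)$ (these exist since $H\ncong K_1$) consider the two edges $\{(g,h_1),(g',h')\}$ and $\{(g,h_2),(g',h')\}$. A direct computation with the edge--vertex distance shows that a vertex $(x,y)$ separates these two edges only when $x=g$ and $y\in\{h_1,h_2\}$, that is, only the endpoints $(g,h_1)$ and $(g,h_2)$ themselves do so; hence $S$ cannot omit two vertices of ${}^{g}H$, which gives the bound, and summing over the $|V(G)|$ copies yields the main term $|V(G)|(|V(H)|-1)$. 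The extra summands $f'(G)+t'(G)+q'(G')$ are then obtained by a finer inspection of which vertices separate the symmetric edges attached to two copies ${}^{g_1}H$ and ${}^{g_2}H$ over a pair of (true or false) twins $g_1,g_2$ of $G$: for such pairs the vertices lying outside both copies cannot help, the $H$-vertices one is allowed to drop from the two copies must be chosen to satisfy compatibility constraints, and when these constraints cannot all be met one is forced to keep additional vertices; tallying this deficiency over the true-twin classes, the false-twin classes, and the auxiliary conflict graph $G'$ built from $G$ produces exactly $t'(G)$, $f'(G)$ and $q'(G')$, all of which vanish when $G$ is twin-free, recovering $\edim(G\circ H)=|V(G)|(|V(H)|-1)$ in that case.

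For the reverse inequality when $H\notin\mathcal G$, the plan is to build an edge resolving set of exactly the stated cardinality: take all of ${}^{g}H$ except one carefully chosen vertex in each copy, the choices respecting the compatibility constraints above, and then add the extra vertices accounted for by $f'(G)+t'(G)+q'(G')$. One then verifies that every pair of edges is separated, splitting into the cases ``both edges inside one copy'', ``both edges inside two copies over twin vertices'', ``the two edges run between copies'' and ``mixed'', using the $G$-distances between non-twin copies together with the adjacency distances inside a copy; the hypothesis $H\notin\mathcal G$ is precisely what makes a compatible choice of dropped vertices possible, $\mathcal G$ being the list of second factors for which no such choice works and one more vertex is unavoidable.

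The main obstacle is the bookkeeping of the three correction terms and the identification of $\mathcal G$. The localization bound $|V(G)|(|V(H)|-1)$ and the generic construction are essentially routine distance computations in a lexicographic product; what is delicate is to determine, for each pattern of twins in $G$, exactly how many extra vertices are forced and to show the construction is simultaneously tight for all of them — this case analysis, together with the small exceptional family $\mathcal G$ on the side of $H$, is the technical core of \cite{Peterin-2020}.
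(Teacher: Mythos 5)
The survey you are working from does not actually prove this theorem: it only cites \cite{Peterin-2020} and explicitly declines even to define $f'(G)$, $t'(G)$, $q'(G')$ and the family $\mathcal G$, so there is no in-paper argument to compare yours against. Judged on its own terms, your localization step is correct and is the right opening move: for a fixed $g$ with neighbour $g'$, the two edges $\{(g,h_1),(g',h')\}$ and $\{(g,h_2),(g',h')\}$ are indeed distinguished only by $(g,h_1)$ and $(g,h_2)$ themselves (any $(x,y)$ with $x\ne g$ is equidistant from $(g,h_1)$ and $(g,h_2)$, while for $x=g$ the distance to the common endpoint $(g',h')$ is $1$, so both edge-distances are $0$ or $1$ according to whether $y\in\{h_1,h_2\}$). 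Hence every edge resolving set misses at most one vertex of each copy ${}^{g}H$, which gives $\edim(G\circ H)\ge |V(G)|(|V(H)|-1)$.

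The genuine gap is everything after that. The content of the theorem is the exact correction $f'(G)+t'(G)+q'(G')$ and the exact exceptional family $\mathcal G$, and your proposal never defines any of these objects, never constructs the auxiliary graph $G'$, and never carries out the case analysis that you yourself flag as ``the technical core''. The sentence asserting that tallying the deficiency over twin classes ``produces exactly $t'(G)$, $f'(G)$ and $q'(G')$'' is circular as written: without definitions of the parameters there is nothing to tally against, and without a definition of $\mathcal G$ the upper-bound construction cannot be verified, since you cannot check that a compatible choice of dropped vertices exists precisely when $H\notin\mathcal G$. What you have is a correct proof of the weaker bound $\edim(G\circ H)\ge|V(G)|(|V(H)|-1)$ together with an accurate roadmap of where the remaining terms must come from (pairs of true or false twins of $G$, whose copies of $H$ see the rest of the product identically, so that only vertices inside those two copies can separate the relevant edge pairs), but not a proof of the stated inequality or equality. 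To close the gap you would need to import or reconstruct the definitions of $f'$, $t'$, $q'$, $G'$ and $\mathcal G$ from \cite{Peterin-2020} and perform the twin-by-twin accounting explicitly on both sides of the bound.
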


The notations $f'(G)$, $t'(G)$, and $q'(G')$ are parameters which depends on false and true twins of $G$. Since their definition required a little much effort (which is a little beyond the goal of this survey), we suggest the reader to directly check them in \cite{Peterin-2020}, as well as, for the definition of the family $\mathcal G$.

Other examples of product graphs, like corona, join and grid graphs, that have been studied in the literature, already appear in Table \ref{tab:edim-values}. However, there still remain the case of corona graph $G\odot K_1$, for which is known from \cite{Peterin-2020} the following contributions.

\begin{theorem}\emph{\cite{Peterin-2020}}
For any graph $G$, $\edim(G\odot K_1)\ge \edim(G)$, and this bound is sharp.
\end{theorem}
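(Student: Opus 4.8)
The plan is to prove the inequality by a projection argument and to certify sharpness with the family of complete graphs. Write $H=G\odot K_1$, so that $H$ is obtained from $G$ by attaching to each $v\in V(G)$ a new vertex $v'$ whose only neighbour is $v$. First I would record two elementary distance identities in $H$: since each pendant $v'$ has degree one, no shortest path in $H$ passes through a pendant as an internal vertex, whence $d_H(u,v)=d_G(u,v)$ for all $u,v\in V(G)$ and $d_H(u,v')=d_G(u,v)+1$ for every $u\in V(G)$ and every pendant $v'$. It follows that for any edge $e=xy$ of $G$ (which is also an edge of $H$) and any vertex $w\in V(G)$ we have $d_H(e,w)=d_G(e,w)$, while for any pendant $v'$ we have $d_H(e,v')=\min\{d_G(x,v)+1,d_G(y,v)+1\}=d_G(e,v)+1$.

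Next, let $S$ be an edge metric basis of $H$, so $|S|=\edim(H)$, and let $\pi\colon V(H)\to V(G)$ send $v\mapsto v$ for $v\in V(G)$ and $v'\mapsto v$ for each pendant. Set $S^{\ast}=\pi(S)\subseteq V(G)$, so that $|S^{\ast}|\le|S|=\edim(H)$. I claim $S^{\ast}$ is an edge resolving set of $G$. Let $e,f$ be distinct edges of $G$; they are distinct edges of $H$, so some $s\in S$ satisfies $d_H(e,s)\ne d_H(f,s)$. If $s\in V(G)$ then the identities above give $d_G(e,s)\ne d_G(f,s)$ and $s=\pi(s)\in S^{\ast}$. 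If $s=v'$ is a pendant then $d_G(e,v)+1=d_H(e,v')\ne d_H(f,v')=d_G(f,v)+1$, so $d_G(e,v)\ne d_G(f,v)$ and $v=\pi(v')\in S^{\ast}$. In either case $S^{\ast}$ contains a vertex resolving the pair $e,f$ in $G$, so $S^{\ast}$ is an edge resolving set and $\edim(G)\le|S^{\ast}|\le\edim(G\odot K_1)$.

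For sharpness I would show $\edim(K_n\odot K_1)=\edim(K_n)=n-1$ for every $n\ge 2$, which realises the bound for every positive-integer value of the parameter. In $H=K_n\odot K_1$, with clique vertices $u_1,\dots,u_n$ and pendants $u_1',\dots,u_n'$, the nonzero distances are $d_H(u_i,u_j)=1$, $d_H(u_i,u_i')=1$, $d_H(u_i,u_j')=2$, $d_H(u_i',u_j')=3$ for $i\ne j$. Take $S=\{u_1',\dots,u_{n-1}'\}$, so $|S|=n-1$. A short computation of the distance vectors to $S$ shows that the pendant edge $u_iu_i'$ has entry $0$ exactly in coordinate $i$ when $i\le n-1$, that $u_nu_n'$ has all entries equal to $2$, and that a clique edge $u_iu_j$ has entry $1$ exactly at those of $i,j$ that lie in $\{1,\dots,n-1\}$ and entry $2$ elsewhere; these vectors are pairwise distinct, so $S$ is an edge resolving set and $\edim(K_n\odot K_1)\le n-1$. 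Together with the inequality already proved this gives equality, the base case $n=2$ being $K_2\odot K_1\cong P_4$ with $\edim(P_4)=1=\edim(K_2)$.

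The inequality part is short once the two distance identities in $H$ are in place; the only point requiring care is the sharpness check, where one must verify that no clique edge shares its distance vector with a pendant edge and that distinct clique edges receive distinct vectors. This is a finite routine verification rather than a real obstacle, so I do not expect any serious difficulty.
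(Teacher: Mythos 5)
Your argument is correct: the two distance identities in $G\odot K_1$ are right, the projection $\pi$ of an edge metric basis of $G\odot K_1$ onto $V(G)$ does yield an edge resolving set of $G$ of no larger cardinality, and your sharpness computation on $K_n\odot K_1$ (the very family the paper cites for tightness, with $\edim(K_n\odot K_1)=n-1=\edim(K_n)$) checks out. This is essentially the same approach as the source the survey attributes the result to, so there is nothing to add.
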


For the sharpness of the bound, it is used the graph $K_n\odot K_1$. Although the bound above is tight, one can see that the difference $\edim(G\odot K_1)-\edim(G)$ can be arbitrarily large. To observe this, in \cite{Peterin-2020} was given the following. Let $T$ be a tree of order $n\ge 3$ different from a path. Clearly, $l(T\odot K_1)=n$ and $ex(T\odot K_1)=n-l(T)$. By using the formula for the edge metric dimension of trees given in Table \ref{tab:edim-values}, $\edim(T\odot K_1)=l(T\odot K_1)-ex(T\odot K_1)=n-(n-l(T))=l(T)$. Thus, $\edim(T\odot K_1)-\edim(T) = l(T)-(l(T)-ex(T))=ex(T)$, which can be as large as desired.

We close this subsection by mentioning some results concerning the edge metric dimension of unicyclic graphs obtained in \cite{Sedlar-2021-c}. Among them, we remark that one which states that for any unicyclic graph $G$, the difference $\dim(G)-\edim(G)$ can only take the values $-1,0,1$. The unicyclic graphs for which such difference equals $-1$, $0$ or $1$ are described based on some possible configurations that can occur in a unicyclic graph.

\subsection{Mixed metric dimension}

The mixed metric dimension appeared in \cite{Kelenc-2017} as an approach for the (edge) metric dimension aimed to uniquely and indistinctly recognizing all the elements (vertices or edges) of a graph by means of distances to a given set of vertices.

A set $S$ of vertices of a connected graph $G$ is a \emph{mixed resolving set} (or \emph{mixed metric generator}) if any two elements (vertices or edges) of $G$ are distinguished by some vertex of $S$. A mixed resolving set of smallest possible cardinality is a \emph{mixed metric basis} of  $G$, and its cardinality is the \emph{mixed metric dimension}, denoted by $\mdim(G)$. Figure shows the example of a grid graph, where a mixed metric basis appears in red color. This is a particular case of a general case studied in \cite{Kelenc-2017}.

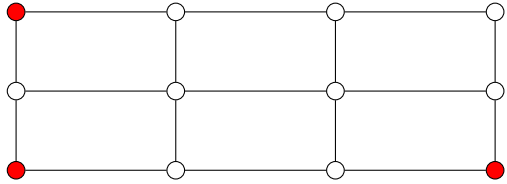
\begin{figure}[h]
\centering
\begin{tikzpicture}[scale=.7, transform shape]
\node [draw, shape=circle, fill=red] (a1) at  (0,0) {};
\node [draw, shape=circle] (a5) at  (0,1.5) {};
\node [draw, shape=circle, fill=red] (a9) at  (0,3) {};
\node [draw, shape=circle] (a2) at  (3,0) {};
\node [draw, shape=circle] (a6) at  (3,1.5) {};
\node [draw, shape=circle] (a10) at  (3,3) {};
\node [draw, shape=circle] (a3) at  (6,0) {};
\node [draw, shape=circle] (a7) at  (6,1.5) {};
\node [draw, shape=circle] (a11) at  (6,3) {};
\node [draw, shape=circle, fill=red] (a4) at  (9,0) {};
\node [draw, shape=circle] (a8) at  (9,1.5) {};
\node [draw, shape=circle] (a12) at  (9,3) {};

\draw(a1)--(a2)--(a3)--(a4)--(a8)--(a7)--(a6)--(a5)--(a1);
\draw(a5)--(a9)--(a10)--(a11)--(a12)--(a8);
\draw(a2)--(a6)--(a10);
\draw(a3)--(a7)--(a11);
\end{tikzpicture}
\caption{A mixed resolving set of the smallest possible cardinality appears in red.}\label{Fig_resolving-basis}
\end{figure}

In concordance with the NP-completeness of the decision problems concerning computing the metric and edge metric dimensions of connected graphs, it is not surprising that an analogous problem for the mixed metric dimension is of the same complexity class. This was precisely proved in \cite{Kelenc-2017}. The approach of the proof is also using a reduction from 3-SAT, and therefore, highly influenced by the related proofs for the metric and edge metric dimension complexity results.

\subsubsection{Mixed metric dimension versus metric and edge metric dimension}

It clearly happens that any mixed resolving set is also a resolving set as well as an edge resolving set. In this sense, the following relationship immediately follows. For any graph $G$,
\begin{equation}\label{mdim-dim-edim}
\mdim(G)\ge \max\{\dim(G),\edim(G)\}.
\end{equation}
On the other hand, one could think that the union of a resolving with an edge resolving set might be a mixed resolving set. However, this is indeed far from being true. From \cite{Khuller-1996} and \cite{Kelenc-2018} it is known that
$$\dim(T)=l(T)-ex(T)=\edim(T).$$
Moreover, a set having all but one terminal vertex of each exterior major vertex forms a metric basis (an edge metric) for $T$. Also, from \cite{Kelenc-2017} we know that for any tree $T$,
$$\mdim(T)=l(T).$$
In consequence, the following result can be easily deduced.

\begin{proposition}
\label{prop:dim-m-all-q}
For any integer $q$, there exists a graph $G$ for which $$\mdim(G)=\dim(G)+\edim(G)+q.$$
\end{proposition}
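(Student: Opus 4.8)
The plan is to use trees as the workhorse, exactly as the discussion preceding the statement suggests. For a tree $T$ that is not a path we have $\dim(T)=\edim(T)=l(T)-ex(T)$ and $\mdim(T)=l(T)$, hence $\mdim(T)-\dim(T)-\edim(T)=l(T)-2(l(T)-ex(T))=2ex(T)-l(T)$. So by choosing trees with a controlled number of leaves and exterior major vertices I can make this quantity equal to any prescribed integer $q$ (positive, negative, or zero), provided I can realize the required pair $(l(T),ex(T))$ by an actual tree. That is the whole idea; the rest is bookkeeping to check that the two cases $q\ge 0$ and $q<0$ (and the degenerate case where a path is forced) are all covered.

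First I would handle $q$ with $2ex(T)\ge l(T)$, i.e.\ essentially $q\ge -l(T)$ with room to spare. Take a ``spider-of-spiders'' or, more simply, a caterpillar-type tree: fix $p$ exterior major vertices, each of terminal degree $2$ (so each contributes exactly $2$ leaves), arranged along a path so that they are genuinely exterior major vertices. Then $l(T)=2p$ and $ex(T)=p$, giving $2ex(T)-l(T)=0$; this already yields $q=0$. To push the value up to an arbitrary positive $q$, increase the terminal degree of the major vertices in a controlled way is the wrong direction (that raises $l(T)$ relative to $ex(T)$), so instead I keep terminal degrees at $2$ but subdivide: that does not help either since it changes neither $l$ nor $ex$. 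The clean move for $q>0$ is to attach additional major vertices whose terminal degree stays $2$ while \emph{removing} leaves is impossible, so I instead reduce leaves per major vertex is not available below $2$. Hence for $q>0$ I pass to a different family: take one central path and hang $p$ many ``$K_{1,2}$-ends'' but also make some major vertices have terminal degree exactly $2$ and pair them up cleverly — more transparently, observe $2ex(T)-l(T)=\sum_{w\in M(T)}(2-\ter(w))$, so each major vertex of terminal degree $1$ contributes $+1$, each of terminal degree $2$ contributes $0$, and each of terminal degree $t\ge 3$ contributes $2-t\le -1$. Thus I can hit \emph{any} integer $q$: for $q\ge 1$ build a tree with $q$ exterior major vertices of terminal degree $1$ plus enough degree-$2$ ones to make the structure a legitimate tree-not-a-path; for $q=0$ use the all-degree-$2$ family above; for $q\le -1$ build a tree with a single exterior major vertex of terminal degree $|q|+2$ (a spider $K_{1,|q|+2}$ subdivided once on each leg), for which $l=|q|+2$, $ex=1$, and $2\cdot 1-(|q|+2)=-|q|=q$.

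The one point requiring care — and the step I expect to be the mildest obstacle — is ensuring that each construction really is a tree that is \emph{not a path}, so that the formulas $\dim(T)=\edim(T)=l(T)-ex(T)$ and $\mdim(T)=l(T)$ from \cite{Khuller-1996}, \cite{Kelenc-2018}, \cite{Kelenc-2017} genuinely apply, and that every vertex I call an ``exterior major vertex'' has degree at least three and positive terminal degree. For the degree-$1$-terminal family this means: take a path $v_0v_1\cdots v_m$, and at each chosen internal vertex $v_i$ attach a pendant path (of length at least $1$) so that $v_i$ becomes a major vertex with exactly one terminal vertex; as long as two such attachments are made (or one attachment plus the path has length making the attachment point major with $\ter=1$ and there is another major vertex) the object is not a path. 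A short case check confirms $M(T)$ is exactly the set of attachment points and each has the intended terminal degree, so $2ex(T)-l(T)=q$ as computed. Finally, setting $G:=T$ for the tree produced in the appropriate case gives $\mdim(G)=\dim(G)+\edim(G)+q$, completing the proof.
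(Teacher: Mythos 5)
Your proposal is correct and takes essentially the same route as the paper: reduce to trees, where the known formulas give $\mdim(T)-\dim(T)-\edim(T)=l(T)-2\bigl(l(T)-ex(T)\bigr)=2\,ex(T)-l(T)$, and then realize any integer $q$ as $2\,ex(T)-l(T)$ by an explicit tree (the paper merely asserts such trees are easily constructed and exhibits two examples for $q=6$ and $q=-2$, while you give the general recipe via the decomposition $2\,ex(T)-l(T)=\sum_{w\in M(T)}\bigl(2-\ter_T(w)\bigr)$). One small correction to your $q\ge 1$ construction: the two attachment points nearest the ends of the spine $v_0\cdots v_m$ also inherit $v_0$ and $v_m$ as terminal vertices, so they have terminal degree $2$ rather than $1$; you therefore need $q+2$ attachment points (the $q$ interior ones contribute $+1$ each and the two extreme ones contribute $0$) — a triviality that your own summation formula already accounts for.
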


\begin{proof}
To see this, for a given integer $q$, we only need to consider a tree $T$ such that $q=2ex(T)-l(T)$. This means that
\begin{align*}
  \mdim(T) & =l(T) \\
            & =l(T)+2l(T)-2ex(T)-2l(T)+2ex(T) \\
            & =2l(T)-2ex(T)+2ex(T)-l(T)\\
            & =\dim(T)+\edim(T)+q,
\end{align*}
which is the required value for $\mdim(T)$.
\end{proof}

Notice that the value $q$ from the result above can be zero, or positive or negative. Moreover, trees for which $q=2ex(T)-l(T)$ can be easily constructed. Figure \ref{fig:tree} shows an example whether $q=6$ and other when $q=-2$.

\begin{figure}[h]
  \centering
\begin{tikzpicture}[scale=.5, transform shape]
\node [draw, shape=circle] (a0) at  (0,0) {};
\node [draw, shape=circle] (a1) at  (3,0) {};
\node [draw, shape=circle] (a2) at  (6,0) {};
\node [draw, shape=circle] (a3) at  (9,0) {};
\node [draw, shape=circle] (a4) at  (12,0) {};
\node [draw, shape=circle] (b1) at  (-3,0) {};
\node [draw, shape=circle] (b2) at  (-6,0) {};
\node [draw, shape=circle] (b3) at  (-9,0) {};
\node [draw, shape=circle] (b4) at  (-12,0) {};
\node [draw, shape=circle] (x00) at  (1,3) {};
\node [draw, shape=circle] (y00) at  (-1,3) {};
\node [draw, shape=circle] (a11) at  (3,3) {};
\node [draw, shape=circle] (a22) at  (6,3) {};
\node [draw, shape=circle] (a33) at  (9,3) {};
\node [draw, shape=circle] (a44) at  (12,3) {};
\node [draw, shape=circle] (a444) at  (13.5,3) {};
\node [draw, shape=circle] (b11) at  (-3,3) {};
\node [draw, shape=circle] (b22) at  (-6,3) {};
\node [draw, shape=circle] (b33) at  (-9,3) {};
\node [draw, shape=circle] (b44) at  (-12,3) {};
\node [draw, shape=circle] (b444) at  (-13.5,3) {};

\node at (-14,1.5) {\huge $T$};

\draw(a0)--(a1)--(a2)--(a3)--(a4);
\draw(a0)--(b1)--(b2)--(b3)--(b4);
\draw(x00)--(a0)--(y00);
\draw(a44)--(a4)--(a444);
\draw(b44)--(b4)--(b444);
\draw(a1)--(a11);
\draw(a2)--(a22);
\draw(a3)--(a33);
\draw(b1)--(b11);
\draw(b2)--(b22);
\draw(b3)--(b33);

\end{tikzpicture}

\vspace*{0.3cm}
\begin{tikzpicture}[scale=.5, transform shape]
\node [draw, shape=circle] (a0) at  (0,0) {};
\node [draw, shape=circle] (a2) at  (6,0) {};
\node [draw, shape=circle] (a4) at  (12,0) {};
\node [draw, shape=circle] (b2) at  (-6,0) {};
\node [draw, shape=circle] (b4) at  (-12,0) {};
\node [draw, shape=circle] (x00) at  (1,3) {};
\node [draw, shape=circle] (y00) at  (-1,3) {};
\node [draw, shape=circle] (a11) at  (3,3) {};
\node [draw, shape=circle] (a22) at  (6,3) {};
\node [draw, shape=circle] (a33) at  (9,3) {};
\node [draw, shape=circle] (a44) at  (12,3) {};
\node [draw, shape=circle] (a444) at  (13.5,3) {};
\node [draw, shape=circle] (b11) at  (-3,3) {};
\node [draw, shape=circle] (b22) at  (-6,3) {};
\node [draw, shape=circle] (b33) at  (-9,3) {};
\node [draw, shape=circle] (b44) at  (-12,3) {};
\node [draw, shape=circle] (b444) at  (-13.5,3) {};

\node at (-14.3,1.5) {\huge $T'$};

\draw(a0)--(a2)--(a4);
\draw(a0)--(b2)--(b4);
\draw(x00)--(a0)--(y00);
\draw(a44)--(a4)--(a444);
\draw(b44)--(b4)--(b444);
\draw(a2)--(a11);
\draw(a2)--(a22);
\draw(a2)--(a33);
\draw(b2)--(b11);
\draw(b2)--(b22);
\draw(b2)--(b33);
\end{tikzpicture}
\caption{A tree $T$ with $\mdim(T)=\dim(T)+\edim(T)+6$, and other one $T'$ with $\mdim(T')=\dim(T')+\edim(T')-2$.}\label{fig:tree}
\end{figure}
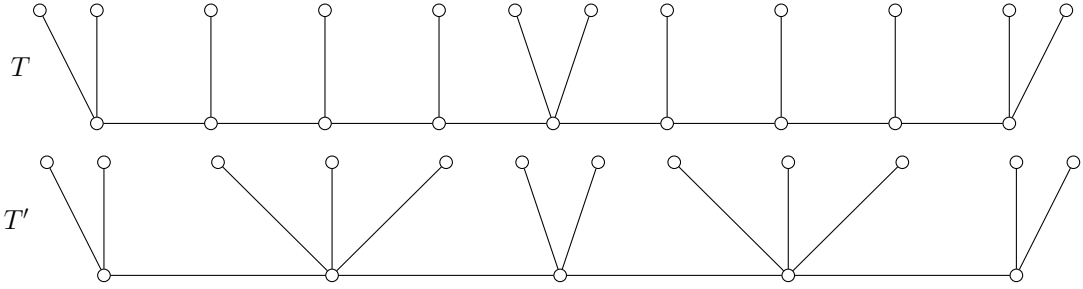

Proposition \ref{prop:dim-m-all-q} leads to claim the following result, which indeed means that the mixed metric dimension of a graph $G$ cannot in general be bounded from above by a constant factor of the sum $\dim(G)+\edim(G)$.

\begin{theorem}
The ratio $\frac{\mdim(G)}{\dim(G)+\edim(G)}$ cannot be bounded from above.
\end{theorem}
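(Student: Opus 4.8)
The plan is to use the tree formulas already recalled just above the statement, namely $\dim(T)=\edim(T)=l(T)-ex(T)$ and $\mdim(T)=l(T)$ for every tree $T$, and to exhibit an explicit infinite family of trees along which the denominator $\dim(T)+\edim(T)$ stays constant while the numerator $\mdim(T)$ grows without bound. This is exactly the family underlying Proposition~\ref{prop:dim-m-all-q}, specialised so that $l(T)-ex(T)$ is held fixed while $q=2ex(T)-l(T)\to\infty$.

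Concretely, for each integer $k\ge 2$ I would take the caterpillar $T_k$ whose spine is a path $s_0s_1s_2\cdots s_ks_{k+1}$, together with one extra pendant leaf $\ell_i$ attached to $s_i$ for every $i\in\{1,\dots,k\}$. The first step is to read off the structural data of $T_k$: each $s_i$ with $1\le i\le k$ has degree $3$, while all remaining vertices ($s_0$, $s_{k+1}$ and the $\ell_i$) are leaves; hence $M(T_k)=\{s_1,\dots,s_k\}$. Since the unique neighbour of $\ell_i$ is $s_i$, the leaf $\ell_i$ is a terminal vertex of $s_i$, so each $s_i$ is an exterior major vertex and $ex(T_k)=k$. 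The leaf set of $T_k$ is $\{s_0,s_{k+1}\}\cup\{\ell_1,\dots,\ell_k\}$, so $l(T_k)=k+2$.

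Plugging these into the tree formulas gives $\dim(T_k)=\edim(T_k)=l(T_k)-ex(T_k)=2$, hence $\dim(T_k)+\edim(T_k)=4$, whereas $\mdim(T_k)=l(T_k)=k+2$. Therefore
\[
\frac{\mdim(T_k)}{\dim(T_k)+\edim(T_k)}=\frac{k+2}{4},
\]
which tends to infinity as $k\to\infty$, proving that the ratio is not bounded from above. The only step requiring any care — and the only place the argument could slip — is the bookkeeping of which vertices of $T_k$ are exterior major vertices and the exact count of leaves; once that verification is in place, the conclusion is immediate from the already-established formulas for trees, so I do not expect any genuine obstacle.
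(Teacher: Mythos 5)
Your proof is correct and follows essentially the same route as the paper: both arguments rest on the tree formulas $\dim(T)=\edim(T)=l(T)-ex(T)$ and $\mdim(T)=l(T)$, and both produce trees in which $l(T)-ex(T)$ stays bounded while $l(T)\to\infty$ (the paper does this abstractly via Proposition~\ref{prop:dim-m-all-q}, you do it with an explicit caterpillar family whose structural bookkeeping checks out). No issues.
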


\begin{proof}
By using Proposition \ref{prop:dim-m-all-q}, for any integer $n$, one can construct a large enough tree $T$ such that $\dim(T)+\edim(T)=k$ for some integer $k$, and also $\mdim(T)=nk$ (by taking $q=(n-1)k$ in Proposition \ref{prop:dim-m-all-q}). Thus, it follows that $\frac{\mdim(G)}{\dim(G)+\edim(G)}=\frac{nk}{k}=n$.
\end{proof}

In view of Proposition \ref{prop:dim-m-all-q}, one would consider the problem of characterizing the graphs $G$ for which either $\mdim(G)<\dim(G)+\edim(G)$, or $\mdim(G)=\dim(G)+\edim(G)$, or $\mdim(G)>\dim(G)+\edim(G)$. By using the formulas $\dim(T)$ and $\edim(T)$ of any tree $T$, the following observations are easy to deduce.

\begin{remark}
Let $T$ be any tree. Then,
\begin{itemize}
  \item $\mdim(T)=\dim(T)+\edim(T)$ if and only if $\dim(T)=\edim(T)=l(T)/2$,
  \item $\mdim(T)>\dim(T)+\edim(T)$ if and only if $\dim(T)=\edim(T)<l(T)/2$, and
  \item $\mdim(T)<\dim(T)+\edim(T)$ if and only if $\dim(T)=\edim(T)>l(T)/2$.
\end{itemize}
\end{remark}

An interesting connection between the metric, edge metric and mixed metric dimension of graphs appears whether we consider hypercubes. For instance, it was proved in \cite{Kelenc-2021} the following result.

\begin{theorem}\emph{\cite{Kelenc-2021}}
For every $d\ge 3$ it holds,
            $$\dim(Q_d) = \mdim(Q_d).$$
\end{theorem}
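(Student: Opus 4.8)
The plan is to establish the nontrivial inequality $\mdim(Q_d)\le\dim(Q_d)$, the reverse one $\mdim(Q_d)\ge\dim(Q_d)$ being immediate from \eqref{mdim-dim-edim}. Identify $V(Q_d)$ with $\{0,1\}^d$ and $d_{Q_d}$ with Hamming distance, and recall that $Q_d$ is bipartite with classes the even- and odd-weight words. The core of the argument is a characterization of the mixed resolving sets of $Q_d$ among its resolving sets. Since $Q_d$ is bipartite, the result behind \eqref{eq:edim-dim-bipartite} tells us that every resolving set is already an edge resolving set, so a metric basis $S$ distinguishes every pair of vertices and every pair of edges, and only vertex--edge pairs remain to be controlled. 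First I would show that, given a vertex $w$ and an edge $e=uv$ whose endpoints differ in coordinate $i$, exactly one of $u,v$ -- say $u$ -- lies in the bipartition class of $w$, whence $d(w,s)\equiv d(u,s)$ and $d(w,s)\not\equiv d(v,s)\pmod 2$ for every vertex $s$. Together with $|d(u,s)-d(v,s)|=1$ and $d(e,s)=\min\{d(u,s),d(v,s)\}$, this forces that if no $s\in S$ separates $w$ from $e$, then $d(w,s)=d(u,s)$ for every $s\in S$, hence $w=u$ because $S$ resolves, and moreover $d(v,s)=d(w,s)+1$ for all $s\in S$. As flipping coordinate $i$ of $v$ changes its distance to $s$ by $\pm 1$ according to whether $v_i=s_i$, the last condition says exactly that $s_i=w_i$ for every $s\in S$. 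Reading this in reverse yields the key fact: a resolving set $S$ of $Q_d$ is a mixed resolving set if and only if no coordinate is constant on $S$.

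Granting this, the theorem reduces to producing a metric basis of $Q_d$ in which no coordinate is constant. A first handle is that if coordinate $i$ is constant on a metric basis $S$ then, after a translation and coordinate flips (all automorphisms of $Q_d$), $S$ lies in the subcube $\{x:x_i=0\}\cong Q_{d-1}$, and splitting every distance into its part inside that subcube and its part on coordinate $i$ shows that $S$, seen in $Q_{d-1}$, remains a resolving set. Using this, I would try to move from an arbitrary metric basis to one of the same size with no constant coordinate: whenever coordinate $i$ is constant on $S$, replace one element $s^{*}$ of $S$ by $s^{*}\oplus e_i$; this makes coordinate $i$ non-constant, and since changing one coordinate of one element cannot turn any other coordinate constant, no new degeneracy is created, so it is enough to pick $s^{*}$ for which the modified set still resolves $Q_d$. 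Examining when this can fail (again via the distance splitting) confines the obstruction to pairs $x,y$ with $x_i\ne y_i$ that in $S$ are separated only by $s^{*}$ and with $|d(s^{*},x)-d(s^{*},y)|=2$; these I would eliminate by choosing $s^{*}$ suitably, if necessary perturbing two elements instead of one, and disposing of the finitely many small $d$ not covered by the general argument through explicit bases that are easily exhibited (for instance $\{000,110,011\}$ for $Q_3$).

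I expect this last step -- constructing a metric basis of the optimal size $\dim(Q_d)$ with every coordinate non-constant -- to be the main obstacle, since it is the only place where the resolving property can genuinely break, the parity splitting settles only the generic pairs, and the residual configurations must be handled by a careful choice of which element(s) to perturb, or else by exhibiting an explicit family of metric bases of $Q_d$ (built from the structure underlying the value $\dim(Q_d)$) that already has no constant coordinate. Everything before it -- the two easy inequalities and the bipartite/coordinate characterization of the mixed resolving sets -- is routine once the right statement has been isolated.
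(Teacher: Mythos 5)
Since the survey only states this theorem with a pointer to \cite{Kelenc-2021} and reproduces no proof, your proposal has to be judged on its own merits. The part that carries the conceptual weight is correct: $\mdim(Q_d)\ge\dim(Q_d)$ is immediate from \eqref{mdim-dim-edim}, bipartiteness together with the result behind \eqref{eq:edim-dim-bipartite} disposes of the edge--edge pairs, and your parity analysis of a vertex--edge pair $(w,uv)$ is sound: it does force $w$ to be the endpoint of $uv$ lying in its own bipartition class, and it reduces non-resolution of the pair to the edge's coordinate being constant on $S$. The resulting characterization --- a resolving set of $Q_d$ is a mixed resolving set exactly when no coordinate is constant on it --- is the right key lemma, and it is surely also the backbone of the argument in \cite{Kelenc-2021}.

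The genuine gap is the step you yourself flag as the main obstacle: producing, for every $d\ge 3$, a metric basis of $Q_d$ on which no coordinate is constant. This cannot be treated as a routine detail, because $\dim(Q_d)$ has no closed form for general $d$ and explicit optimal bases are unavailable, so this existence statement is essentially the whole content of the theorem beyond the characterization. Your single-element perturbation really can fail for most choices of $s^{*}$: in $Q_3$ the set $\{000,110,100\}$ is a metric basis with the third coordinate constant, and replacing $000$ by $001$ leaves $000$ and $101$ with the common representation $(1,2,1)$, while replacing $110$ by $111$ leaves $010$ and $001$ with the common representation $(1,2,2)$; only the third swap, $100\mapsto 101$, yields a resolving set. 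So ``choose $s^{*}$ suitably'' requires a proof that a good choice always exists, and neither that proof nor the two-element fallback is supplied. One fact that would at least organize the repair, and which your sketch misses, is that a resolving set of $Q_d$ has at most one constant coordinate: if $i\ne j$ were both constant, translate so that $\mathbf{0}\in S$; then $e_i$ and $e_j$ are at distance $|s|+1$ from every $s\in S$ and are not resolved. Hence a single successful repair always finishes the job and, as you correctly note, cannot create a new constant coordinate. Even so, the existence of an element whose perturbation preserves resolvability --- or some alternative construction of an optimal resolving set meeting both values in every coordinate --- still has to be proved, and until it is, the argument is incomplete.
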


This together with inequalities in \eqref{eq:dim-edim-Q_d} leads to the following. For every $d\ge 3$ it holds
$$\mdim(Q_d)-1=\dim(Q_d) -1 \le  \edim(Q_d) \le \dim(Q_d) = \mdim(Q_d),$$
and moreover, by using the asymptotical result for the metric dimension of hypercubes known from \cite{Cantor-1966} it is deduced that for every $d\ge 2$,
$$\mdim(Q_d) \sim \edim(Q_d) \sim \dim(Q_d) \sim \frac{2d}{\log_2 d}.$$

\subsubsection{Bounding the mixed metric dimension of graphs}

It is not difficult to see that the whole vertex set of any graph $G$ forms a mixed resolving set. Also, any vertex of $G$ and any incident edge with it, have the same distance to the vertex itself. In this sense, a vertex alone cannot form a mixed resolving set in $G$. As a consequence of these situations, the following remark from \cite{Kelenc-2017} is readily seen to be true.

\begin{remark}\emph{\cite{Kelenc-2017}}
For any graph $G$ of order $n$, $2\le \mdim(G)\le n$.
\end{remark}

It is then natural to consider characterizing the graphs attaining the limit values in the remark above. Such issues were settled in \cite{Kelenc-2017}, as the next two results show. To this end, we need the following terminology. Let $v$ be a vertex of a graph $G$. A vertex $u \in N_G(v)$ is said to be a \emph{maximal neighbor} of $v$ if all neighbors of $v$ (and $v$ itself) are also in the closed neighbourhood of $u$.

\begin{theorem}\emph{\cite{Kelenc-2017}}
Let $G$ be any graph of order $n$. Then $\mdim(G)=2$ if and only if $G$ is a path.
\end{theorem}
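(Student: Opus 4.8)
The plan is to prove both directions of the equivalence $\mdim(G)=2 \iff G$ is a path. The forward direction is essentially immediate from the fact, established earlier in the excerpt, that $\mdim(G)\ge\max\{\dim(G),\edim(G)\}$ together with the known characterizations $\dim(G)=1\iff G$ is a path and $\edim(G)=1\iff G$ is a path of order at least two: if $\mdim(G)=2$, then in particular $G$ is not $K_1$ (where $\mdim$ is not even defined in the usual convention), and I would actually argue the converse direction is the substantive one and the forward direction follows once we know $\mdim(P_n)=2$ and that paths are the only small-$\mdim$ graphs. So let me restructure: first I would compute $\mdim(P_n)=2$ directly, then show no other graph achieves the value $2$.

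For the easy half, let $P_n$ have endpoints $u$ and $v$. I claim $S=\{u,v\}$ is a mixed resolving set. Every vertex $x$ is determined by the pair $(d(x,u),d(x,v))$ since these sum to $n-1$ and $d(x,u)$ runs over $0,1,\dots,n-1$; every edge $e$ lies between consecutive vertices and $(d(e,u),d(e,v))$ takes a distinct value for each of the $n-1$ edges, and no vertex shares a representation with an edge because vertex representations have both coordinates summing to $n-1$ while edge representations sum to $n-2$. Since a single vertex can never be a mixed resolving set (a vertex and one of its incident edges are always at the same distance from it, as recalled in the Remark from \cite{Kelenc-2017} just above), we get $\mdim(P_n)=2$.

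For the hard half, suppose $\mdim(G)=2$ with mixed metric basis $S=\{a,b\}$. The key structural obstacle is to rule out every non-path graph, and the cleanest route is to exploit the vertex-edge incidence constraint: for any vertex $w$ and any edge $e=wz$ incident with $w$, we need some $s\in S$ with $d(s,w)\ne d(s,e)=\min\{d(s,w),d(s,z)\}$, which forces $d(s,z)<d(s,w)$, i.e. $z$ is strictly closer to $s$ than $w$ is. In other words, for every vertex $w$ and every neighbor $z$ of $w$, either $a$ or $b$ must have $z$ strictly closer than $w$. This means no vertex $w$ can be a local minimum of $d(\cdot,a)$ and simultaneously a local minimum of $d(\cdot,b)$ unless it equals $a$ or $b$; more strongly, each vertex $w\notin\{a,b\}$ with two neighbors $z_1,z_2$ needs the "descent toward $a$ or $b$" condition to hold for each incident edge. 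I would then argue: since $\dim(G)\le\mdim(G)=2$, $G$ is one of the known graphs of metric dimension at most $2$; combining this with $\edim(G)\le 2$ restricts further, and then the incidence condition above forces $G$ to be a path. Concretely, I expect to show that every vertex has degree at most $2$ (a vertex $w$ of degree $\ge 3$ would have three incident edges, and the "strict descent toward $a$ or $b$" requirement, being a condition on the at-most-two values $d(w,a),d(w,b)$ each decreasing along at most... — this needs care, as a high-degree vertex could still have all neighbors closer to $a$), so the degree bound will come from a more delicate case analysis on the positions of $a$ and $b$ and how distances behave, and then connectivity plus maximum degree $2$ gives a path or a cycle, with cycles excluded since $\mdim(C_n)\ge 3$ (an odd cycle already has $\dim=2$ but one checks the two antipodal-ish edges collide, or invoke $\edim$; even cycles have $\dim=2$ too, so this must be checked directly).

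The main obstacle will be precisely this last step: proving $\mdim(C_n)>2$ and, more generally, ruling out all degree-$\le 2$ non-path connected graphs (i.e. cycles). For a cycle $C_n$ with any two vertices $a,b$, I would exhibit two elements (two edges, or an edge and a vertex) with the same distance vector to $\{a,b\}$: going "around the far side" of the cycle from the chord $ab$, distances to $a$ and to $b$ both increase and then decrease, producing a collision. I would also need to handle the possibility that $\{a,b\}$ are not both in every basis, but since we only need existence of a bad pair for the specific basis we fixed, this is fine. The only genuinely fiddly bookkeeping is the parity/floor arithmetic identifying which pair of elements collides, which I would present compactly rather than exhaustively. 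Once cycles are eliminated and maximum degree $2$ is established, connectedness forces $G\cong P_n$, completing the proof.
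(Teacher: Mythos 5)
Your first half is fine: the computation showing $\{u,v\}$ resolves all vertices and edges of $P_n$ and the observation that no single vertex can be a mixed resolving set together give $\mdim(P_n)=2$. (The survey states this theorem only with a citation to \cite{Kelenc-2017}, so I am judging your converse on its own merits.) That converse, however, has a genuine gap exactly where you flag it, and neither of the two repairs you propose will close it. There is no characterization of ``the known graphs of metric dimension at most $2$'' to invoke, so that appeal is a dead end; and the vertex-versus-incident-edge condition alone cannot bound the degree, because, as you yourself observe, a vertex of large degree may have many neighbours strictly closer to $a$ (already at the vertex antipodal to $a$ in $C_4$ both neighbours are strictly closer to $a$). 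Deferring this to ``a more delicate case analysis'' leaves the main step of the theorem unproved, and the cycle case is likewise only promised, not carried out.

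The missing idea is to use, in addition, the requirement that two edges sharing an endpoint be distinguished. For a single edge $wz$, distinguishing $w$ from $wz$ forces some $s\in\{a,b\}$ with $d_G(s,z)<d_G(s,w)$, and distinguishing $z$ from $wz$ forces the other landmark to satisfy the reverse strict inequality; since distances change by at most one along an edge, every edge $wz$ must satisfy $(d_G(a,z)-d_G(a,w),\,d_G(b,z)-d_G(b,w))\in\{(1,-1),(-1,1)\}$. So each vertex $w$ has neighbours of at most two ``types,'' and if two neighbours $z_1,z_2$ had the same type, the edges $wz_1$ and $wz_2$ would receive the identical vector $(\min\{d_G(a,w),d_G(a,z_i)\},\min\{d_G(b,w),d_G(b,z_i)\})$ and could not be resolved. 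This gives $\Delta(G)\le 2$ immediately; applied at $w=a$ it shows every neighbour of $a$ has type $(1,-1)$, hence $a$ (and symmetrically $b$) has degree one, and a connected graph with maximum degree two containing a vertex of degree one is a path. This disposes of cycles without any of the parity bookkeeping you were planning, and it is the step your proposal needs but does not supply.
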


\begin{theorem}\emph{\cite{Kelenc-2017}}
Let $G$ be a graph of order $n$. Then $\mdim(G)=n$ if and only if every vertex of the graph $G$ has a maximal neighbor.
\end{theorem}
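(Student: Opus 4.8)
The plan is to prove both directions of the equivalence, working with the definition of a maximal neighbor and the basic fact that a single vertex $v$ fails to distinguish $v$ from any edge incident with $v$.

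For the easier direction ($\Leftarrow$), suppose every vertex of $G$ has a maximal neighbor. I would argue that $V(G)$ minus any single vertex cannot be a mixed resolving set, which forces $\mdim(G) = n$ since $\mdim(G) \le n$ always holds by the preceding remark. Fix $v \in V(G)$ and let $u$ be a maximal neighbor of $v$, so $N_G[v] \subseteq N_G[u]$. I claim $S = V(G) \setminus \{v\}$ does not resolve the vertex $u$ from the edge $e = uv$. For any $w \in S$, I need $d_G(w,u) = d_G(w,e) = \min\{d_G(w,u), d_G(w,v)\}$, i.e. $d_G(w,u) \le d_G(w,v)$. Take a shortest $w$–$v$ path; its penultimate vertex lies in $N_G(v) \subseteq N_G[u]$, so it is either $u$ itself or a neighbor of $u$, giving $d_G(w,u) \le d_G(w,v)$ (and when the path has length $0$, $w = v \notin S$, so this case does not arise). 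Hence $u$ and $e$ are not distinguished by $S$, so $S$ is not a mixed resolving set; as $v$ was arbitrary, $\mdim(G) = n$.

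For the harder direction ($\Rightarrow$), I would prove the contrapositive: if some vertex $v$ has no maximal neighbor, then $\mdim(G) < n$, and in fact $V(G) \setminus \{v\}$ is a mixed resolving set. So assume no neighbor of $v$ is maximal, and set $S = V(G) \setminus \{v\}$. I must check that every two distinct elements (vertices or edges) of $G$ are resolved by some vertex of $S$. The only pairs that could possibly fail are those involving $v$ or an edge incident to $v$ in an essential way, since all other elements already have a representative of $S$ "on themselves". Concretely I would run through the cases: (i) two vertices $x, y \in S$ — resolved since they are vertices of $S$; (ii) $v$ versus another vertex $x$ — pick any neighbor $a$ of $v$; if $a \ne x$ it may still not resolve, so instead use the standard fact that distances in $G$ from the vertices of $S$ determine $v$ up to... here I need the non-maximality hypothesis; (iii) edges not incident to $v$ — each contains a vertex of $S$; (iv) an edge $e = vw$ versus something else — this is where $w$ failing to be maximal produces a vertex $z$ with $z \in N_G[v]$ but $z \notin N_G[w]$ or $z \notin N_G(w)$, i.e. some neighbor of $v$ is not "dominated" by $w$, which one uses to separate $e = vw$ from the vertex $v$, from other edges at $v$, and from $w$.

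**The hard part will be** the bookkeeping in direction ($\Rightarrow$): one must show that the single failure of maximality at $v$ is simultaneously enough to resolve $v$ from every other vertex, to resolve $v$ from every edge incident with it, and to resolve distinct edges $vw_1, vw_2$ at $v$ from each other, all using only vertices of $S = V(G)\setminus\{v\}$. The key technical observation I expect to use repeatedly is: for a vertex $w \in N_G(v)$ which is \emph{not} maximal, there is a vertex $z$ with $z \in N_G[v]$ and $z \notin N_G[w]$ (so $d_G(z,w) \ge 2$ while $d_G(z,v) \le 1$, hence $d_G(z,vw) \le 1 < d_G(z, \cdot)$ for the competing element); and for the vertex–vertex pair $\{v,x\}$ one argues that if no vertex of $S$ separated them then $x$ would have to be adjacent to $v$ and dominate all of $N_G(v)$, contradicting non-maximality. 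Once these observations are isolated, each case is a short distance computation. I would present the non-maximality observation as a preliminary claim, then dispatch the four case groups in turn.
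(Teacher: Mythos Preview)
The paper under review is a survey and does not contain a proof of this theorem; it merely quotes the result from \cite{Kelenc-2017}. There is therefore nothing in the present paper to compare your proposal against.

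That said, your approach is sound and is in fact the natural one. Your argument for ($\Leftarrow$) is complete and correct. For ($\Rightarrow$) you have the right idea but you overestimate the difficulty of the case analysis. With $S=V(G)\setminus\{v\}$, almost every pair of elements is resolved for trivial reasons: any vertex $x\ne v$ resolves itself from anything not containing $x$; any edge $e$ with an endpoint $y\ne v$ not shared with the competing element is resolved by $y$; the pair $\{v,x\}$ is resolved by $x\in S$; the pair $\{v,vw\}$ is resolved by $w\in S$; and two edges $vw_1,vw_2$ are resolved by $w_1\in S$. The \emph{only} pair that genuinely requires the hypothesis is a neighbor $w\in N_G(v)$ versus the edge $vw$: here $d_G(z,vw)=\min\{d_G(z,v),d_G(z,w)\}\le d_G(z,w)$ for every $z$, and equality holds for all $z\in S$ precisely when $N_G[v]\subseteq N_G[w]$, i.e.\ when $w$ is a maximal neighbor of $v$. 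Since $v$ has no maximal neighbor, each such pair is resolved by some $z\in N_G(v)\setminus N_G[w]\subseteq S$. So your ``hard part'' collapses to this single case; the rest of your outlined bookkeeping can be dispatched in one or two lines.
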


Other bounds for the mixed metric dimension of graphs are less common than for the edge metric dimension. However, a few of them them can be remarked, as the next one from \cite{Kelenc-2017}, in terms of the \emph{girth} $g(G)$ of the graph $G$.

\begin{theorem}\emph{\cite{Kelenc-2017}}
Let $G$ be a graph of order $n$. If $G$ has a cycle, then $\mdim(G)\le n-g(G)+3$.
\end{theorem}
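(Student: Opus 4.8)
The plan is to construct a mixed resolving set of $G$ of cardinality $n-g(G)+3$. If $g(G)=3$ there is nothing to prove, since $\mdim(G)\le n=n-g(G)+3$ always holds; so assume $g:=g(G)\ge 4$ and fix a shortest cycle $C=v_1v_2\cdots v_gv_1$ of $G$. I would first record the standard fact that a shortest cycle is isometric and chordless: if two vertices $v_i,v_j$ of $C$ were joined by a path shorter than the shorter of the two arcs of $C$ between them, that path together with that arc would contain a cycle of length strictly less than $g$, a contradiction; likewise a chord would create two cycles of length at most $g-1$. Consequently $d_G(v_i,v_j)=d_C(v_i,v_j)$ for all $i,j$, and for every $s\in V(C)$ and every element (vertex or edge) $z$ of $C$ we have $d_G(s,z)=d_C(s,z)$.

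Next I would choose $S_0\subseteq V(C)$ with $|S_0|=3$ that resolves every pair of elements of the cycle $C$, i.e.\ a mixed metric basis of $C$; such a set exists because $\mdim(C_g)=3$ for $g\ge 4$, and concretely one may take three vertices that split $C$ into three arcs each of length less than $g/2$ (for $g=4$ one takes $S_0=\{v_1,v_2,v_3\}$). Put $W:=(V(G)\setminus V(C))\cup S_0$, so that $|W|=(n-g)+3=n-g+3$. It then remains to verify that $W$ is a mixed metric generator of $G$, which I would do by running through all pairs of distinct elements $e,f$ of $G$.

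If both $e$ and $f$ are elements of $C$, then by the isometry remark their distances from the vertices of $S_0$ coincide with the corresponding cycle-distances, so some vertex of $S_0\subseteq W$ resolves them. Otherwise one of them, say $e$, involves a vertex $x\in V(G)\setminus V(C)$ (that is, $e=x$ or $e$ is an edge incident with $x$); then $x\in W$ and $d_G(x,e)=0$, so $x$ resolves $e$ from $f$ unless also $d_G(x,f)=0$, which forces $f\in\{x\}\cup\{\text{edges incident with }x\}$. This leaves two subcases. First, distinguishing two edges $xy$ and $xz$ at $x$: if $y\in W$ then $d_G(y,xy)=0\ne d_G(y,xz)\ge 1$, and symmetrically if $z\in W$; and $y,z$ cannot both lie outside $W$, since then $x$ would be adjacent to two vertices of $C$ and, together with a shortest arc of $C$ joining them (of length at most $2$ by isometry), this would yield a cycle of length at most $4$, forcing $g=4$ and hence $|V(C)\setminus S_0|=1$, contradicting $y\ne z$. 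Second, distinguishing the vertex $x$ from an incident edge $xy$: if $y\in W$ then $d_G(y,x)=1\ne 0=d_G(y,xy)$; and if $y=v_j\in V(C)\setminus S_0$, pick $s\in S_0$ with $d_C(s,v_j)$ strictly smaller than $\max_{v\in V(C)}d_C(v,v_j)=\lfloor g/2\rfloor$, which is possible because at most two vertices of $C$ attain that maximum while $|S_0|=3$. Then $d_G(s,x)=d_G(s,v_j)+1$: otherwise there would be a path from $s$ to $x$ of length at most $d_G(s,v_j)=d_C(s,v_j)$, and joining it to the edge $xv_j$ and to a shortest arc of $C$ from $v_j$ to $s$ produces a closed walk of length at most $2d_C(s,v_j)+1\le g-1$ that uses the edge $xv_j$ exactly once, hence contains a cycle through $xv_j$ of length between $3$ and $g-1$ — impossible. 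Thus $d_G(s,x)=d_G(s,v_j)+1\ne d_G(s,v_j)=d_G(s,xy)$, so $s$ resolves the pair. This covers all cases, giving $\mdim(G)\le|W|=n-g+3$.

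The step I expect to be the real obstacle is the choice of $S_0$ together with the proof that it resolves every pair of elements of the cycle $C$ — essentially the base case $\mdim(C_g)\le 3$ — where the parity of $g$ and the small values $g\in\{4,5\}$ call for separate verification; once this is settled, the isometry of the shortest cycle makes the remaining case analysis above go through smoothly, the crucial point being that no vertex outside $C$ can provide a shortcut creating a cycle shorter than $g$.
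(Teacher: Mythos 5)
Your proof is correct: taking the complement of a shortest cycle $C$ together with a three-vertex mixed metric basis of $C$, and exploiting that a girth cycle is isometric and chordless, is essentially the construction used for this bound in \cite{Kelenc-2017} (the survey itself only quotes the result without proof). The two delicate points you flag --- that $\mdim(C_g)=3$ supplies the set $S_0$, and that a vertex outside $C$ cannot admit a shortcut to $C$ without creating a cycle of length less than $g$ --- are both handled soundly in your case analysis.
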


Some other general bound for $\mdim(G)$ was given in \cite{Danas-2020-a} in terms of the minimum degree $\delta(G)$ as follows. The proof of it relies in some similar bounds that exist for the metric and edge metric dimensions of graphs (see Table \ref{tab:edge-dim-bound} for instance).

\begin{theorem}\emph{\cite{Danas-2020-a}}
If $G$ is a connected graph, then  $\mdim(G) \ge 1+\left\lceil\log_2(\delta(G)+11)\right\rceil$.
\end{theorem}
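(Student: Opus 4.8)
If $G$ is a connected graph, then $\mdim(G) \ge 1+\left\lceil\log_2(\delta(G)+11)\right\rceil$.

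The plan is to mimic the counting argument that gives the analogous lower bounds $\edim(G)\ge\lceil\log_2\Delta(G)\rceil$ (Table~\ref{tab:edge-dim-bound}) and $\dim(G)\ge\log_2(\Delta(G)+1)$, but to exploit that a mixed resolving set must separate \emph{all} vertices and edges simultaneously, which lets us focus on a single vertex of minimum degree together with its incident edges. Fix a vertex $v$ with $\deg_G(v)=\delta:=\delta(G)$, let $N_G(v)=\{u_1,\dots,u_\delta\}$, and let $S$ be a mixed metric basis of $G$ with $|S|=\mdim(G)$. The key local observation is that for any $w\in V(G)$, the distance from $w$ to $v$ and the distances from $w$ to the $\delta$ incident edges $vu_i$ take values in a very restricted window: $d_G(w,vu_i)\in\{d_G(w,v)-1,\,d_G(w,v)\}$, since $vu_i$ joins $v$ to a neighbour $u_i$, so $|d_G(w,u_i)-d_G(w,v)|\le 1$ and $d_G(w,vu_i)=\min\{d_G(w,u_i),d_G(w,v)\}$. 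Hence, relative to a single witness $w$, the element $v$ and its $\delta$ incident edges — that is $\delta+1$ objects — can only realise \emph{two} distinct distance values, namely $d_G(w,v)$ and $d_G(w,v)-1$ (when $w=v$ or $w=u_i$ the value $0$ appears, which we fold into this dichotomy by a small case check).

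First I would make this precise: for each $w\in S$ define the sign vector entry recording whether $d_G(w,vu_i)=d_G(w,v)$ or $d_G(w,v)-1$; this assigns to each of the $\delta$ edges $vu_i$ (and, separately, to $v$ itself) a binary label, so the map sending each object in $\{v\}\cup\{vu_i: i\}$ to its vector of labels over the $|S|$ witnesses lands in $\{0,1\}^{|S|}$. Since $S$ is a mixed resolving set, these $\delta+1$ vectors must be pairwise distinct, giving $\delta+1\le 2^{|S|}$, i.e. $|S|\ge\log_2(\delta+1)$, hence $\mdim(G)\ge\lceil\log_2(\delta+1)\rceil$. This is the ``$+1$-free'' version; to reach the claimed $1+\lceil\log_2(\delta+11)\rceil$ one needs two extra refinements. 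The first is the standard observation that $v$ cannot lie in $S$ and still serve as its own good witness in a minimal way — more carefully, at least one witness (the one nearest $v$, or $v$ itself) contributes no discriminating power among the incident edges, so one coordinate is ``wasted'', which is where the leading additive $1$ comes from; this is exactly the mechanism behind the $+1$ in $\edim(G)\ge 1+\lceil\log_2\delta(G)\rceil$ from \cite{Filipovic-2019}, and I would reuse that argument essentially verbatim after replacing ``edge resolving'' by ``mixed resolving''. The second refinement is to squeeze the constant from $+1$ inside the log up to $+11$ by also counting, alongside $v$ and its incident edges, a few more objects that are forced to share the same distance window: the neighbours $u_i$ themselves and the edges at bounded distance contribute additional vectors constrained to a slightly larger but still bounded alphabet, and a careful bookkeeping (distinguishing the cases $w=v$, $w=u_i$, $w\notin N_G[v]$) yields the clean $\delta+11$.

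The main obstacle I expect is precisely this last constant-tracking step: getting from the easy bound $\mdim(G)\ge\lceil\log_2(\delta+1)\rceil$ to the stated $1+\lceil\log_2(\delta+11)\rceil$ requires identifying exactly which extra $11$-or-so graph elements (vertices and incident/near-incident edges of $v$) are forced into a common bounded distance-profile relative to every witness, and verifying they are still pairwise resolved. This is a finite but fiddly case analysis around the closed neighbourhood of a minimum-degree vertex, and the cleanest route is to first establish a lemma of the form: for every $w\in V(G)$, the multiset of distances $\{d_G(w,x): x\in \{v\}\cup N_G(v)\cup E_v\}$ (where $E_v$ is the edge set incident with $v$) takes at most three distinct values, and then bound the number of distinct binary/ternary label vectors accordingly. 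Everything else — the reduction to a counting inequality $(\text{number of objects})\le(\text{alphabet size})^{|S|}$, taking logarithms, and the ceiling — is routine and parallels the proofs already cited in the excerpt for $\dim$ and $\edim$; I would assume those as templates and only spell out the mixed-graph-specific window lemma in detail.
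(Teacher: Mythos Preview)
Your core counting idea --- that $v$ together with its incident edges receive, from each witness $w$, distances in the two-element window $\{d_G(w,v)-1,\,d_G(w,v)\}$ --- is exactly the argument the survey alludes to (it gives no detailed proof, only the remark that one ``relies on similar bounds for the metric and edge metric dimensions''). However, the ``$+11$'' in the displayed statement is a typo for ``$+1$'': the bound as printed is false, since for any path $P_n$ one has $\delta(P_n)=1$ and $\mdim(P_n)=2$, whereas the inequality would demand $2\ge 1+\lceil\log_2 12\rceil=5$. Your entire ``second refinement'', hunting for eleven extra objects around $N_G[v]$, is therefore chasing a misprint and cannot succeed. The intended bound is $\mdim(G)\ge 1+\lceil\log_2(\delta(G)+1)\rceil$.

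Your route to the additive ``$+1$'' also has a gap. You fix $v$ of \emph{minimum} degree and claim that ``at least one witness (the one nearest $v$, or $v$ itself) contributes no discriminating power''. This fails when $v\notin S$: in $C_4$ with vertices $1,2,3,4$, taking $v=1$ and the mixed metric basis $S=\{2,3,4\}$, each of the nearest witnesses $2$ and $4$ separates one incident edge of $v$ from the other two elements (distance $0$ versus $1$), and witness $3$ separates $v$ from both edges, so no coordinate is wasted and your argument stalls at $\delta+1\le 2^{|S|}$. The argument from \cite{Filipovic-2019} that you mean to reuse does \emph{not} start from a minimum-degree vertex: it picks an arbitrary $v\in S$, which automatically has $\deg_G(v)\ge\delta(G)$. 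Then $v$ together with all edges incident to $v$ are at distance $0$ from the witness $v$ itself, so that coordinate is genuinely useless, and the remaining $|S|-1$ binary coordinates must distinguish at least $\delta(G)+1$ elements, giving $\delta(G)+1\le 2^{|S|-1}$. With this correction your first refinement already yields the intended bound, and the second refinement should be discarded.
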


Another style of bound that was given in \cite{Danas-2020-a} involving the order, the size, the maximum degree, and the diameter of graphs is as follows.

\begin{theorem}\emph{\cite{Danas-2020-a}}
Let $G$ be a connected graph with mixed metric dimension $M$ and let $D$ be the diameter of graph $G$. Then
$$|V(G)|+|E(G)|\leq D^{M}+M(\Delta(G)+1).$$
\end{theorem}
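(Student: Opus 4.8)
The plan is to exploit the fact that a mixed metric basis $S=\{v_1,\dots,v_M\}$ assigns to every element (vertex or edge) $t$ of $G$ a \emph{mixed metric representation} $r(t\mid S)=(d_G(t,v_1),\dots,d_G(t,v_M))$, and that, $S$ being a mixed resolving set, these vectors must be pairwise distinct among all $|V(G)|+|E(G)|$ elements. Each coordinate is a distance from an element to a vertex, hence lies in $\{0,1,\dots,D\}$; a crude count would only give the weaker bound $(D+1)^M$. To obtain the sharper estimate I would split the elements according to whether some coordinate of their representation equals $0$.

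First I would let $A$ be the set of elements $t$ (vertices or edges) with $d_G(t,v_i)\ge 1$ for every $i\in\{1,\dots,M\}$, and $B$ the set of elements with $d_G(t,v_i)=0$ for at least one $i$. For an element of $A$, every coordinate of its representation lies in $\{1,\dots,D\}$, so there are at most $D^M$ possible distinct representations; since distinct elements of $G$ have distinct representations with respect to $S$, this forces $|A|\le D^M$.

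Next I would bound $|B|$. The key observation is that $d_G(t,v_i)=0$ forces $t$ to be either the vertex $v_i$ itself or an edge incident with $v_i$: indeed, for an edge $e=xy$ one has $d_G(e,v_i)=\min\{d_G(x,v_i),d_G(y,v_i)\}$, which equals $0$ only when $v_i\in\{x,y\}$. Hence for each fixed $i$ there are at most $1+\deg_G(v_i)\le 1+\Delta(G)$ elements having a $0$ in coordinate $i$, and summing over $i\in\{1,\dots,M\}$ — allowing overcounting for the (few) elements with several zero coordinates, which is harmless for an upper bound — yields $|B|\le M(\Delta(G)+1)$.

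Finally, adding the two estimates gives $|V(G)|+|E(G)|=|A|+|B|\le D^{M}+M(\Delta(G)+1)$, as claimed. The argument is essentially a counting one, so I do not anticipate a genuine obstacle; the only point requiring care is the precise description of which elements can carry a zero coordinate (the vertex $v_i$ together with its incident edges), since this is exactly where the correction term $M(\Delta(G)+1)$ originates and where the convention $d_G(e,v)=\min\{d_G(x,v),d_G(y,v)\}$ for $e=xy$ must be invoked correctly.
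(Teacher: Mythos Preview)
Your proof is correct. The survey paper does not reproduce the proof but only remarks that it ``relies in other similar bounds that exist for the metric and edge metric dimensions of graphs''; this refers to the standard counting technique you have used, which mirrors the classical argument behind the bound $|V(G)|\le D^{k}+k$ for ordinary metric dimension (split off the basis vertices, whose representations contain a zero, and count the remaining $D^{k}$ vectors with entries in $\{1,\dots,D\}$). Your adaptation---identifying the elements with a zero coordinate as precisely the $v_i$ together with their incident edges, giving at most $1+\Delta(G)$ such elements per basis vertex---is exactly the natural extension of this idea to the mixed setting and is essentially the approach of the cited source.
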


We again must remark that the proof of such bound also relies in other similar bounds that exist for the metric and edge metric dimensions of graphs. The work \cite{Danas-2020-a} also contains an interesting computational contribution while comparing the quality of their new lower bounds with respect to four known lower bounds from the literature. We indeed write their conclusions: ``\emph{Testing was performed on two groups: all
21 connected graphs of 5 vertices and 12 well-known graphs with 10 up to 36 vertices. For the first group, one of proposed lower bounds in 20 out of 21 cases reached value of mixed metric dimension. For the second group, situation is quite opposite, so only in 2 cases proposed lower bound reached value of mixed metric dimension.}''

Other bounds for the mixed metric dimension of graphs are centered into special graph classes. For instances we can find some of them in \cite{Sedlar-2021-b,Sedlar-2021-d} for graphs with vertex disjoint cycles, with some emphasis on unicyclic graphs and cactus graphs. We remark the following ones.

\begin{theorem}{\em\cite{Sedlar-2021-d}}
Let $G \ne C_n$ be a cactus graph with $c$ cycles. Then $\mdim(G)\le l(G) + 2c$, and the upper bound is attained if and only if every cycle in $G$ has exactly one vertex of degree larger or equal to $3$.
\end{theorem}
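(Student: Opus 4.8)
The plan is to prove the bound $\mdim(G) \le l(G) + 2c$ for a cactus graph $G \ne C_n$ with $c$ cycles by exhibiting an explicit mixed resolving set of the required size, and then to analyze the equality case. First I would fix some structural notation: in a cactus, every edge lies on at most one cycle, so the cycles $C_1, \dots, C_c$ are pairwise vertex-disjoint except possibly sharing cut vertices, and removing the cycle edges leaves a forest hanging off the cycles. The natural candidate set $S$ will consist of: (i) for each exterior major vertex, all but possibly one of its terminal vertices (as in the tree formula $\edim(T) = l(T) - ex(T)$ and $\mdim(T) = l(T)$ from \cite{Kelenc-2017,Kelenc-2018}), and (ii) for each cycle $C_i$, a carefully chosen pair of vertices on (or near) $C_i$ that handles the edges and vertices of that cycle. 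The count should come out to at most $l(G) + 2c$: roughly $l(G)$ vertices to resolve the tree-like part and $2c$ extra vertices, two per cycle.

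**Resolving the pairs.**

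The verification splits into cases according to where the two elements (vertices or edges) to be distinguished lie. If both lie in a common ``branch'' (a maximal subtree attached to the cyclic structure), the argument reduces to the tree case: the leaves of that branch that belong to $S$ already resolve everything there, using the standard distance-parity / terminal-vertex argument behind $\mdim(T) = l(T)$. If the two elements lie on a single cycle $C_i$, or one is on $C_i$ and the other in a branch attached to $C_i$, then the two designated vertices for $C_i$ must do the work; here I would use that on a cycle, two vertices at appropriate positions break the symmetry enough to distinguish all vertices and all edges of the cycle (this is essentially why $\sdim(C_n)$ and $\mdim(C_n)$ are small, and why $C_n$ has to be excluded from the statement — an isolated cycle needs a different accounting). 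The genuinely delicate sub-case is when the two elements belong to different cycles or different branches: then I must check that the combination of distances to $S$-vertices from several different parts of the graph still separates them, which follows because any element's distance vector already differs in the coordinates corresponding to the ``side'' of the relevant cut vertex it sits on. I would organize this as a lemma: any cut vertex $v$ separates $G$ into blocks, and elements in different blocks are separated by the portion of $S$ lying beyond $v$ on one side.

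**The equality case and the main obstacle.**

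For the ``if and only if'' part, the easier direction is sufficiency: if every cycle in $G$ has exactly one vertex of degree $\ge 3$, I would show that each such cycle genuinely forces $2$ extra vertices into any mixed resolving set beyond the $l(G)$ forced by the branch leaves — intuitively, a cycle with a single attachment point behaves like a pendant cycle, and pendant cycles are known to require two extra resolving vertices (one does not suffice to distinguish the two edges incident to the antipodal vertex, nor the antipodal vertex from those edges). The harder direction is necessity: I must show that if some cycle $C_i$ has two or more vertices of degree $\ge 3$, then one can save at least one vertex, i.e.\ $\mdim(G) < l(G) + 2c$. The idea is that the branches hanging from two distinct high-degree vertices of $C_i$ already supply enough ``asymmetry'' on $C_i$ that one of the two cycle-vertices earmarked for $C_i$ becomes redundant. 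I expect this necessity direction — showing a strict saving is always possible and quantifying it precisely so the bound drops by exactly enough — to be the main obstacle, because it requires a careful global argument that the leaf-set resolving the branches does double duty on the cycle, rather than a purely local cycle-by-cycle estimate. I would handle it by contradiction: assume a mixed resolving set of size $l(G) + 2c$ that is ``tight'' on every cycle, then derive that the branch leaves at the two high-degree vertices of $C_i$ cannot both be needed, producing a smaller mixed resolving set.
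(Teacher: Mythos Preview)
This paper is a survey and does not contain a proof of the theorem; the result is merely quoted from \cite{Sedlar-2021-d}. There is therefore no ``paper's own proof'' to compare your proposal against.

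A couple of remarks on the proposal itself. Your overall architecture---build an explicit mixed resolving set by taking all leaves plus two carefully placed vertices per cycle, then verify by case analysis on the location of the two elements to be distinguished---is the natural one and is in the spirit of how \cite{Sedlar-2021-d} actually proceeds (that paper develops the unicyclic case first and then extends to cacti). One small slip: your parenthetical ``as in the tree formula $\edim(T)=l(T)-ex(T)$'' is irrelevant here, since for mixed resolving sets \emph{every} leaf must be included (a leaf and its incident edge are at the same distance from every other vertex), so the leaf contribution is exactly $l(G)$, not $l(G)-ex(G)$. For the equality direction you label ``harder'', your intuition is right that a cycle with two or more high-degree vertices allows a saving, but your proposed contradiction argument (``assume a tight mixed resolving set, then some branch leaves are redundant'') is backwards: you should instead directly exhibit a mixed resolving set of size at most $l(G)+2c-1$ by showing that for such a cycle one of the two earmarked cycle vertices can be replaced by a leaf already present in $S$ from one of the attached branches. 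That constructive saving is what the original argument does, and it avoids the vagueness of your contradiction sketch.
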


\begin{proposition}{\em\cite{Sedlar-2021-b}}
Let $G$ be a 3-connected graph with cyclomatic number $c(G)$. Then $\mdim(G) < 2c(G)$.
\end{proposition}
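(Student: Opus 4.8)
The plan is to combine the trivial upper bound $\mdim(G)\le n$, valid for every graph of order $n$ as recalled in the remark above, with a lower bound on the cyclomatic number that is forced by the edge density of a $3$-connected graph. In other words, the inequality $\mdim(G)<2c(G)$ turns out not to require any structural analysis of mixed resolving sets at all: it is a consequence of counting edges.

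First I would record that a $3$-connected graph $G$ has minimum degree $\delta(G)\ge 3$. Indeed, if some vertex had degree at most two, deleting its (at most two) neighbours would disconnect $G$ or reduce it to a single vertex, contradicting $3$-connectivity; in particular $n=|V(G)|\ge 4$. Summing degrees then gives $2|E(G)|=\sum_{v\in V(G)}\deg_G(v)\ge 3n$, so $|E(G)|\ge \frac{3n}{2}$.

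Next I would substitute this into the defining identity $c(G)=|E(G)|-|V(G)|+1$ for the cyclomatic number of a connected graph, obtaining
$$c(G)\ \ge\ \frac{3n}{2}-n+1\ =\ \frac{n}{2}+1,$$
that is, $n\le 2c(G)-2$. Finally, combining this with $\mdim(G)\le n$ yields $\mdim(G)\le n\le 2c(G)-2<2c(G)$, which is exactly the asserted strict inequality (with two units to spare).

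The argument is short, and its only delicate points are bookkeeping: verifying that $3$-connectivity genuinely forces $\delta(G)\ge 3$ (so that one lands on $n\le 2c(G)-2$ rather than on a weaker estimate), and observing that the trivial bound $\mdim(G)\le n$ is all that is needed. If one wished for a sharper statement, one could instead invoke the fact recalled above that $\mdim(G)=n$ holds only when every vertex has a maximal neighbour, which would let one strengthen the inequality for most $3$-connected graphs; but this refinement is unnecessary for the bound as stated.
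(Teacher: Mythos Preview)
Your argument is correct. The chain $\mdim(G)\le n\le 2c(G)-2<2c(G)$ is sound: $3$-connectivity forces $\delta(G)\ge 3$ (hence $|E(G)|\ge \tfrac{3n}{2}$), the cyclomatic identity $c(G)=|E(G)|-n+1$ then gives $c(G)\ge \tfrac{n}{2}+1$, and the trivial bound $\mdim(G)\le n$ closes the argument. You even obtain the slightly stronger $\mdim(G)\le 2c(G)-2$.

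Note, however, that the present paper is a survey and merely quotes this proposition from \cite{Sedlar-2021-b} without reproducing a proof, so there is no in-paper argument to compare against. Your counting approach is self-contained and elementary; it bypasses any structural analysis of mixed resolving sets, which is presumably closer to the spirit of the original source (whose focus is the relationship between $\mdim$ and the cyclomatic number, cf.\ the conjecture $\mdim(G)\le l(G)+2c(G)$ mentioned later in the survey). In the $3$-connected setting $l(G)=0$, so the proposition can be read as confirming that conjecture with strict inequality, and your proof shows this follows already from edge density rather than from any resolving-set machinery.
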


\subsubsection{Mixed metric dimension of some graphs}

This subsubsection is devoted to research works aimed to computing the value of the mixed metric dimension of some families of graphs. This is summarized in Table \ref{tab:mdim-values}, where we also include the references where the corresponding values have been computed.

\begin{table}[ht]
  \centering
  \small{
  \begin{tabular}{|c|c|c|}
    \hline
    \textbf{Graphs} $\mathbf{G}$ & $\mathbf{\mdim(G)}$ & \textbf{Reference} \\ \hline
    Cycle graph $C_n$ & $3$ & \cite{Kelenc-2017} \\ \hline
    Unicyclic graphs & Several formulas and bounds & \cite{Sedlar-2021-b,Sedlar-2021-d} \\ \hline
    Complete bipartite graph $K_{r,t}$ & $\begin{array}{ll}
                                                                    r+t-1, & \text{if } r=2  \text{ or } t=2 \\
                                                                    r+t-2, & \text{otherwise}
                                                                  \end{array}$ & \cite{Kelenc-2017} \\ \hline
    Tree $T$ & $l(T)$ & \cite{Kelenc-2017} \\ \hline
    Torus graph $C_r\Box C_t$, $r,t\ge 3$ & $4$ & \cite{Danas-2020-a} \\ \hline
    Grid graph $P_r\Box P_t$ & 3 & \cite{Kelenc-2017} \\ \hline
    Prism graph $C_n\Box P_2$, $n\ge 5$ & $\begin{array}{ll}
                                                                    3, & \text{if } n  \text{ is even} \\
                                                                    4, & \text{if $n$ is odd}
                                                                  \end{array}$ & \cite{Raza-2020-a} \\ \hline
    Antiprism graph $\mathcal{A}_n$, $n\ge 3$ & $\begin{array}{ll}
                                                                    4, & \text{if } n  \text{ is even} \\
                                                                    5, & \text{if $n$ is odd}
                                                                  \end{array}$ & \cite{Raza-2020-a} \\ \hline
    Generalized Petersen graph $P(n,2)$ & $\begin{array}{ll}
                                                                    4, & \text{$n\cong\; 0, 2$ (mod 4)} \\
                                                                    5, & \text{$n\cong\; 1, 3$ (mod 4)}
                                                                  \end{array}$ & \cite{Raza-2020-b} \\ \hline
    Flower snark $\mathcal{J}_n$, $n\ge 5$ odd & $\begin{array}{ll}
                                                                    5, & n=5 \\
                                                                    4, & n\ge 7
                                                                  \end{array}$ & \cite{Danas-2020} \\ \hline
  \end{tabular}
  }
  \caption{Graphs for which their mixed metric dimension has been studied.}\label{tab:mdim-values}
\end{table}

\subsubsection{Miscellaneous results}

In a similar manner as for the edge metric dimension, the following model can be stated. Let $G$ be a graph of order $n$ and size $m$ with vertex set $V=\{v_1,\dots,v_n\}$ and edge set $E=\{e_1,\dots,e_m\}$. We consider the matrix $D=[d_{ij}]$ of order $(n+m)\times n$ such that $d_{ij}=d_G(x_i,x_j)$, $x_i\in V\cup E$ and $x_j\in V$. Now, given the variables $y_j\in \{0,1\}$ with $j\in \{1,2,\dots,n\}$ we define the following function:
$$\mathcal{F}(y_1,y_2,\dots,y_{n})=y_1+y_2+\dots+y_{n}.$$
Clearly, minimizing the function $\mathcal{F}$ subject to the following constraints
$$\sum_{i=1}^{n}|d_{ji}-d_{li}|y_i\ge 1\;\;\mbox{for every $1\le j<l\le n+m$},$$
is equivalent to finding a mixed metric basis of $G$, since the solution for $y_{1}, y_{2},\dots, y_{n}$ represents a set of values for which the function $\mathcal{F}$ achieves the minimum possible. The model above was first presented in \cite{Yero-2016-a}.

\subsection{Edge version of metric dimension}

It is very common in graph theory studying the dual version of a vertex parameter in a graph for an edge situation. This is indeed nothing more but studying the given parameter in the line graph of the graph. The metric dimension has not of course escaped to this. In \cite{Nasir-2019}, authors introduced a parameter called edge metric dimension defined as follows. For a connected graph $G$, a set of edges $W\subset E(G)$ is an \emph{edge resolving set} for $G$ if for every two distinct edges $e,f\in E(G)$ there is an edge $h\in W$ such that $d_G(e,h)\ne d_G(f,h)$. We understand the distance between two edges $xy,uv$ as $d_G(xy,uv)=\min\{d_G(x,u),d_G(x,v),d_G(y,u),d_G(y,v)\}$. Moreover, the cardinality of a smallest possible edge resolving set was called the \emph{edge metric dimension} of $G$, and denoted by $\dim_E(G)$. Since the concept of edge metric dimension was indeed known from before for some other structure, and based on what this other version indeed represents, the authors of \cite{Liu-2018} decided to rename the parameter in \cite{Nasir-2019} as the \emph{edge version of metric dimension}.

It must be remarked that in fact, this parameter had already been studied before in \cite{Feng-2013-a,Eroh-2014} for instances, although not from the perspective of a parameter, but of that of studying the classical metric dimension of line graphs.

In concordance with the recent survey \cite{Tillquist-2021}, we shall not consider exposing more results on this parameter here.

\subsection{Some open problems}

\begin{itemize}
  \item Infinite families of $k$-connected graphs with $k=1$ and $k=2$ are known for which $\edim(G)< \dim(G)$. Can you find the largest value $k$ for which a graph $G$ is $k$-connected and $\edim(G)< \dim(G)$?
  \item Compute the edge metric dimension of other grid like graphs: cylinders, toruses, etc.
  \item Based on inequality \eqref{eq:edim-dim-bipartite}, characterize all the bipartite graphs $G$ with $\edim(G)=\dim(G)$.
  \item Is it true that $\dim(Q_d)=\edim(Q_d)$ for every $d\ne 4$?
  \item Characterizing the graphs $G$ for which $\mdim(G)=\dim(G)$, $\mdim(G)=\edim(G)$ or $\mdim(G)=\dim(G)=\edim(G)$.
  \item Characterizing the graphs $G$ for which $\mdim(G)=\dim(G)+\edim(G)$.
  \item The problems of computing the metric and edge metric dimensions of graphs can approximated within a constant factor. Can the problem of computing the mixed metric dimension also be approximated within a constant factor?
  \item Prove the following conjecture from \cite{Sedlar-2021-d}: Let $G\ne C_n$ be a graph, $c(G)$ its cyclomatic number, and $l(G)$ the number of leaves in $G$. Then $\mdim(G) \le l(G) + 2c(G)$.
  \item Which is the complexity of computing the edge version of metric dimension?
\end{itemize}

\section{ILP models and the fractional versions of metric dimension}

General Integer Linear Programming models for the metric dimension of graphs are already known in the literature from \cite{Chartrand-2000-a}. Since then, some variations (and specifications for special families of graphs) of this model have appeared here and there. One interesting contribution concerning such ILP model from \cite{Chartrand-2000-a} is one that standing over the ILP model, consider not only integers solutions for it, but also real solutions. Hence, the metric dimension concept can be ``fractionalized'', and it is understood from a different point of view. Such ideas were first described in \cite{Arumugam-2012}.

Consider a function $f$ defined on the vertex set $V(G)$. For a set $S \subseteq V(G)$, the \emph{weight} of $f$ is $f(S)=\sum_{s \in S}f(s)$. A real-valued function $f: V(G) \rightarrow [0,1]$ is a \emph{resolving function} of $G$ if $f(R\{x, y\}) \ge 1$ for any two distinct vertices $x, y \in V(G)$. The \emph{fractional metric dimension}, $\dim_f(G)$, of $G$ equals $\min\{f(V(G)): f \mbox{ is a resolving function of } G\}$. Figure \ref{Fig:fractional} shows a graph with a labeling that produces a resolving function $f'$ of minimum weight in such graph. Notice that, in this case, $|R\{x, y\}|\in \{6,7\}$ for any two distinct vertices $x, y \in V(C_8)$, and thus, $f'(R\{x, y\}) \ge 1$ for any two distinct vertices $x, y \in V(C_8)$.

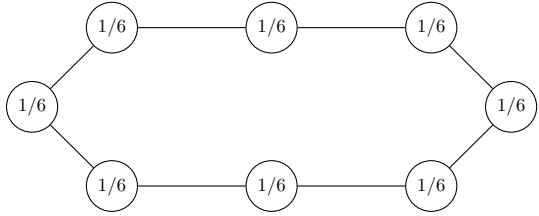
\begin{figure}[h]
\centering
\begin{tikzpicture}[scale=.7, transform shape]
\node [draw, shape=circle] (a1) at  (1.5,0) {1/6};
\node [draw, shape=circle] (a5) at  (0,1.5) {1/6};
\node [draw, shape=circle] (a9) at  (1.5,3) {1/6};
\node [draw, shape=circle] (a2) at  (4.5,0) {1/6};
\node [draw, shape=circle] (a10) at  (4.5,3) {1/6};
\node [draw, shape=circle] (a4) at  (7.5,0) {1/6};
\node [draw, shape=circle] (a8) at  (9,1.5) {1/6};
\node [draw, shape=circle] (a12) at  (7.5,3) {1/6};

\draw(a1)--(a2)--(a4)--(a8);
\draw(a1)--(a5)--(a9)--(a10)--(a12)--(a8);
\end{tikzpicture}
\caption{A labeling in the cycle $C_8$ that produces a resolving function of minimum weight.}\label{Fig:fractional}
\end{figure}

Assume $G$ is a graph of order $n$ with vertex set $V=\{v_1,\dots,v_n\}$, and consider the matrix $D=[d_{ij}]$ of order $n\times n$ such that $d_{ij}=d_G(x_i,x_j)$, $x_i,x_j\in V$. Now, given the real valued variables $y_j\in [0,1]$ with $j\in \{1,2,\dots,n\}$, let
$$\mathcal{F}(y_1,y_2,\dots,y_{n})=y_1+y_2+\dots+y_{n}.$$
Clearly, minimizing the function $\mathcal{F}$ subject to the following constraints
$$\sum_{i=1}^{n}|d_{ji}-d_{li}|y_i\ge 1\;\;\mbox{for every $1\le j<l\le n$},$$
leads to finding a fractional resolving function of minimum weight in $G$, since the solution for $y_{1}, y_{2},\dots, y_{n}$ represents a set of values for which the function $\mathcal{F}$ achieves the minimum possible, and viceversa.

Once this concept was made public, some investigations on it have been developed. Among them, we remark \cite{Arumugam-2013,Feng-2014,Feng-2013,Krismanto-2015,Liu-2019,Yi-2015}. For some extra information on fractionalization of graph parameters we suggest the book \cite{Scheinerman-2011}.

One first comment on the fractional metric dimension of graphs is that $\dim_f(G)$ reduces to $\dim(G)$, if the codomain of the resolving functions is restricted to the set $\{0,1\}$. In this sense, it is clear that $\dim_f(G)\le \dim(G)$ for any graph $G$. It is known that $1\le \dim(G)\le n-1$ for every graph $G$ of order $n$, and the limit values of such trivial bounds are attained when $G$ is a path ($\dim(G)=1$) or $G$ is a complete graph ($\dim(G)=n-1$). However, for the fractional version, the upper bound can be significantly decreased as proved in \cite{Arumugam-2012}. They have obtained that for any graph $G$ of order $n$,
$$1\le \dim_f(G)\le n/2.$$
The proof of the upper bound relies on the following fact. If $k=\min\{|R\{u,v\}|\,:\,u,v\in V(G),\,u\ne v\}$, then the constant function $f$ defined on $V(G)$ as $f(v)=1/k$ for all $v\in V(G)$ is readily seen to be a resolving function. Thus, $\dim_f(G)\le n/k\le n/2$ since $|R\{u,v\}|\ge 2$ for any pair of distinct vertices $u,v$. A theoretical characterization of graphs achieving the equality in such upper bound was given also in \cite{Arumugam-2012}, and examples of graphs for which this is satisfied are complete graphs; complete graphs minus one edge; complete graphs of even order minus a perfect matching and complete multipartite graphs whose each partition set has at least two vertices. Further on, a complete characterization of the graphs $G$ for which $\dim_f(G)=n/2$ was presented in \cite{Arumugam-2013}.

In \cite{Feng-2014}, the upper bound $\dim_f(G)\le n/2$ was indeed improved to $\dim_f(G)\le n/k$ by using the same argument as before. The interesting fact in this paper was that they were able to prove the following result.

\begin{theorem}{\em\cite{Feng-2014}}
If $G$ is a vertex-transitive graph and $k=\min\{|R\{u,v\}|\,:\,u,v\in V(G),\,u\ne v\}$, then $\dim_f(G)=|V(G)|/k$.
\end{theorem}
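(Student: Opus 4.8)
The plan is to prove both inequalities $\dim_f(G)\le |V(G)|/k$ and $\dim_f(G)\ge |V(G)|/k$. The upper bound is already available without the vertex-transitivity hypothesis: as recalled in the excerpt, the constant function $f\equiv 1/k$ on $V(G)$ is a resolving function (since $|R\{u,v\}|\ge k$ for every pair of distinct vertices $u,v$, by the definition of $k$), and it has weight $|V(G)|/k$, so $\dim_f(G)\le |V(G)|/k$. Hence the whole content of the theorem is the \emph{lower bound} $\dim_f(G)\ge |V(G)|/k$, and this is where vertex-transitivity must be used.

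For the lower bound I would use an averaging (symmetrization) argument over the automorphism group $\Gamma=\operatorname{Aut}(G)$. Let $f$ be any resolving function of $G$, and define the averaged function $\bar f(v)=\frac{1}{|\Gamma|}\sum_{\sigma\in\Gamma} f(\sigma(v))$. First I would check that $\bar f$ is again a resolving function: for any automorphism $\sigma$ and any pair $x,y$, one has $\sigma(R\{x,y\})=R\{\sigma(x),\sigma(y)\}$ because automorphisms preserve distances, so $f\circ\sigma$ is a resolving function whenever $f$ is, and a convex combination of resolving functions is resolving (the constraint $f(R\{x,y\})\ge 1$ is linear). Next, $\bar f$ has the same total weight as $f$, since summing $\bar f$ over $V(G)$ just permutes the terms of $\sum_v f(v)$. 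Finally, because $\Gamma$ acts transitively on $V(G)$, the averaged function $\bar f$ is \emph{constant}: for any $u,v$ pick $\tau\in\Gamma$ with $\tau(u)=v$, and reindexing the sum by $\sigma\mapsto\sigma\tau$ gives $\bar f(u)=\bar f(v)$. So $\bar f\equiv c$ for some constant $c\in[0,1]$, with weight $c\,|V(G)|$.

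It remains to bound $c$ from below. Choose a pair of distinct vertices $x_0,y_0$ attaining the minimum, i.e. $|R\{x_0,y_0\}|=k$. The resolving constraint for $\bar f$ on this pair reads $\bar f(R\{x_0,y_0\})\ge 1$, that is $c\cdot k=\sum_{w\in R\{x_0,y_0\}}\bar f(w)\ge 1$, hence $c\ge 1/k$. Therefore the weight of $\bar f$, and so $\dim_f(G)$ as well (taking $f$ to be an optimal resolving function), is at least $|V(G)|/k$. Combining with the upper bound gives $\dim_f(G)=|V(G)|/k$.

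The only mildly delicate point — the step I expect to need the most care — is the verification that the averaged function $\bar f$ really is resolving, which hinges on the elementary but essential fact that graph automorphisms are isometries, so that $d_G(\sigma(a),\sigma(b))=d_G(a,b)$ and consequently $w\in R\{x,y\}$ iff $\sigma(w)\in R\{\sigma(x),\sigma(y)\}$; once this is in hand, the convexity of the feasible region of resolving functions does the rest. Everything else (weight invariance under averaging, constancy of $\bar f$ under transitivity, and the final one-line bound on $c$) is routine.
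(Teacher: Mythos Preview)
Your argument is correct. The upper bound is exactly the one the survey sketches just before the theorem, and your symmetrization over $\operatorname{Aut}(G)$ for the lower bound is clean and valid: the key facts---that automorphisms are isometries (so $\sigma(R\{x,y\})=R\{\sigma(x),\sigma(y)\}$), that the feasible region of resolving functions is convex, that averaging preserves total weight, and that transitivity forces the average to be constant---are all correctly identified and used.

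As for comparison: this paper is a survey and does \emph{not} supply a proof of the theorem; it simply quotes the result from \cite{Feng-2014}. The only proof ingredient the survey records is the upper bound via the constant function $f\equiv 1/k$, which you reproduce. Your averaging argument for the matching lower bound is the standard route for such statements on vertex-transitive graphs and is essentially what one finds in the original source (where the result is phrased via LP duality / an optimal pair of primal and dual solutions, which amounts to the same symmetrization idea). So there is nothing to correct and nothing materially different to contrast.
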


Table \ref{tab:fractional} shows some other values for the fractional metric dimension of graphs which are already known, together with the corresponding references where they were obtained.

\begin{table}[h]
  \centering
  \begin{tabular}{|c|c|c|}
  \hline
  \textbf{Graph} & \textbf{Value} & \textbf{Ref.} \\ \hline
  Petersen graph & 5/3 & \cite{Arumugam-2012} \\ \hline
  Cycle $C_n$ & $\begin{array}{ll}
            \frac{n}{n-1}, & \mbox{$n$ odd}\\ [0.15cm]
            \frac{n}{n-2}, & \mbox{$n$ even}
          \end{array}$
   & \cite{Arumugam-2012} \\ \hline
  Hypercube $Q_n$ & 2 & \cite{Arumugam-2012} \\ \hline
  Wheel $W_n$, $n\ge 7$ & $\frac{n-1}{4}$ & \cite{Arumugam-2012} \\
  \hline
  Grid graph $P_r\Box P_s$ & 2 & \cite{Arumugam-2012} \\ \hline
  Bouquet of cycles $B_m$, $m\ge 2$ & $m$ & \cite{Kang-2013} \\ \hline
  $K_k\Box K_n$, $2\le k\le n$, $n\ge 3$ & $\frac{n}{2}$ & \cite{Arumugam-2013} \\ \hline
  Hamming graph $H_{n,k}$, $k\ge 3$ & $\frac{k}{2}$ & \cite{Feng-2014} \\ \hline
  $\begin{array}{c}
     \mbox{Johnson graph $J(n,k)$} \\
     4\le 2k\le n
   \end{array}
  $ & $\begin{array}{ll}
                                              3, & \mbox{$(n,k)=(4,2)$} \\ [0.15cm]
                                              35/17, & \mbox{$(n,k)=(8,4)$} \\ [0.15cm]
                                              \frac{n^2-n}{2kn-2k^2}, & \mbox{otherwise} \\ [0.15cm]
                                            \end{array}
  $ & \cite{Feng-2014} \\ \hline
  Generalized Jahangir graph $J_{5,k}$, $k\ge 4$ & $\begin{array}{ll}
            \frac{5k}{k+4}, & \mbox{$k\equiv 0$ (mod $2$)}\\[0.15cm]
            \frac{5k}{k+3}, & \mbox{$k\equiv 1$ (mod $2$)}
          \end{array}$ & \cite{Liu-2019} \\ \hline
\end{tabular}
  \caption{Fractional metric dimension of some graphs}\label{tab:fractional}
\end{table}

It is necessary to remark that the equality $\dim_f(G)=\dim(G)$ can occur in several situations. Examples of this are the friendship graphs and grid graphs, as shown in \cite{Arumugam-2012}. In connection with such equality, another interesting relationship between $\dim_f(G)$ and $\dim(G)$ was given in \cite{Feng-2014}, involving also the order of $G$.

\begin{theorem}{\em \cite{Feng-2014}}
\label{th:bound-dim_f-dim}
Let $G$ be a graph of order $n$. Then $\dim_f(G)\ge \frac{n}{n-\dim(G)+1}$. Moreover, the equality holds if and only if $G$ is isomorphic to a path, a complete graph, or an odd cycle.
\end{theorem}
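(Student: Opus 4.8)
The plan is to exploit the basic lower-bound principle for fractional covers: the weight of any resolving function is bounded below by a quantity governed by the largest resolving set $R\{x,y\}$, in the same way the upper bound $\dim_f(G)\le n/k$ is governed by the smallest one. Concretely, set $K=\max\{\,|R\{u,v\}|\,:\,u,v\in V(G),\,u\neq v\,\}$. First I would show that $\dim_f(G)\ge n/K$. For this, let $f$ be any resolving function and pick a pair $x,y$ attaining $|R\{x,y\}|=K$; since every vertex outside $R\{x,y\}$ contributes nonnegatively and $f(R\{x,y\})\ge 1$, averaging the resolving-function constraints (or simply summing $f$ over all of $V(G)$ and using that the complement of $R\{x,y\}$ has $n-K$ vertices each of weight at most $1$) gives a bound; more carefully, the cleanest argument is: for the single pair $x,y$, $1\le f(R\{x,y\})=\sum_{v\in R\{x,y\}}f(v)$, so there is $v_0\in R\{x,y\}$ with $f(v_0)\ge 1/K$ — but that only bounds one value. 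The correct route is the LP-duality / averaging one: since $R\{x,y\}$ has at most $K$ vertices and $f\le 1$ everywhere, while the total weight must "cover" this pair, one combines this with the fact that $V(G)\setminus R\{x,y\}$ has exactly $n-K$ vertices to conclude $f(V(G))\ge 1+\big((n-K)\cdot 0\big)$ is too weak; instead I would directly cite that the constant function $1/K$ is optimal-shaped and argue that any resolving function restricted to a maximum-size pair forces weight at least $n/K$ by a convexity/symmetrization argument over the automorphism orbit of $f$, or simply invoke the standard fractional-covering lower bound $\dim_f(G)\ge \frac{n}{K}$ proved in \cite{Arumugam-2012,Feng-2014}.

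Next I would relate $K$ to $\dim(G)$. The key combinatorial fact is $K\le n-\dim(G)+1$: if $S$ is a metric basis of $G$, then for any pair $x,y$ the set $R\{x,y\}$ must meet $S$ (some basis vertex resolves $x,y$), but more is true — for a pair $x,y$, $S\setminus R\{x,y\}$ is a set of basis vertices none of which resolves $x,y$, and one shows that the vertices of $V(G)$ not resolving $x,y$ together with the requirement that $S$ still resolves everything forces $|V(G)\setminus R\{x,y\}|\ge \dim(G)-1$, equivalently $K=|R\{x,y\}|\le n-(\dim(G)-1)=n-\dim(G)+1$. The cleanest way to see this: pick $x,y$ with $|R\{x,y\}|=K$; then $\{x,y\}$ together with $V(G)\setminus R\{x,y\}$... actually the slick argument is that any metric basis, after we note at least one basis vertex lies in $R\{x,y\}$, can be modified, or: the complement $V(G)\setminus R\{x,y\}$ has the property that adding the pair-distinguishing data, one still needs $\dim(G)$ vertices to resolve all pairs, and since all but at most one can be taken outside $R\{x,y\}$ we get $n-K\ge \dim(G)-1$. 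Combining $\dim_f(G)\ge n/K$ with $K\le n-\dim(G)+1$ yields $\dim_f(G)\ge \frac{n}{n-\dim(G)+1}$.

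For the equality characterization, I would argue in both directions. If $G$ is a path, a complete graph, or an odd cycle, then $\dim_f(G)$ is computed explicitly (from Table~\ref{tab:fractional} and the known values $\dim(P_n)=1$, $\dim(K_n)=n-1$, $\dim(C_{2k+1})=1$) and one checks directly that $\frac{n}{n-\dim(G)+1}$ matches: for $K_n$ it is $\frac{n}{2}$, which equals $\dim_f(K_n)$; for $P_n$ it is $\frac{n}{n}=1=\dim_f(P_n)$; for odd $C_n$ it is $\frac{n}{n}=1$, but $\dim_f(C_n)=\frac{n}{n-1}$ — so here I would need to be careful, and the right reading is that equality forces $\dim(G)$ and $K$ to be extremal simultaneously, i.e. $K=2$ (every pair is resolved by essentially only itself, giving the $n/2$-type behavior) as in complete graphs, or $K=n$ (some pair resolved by all vertices) as in paths and odd cycles; the characterization then follows by analyzing when both inequalities $\dim_f(G)\ge n/K$ and $K\le n-\dim(G)+1$ are tight. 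For the converse, assume equality; then $K=n-\dim(G)+1$ exactly and the fractional bound $n/K$ is achieved, which by the tightness analysis of the constant-function argument (the uniform function $1/K$ must be optimal and all pairs $R\{u,v\}$ must have size $K$ or the structure must be rigid) pins down the graph. The main obstacle I anticipate is precisely this equality analysis: proving $K\le n-\dim(G)+1$ is tight only in the listed families requires ruling out sporadic graphs, and doing this will likely need the explicit structure of small-$|R\{x,y\}|$ pairs (twin-like vertices) together with a case split on whether $\dim(G)$ is forced to be $1$, $n-1$, or intermediate. I would handle the intermediate case by showing it cannot give equality, reducing to the two extremes.
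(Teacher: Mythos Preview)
Your approach has a genuine and fatal gap: the inequality $K\le n-\dim(G)+1$ (with $K=\max_{x\ne y}|R\{x,y\}|$) is simply false. Take $G=C_4$: any two adjacent vertices are resolved by all four vertices, so $K=4$, while $\dim(C_4)=2$ gives $n-\dim(G)+1=3<4$. So the chain $\dim_f(G)\ge n/K\ge n/(n-\dim(G)+1)$ breaks at the second step. Your first step is also unsubstantiated: you never produce an argument for $\dim_f(G)\ge n/K$, and the result you try to ``invoke'' from \cite{Arumugam-2012,Feng-2014} is the \emph{upper} bound $\dim_f(G)\le n/k$ with $k=\min|R\{x,y\}|$, not a lower bound involving the maximum. (You also slip when checking odd cycles: $\dim(C_{2k+1})=2$, not $1$, so $n/(n-\dim+1)=n/(n-1)$, which does match $\dim_f(C_n)$.)

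The argument that actually works is a threshold/sorting argument on the values of a resolving function. Given any resolving function $f$, order the vertices so that $f(v_1)\ge\cdots\ge f(v_n)$ and let $r$ be the least index with $\{v_1,\dots,v_r\}$ a resolving set; then $r\ge\dim(G)$. Since $\{v_1,\dots,v_{r-1}\}$ fails to resolve some pair $x,y$, we have $R\{x,y\}\subseteq\{v_r,\dots,v_n\}$, hence $1\le f(R\{x,y\})\le(n-r+1)f(v_r)$, giving $f(v_r)\ge 1/(n-r+1)$. Now
\[
f(V)=\sum_{i<r}f(v_i)+\sum_{i\ge r}f(v_i)\ \ge\ (r-1)f(v_r)+1\ \ge\ \frac{r-1}{\,n-r+1\,}+1=\frac{n}{\,n-r+1\,}\ \ge\ \frac{n}{\,n-\dim(G)+1\,}.
\]
The equality analysis then proceeds by tracking when both inequalities $r=\dim(G)$ and $f(v_i)=f(v_r)$ for all $i$ are forced; this is where the path/complete/odd-cycle trichotomy emerges.
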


In addition to the commented results, some bounds for the fractional metric dimension of Cartesian product graph were given in \cite{Feng-2014}. For instance, if $G$ and $H$ are two graphs, then
$$\dim_f(G)\le \dim_f(G\Box H)\le \max\{\dim_f(G),|V(H)|\}.$$
Other particular cases of Cartesian product graphs were studied in \cite{Arumugam-2013,Feng-2014}. Regarding this, the fractional metric dimension of the hierarchical product of graphs was studied in \cite{Feng-2013}, where it was indeed used as a tool, a kind of variation of the fractional metric dimension, which authors called as rooted fractional metric dimension. For more details, we indeed suggest to precisely see \cite{Feng-2013}. Other results concerning product graphs have appeared in the unpublished manuscript \cite{Feng-2012}, which is dedicated to the corona and the lexicographic products of graphs.

Finally, in \cite{Yi-2015}, some other studies on the fractional metric dimension of graphs were given. For instance, we remark the next formula for the case of trees. Some bounds and closed formulae for some families of permutation graphs were also proved in \cite{Yi-2015}.

\begin{theorem}{\em \cite{Yi-2015}}
If $T$ is a tree, then $\dim_f(T)=\frac{1}{2}(|L(T)|-ex_1(T))$.
\end{theorem}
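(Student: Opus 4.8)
The plan is to compute $\dim_f(T)$ directly by constructing an optimal resolving function and proving a matching lower bound. First I would recall the relevant tree terminology from Subsection~\ref{subsec:terminology}: $L(T)$ is the leaf set, $M(T)$ the set of exterior major vertices, and $ex_1(T)$ counts those exterior major vertices with terminal degree exactly one. A leaf $u$ with terminal vertex host $v$ is a leaf attached to an exterior major vertex; the value $ex_1(T)$ counts those ``legs'' consisting of a single path from a major vertex to a leaf with no branching. The key structural fact I would isolate is the description, for two vertices $x,y\in V(T)$, of the set $R_T\{x,y\}$: since $T$ is a tree there is a unique $x,y$-path, and a vertex $w$ fails to resolve $x,y$ precisely when $w$ hangs off the midpoint of that path in a balanced way. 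The crucial special case is when $x,y$ are the two vertices adjacent to a common vertex $z$ along two different terminal paths: then $R_T\{x,y\}$ is a relatively small, explicitly describable set, and these ``hardest pairs'' are exactly what force the fractional dimension up.

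**The upper bound.** Next I would exhibit a resolving function of weight $\tfrac12(|L(T)|-ex_1(T))$. The natural candidate assigns weight $1/2$ to each leaf that lies on a terminal path of an exterior major vertex of terminal degree $\ge 2$, and weight $0$ elsewhere; leaves that are the unique terminal vertex of their exterior major vertex (contributing to $ex_1(T)$) can be handled with a slightly different allocation so that the total drops by $1/2$ for each such leg. The verification amounts to checking $f(R_T\{x,y\})\ge 1$ for every pair. By the structural analysis above, the worst case is a pair of siblings $x,y$ on two terminal paths emanating (through internal vertices) from the same exterior major vertex $w$: then $R_T\{x,y\}$ contains both entire legs of $w$ through $x$ and through $y$, hence at least two leaves of weight $1/2$, giving $f(R_T\{x,y\})\ge 1$. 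For pairs on a single path, or pairs involving the unique terminal vertices, one checks that $R_T\{x,y\}$ still captures enough weighted leaves. This reduces everything to a finite case check organized by the position of $x,y$ relative to $M(T)$.

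**The lower bound.** Then I would prove $\dim_f(T)\ge \tfrac12(|L(T)|-ex_1(T))$ by a counting/averaging argument or by LP duality on the ILP model displayed just before the theorem. Concretely, for each exterior major vertex $w$ of terminal degree $t(w)\ge 2$, pick a collection of sibling pairs among its $t(w)$ terminal paths whose associated constraint sets $R_T\{x,y\}$ are pairwise disjoint except on the legs of $w$; summing the corresponding constraints $\sum_{i}f(R_T\{x,y\})\ge (\text{number of pairs})$ and noting each leg of $w$ is counted a bounded number of times forces the total weight on the legs of $w$ to be at least $\tfrac12(t(w)-1)$ — one carefully chooses a ``star'' or ``path'' of $t(w)-1$ pairs so the bookkeeping is tight. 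Because legs belonging to distinct exterior major vertices are vertex-disjoint, these local lower bounds add, and $\sum_{w\in M(T)}(t(w)-1) = |L(T)|-ex_1(T)$ after accounting for the $ex_1$ correction (each terminal-degree-one exterior major vertex contributes $t(w)-1=0$, i.e.\ is invisible, matching the subtraction).

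**Main obstacle.** I expect the delicate point to be the lower bound's combinatorial bookkeeping: one must choose, for each exterior major vertex $w$, a set of $t(w)-1$ sibling-pair constraints whose supports overlap in a controlled way so that dividing by the maximum overlap multiplicity yields exactly $\tfrac12(t(w)-1)$ rather than something weaker. Getting the right family of pairs (a matching-like or path-like selection among the terminal paths of $w$, rather than all $\binom{t(w)}{2}$ of them) and verifying the overlap count is where the real work lies; the rest — the upper bound construction and the identity $\sum_w (t(w)-1) = |L(T)| - ex_1(T)$ — is routine once the tree terminology is unwound.
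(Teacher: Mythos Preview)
The survey does not itself prove this statement (it merely quotes the result from \cite{Yi-2015}), so there is no in-paper argument to compare against; I comment only on the correctness of your outline.

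Your overall plan is sound, and the upper-bound construction---weight $\tfrac12$ on each leaf whose exterior major vertex has terminal degree at least $2$, weight $0$ on the remaining $ex_1(T)$ leaves and on all non-leaves---does give a resolving function of total weight $\tfrac12(|L(T)|-ex_1(T))$, once one checks that every non-terminal branch at a major vertex contains an exterior major vertex of terminal degree $\ge 2$ and hence at least two weight-$\tfrac12$ leaves.

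The lower bound, however, has a genuine gap. You claim that a ``star or path'' of $t(w)-1$ sibling-pair constraints at an exterior major vertex $w$ forces the weight on its legs to be at least $\tfrac12(t(w)-1)$, and then assert $\sum_{w\in M(T)}(t(w)-1)=|L(T)|-ex_1(T)$. The identity is wrong: that sum equals $|L(T)|-ex(T)$, not $|L(T)|-ex_1(T)$, so your argument would only yield $\dim_f(T)\ge\tfrac12(|L(T)|-ex(T))=\tfrac12\dim(T)$, which is strictly too weak whenever some exterior major vertex has terminal degree $\ge 2$. The fix is not to economize on constraints. Writing $x_i$ for the total $f$-weight on the $i$th leg of $w$, the pair $(u_i,u_j)$ of neighbours of $w$ on legs $i,j$ satisfies $R_T\{u_i,u_j\}=\mathrm{leg}_i\cup\mathrm{leg}_j$, so $x_i+x_j\ge 1$ for every $i\ne j$. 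Using only a path of $t(w)-1$ such inequalities gives $\sum_i x_i\ge (t(w)-1)/2$, which is exactly your (insufficient) bound; summing \emph{all} $\binom{t(w)}{2}$ of them gives $(t(w)-1)\sum_i x_i\ge\binom{t(w)}{2}$, hence $\sum_i x_i\ge t(w)/2$. Now the disjointness of legs across different exterior major vertices together with the correct identity $\sum_{w:\,t(w)\ge 2}t(w)=|L(T)|-ex_1(T)$ finishes the lower bound.
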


The notion of fractionalizing the metric dimension has been also extended to other related parameters, and one can find now variants like fractional local metric dimension (\cite{Benish-2018}), fractional strong metric dimension (\cite{Kang-2013}), fractional edge metric dimension (\cite{Yi-2021}), and fractional $k$-metric dimension (\cite{Kang-2019}). We next shortly review a couple of them.

\subsection{Fractional strong metric dimension}

We consider $S\{x,y\}$ represents the set of vertices $z$ such that either $x$ lies on a shortest $y-z$ path, or $y$ lies on a shortest $x-z$ path. A real valued function $f: V(G)\rightarrow [0,1]$ is a \emph{strong resolving function} of $G$ if $f(S\{x,y\})=\sum_{z\in S\{x,y\}}f(z)\ge 1$ for any two distinct vertices $x,y\in V(G)$. The \emph{fractional strong metric dimension} of $G$ equals $\min\{f(V(G))\,:\,f \mbox{ is a strong resolving function for $G$}\}$, and is denoted as $\sdim_f(G)$. Figure \ref{Fig:fractional-strong} shows a labeling of the graph $C_8$ that produces a strong resolving function of minimum weight. The proof of this comes from the article 

\begin{figure}[h]
\centering
\begin{tikzpicture}[scale=.7, transform shape]
\node [draw, shape=circle] (a1) at  (1.5,0) {1/2};
\node [draw, shape=circle] (a5) at  (0,1.5) {1/2};
\node [draw, shape=circle] (a9) at  (1.5,3) {1/2};
\node [draw, shape=circle] (a2) at  (4.5,0) {1/2};
\node [draw, shape=circle] (a10) at  (4.5,3) {1/2};
\node [draw, shape=circle] (a4) at  (7.5,0) {1/2};
\node [draw, shape=circle] (a8) at  (9,1.5) {1/2};
\node [draw, shape=circle] (a12) at  (7.5,3) {1/2};

\draw(a1)--(a2)--(a4)--(a8);
\draw(a1)--(a5)--(a9)--(a10)--(a12)--(a8);
\end{tikzpicture}
\caption{A labeling in the cycle $C_8$ that produces a strong resolving function of minimum weight.}\label{Fig:fractional-strong}
\end{figure}
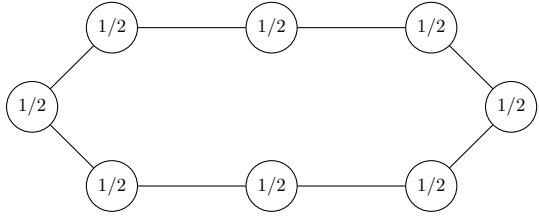

Notice that $\sdim_f(G)$ reduces to $\sdim(G)$ if the codomain of the strong resolving functions is restricted to the set $\{0,1\}$. The concepts above were first presented in \cite{Kang-2013}. In connection with the other fractional versions, it must be mentioned that the problem of finding the fractional strong metric dimension of graphs can be formulated as a linear programming problem as detailed in \cite{Fehr-2006-a}.

The main contributions on this parameter are divided into three styles of results: bounds, formulas for specific families, and the behaviour of $\sdim_f(G)$ in the case of product graphs. With respect to some interesting bounds, we recall the following ones.

\begin{theorem}{\em \cite{Kang-2013}}
Let $G$ be a connected graph of order $n\ge 2$. Then $1\le \sdim_f(G)\le \frac{n}{2}$. Further,
\begin{itemize}
  \item $\sdim_f(G)=1$ if and only if $G$ is a path.
  \item $\sdim_f(G) = \frac{n}{2}$ if and only if there exists a bijection $\alpha$ on $V(G)$ such that $\alpha(v) \ne v$ and $S\{v, \alpha(v)\} = \{v, \alpha(v)\}$ for every $v\in V(G)$.
\end{itemize}
\end{theorem}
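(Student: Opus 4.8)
The plan is to dispatch the two trivial bounds with explicit witnesses, then treat the two extremal cases separately: the equality $\sdim_f(G)=1$ by a ``betweenness'' argument, and $\sdim_f(G)=n/2$ by linear‑programming duality against the strong resolving graph. First the bounds. Since $x$ always lies on a shortest $x$–$y$ path, $\{x,y\}\subseteq S\{x,y\}$, so $|S\{x,y\}|\ge 2$ for every pair of distinct vertices; hence the constant function $f\equiv 1/2$ is a strong resolving function, giving $\sdim_f(G)\le n/2$, while for any fixed pair $f(V(G))\ge f(S\{x,y\})\ge 1$ gives $\sdim_f(G)\ge 1$. (It is worth recording the fractional analogue of Theorem~\ref{th:strong-dim-cover}, namely $\sdim_f(G)=\tau_f(G_{SR})$ with $\tau_f$ the fractional vertex cover number: it follows because for every pair $x,y$ there is a mutually maximally distant pair $x',y'$ with $S\{x',y'\}\subseteq S\{x,y\}$, so the constraints indexed by the edges of $G_{SR}$ already force all the others. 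I would use this only as a guide.)

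For $\sdim_f(G)=1$: let $f$ be an optimal strong resolving function, of weight $1$. Then $f(S\{x,y\})=1$ for every pair forces the support of $f$ to lie inside $S\{x,y\}$, hence inside $W:=\bigcap_{x\ne y}S\{x,y\}$, which is therefore nonempty. Fixing $w\in W$, define $x\preceq y$ iff $d(w,x)+d(x,y)=d(w,y)$; the triangle inequality makes $\preceq$ a partial order with least element $w$, and the condition ``$w\in S\{x,y\}$ for all $x\ne y$'' says exactly that $\preceq$ is a total order, say $w=z_0\prec z_1\prec\cdots\prec z_{n-1}$. A short argument shows $d(w,z_k)=k$ for all $k$ — if $d(w,z_{k+1})\ge d(w,z_k)+2$, then a vertex at distance $d(w,z_k)+1$ on a $w$–$z_{k+1}$ geodesic would be strictly between $z_k$ and $z_{k+1}$ in $\preceq$, which is impossible — and then $d(z_k,z_{k+1})=1$, while any edge $z_iz_j$ with $i<j$ would force $j-i=d(z_i,z_j)=1$; so $G$ is exactly the path $z_0z_1\cdots z_{n-1}$. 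Conversely, an endpoint $w$ of $P_n$ lies in every $S\{x,y\}$, so putting $f(w)=1$ and $f\equiv 0$ elsewhere shows $\sdim_f(P_n)=1$.

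For $\sdim_f(G)=n/2$: the ``only if'' direction is just counting — given the bijection $\alpha$, sum $f(v)+f(\alpha(v))=f(S\{v,\alpha(v)\})\ge 1$ over all $v\in V(G)$; since $\alpha$ is a bijection the left side is $2f(V(G))$, so every strong resolving function has weight at least $n/2$, and with the upper bound this is equality. For the ``if'' direction I would dualise: $\sdim_f(G)$ is the optimum of $\min\sum_v f(v)$ subject to $\sum_{z\in S\{x,y\}}f(z)\ge 1$ and $f\ge 0$, whose LP dual is $\max\sum_{\{x,y\}}g_{xy}$ subject to $\sum_{\{x,y\}:\,v\in S\{x,y\}}g_{xy}\le 1$ for all $v$ and $g\ge 0$. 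If this common value is $n/2$, summing the dual constraints over all $v$ gives $\sum_{\{x,y\}}g_{xy}\,|S\{x,y\}|\le n=2\sum_{\{x,y\}}g_{xy}$, so $\sum_{\{x,y\}}g_{xy}\,(|S\{x,y\}|-2)\le 0$; as every summand is nonnegative, each $g$‑positive pair satisfies $|S\{x,y\}|=2$, i.e.\ is a mutually maximally distant pair (an edge of $G_{SR}$), and every vertex constraint holds with equality. Thus an optimal dual $g$ is a fractional perfect matching of $G_{SR}$; choosing it at a vertex of the half‑integral fractional matching polytope, its support is a spanning subgraph of $G_{SR}$ each component of which is a $K_2$ or a cycle. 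Defining $\alpha$ to transpose the two ends of each $K_2$ and to rotate each cycle yields a fixed‑point‑free bijection of $V(G)$ with $\{v,\alpha(v)\}$ always an edge of $G_{SR}$, i.e.\ $S\{v,\alpha(v)\}=\{v,\alpha(v)\}$.

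The hard part will be this ``if'' direction: the hypothesis $\sdim_f(G)=n/2$ is purely numerical, and extracting the combinatorial witness $\alpha$ from it needs the LP‑duality bridge together with half‑integrality of the fractional matching polytope (equivalently, the classical fact that a graph has a fractional perfect matching iff it has a spanning subgraph all of whose components are $K_2$'s or cycles). One point to keep in mind is that $\sdim_f(G)=n/2$ forces $\partial(G)=V(G)$ — otherwise, for a non‑boundary vertex $v$, the function equal to $1/2$ on $V(G)\setminus\{v\}$ and $0$ at $v$ is still strong resolving and has weight $(n-1)/2<n/2$ — but this falls out automatically, since the tight dual vertex constraints cannot be satisfied at an isolated vertex of $G_{SR}$.
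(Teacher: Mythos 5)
This survey states the theorem without proof, importing it verbatim from \cite{Kang-2013}, so there is no in-paper argument to measure you against; judged on its own terms, your proof is correct and complete. The two bounds and the implication ``bijection $\Rightarrow\ \sdim_f(G)=n/2$'' are the expected counting arguments. Your treatment of $\sdim_f(G)=1$ is a nice self-contained route: complementary slackness pushes the support of an optimal resolving function into $\bigcap_{x\ne y}S\{x,y\}$, and the relation $x\preceq y\Leftrightarrow d(w,x)+d(x,y)=d(w,y)$ being a \emph{total} order does force $G=P_n$ (the step where a vertex at intermediate distance would sit strictly between $z_k$ and $z_{k+1}$ tacitly uses totality to exclude $u\preceq z_k$, which is fine). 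For the hard direction of the $n/2$ case, the LP-duality argument works: tightness in $\sum_{\{x,y\}}g_{xy}\bigl(|S\{x,y\}|-2\bigr)\le 0$ really does force an optimal dual solution to be a fractional perfect matching of $G_{SR}$, and a half-integral extreme point gives the spanning union of $K_2$'s and cycles from which $\alpha$ is read off; your closing observation that this automatically rules out isolated vertices of $G_{SR}$ is also correct. Three small points to tidy. First, when you dualise you silently replace the codomain $[0,1]$ by $[0,\infty)$; this is harmless because truncating a feasible $f$ at $1$ preserves feasibility without increasing the weight, but it should be said. Second, the half-integrality of the fractional matching polytope (equivalently, that a fractional perfect matching exists iff there is a spanning subgraph whose components are single edges or cycles) is a genuine external ingredient and deserves a citation, e.g.\ \cite{Scheinerman-2011}, which is already in the bibliography. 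Third, your ``if''/``only if'' labels in the $n/2$ case are swapped relative to the statement, though both implications are in fact proved.
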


An interesting connection between $\sdim_f(G)$ and $\sdim(G)$ for connected graphs $G$ was found in \cite{Kang-2016}, which is a result influenced by an analogous one for $\dim_f(G)$ (see Theorem \ref{th:bound-dim_f-dim}).

\begin{theorem}{\em \cite{Kang-2016}}
Let $G$ be a graph of order $n$. Then $\sdim_f(G)\ge \frac{n}{n-\sdim(G)+1}$. Moreover, the equality holds exactly when $G$ is a path or a complete graph.
\end{theorem}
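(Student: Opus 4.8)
The plan is to derive the lower bound from two ingredients already available: the identity $\sdim(G)=\alpha(G_{SR})$ (Theorem~\ref{th:strong-dim-cover}) together with Gallai's theorem, and the fact recalled in Section~\ref{Sec:strong-dim} that a pair of mutually maximally distant vertices is strongly resolved only by itself. First I would fix an optimal strong resolving function $f$ of $G$, so $\sdim_f(G)=f(V(G))$, set $s=\sdim(G)$, and order the vertices $v_1,\dots,v_n$ with $f(v_1)\ge\cdots\ge f(v_n)$. The key observation is a pigeonhole statement: every subset of $V(G)$ of size at least $n-s+1$ contains a pair of mutually maximally distant vertices, because a set containing no such pair is an independent set of $G_{SR}$, hence has at most $\beta(G_{SR})=n-\alpha(G_{SR})=n-s$ vertices. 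Applying this to $\{v_i,\dots,v_n\}$ for each $i\in\{1,\dots,s\}$ (which has $n-i+1\ge n-s+1$ vertices) produces indices $i\le a<b\le n$ with $v_a,v_b$ mutually maximally distant; since such a pair is strongly resolved only by $\{v_a,v_b\}$, the defining constraint of $f$ for that pair is $f(v_a)+f(v_b)\ge 1$, and as $a,b\ge i$ and $f$ is nonincreasing one gets $f(v_i)+f(v_{i+1})\ge f(v_a)+f(v_b)\ge 1$.

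With the $s$ inequalities $f(v_i)+f(v_{i+1})\ge 1$ ($i=1,\dots,s$) in hand, the lower bound follows by a short computation rather than any real analysis. They force $f(v_i)\ge 1/2$ for $i\le s$, so
$$\sdim_f(G)=f(V(G))\ \ge\ \sum_{j=1}^{s+1}f(v_j)\ =\ \Big(\sum_{j=1}^{s-1}f(v_j)\Big)+\big(f(v_s)+f(v_{s+1})\big)\ \ge\ \frac{s-1}{2}+1\ =\ \frac{s+1}{2},$$
and finally $\frac{s+1}{2}\ge\frac{n}{n-s+1}$ since $(s+1)(n-s+1)-2n=(s-1)(n-s-1)\ge 0$ whenever $1\le s\le n-1$, which always holds. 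This establishes $\sdim_f(G)\ge\frac{n}{n-\sdim(G)+1}$.

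For the equality statement I would note that the hypothesis $\sdim_f(G)=\frac{n}{n-s+1}$ forces every inequality in the chain above to be tight, in particular $\frac{s+1}{2}=\frac{n}{n-s+1}$, i.e. $(s-1)(n-s-1)=0$, so $s=1$ or $s=n-1$. If $s=1$ then $1\le\sdim_f(G)\le\sdim(G)=1$, so $G$ is a path by the already stated characterization of graphs with fractional strong metric dimension $1$; conversely paths satisfy $\sdim(P_n)=1$ and $\sdim_f(P_n)=1=\frac{n}{n}$. If $s=n-1$ then $\alpha(G_{SR})=n-1$, hence $\beta(G_{SR})=1$, hence $G_{SR}\cong K_n$; conversely $(K_n)_{SR}\cong K_n$ gives $\sdim(K_n)=n-1$, and since in $K_n$ every pair is mutually maximally distant, the strong resolving functions of $K_n$ are precisely the fractional vertex covers of $K_n$, so $\sdim_f(K_n)=\alpha_f(K_n)=\frac{n}{2}=\frac{n}{2}$. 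The part I expect to require the most care is the implication $G_{SR}\cong K_n\Rightarrow G\cong K_n$: if $G$ had non-adjacent vertices $u,v$, then on a shortest $u$–$v$ path $u=x_0,x_1,x_2,\dots$ the vertex $x_1$ is a neighbour of $u$ with $d(u,x_2)=2>1=d(u,x_1)$, so $x_1$ is not maximally distant from $u$ and $u,x_1$ are not mutually maximally distant, contradicting $G_{SR}=K_n$. I would also flag the contrast with the metric-dimension analogue (Theorem~\ref{th:bound-dim_f-dim}), where odd cycles also give equality: here they do not, and a quick check confirms that for $C_n$ with $n\ge 5$ odd one has $\sdim_f(C_n)\ge\frac n2>\frac{n}{n-\sdim(C_n)+1}$, so only $C_3=K_3$ among cycles attains equality.
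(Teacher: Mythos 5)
Your argument is correct, and it is worth noting that the survey states this result without proof (it is only cited from the 2016 paper of Kang), so there is no in-text proof to match; what you give is genuinely different from the argument that underlies the analogous bound for $\dim_f$ (Theorem~\ref{th:bound-dim_f-dim}) and that one would expect to be adapted here. The expected route is an averaging one: order the vertices by decreasing $f$-value, observe that the top $\sdim(G)-1$ vertices cannot form a strong resolving set, so some pair has $S\{x,y\}$ contained in the bottom $n-\sdim(G)+1$ vertices, whose total weight is at most $\frac{n-\sdim(G)+1}{n}f(V(G))$ and yet must be at least $1$. Your route instead leans on the machinery specific to the strong variant — Theorem~\ref{th:strong-dim-cover}, Gallai's identity, and the fact that a mutually maximally distant pair is resolved only by itself — to extract the $s$ constraints $f(v_i)+f(v_{i+1})\ge 1$. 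What this buys is the cleaner intermediate inequality $\sdim_f(G)\ge\frac{\sdim(G)+1}{2}$, which is strictly stronger than the stated bound whenever $1<\sdim(G)<n-1$ (it is tight for complete graphs, stars and trees generally), and it makes the equality analysis almost mechanical: tightness of the purely numerical step $\frac{s+1}{2}\ge\frac{n}{n-s+1}$ forces $s\in\{1,n-1\}$, and your structural lemma $G_{SR}\cong K_n\Rightarrow G\cong K_n$ closes the second case. It also explains transparently why odd cycles, which survive as equality cases in Theorem~\ref{th:bound-dim_f-dim}, disappear here. The only caveats are cosmetic: the statement implicitly assumes $G$ connected of order $n\ge 2$ (so that $1\le s\le n-1$ and the index $s+1\le n$ in your truncated sum makes sense), and you should say explicitly that $f$ nonincreasing together with $f(v_a)+f(v_b)\ge 1$ for some $i\le a<b$ yields $f(v_i)+f(v_{i+1})\ge 1$ because $a\ge i$ and $b\ge i+1$ — but both points are already implicit in what you wrote.
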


\begin{theorem}{\em \cite{Kang-2013}}
For any connected graph $G$, $\sdim_f(G)\ge \frac{l(G)}{2}$.
\end{theorem}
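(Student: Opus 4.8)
The plan is to exploit the fact that any two leaves of $G$ are mutually maximally distant, which forces the set $S\{u,v\}$ to be as small as it can possibly be, and then to average the resulting family of constraints $f(S\{u,v\})\ge 1$ over all pairs of leaves.

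First I would record the geometric observation. If $u$ is a leaf of $G$ with unique neighbour $w$, then for every vertex $z\ne u$ each $u$--$z$ path passes through $w$, so $d_G(v,u)=d_G(v,w)+1>d_G(v,w)$ for every $v\ne u$; hence $u$ is maximally distant from every other vertex of $G$. Consequently, if $u,v$ are two distinct leaves they are mutually maximally distant. I would then verify that this yields $S\{u,v\}=\{u,v\}$: trivially $u,v\in S\{u,v\}$; and if some $z\notin\{u,v\}$ lay in $S\{u,v\}$, say with $u$ on a shortest $v$--$z$ path, then the neighbour $u'$ of $u$ on that path towards $z$ would satisfy $d_G(v,u')=d_G(v,u)+1$, contradicting that $u$ is maximally distant from $v$ (and symmetrically if $v$ lies on a shortest $u$--$z$ path). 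This is exactly the fractional-setting analogue of the classical remark that mutually maximally distant vertices are strongly resolved only by themselves.

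Next, let $f$ be an arbitrary strong resolving function of $G$, write $L=L(G)$ and $l=l(G)=|L|$. If $l\le 1$ the bound is immediate, since $l/2\le 1/2\le\sdim_f(G)$ for every connected graph of order at least two (each constraint already forces $f(V(G))\ge f(S\{x,y\})\ge 1$), the order-one case being vacuous. So assume $l\ge 2$. For every unordered pair of distinct leaves $\{u,v\}$ the previous step and the defining property of $f$ give $f(u)+f(v)=f(S\{u,v\})\ge 1$. Summing these $\binom{l}{2}$ inequalities, in which each leaf occurs exactly $l-1$ times, yields $(l-1)\sum_{u\in L}f(u)\ge\binom{l}{2}=\frac{l(l-1)}{2}$, hence $\sum_{u\in L}f(u)\ge l/2$. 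Since $f$ is non-negative, $f(V(G))\ge\sum_{u\in L}f(u)\ge l/2$, and as $f$ was arbitrary we conclude $\sdim_f(G)\ge l(G)/2$.

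The argument is essentially routine once the geometric observation is in place; the only point needing a little care is the identity $S\{u,v\}=\{u,v\}$ for pairs of leaves, so I do not anticipate a genuine obstacle beyond making that identification precise and handling the degenerate small-$l$ cases.
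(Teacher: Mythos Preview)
Your proof is correct. The paper is a survey and states this theorem only as a citation to \cite{Kang-2013}, without giving any proof, so there is nothing in the paper to compare your argument against. Your two key steps are both sound: first, a leaf $u$ has a unique neighbour $w$, and every path from any $v\ne u$ to $u$ passes through $w$, so $d_G(v,w)=d_G(v,u)-1$ and $u$ is maximally distant from $v$; hence any two leaves are mutually maximally distant. Second, for such a pair the identity $S\{u,v\}=\{u,v\}$ holds because if $z\notin\{u,v\}$ had, say, $u$ on a shortest $v$--$z$ path, then the successor $u'$ of $u$ along that shortest path satisfies $d_G(v,u')=d_G(v,u)+1$ (subpaths of shortest paths are shortest), contradicting maximal distance. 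The averaging over the $\binom{l}{2}$ constraints $f(u)+f(v)\ge 1$ then gives $\sum_{u\in L}f(u)\ge l/2$, and non-negativity of $f$ finishes it. The degenerate cases $l\le 1$ are handled correctly via $\sdim_f(G)\ge 1$ for $|V(G)|\ge 2$.

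One minor remark: your argument for $S\{u,v\}=\{u,v\}$ actually uses only mutual maximal distance, not the leaf hypothesis, so you have implicitly reproved the general fact (used elsewhere in the survey in the context of $G_{SR}$) that mutually maximally distant pairs satisfy $S\{u,v\}=\{u,v\}$. An equivalent and perhaps more structural phrasing of your averaging step is that the leaves of $G$ induce a clique of size $l$ in the strong resolving graph $G_{SR}$, and the fractional vertex cover of a clique $K_l$ has value $l/2$; but what you wrote is already a clean direct proof.
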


For the case of trees, the following connection between their fractional strong metric dimension and that of the unicyclic graph obtained from a tree by adding any edge is known from \cite{Kang-2013}.

\begin{theorem}{\em \cite{Kang-2013}}
For a tree $T$, $\sdim_f(T + e) \ge \sdim_f(T)$, where $e\in E(\overline{T})$.
\end{theorem}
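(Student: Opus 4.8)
The plan is to pin down $\sdim_f(T)$ exactly and then compare. Since the excerpt already records $\sdim_f(G)\ge l(G)/2$ for every connected $G$, it suffices to prove $\sdim_f(T)\le l(T)/2\le \sdim_f(T+e)$. The first inequality is easy: I claim the function $f$ with $f(v)=1/2$ for every leaf $v$ of $T$ and $f(v)=0$ otherwise is a strong resolving function of $T$. To check this, fix distinct $x,y$ and let $x=p_0,p_1,\dots,p_k=y$ be the unique $x$--$y$ path; a vertex $z$ strongly resolves $x,y$ in a \emph{tree} precisely when $z$ lies in the component $T_x$ of $x$ in $T-p_0p_1$ or in the component $T_y$ of $y$ in $T-p_{k-1}p_k$ (because $x$ lies on the $z$--$y$ path iff $z$ is "behind" $x$, and symmetrically for $y$). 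These two subtrees are disjoint and each contains a leaf of $T$ (a tree on at least two vertices has two leaves, and every leaf of $T_x$ other than $x$ has degree $1$ in $T$), so $f$ gives the set of strong resolvers of $x,y$ total weight $\ge 1/2+1/2=1$. Hence $\sdim_f(T)\le l(T)/2$ (and together with the cited bound, $\sdim_f(T)=l(T)/2$, though only $\le$ is needed).

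For the remaining inequality, let $g$ be any strong resolving function of $H:=T+e$; I will show $g(V(H))\ge l(T)/2$. Write $e=uv$ and let $C$ be the unique cycle of the unicyclic graph $H$; then $u,v\in C$. I use that for any two vertices $a,b$ that are mutually maximally distant (MMD) in $H$ one has $g(a)+g(b)\ge 1$, since such a pair is strongly resolved only by $a$ and $b$ themselves; in particular, if $W$ is a set of pairwise MMD vertices with $|W|\ge 2$ then $g(W)\ge |W|/2$. The two structural facts I need are: (a) any two leaves of $H$ are MMD (each has degree $1$, hence is maximally distant from every vertex), so $L(H)$ is such a set; and (b) a degree-$2$ vertex $w$ of $C$ is MMD with every degree-$2$ vertex of $C$ at cycle-distance $\lfloor|C|/2\rfloor$ from $w$, as well as with every leaf of $H$ hanging off such an antipodal vertex — so every degree-$2$ vertex of $C$ is MMD with at least one other vertex. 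Fact (b) follows from the description of $\partial(H)$: a cycle vertex with a pendant branch, or a non-leaf vertex inside a branch, is never maximally distant from anything.

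Now I argue by cases on $\varepsilon:=|\{u,v\}\cap L(T)|$. A leaf of $T$ different from $u,v$ is still a leaf of $H$, whereas $u$ (resp. $v$), if it was a leaf of $T$, becomes a degree-$2$ vertex of $C$ in $H$; thus $l(H)=l(T)-\varepsilon$, and the vertices counted by $\varepsilon$ are not leaves of $H$. If $\varepsilon=0$ then $L(H)=L(T)$ and $g(V(H))\ge g(L(H))\ge l(H)/2=l(T)/2$. If $\varepsilon\ge 1$, use (b) to pick $t_u\ne u$ with $\{u,t_u\}$ MMD in $H$. If $t_u\notin L(H)$ then $g(V(H))\ge g(L(H))+g(u)+g(t_u)\ge l(H)/2+1$, which already exceeds $l(T)/2$; if $t_u\in L(H)$, split $t_u$ off the clique $L(H)$, so $g(L(H))\ge g(t_u)+(l(H)-1)/2$ and $g(V(H))\ge g(t_u)+(l(H)-1)/2+g(u)\ge (l(H)-1)/2+1=l(T)/2$ when $\varepsilon=1$ (using $g(u)\ge 1-g(t_u)$). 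For $\varepsilon=2$ one argues analogously, additionally observing that when $|C|=3$ the vertices $u,v$ are themselves MMD, and that for $|C|\ge 4$ the MMD partners $t_u,t_v$ can be chosen distinct (an odd cycle provides two antipodes, and if the single even-cycle antipode is not of degree $2$ it carries a leaf), so that $g(t_u)+g(t_v)+(l(H)-2)/2+g(u)+g(v)\ge (l(H)+2)/2=l(T)/2$.

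The main obstacle is exactly this bookkeeping for $\varepsilon\in\{1,2\}$: one must ensure that the weight contributed by an endpoint of $e$ that used to be a leaf of $T$ is genuinely "new", i.e.\ not already charged to the leaf-clique $L(H)$ of $H_{SR}$, which forces the careful choice of the cycle partners $t_u,t_v$ and a short analysis of their antipodal positions on $C$. Finally, a few degenerate configurations should be disposed of separately, because the estimate $g(W)\ge|W|/2$ is vacuous when $|W|\le 1$: when $T$ is a path, $H=T+e$ is a cycle and the trivial bound $\sdim_f(H)\ge 1$ already equals $l(T)/2$; and the cases with $l(H)\le 1$ (forcing $l(T)\le 3$) can be checked directly.
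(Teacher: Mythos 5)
The survey only cites \cite{Kang-2013} here and contains no proof of this theorem, so I can only judge your argument on its own terms. Your reduction is legitimate: combining your explicit function (weight $1/2$ on every leaf, verified correctly via $S_T\{x,y\}=T_x\cup T_y$ and the observation that these two disjoint subtrees each contain a leaf of $T$) with the recorded bound $\sdim_f(G)\ge l(G)/2$, it does suffice to prove $\sdim_f(T+e)\ge l(T)/2$. The structural ingredients you invoke are also sound: leaves of $H=T+e$ are pairwise mutually maximally distant (MMD), every degree-$2$ cycle vertex is a boundary vertex and hence MMD with an antipodal degree-$2$ cycle vertex or with a leaf in a branch rooted at an antipode, and a pairwise-MMD set $W$ forces $g(W)\ge |W|/2$ provided $|W|\ne 1$. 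The case $\varepsilon=0$ is complete.

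The gap sits in the cases $\varepsilon\ge1$ where the chosen partners land in $L(H)$. Your step $g(L(H)\setminus\{t_u\})\ge(l(H)-1)/2$ (and likewise $g(L(H)\setminus\{t_u,t_v\})\ge(l(H)-2)/2$) is exactly the clique bound applied to a set that may be a singleton, where it gives nothing. The uncovered configurations are $l(H)=2,\varepsilon=1$ (so $l(T)=3$) and $l(H)=3,\varepsilon=2$ (so $l(T)=5$); neither falls under your declared degenerate cases ($T$ a path, or $l(H)\le1$), and they are not mere bookkeeping. In the first, the only constraints you have assembled are $g(u)+g(t_u)\ge1$ and $g(t_u)+g(w)\ge1$ with $L(H)=\{t_u,w\}$: this is a path in $H_{SR}$, its fractional vertex cover is $1$, and the assignment $g(t_u)=1$, $g(u)=g(w)=0$ satisfies everything you used while totalling $1<3/2$. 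The inequality still holds in such graphs, but only because $H_{SR}$ carries further edges (e.g.\ a second degree-$2$ cycle vertex MMD with $w$, or $u$ MMD with both leaves when $|C|=3$), and producing such an edge in general requires an additional case analysis on where the branch roots sit on the cycle relative to $u$. The same defect recurs in your $\varepsilon=2$ sketch when $t_u=t_v\in L(H)$, where the estimate comes up short by $1/2$, and the asserted distinctness of $t_u,t_v$ is plausible but not proved. So: right architecture and a correct first half, but the lower bound is not yet a proof; you need either a richer inventory of MMD pairs in these residual cases or a genuine fractional-matching argument in $H_{SR}$ replacing the per-vertex charging.
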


We recall that this style of bound presented in the next theorem is known as a Nordhauss-Gaddum result, which is a typical contribution in several areas of graph theory.

\begin{theorem}{\em \cite{Kang-2013}}
Let $G$ and $\overline{G}$ be connected graphs of order $n\ge 4$. Then $2\le \sdim_f(G) + \sdim_f(G)\le n$. Moreover,
\begin{itemize}
  \item $\sdim_f(G) + \sdim_f(G)=2$ if and only if $n = 4$.
  \item If $n\ge 5$ and $G$ is a unicyclic graph, then $\sdim_f(G) + \sdim_f(G) = n$ if and only if $G$ is the cycle $C_n$.
\end{itemize}
\end{theorem}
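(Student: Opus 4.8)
\emph{Proof proposal.} The plan is to obtain both inequalities for free from the elementary bound recalled above, and then to treat the two equality cases separately using the structural characterizations of the graphs with $\sdim_f=1$ and with $\sdim_f=\frac n2$. For the inequalities I would simply apply $1\le\sdim_f(H)\le\frac n2$ (valid for every connected graph $H$ of order $n\ge 2$) to $H=G$ and to $H=\overline{G}$ and add, which yields $2\le\sdim_f(G)+\sdim_f(\overline{G})\le n$ at once.

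For the lower extreme, note that $\sdim_f(G)+\sdim_f(\overline{G})=2$ forces $\sdim_f(G)=\sdim_f(\overline{G})=1$, so by the characterization ``$\sdim_f=1$ iff a path'' both $G$ and $\overline{G}$ are paths. Now $\overline{P_n}$ is a path only for $n\le 4$ (for $n\ge 5$ an end-vertex of $P_n$ has degree $n-2\ge 3$ in $\overline{P_n}$), while $\overline{P_4}\cong P_4$; moreover $P_4$ is the unique graph on four vertices whose complement is connected, as one checks by inspecting the six connected graphs on four vertices. Hence, under the standing hypothesis $n\ge 4$, the sum equals $2$ exactly when $n=4$ — in which case $G\cong P_4$ and indeed $\sdim_f(P_4)+\sdim_f(\overline{P_4})=1+1=2$. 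This settles the first item.

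For the upper extreme with $G$ unicyclic and $n\ge 5$: since each summand is $\le \frac n2$, the sum is $n$ iff $\sdim_f(G)=\sdim_f(\overline{G})=\frac n2$. If $G\cong C_n$, I would exhibit in $C_n$ the fixed-point-free rotation $\alpha(v)=v+\lfloor n/2\rfloor$: here $d_{C_n}(v,\alpha(v))=\lfloor n/2\rfloor$ equals the diameter, so no other vertex $z$ can have $v$ on a shortest $\alpha(v)$--$z$ geodesic (that would give $d(\alpha(v),z)=d(\alpha(v),v)+d(v,z)>d(\alpha(v),v)$, exceeding the diameter), and symmetrically; thus $S\{v,\alpha(v)\}=\{v,\alpha(v)\}$ and $\sdim_f(C_n)=\frac n2$ by the bijection criterion. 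For the complement, $\overline{C_n}$ has diameter $2$ when $n\ge 5$, so $S\{x,y\}=\{x,y\}$ for every non-adjacent pair (if $d(x,y)=2$ and $x$ lay on a shortest $y$--$z$ geodesic then $d(y,z)=2+d(x,z)>2$, impossible); taking $\alpha(i)=i+1$, which pairs each vertex with one of its two non-neighbours in $\overline{C_n}$, gives $\sdim_f(\overline{C_n})=\frac n2$. Hence the sum is $n$.

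The converse is where the only genuine argument lies, and I expect it to be the main obstacle: assuming $\sdim_f(G)=\frac n2$ for a unicyclic graph $G$ on $n\ge 5$ vertices, I must force $G\cong C_n$. Suppose not; a connected unicyclic graph that is not a cycle is not $2$-connected, hence has a cut vertex $u$. I claim $u$ can be matched by no admissible bijection: for any $z\ne u$, choose a component of $G-u$ avoiding $z$ and a neighbour $w$ of $u$ inside it; every $z$--$w$ walk passes through $u$, so $d_G(z,w)=d_G(z,u)+1$, meaning $u$ lies on a shortest $z$--$w$ path with $w\notin\{u,z\}$, i.e. $w\in S\{u,z\}\setminus\{u,z\}$. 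Thus no fixed-point-free bijection $\alpha$ with $S\{v,\alpha(v)\}=\{v,\alpha(v)\}$ for all $v$ can exist, so by the characterization $\sdim_f(G)<\frac n2$, a contradiction. Therefore $G\cong C_n$, completing the proof.
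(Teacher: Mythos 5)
The survey states this theorem without proof, merely citing \cite{Kang-2013}, so there is no in-paper argument to measure yours against; I can only assess the proposal on its own terms, and it is correct. You build everything from the two ingredients the survey does record from the same source: the bounds $1\le \sdim_f(H)\le n/2$ together with the characterizations of the two extremes (paths at the bottom, the fixed-point-free bijection criterion $S\{v,\alpha(v)\}=\{v,\alpha(v)\}$ at the top). Each step checks out: the $n=4$ case reduces correctly to the facts that $\overline{P_n}$ is a path only for $n\le 4$ and that $P_4$ is the only four-vertex graph with both itself and its complement connected; the diameter arguments legitimately force $S\{v,\alpha(v)\}=\{v,\alpha(v)\}$ for antipodal pairs in $C_n$ and for non-adjacent pairs in the diameter-two graph $\overline{C_n}$ ($n\ge 5$); and the converse correctly isolates the only genuinely structural point, namely that a cut vertex $u$ of a unicyclic non-cycle satisfies $S\{u,z\}\supsetneq\{u,z\}$ for \emph{every} $z\ne u$ (via a neighbour of $u$ in a component of $G-u$ missing $z$), which rules out every admissible image $\alpha(u)$ and hence defeats the bijection criterion outright. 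Whether this matches the route of Kang and Yi I cannot certify from the survey, but as a self-contained derivation from the quoted characterizations it is sound.
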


Now, with respect to formulas for $\sdim_f(G)$ for specific families of graph $G$, we summarize them in Table \ref{tab:sdim_f-values}.

\begin{table}[ht]
  \centering
  \small{
  \begin{tabular}{|c|c|c|}
    \hline
    \textbf{Graphs} $\mathbf{G}$ & $\mathbf{\sdim_f(G)}$ & \textbf{Reference} \\ \hline
    Petersen graph & $5$ & \cite{Kang-2013} \\ \hline
    Cycle graph $C_n$, $n\ge 3$ & $\frac{n}{2}$ & \cite{Kang-2013} \\ \hline
    Grid graph $P_r\Box P_t$, $r,t\ge 2$ & $2$ & \cite{Kang-2013} \\ \hline
    Tree $T$ with $l(T)$ leaves & $\frac{l(T)}{2}$ & \cite{Kang-2013} \\ \hline
    Wheel graph $W_{1,n}$, $n\ge 3$ & $\begin{array}{ll} 2, & n = 3\\ \frac{n-1}{2}, & n \geq 4  \end{array}$ & \cite{Kang-2013} \\ \hline
    Vertex-transitive graph $G$ & $\frac{|V(G)|}{2}$ & \cite{Kang-2016} \\ \hline
    $\begin{array}{c}
      \mbox{Complete $t$-partite graph $K_{r_1,\ldots,r_t}$,} \\
      \mbox{order $n=\sum_{i=1}^{t}r_i$}
    \end{array}$
     & $\begin{array}{cl}
\frac{n-1}{2}, & \mbox{if $r_i=1$ for only one $i\in\{1,2,\dots,t\}$} \\
\frac{n}{2}, & \mbox{otherwise} \\ %
\end{array}$ & \cite{Kang-2013} \\ \hline
  \end{tabular}
  }
  \caption{The fractional strong metric dimension of some families of graphs.}\label{tab:sdim_f-values}
\end{table}

The last point of interest in this subsection concerns contributions on the fractional strong metric dimension of product graphs, specifically on the corona product, the lexicographic product, and the Cartesian product of graphs. For this topic, it is remarkable the use of the strong resolving graph of a graph, already defined in Section \ref{Sec:strong-dim}. For instance, the following general results are worth of mentioning. By $\nu(G)$ we mean the \emph{matching number} of $G$.

\begin{theorem}{\em \cite{Kang-2018}}
For any connected graph $G$, $\sdim_f(G)\ge \nu(G_{SR})$.
\end{theorem}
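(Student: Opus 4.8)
The plan is to exploit the fact, already recorded in Section~\ref{Sec:strong-dim}, that two mutually maximally distant vertices of $G$ are strongly resolved only by themselves; in other words, if $uv$ is an edge of $G_{SR}$, then $S\{u,v\}=\{u,v\}$. So first I would fix a strong resolving function $f$ of $G$ of minimum weight, so that $f(V(G))=\sdim_f(G)$, and fix a maximum matching $M=\{u_1v_1,\dots,u_kv_k\}$ of $G_{SR}$, where $k=\nu(G_{SR})$.

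Next, for each $i\in\{1,\dots,k\}$, since $u_iv_i\in E(G_{SR})$ the pair $u_i,v_i$ is mutually maximally distant in $G$, hence $S\{u_i,v_i\}=\{u_i,v_i\}$. Applying the defining inequality of a strong resolving function to this pair then yields $f(u_i)+f(v_i)=f(S\{u_i,v_i\})\ge 1$. Because $M$ is a matching, the pairs $\{u_1,v_1\},\dots,\{u_k,v_k\}$ are pairwise disjoint subsets of $V(G)$, so, using that $f$ is nonnegative, summing these $k$ inequalities gives
$$\sdim_f(G)=f(V(G))=\sum_{w\in V(G)}f(w)\ \ge\ \sum_{i=1}^{k}\bigl(f(u_i)+f(v_i)\bigr)\ \ge\ k\ =\ \nu(G_{SR}),$$
which is exactly the asserted bound.

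There is no genuinely hard step here: the only points requiring care are the invocation of the characterization of $S\{u,v\}$ for mutually maximally distant pairs (this is what guarantees that every matched edge of $G_{SR}$ imposes a real constraint on $f$) and the disjointness of the matched vertex pairs, which is precisely what permits adding the $k$ local inequalities without double-counting any $f(w)$. If one prefers a more structural formulation, the same conclusion follows by LP duality: the restriction of any strong resolving function to $V(G_{SR})$ is a fractional vertex cover of $G_{SR}$, whence $\sdim_f(G)\ge\tau_f(G_{SR})=\nu_f(G_{SR})\ge\nu(G_{SR})$; but the direct matching argument above is shorter and entirely self-contained.
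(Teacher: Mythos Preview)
Your argument is correct and is precisely the natural one: for each edge $u_iv_i$ of a maximum matching in $G_{SR}$ the mutual maximal distance of $u_i,v_i$ forces $S\{u_i,v_i\}=\{u_i,v_i\}$, so any strong resolving function satisfies $f(u_i)+f(v_i)\ge 1$, and disjointness of the matched pairs lets you sum these constraints. This is exactly the approach taken in \cite{Kang-2018} (the present paper, being a survey, only states the result without proof), and your alternative LP-duality remark---that any strong resolving function restricts to a fractional vertex cover of $G_{SR}$, whence $\sdim_f(G)\ge\tau_f(G_{SR})=\nu_f(G_{SR})\ge\nu(G_{SR})$---is also valid and in fact yields the slightly sharper bound $\sdim_f(G)\ge\nu_f(G_{SR})$.
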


\begin{theorem}{\em \cite{Kang-2018}}
Let $G$ be a connected graph. If each connected component of $G_{SR}$ is a regular graph, then $\sdim_f(G) = \frac{|\partial(G)|}{2}$.
\end{theorem}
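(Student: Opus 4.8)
The plan is to prove the two inequalities $\sdim_f(G)\le\frac{|\partial(G)|}{2}$ and $\sdim_f(G)\ge\frac{|\partial(G)|}{2}$ separately, noting that the first one will in fact hold for \emph{every} connected graph and that only the second one uses the regularity hypothesis. For the upper bound I would exhibit the explicit function $f_0$ with $f_0(v)=\tfrac12$ for $v\in\partial(G)$ and $f_0(v)=0$ otherwise; its weight is $\frac{|\partial(G)|}{2}$, so it remains to check that $f_0$ is a strong resolving function, i.e. that $f_0(S\{x,y\})\ge 1$, equivalently $|S\{x,y\}\cap\partial(G)|\ge 2$, for every pair of distinct vertices $x,y$. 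I would establish this via a ``pushing to the boundary'' argument: starting from $z=y$ (which lies in $S\{x,y\}$ since $d_G(x,z)=d_G(x,y)+d_G(y,z)$ holds trivially), repeatedly replace $z$ by a neighbour $z'$ with $d_G(x,z')=d_G(x,z)+1$ whenever one exists; a short distance computation shows $d_G(y,z')=d_G(y,z)+1$, so $z'$ stays in $S\{x,y\}$, and the process terminates at a vertex $x^{*}$ maximally distant from $x$, hence $x^{*}\in\partial(G)\cap S\{x,y\}$, with $d_G(x,x^{*})=d_G(x,y)+d_G(y,x^{*})$. Symmetrically, pushing from $z=x$ away from $y$ yields $y^{*}\in\partial(G)\cap S\{x,y\}$ with $d_G(y,y^{*})=d_G(x,y)+d_G(x,y^{*})$. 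Adding these two identities forces $x^{*}\ne y^{*}$, since $x^{*}=y^{*}$ would give $2\,d_G(x,y)=0$. Thus $f_0(S\{x,y\})\ge 2\cdot\tfrac12=1$ and $\sdim_f(G)\le\frac{|\partial(G)|}{2}$.

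For the lower bound, let $f$ be an arbitrary strong resolving function of $G$. Recall from Section~\ref{Sec:strong-dim} that mutually maximally distant vertices are strongly resolved only by themselves, so for every edge $xy$ of $G_{SR}$ we have $S\{x,y\}=\{x,y\}$ and therefore $f(x)+f(y)\ge 1$. Fix a non-trivial connected component $H$ of $G_{SR}$, which by hypothesis is $d$-regular for some $d\ge 1$; writing $m=|V(H)|$ and summing $f(x)+f(y)\ge 1$ over the $\tfrac{md}{2}$ edges of $H$, and counting each vertex with multiplicity equal to its degree, gives $d\sum_{v\in V(H)}f(v)\ge\tfrac{md}{2}$, hence $\sum_{v\in V(H)}f(v)\ge\tfrac{m}{2}$. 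Since $\partial(G)$ is precisely the union of the vertex sets of the non-trivial components of $G_{SR}$ (the trivial components being the $0$-regular ones, i.e. the isolated vertices), summing over all non-trivial components yields $\sum_{v\in\partial(G)}f(v)\ge\frac{|\partial(G)|}{2}$, and since $f\ge 0$ on all of $V(G)$ we conclude $f(V(G))\ge\frac{|\partial(G)|}{2}$. Taking the minimum over all strong resolving functions gives $\sdim_f(G)\ge\frac{|\partial(G)|}{2}$, which together with the previous paragraph finishes the proof.

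The step I expect to be the main obstacle is the ``pushing to the boundary'' lemma, and in particular the verification that the two boundary vertices $x^{*}$ and $y^{*}$ produced are distinct and remain inside $S\{x,y\}$ at every step; this requires a careful bookkeeping of the distance identities and of the definitions of maximally distant vertices, the boundary, and $S\{x,y\}$, but it is otherwise routine. As a sanity check one can verify the formula against known cases: for a path $P_n$ one has $|\partial(P_n)|=2$, recovering $\sdim_f(P_n)=1$; for a tree $T$, $\partial(T)=L(T)$ and $T_{SR}$ is $K_{l(T)}$ together with isolated vertices (all components regular), recovering $\sdim_f(T)=\frac{l(T)}{2}$; and for cycles the strong resolving graphs are $\bigcup K_2$ or $C_n$, both regular, recovering $\sdim_f(C_n)=\frac{n}{2}$. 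It is also worth recording that the argument yields, as a by-product, the general inequality $\sdim_f(G)\le\frac{|\partial(G)|}{2}$ for every connected graph $G$, complementing the earlier bound $\sdim_f(G)\ge\nu(G_{SR})$.
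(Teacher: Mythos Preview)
This paper is a survey and does not itself prove the theorem; it is merely quoted from \cite{Kang-2018}. Your proof is correct and follows the natural route: the upper bound via the explicit function $f_0$ equal to $\tfrac12$ on $\partial(G)$ and $0$ elsewhere, together with the ``push to the boundary'' construction producing two distinct boundary vertices in every $S\{x,y\}$; and the lower bound via the observation that $S\{x,y\}=\{x,y\}$ for every edge $xy$ of $G_{SR}$, so that any strong resolving function restricts to a fractional vertex cover of $G_{SR}$, whose minimum weight on a $d$-regular component of order $m$ is exactly $m/2$ by the double-counting you give.

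The one step that deserves an extra sentence is your identification of $\partial(G)$ with the set of non-isolated vertices of $G_{SR}$. The survey only states the easy direction (if $v\notin\partial(G)$ then $v$ is isolated in $G_{SR}$). The converse, that every boundary vertex is mutually maximally distant from some vertex, is a standard but not entirely trivial lemma: if $u$ is maximally distant from some vertex, choose among all $w$ from which $u$ is maximally distant one with $d_G(u,w)$ largest; if $w$ had a neighbour $w'$ with $d_G(u,w')=d_G(u,w)+1$, then every neighbour $x$ of $u$ would satisfy $d_G(w',x)\le d_G(w,x)+1\le d_G(u,w)+1=d_G(u,w')$, so $u$ would still be maximally distant from $w'$, contradicting the maximality of $d_G(u,w)$. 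Without this lemma your lower bound only yields $\sdim_f(G)\ge\tfrac12\,|\{v:v\text{ non-isolated in }G_{SR}\}|$, which could a priori be smaller than $\tfrac{|\partial(G)|}{2}$.
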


Now, by using these contributions above, as well as, the tool of the strong resolving graph together with some other techniques and arguments, the following results for product graphs were given in \cite{Kang-2018}.

\begin{proposition}\emph{\cite{Kang-2018}}
Let $G$ be a connected graph of order $n \ge 2$, and let $H$ be a graph of order $m \ge 1$. Then $\sdim_f(G \odot H)=\frac{nm}{2}$.
\end{proposition}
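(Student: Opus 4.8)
The plan is to prove $\sdim_f(G\odot H)\le\tfrac{nm}{2}$ and $\sdim_f(G\odot H)\ge\tfrac{nm}{2}$ separately. Write $v_1,\dots,v_n$ for the vertices of the base copy of $G$, let $H_1,\dots,H_n$ be the $n$ copies of $H$ (with $v_i$ joined to every vertex of $V(H_i)$), and set $W=\bigcup_{i=1}^n V(H_i)$, so $|W|=nm$. Everything rests on the observation that in $G\odot H$ a path leaving a copy $H_i$ must pass through $v_i$; this gives $d(v_i,v_j)=d_G(v_i,v_j)$, $d(v_i,(v_j,h))=d_G(v_i,v_j)+1$, $d((v_i,h),(v_j,h'))=d_G(v_i,v_j)+2$ for $i\ne j$, and $d((v_i,h),(v_i,h'))\in\{1,2\}$ for $h\ne h'$, and, more importantly, the following lemma: if $u\in V(H_i)$ and $w\notin V(H_i)\cup\{v_i\}$, then $v_i$ lies on every shortest $u$--$w$ path, so $d(u,w)=1+d(v_i,w)$ and hence both $u\in S\{v_i,w\}$ and $w\in S\{v_i,u\}$.

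For the upper bound I would take the function $f$ with $f\equiv\tfrac12$ on $W$ and $f\equiv 0$ on $\{v_1,\dots,v_n\}$, of weight $\tfrac{nm}{2}$, and check it is a strong resolving function by proving $|S\{x,y\}\cap W|\ge 2$ for all distinct $x,y$ (which forces $f(S\{x,y\})\ge1$). This splits into a short case analysis handled by the lemma: if $x,y\in W$ then $x,y\in S\{x,y\}$; if $x=v_i$ and $y\notin V(H_i)\cup\{v_i\}$ then all of $V(H_i)$ (and, when $y=v_j$, also $V(H_j)$, and when $y\in W$ also $y$ itself) lies in $S\{x,y\}$; and if $x=v_i$, $y\in V(H_i)$ then, choosing $k\ne i$ (possible since $n\ge 2$) and $h''\in V(H_k)$ (possible since $m\ge 1$), the lemma applied with $u=y$, $w=(v_k,h'')$ places $(v_k,h'')$ in $S\{x,y\}$ alongside $y$. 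Every pair of distinct vertices falls into one of these cases, so $\sdim_f(G\odot H)\le\tfrac{nm}{2}$.

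For the lower bound I would first check that any two vertices $(v_i,h)$, $(v_j,h')$ with $i\ne j$ are mutually maximally distant: every neighbour of $(v_i,h)$ is either $v_i$, at distance $d_G(v_i,v_j)+1$ from $(v_j,h')$, or some $(v_i,h'')$, at distance $d_G(v_i,v_j)+2$, and neither exceeds $d((v_i,h),(v_j,h'))=d_G(v_i,v_j)+2$; the argument is symmetric. By the remark in Section~\ref{Sec:strong-dim}, such a pair is strongly resolved only by itself, so every strong resolving function $f$ satisfies $f(v_i,h)+f(v_j,h')\ge1$. The pairs arising this way are exactly the edges of the complete $n$-partite graph $K_{m\times n}$ on $W$ with parts $V(H_1),\dots,V(H_n)$, and each vertex of $W$ lies in exactly $(n-1)m$ of them; averaging these $\binom{n}{2}m^2$ inequalities with the uniform coefficient $\tfrac{1}{(n-1)m}$ (a fractional perfect matching of $K_{m\times n}$) yields
$$f\big(V(G\odot H)\big)\ \ge\ \sum_{z\in W}f(z)\ =\ \frac{1}{(n-1)m}\sum_{zz'\in E(K_{m\times n})}\big(f(z)+f(z')\big)\ \ge\ \frac{\binom{n}{2}m^2}{(n-1)m}\ =\ \frac{nm}{2}.$$
As $f$ was arbitrary, $\sdim_f(G\odot H)\ge\tfrac{nm}{2}$, and together with the upper bound this gives the claimed equality.

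I expect the upper-bound case analysis to be the main obstacle, in particular the configuration $\{v_i,(v_i,h')\}$ (the two vertices hang off the same vertex of $G$, where the ``behind $v_i$'' lemma must be applied in its second form) and degenerate instances such as $m=1$ or $H$ complete or disconnected, where one must ensure a second vertex of $W$ still lands in $S\{x,y\}$. It is also worth noting why the two general theorems quoted earlier do not settle the claim directly: $(G\odot H)_{SR}$ need not have all components regular (already $H=P_3$ fails), so the formula $\sdim_f=\tfrac12|\partial|$ is unavailable, and the integer bound $\sdim_f\ge\nu(G_{SR})$ is too weak when $nm$ is odd — which is precisely why the lower bound above is routed through a \emph{fractional} perfect matching of $K_{m\times n}$ rather than an integral one.
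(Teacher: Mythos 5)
The survey states this proposition without proof, citing \cite{Kang-2018} and remarking only that the arguments there rest on the strong resolving graph $G_{SR}$ together with the two general facts quoted just before it ($\sdim_f(G)\ge\nu(G_{SR})$, and $\sdim_f(G)=|\partial(G)|/2$ when every component of $G_{SR}$ is regular). Your argument is, as far as I can check, correct and complete in all essentials, and it is essentially a self-contained reconstruction of that route: your cross-copy MMD pairs are exactly the edges of the spanning complete multipartite subgraph $K_{m\times\cdots\times m}$ of $(G\odot H)_{SR}$ restricted to $\partial(G\odot H)=W$, your uniform weighting of its edges by $1/((n-1)m)$ is a fractional perfect matching certifying the lower bound, and the constant function $1/2$ on $W$ is the matching upper bound. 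The case analysis for the upper bound is exhaustive (pairs inside $W$; $v_i,v_j$; $v_i$ versus a vertex of a foreign copy; $v_i$ versus a vertex of its own copy), and in each case you correctly exhibit two vertices of $W$ in $S\{x,y\}$, using that $x,y\in S\{x,y\}$ and that $v_i$ separates $V(H_i)$ from the rest of the graph. Your closing observations are also accurate and worth having: $(G\odot H)_{SR}$ need not have regular components (your $H=P_3$ example is right, since only one of the two adjacent endpoints of an induced $P_2$ inside a copy is maximally distant from the other), so the regularity theorem does not apply directly, and the integral matching bound $\nu(G_{SR})=\lfloor nm/2\rfloor$ is strictly weaker when $nm$ is odd, which is precisely why the fractional matching is needed. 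The one point I would make explicit in a written version is the justification that an MMD pair $x,y$ satisfies $S\{x,y\}=\{x,y\}$ (if $x$ lay on a shortest $y$--$z$ path for some $z\ne x$, the next vertex of that path after $x$ would be a neighbour of $x$ strictly farther from $y$); you invoke this via the remark in Section \ref{Sec:strong-dim}, which is stated there for strong resolving sets rather than for the sets $S\{x,y\}$, so a one-line bridge is warranted.
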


\begin{proposition}\emph{\cite{Kang-2018}}
If $H$ is a connected graph, then $\sdim_f(H) \le \sdim_f(K_1 \odot H) \le \frac{1}{2}(1+|V(H)|)$ and both bounds are sharp.
\end{proposition}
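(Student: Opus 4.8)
The plan is to use that $K_1\odot H$ is exactly the join $K_1+H$, a connected graph of order $1+|V(H)|$. The upper bound is then immediate from the general inequality $\sdim_f(G)\le |V(G)|/2$ established above, applied to $G=K_1+H$. So the substance is the lower bound $\sdim_f(H)\le \sdim_f(K_1\odot H)$, which I would obtain by \emph{restriction}: given any strong resolving function $f$ of $K_1+H$, I claim its restriction $g=f|_{V(H)}$ is a strong resolving function of $H$; since $g(V(H))=f(V(H))\le f(V(K_1+H))$, taking the infimum over $f$ yields $\sdim_f(H)\le \sdim_f(K_1\odot H)$.

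Write $v$ for the vertex of $K_1$ and $S_G\{x,y\}$ for the set of vertices strongly resolving $x,y$ inside a graph $G$. The key structural step is to show $S_{K_1+H}\{x,y\}\subseteq S_H\{x,y\}$ for all distinct $x,y\in V(H)$. First, $v\notin S_{K_1+H}\{x,y\}$: since $d_{K_1+H}(x,v)=d_{K_1+H}(y,v)=1$, the defining condition for $v$ (that $x$ lie on a shortest $y$--$v$ path, or $y$ on a shortest $x$--$v$ path) would force $d_{K_1+H}(x,y)=0$, which is false; hence $S_{K_1+H}\{x,y\}\subseteq V(H)$. Next, if $x\not\sim_H y$ then $d_{K_1+H}(x,y)=2$, which is the diameter of $K_1+H$, so any $z\in S_{K_1+H}\{x,y\}$ other than $x,y$ would satisfy $d_{K_1+H}(x,z)=2+d_{K_1+H}(y,z)\ge 2$ together with $d_{K_1+H}(x,z)\le 2$, forcing $z=x$; thus $S_{K_1+H}\{x,y\}=\{x,y\}\subseteq S_H\{x,y\}$. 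Finally, if $x\sim_H y$, any $z\in S_{K_1+H}\{x,y\}\setminus\{x,y\}$ satisfies, after possibly swapping $x$ and $y$, $z\sim_H y$ and $z\not\sim_H x$; the walk $x\,y\,z$ gives $d_H(x,z)\le 2$ while $z\not\sim_H x$ and $z\ne x$ give $d_H(x,z)\ge 2$, so $d_H(x,z)=2=d_H(x,y)+d_H(y,z)$, i.e.\ $y$ lies on a shortest $x$--$z$ path of $H$ and $z\in S_H\{x,y\}$. Combining the three cases proves the claim.

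With the claim in hand, for distinct $x,y\in V(H)$ we get $g(S_H\{x,y\})\ge g(S_{K_1+H}\{x,y\})=f(S_{K_1+H}\{x,y\})\ge 1$, where the equality holds because $f$ and $g$ agree on $V(H)\supseteq S_{K_1+H}\{x,y\}$; hence $g$ strongly resolves $H$ and the lower bound follows. For sharpness I would give a separate witness for each bound. Taking $H\cong K_m$ gives $K_1\odot H\cong K_{m+1}$, where $S\{x,y\}=\{x,y\}$ for every pair, so $\sdim_f(K_{m+1})=\tfrac{m+1}{2}=\tfrac{1}{2}(1+|V(H)|)$ and the upper bound is attained. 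Taking $H\cong K_{r,s}$ with $r,s\ge 2$, a short linear-programming check shows $\sdim_f(K_{r,s})=\tfrac{r+s}{2}$: inside a part of size $\ge 2$ every pair is strongly resolved only by itself, while cross-pairs are resolved by the whole part, so weight $\tfrac12$ on each vertex of $H$ is optimal; the same computation applied to $K_1+K_{r,s}\cong K_{1,r,s}$ (assigning the apex weight $0$) gives $\sdim_f(K_1\odot K_{r,s})=\tfrac{r+s}{2}$ as well, so the lower bound is attained.

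The step I expect to be the main obstacle is the third case of the structural claim — ensuring that adjacency of $x,y$ in $H$ genuinely forces $d_H(x,z)=2$ for the vertices $z$ that strongly resolve $x,y$ in the join, and more generally checking that no distance information is lost when passing between $K_1+H$ and $H$. One should also dispatch the degenerate cases $H\cong K_m$ (so $K_1+H$ has diameter $1$ and $H$ has no non-adjacent pairs) and $|V(H)|=1$ directly; both are trivial but worth noting.
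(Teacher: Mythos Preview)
Your argument is correct. Note that the present paper is a survey and simply cites this proposition from \cite{Kang-2018} without reproducing a proof, so there is no in-paper proof to compare against; that said, your restriction argument via the inclusion $S_{K_1+H}\{x,y\}\subseteq S_H\{x,y\}$ together with the general bound $\sdim_f(G)\le |V(G)|/2$ is exactly the natural route, and your sharpness witnesses $H\cong K_m$ and $H\cong K_{r,s}$ (with $r,s\ge2$) match the values recorded in Table~\ref{tab:sdim_f-values}.
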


We recall that the case for which a first glance indicates will be easier to manage ($G=K_1$) is the one that is more problematic to deal with.

For the case of lexicographic product graphs $G\circ H$, the results are highly dependent on the diameter of $H$, as well as, in the existence of true and false twins in $G$. We mention here, one of the main contributions, and suggest the reader to check \cite{Kang-2018} for several particular situations, and some other more general ones.

\begin{theorem}\emph{\cite{Kang-2018}}
Let $G$ be a connected graph of order $n \ge 2$ without true twin vertices, $H$ be a graph of order $m \ge 2$, and let
$|\partial(G)|=n'$ and $|\partial(H)|=m'$. If $D(H) \le 2$, then
$$\sdim_f(G\circ H) \ge \max\{n \sdim_f(H)+(m-m') \sdim_f(G),(n-n') \sdim_f(H)+m \sdim_f(G)\}$$
and
$$\sdim_f(G\circ H) \le \frac{1}{2}(nm'+mn'-n'm'),$$
where both bounds are sharp.
\end{theorem}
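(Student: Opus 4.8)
The plan is to route everything through the strong resolving graph and the \emph{fractional} analogue of Theorem~\ref{th:strong-dim-cover}. First I would record the principle behind that theorem in fractional form: if $u,v$ are mutually maximally distant then they are strongly resolved only by $\{u,v\}$, so any strong resolving function $f\colon V(\Gamma)\to[0,1]$ of a connected graph $\Gamma$ satisfies $f(u)+f(v)\ge 1$ for every edge $uv$ of $\Gamma_{SR}$; conversely, using the standard fact underlying Theorem~\ref{th:strong-dim-cover} that for \emph{every} pair $x\ne y$ the set $S\{x,y\}$ contains both endpoints of some edge of $\Gamma_{SR}$, a fractional vertex cover of $\Gamma_{SR}$ extended by $0$ outside $\partial(\Gamma)$ is a strong resolving function. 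Hence $\sdim_f(\Gamma)$ equals the fractional vertex-cover number of $\Gamma_{SR}$, which by linear-programming duality equals the fractional matching number of $\Gamma_{SR}$. This already disposes of the upper bound \emph{once the boundary is identified}: putting $\tfrac12$ on every vertex of $\partial(\Gamma)$ and $0$ elsewhere is always a fractional vertex cover of $\Gamma_{SR}$ (each edge of $\Gamma_{SR}$ has both ends in $\partial(\Gamma)$), so $\sdim_f(\Gamma)\le\tfrac12|\partial(\Gamma)|$.

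The core step is the structural lemma: under the hypotheses $D(H)\le 2$ and $G$ true-twin free, $(G\circ H)_{SR}\cong G_{SR}\circ H_{SR}$. I would prove it by the usual case analysis for mutually maximally distant pairs $(g,h),(g',h')$ of $G\circ H$, using $d_{G\circ H}((g,h),(g',h'))=d_G(g,g')$ when $g\ne g'$ and $=\min\{d_H(h,h'),2\}$ when $g=g'$. When $g=g'$ the condition collapses — and here $D(H)\le 2$ removes the truncation — to ``$h,h'$ mutually maximally distant in $H$''; when $g\ne g'$ with $d_G(g,g')\ge 2$ it collapses to ``$g,g'$ mutually maximally distant in $G$'', for all $h,h'$; and the remaining possibility $g\sim_G g'$ is excluded, since a mutually-maximally-distant adjacent pair of $G$ would be a pair of true twins of $G$. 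In particular $\partial(G\circ H)=(\partial(G)\times V(H))\cup(V(G)\times\partial(H))$, of cardinality $n'm+nm'-n'm'$, so the previous paragraph gives $\sdim_f(G\circ H)\le\tfrac12(nm'+mn'-n'm')$.

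For the lower bound I would build a large fractional matching of $G_{SR}\circ H_{SR}$ by a disjoint-support patching argument. Partition the non-isolated vertices as $\big((V(G)\setminus\partial(G))\times\partial(H)\big)\cup\big(\partial(G)\times V(H)\big)$: the first block induces $n-n'$ pairwise disjoint copies of $H_{SR}$, carrying a fractional matching of weight $(n-n')\,\sdim_f(H)$; and the between-copy edges of $G_{SR}\circ H_{SR}$ on the second block form the blow-up $G_{SR}\circ\overline{K_m}$, whose fractional matching number is at least $m\,\sdim_f(G)$ (spread an optimal fractional matching of $G_{SR}$ uniformly over each complete-bipartite bundle). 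As the two matchings have disjoint vertex supports their sum is a fractional matching, so $\sdim_f(G\circ H)\ge(n-n')\sdim_f(H)+m\,\sdim_f(G)$. Re-partitioning instead as $\big(V(G)\times\partial(H)\big)\cup\big(\partial(G)\times(V(H)\setminus\partial(H))\big)$ — the first block now carrying $n$ disjoint copies of $H_{SR}$ (use only the within-copy edges), the second a copy of $G_{SR}\circ\overline{K_{m-m'}}$ — yields $\sdim_f(G\circ H)\ge n\,\sdim_f(H)+(m-m')\sdim_f(G)$, and the larger of the two is the claimed bound. For sharpness I would take $G=H$ an odd cycle of diameter at most $2$ (so $G_{SR},H_{SR}$ are $2$-regular with no isolated vertices, $n'=n$, $m'=m$): then $G_{SR}\circ H_{SR}$ is regular, whence $\sdim_f(G\circ H)=\tfrac12 nm$, and this value equals both the upper bound and the common value of the two lower bounds.

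The main obstacle is the structural identity $(G\circ H)_{SR}\cong G_{SR}\circ H_{SR}$: one must be meticulous with the corner cases of the mutually-maximally-distant characterization, and it is exactly there that both hypotheses enter — $D(H)\le 2$ so that the truncated distance $\min\{d_H(\cdot,\cdot),2\}$ coincides with $d_H$, and the absence of true twins in $G$ so that no edge of $G_{SR}$ joins two $G$-adjacent vertices. Once this lemma and the fractional vertex-cover/fractional matching reformulation are in hand, both inequalities and the sharpness examples are routine.
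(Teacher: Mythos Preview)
The paper is a survey and does not prove this theorem; it merely quotes the statement from \cite{Kang-2018}, remarking only that ``the tool of the strong resolving graph together with some other techniques and arguments'' is used. So there is no in-paper proof to compare against, but your outline is a correct reconstruction along exactly the lines the survey indicates.

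Your key ingredients are all sound. The fractional analogue of Theorem~\ref{th:strong-dim-cover}, namely $\sdim_f(\Gamma)=\alpha_f(\Gamma_{SR})=\nu_f(\Gamma_{SR})$, follows just as you say: the Oellermann--Peters-Fransen argument shows that for every pair $x\neq y$ there is an edge $u^*v^*$ of $\Gamma_{SR}$ with \emph{both} $u^*,v^*\in S\{x,y\}$, which is precisely what is needed to pass from a fractional vertex cover of $\Gamma_{SR}$ to a strong resolving function. Your structural lemma $(G\circ H)_{SR}\cong G_{SR}\circ H_{SR}$ is correct under the hypotheses; the one place your write-up is slightly telegraphic is the case $g\sim_G g'$: what you actually need (and what the computation gives) is that if $(g,h),(g',h')$ are mutually maximally distant in $G\circ H$ with $g\sim_G g'$, then $N_G[g]=N_G[g']$ (and $h,h'$ are universal in $H$) --- so the true-twin-free hypothesis on $G$ kills this case outright. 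You also implicitly use the companion fact that no edge of $G_{SR}$ joins $G$-adjacent vertices (same true-twin argument in $G$), so that the $d_G(g,g')\ge 2$ case really accounts for all between-copy edges. The two disjoint-support fractional matchings you build are valid (the ``spread uniformly over the $K_{m,m}$ bundle'' construction does give weight $m\cdot\nu_f(G_{SR})$ with all vertex loads $\le 1$), and your sharpness example $G=H=C_5$ works since $C_5$ is true-twin-free with $D(C_5)=2$ and $(C_5)_{SR}=C_5$.
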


Finally, for the case of Cartesian product graphs, the contributions are highly based on the interesting result from \cite{Rodriguez-Velazquez-2014-a} which relates the strong resolving graph of $G\Box H$ and that ones of the direct product of the factors. Namely, the one which states that for any two connected graphs $G$ and $H$,
$$(G \square H)_{SR} \cong G_{SR} \times H_{SR}.$$
This fact together with other contributions in the topic allowed to deduce several results like the following ones for instances.

\begin{corollary}\emph{\cite{Kang-2018}}
Let $n\ge 2$ be an integer.
\begin{itemize}
\item[(a)] If $G_{SR}$ is a Hamiltonian graph, then $\sdim_f(G \square K_n)=\frac{n |\partial(G)|}{2}$.
\item [(b)] For any tree $T$ of order at least two, $\sdim_f(T \square K_n)=\frac{n\cdot l(T)}{2}$.
\item [(c)] Let $K_{r_1,\ldots,r_k}$ be a complete $k$-partite graph, where $k \ge 2$. If $r_i \ge 2$ for each $i\in \{1,2,\ldots,k\}$, or $r_j=1$ for at least two different $j \in \{1, 2, \ldots, k\}$, then $\sdim_f(K_{r_1,\ldots,r_k} \square K_n)=\frac{n}{2}\sum_{i=1}^k r_i$.
\end{itemize}
\end{corollary}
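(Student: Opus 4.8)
The plan is to reduce all three items to the structure of the strong resolving graph of a Cartesian product, using the isomorphism $(G\Box H)_{SR}\cong G_{SR}\times H_{SR}$ recalled above together with $(K_n)_{SR}\cong K_n$. The first step, common to the whole argument, is a general upper bound: for any connected graph $G$ of order at least two, the function $f$ assigning weight $1/2$ to every vertex of $\partial(G)$ and $0$ elsewhere is a strong resolving function, so $\sdim_f(G)\le |\partial(G)|/2$. Indeed, for every pair of distinct vertices $x,y$ the set $S\{x,y\}$ meets $\partial(G)$ in at least two vertices: pushing $x$ greedily away from $y$ along vertices of strictly increasing distance to $y$ terminates at a vertex of $\partial(G)$ lying in $S\{x,y\}$, the symmetric construction with $y$ produces a second such vertex, and the two are distinct (if they coincided, adding the two defining distance equalities would force $d(x,y)=0$). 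Applying this to $G\Box K_n$, and noting that $(u,v)$ is isolated in $G_{SR}\times K_n$ exactly when $u$ is isolated in $G_{SR}$ (since $K_n$ has no isolated vertex for $n\ge 2$), we get $|\partial(G\Box K_n)|=n\,|\partial(G)|$, hence $\sdim_f(G\Box K_n)\le \tfrac{n\,|\partial(G)|}{2}$ in (a); once the boundaries of trees and of complete multipartite graphs are identified this becomes $\tfrac{n\,l(T)}{2}$ and $\tfrac n2\sum_i r_i$ in (b) and (c).

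For the matching lower bound it is convenient to use the fractional refinement $\sdim_f(G)\ge \nu^{*}(G_{SR})$ of the stated inequality $\sdim_f(G)\ge \nu(G_{SR})$, where $\nu^{*}$ is the fractional matching number; this follows from LP duality, since weighting only mutually maximally distant pairs $\{x,y\}$ (for which $S\{x,y\}=\{x,y\}$) turns the dual of the defining linear program for $\sdim_f(G)$ into a fractional matching of $G_{SR}$. Since $\nu^{*}(H)$ equals half the number of non-isolated vertices of $H$ whenever those vertices carry a spanning union of cycles, in each case it suffices to exhibit a $2$-factor on the non-isolated part of $G_{SR}\times K_n$.

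In case (a), $G_{SR}$ being Hamiltonian forces $\partial(G)=V(G)$ and provides a spanning cycle $C_m\subseteq G_{SR}$ with $m=|\partial(G)|$; then $C_m\times K_n$ is a spanning subgraph of $G_{SR}\times K_n$ that is $2(n-1)$-regular, and every connected regular graph of even degree has a $2$-factor (Petersen), so $\nu^{*}(G_{SR}\times K_n)=\tfrac{mn}{2}$, which together with the upper bound gives $\sdim_f(G\Box K_n)=\tfrac{n|\partial(G)|}{2}$. In case (b), $T_{SR}\cong K_{l(T)}\cup \overline{K_{|V(T)|-l(T)}}$, so $(T\Box K_n)_{SR}\cong \big(K_{l(T)}\times K_n\big)\cup\big(\overline{K_{|V(T)|-l(T)}}\times K_n\big)$; the second summand is edgeless and the first is vertex-transitive, hence all of its components are $(l(T)-1)(n-1)$-regular, so every connected component of $(T\Box K_n)_{SR}$ is regular and the theorem ``$G_{SR}$ has all components regular $\Rightarrow \sdim_f(G)=|\partial(G)|/2$'' applies directly, giving $\sdim_f(T\Box K_n)=\tfrac{n\,l(T)}{2}$. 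Case (c) is the same computation once one checks that, under either hypothesis on the $r_i$, the strong resolving graph of $K_{r_1,\dots,r_k}$ is a disjoint union of complete graphs $K_m$ with $m\ge 2$ — one for each part of size $\ge 2$, plus, when there are at least two singleton parts, one more on all the singleton vertices — so $(K_{r_1,\dots,r_k}\Box K_n)_{SR}$ is a disjoint union of vertex-transitive regular graphs $K_m\times K_n$ with no isolated vertices; the regular-components theorem then yields $\sdim_f=\tfrac12|\partial(K_{r_1,\dots,r_k}\Box K_n)|=\tfrac n2\sum_i r_i$.

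The main obstacle is case (a): unlike a tree or a complete multipartite graph, a Hamiltonian graph need not be regular, so $G_{SR}\times K_n$ need not have regular components and the clean regular-components theorem is unavailable. Pinning down the exact value therefore rests on the fractional-matching argument — recognising that $C_m\times K_n$ is even-regular and invoking Petersen's $2$-factor theorem — rather than on a single integral matching, which would be off by $1/2$ when $mn$ is odd. The remaining bookkeeping (determining $T_{SR}$ and $\partial(K_{r_1,\dots,r_k})$, and checking vertex-transitivity of $K_a\times K_b$) is routine.
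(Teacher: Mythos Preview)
The survey does not supply its own proof of this corollary but indicates it follows from the tools it cites from \cite{Kang-2018}---the isomorphism $(G\Box H)_{SR}\cong G_{SR}\times H_{SR}$, the lower bound $\sdim_f(G)\ge\nu(G_{SR})$, and the regular-components theorem---and your argument assembles precisely these ingredients (upgrading $\nu$ to $\nu^{*}$ via the LP-duality observation), so the approaches coincide. One small simplification in part (a): the Petersen $2$-factor step is unnecessary, because $C_m\times K_n$ is already $2(n-1)$-regular and any $d$-regular graph with $d\ge 1$ has fractional matching number $|V|/2$ directly (assign weight $1/d$ to every edge), giving $\nu^{*}(G_{SR}\times K_n)\ge\nu^{*}(C_m\times K_n)=mn/2$ without first extracting a $2$-factor.
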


On the other hand, the following general bounds were proved also in \cite{Kang-2018}.

\begin{theorem}\emph{\cite{Kang-2018}}
Let $G$ and $H$ be connected graphs of order at least two. Then
$$\max\{2\sdim_f(G), 2\sdim_f(H)\} \le \sdim_f(G \square H) \le \min\{|\partial(G)|\sdim_f(H), |\partial(H)|\sdim_f(G)\},$$
and both bounds are sharp.
\end{theorem}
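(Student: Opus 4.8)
The plan is to reduce everything to the strong resolving graph. Two ingredients are needed: the identity $(G \square H)_{SR} \cong G_{SR}\times H_{SR}$ from \cite{Rodriguez-Velazquez-2014-a}, and the fractional analogue of the fact underlying Theorem~\ref{th:strong-dim-cover}, namely that a function $f\colon V(\Gamma)\to[0,1]$ is a strong resolving function of a graph $\Gamma$ if and only if $f(p)+f(q)\ge 1$ for every edge $pq$ of $\Gamma_{SR}$ (so that $\sdim_f(\Gamma)$ is just the fractional vertex cover value of $\Gamma_{SR}$). I would first record two elementary observations: for any connected graph of order at least two, $G_{SR}$ has at least one edge — a diametral pair is mutually maximally distant — and the two endpoints of any edge of $G_{SR}$ lie in $\partial(G)$.

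For the upper bound it suffices, by symmetry, to prove $\sdim_f(G\square H)\le|\partial(H)|\,\sdim_f(G)$. Take an optimal strong resolving function $h$ of $G$ and define $f$ on $V(G\square H)$ by $f(a,b)=h(a)$ when $b\in\partial(H)$ and $f(a,b)=0$ otherwise; its weight is $|\partial(H)|\,\sdim_f(G)$. By the characterization above it is enough to verify the edge constraints of $(G\square H)_{SR}=G_{SR}\times H_{SR}$. Such an edge joins $(a_1,b_1)$ and $(a_2,b_2)$ with $a_1a_2\in E(G_{SR})$ and $b_1b_2\in E(H_{SR})$; then $b_1,b_2\in\partial(H)$, so $f(a_i,b_i)=h(a_i)$ and $f(a_1,b_1)+f(a_2,b_2)=h(a_1)+h(a_2)\ge 1$ because $h$ is a strong resolving function of $G$.

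For the lower bound it suffices, by symmetry, to prove $\sdim_f(G\square H)\ge 2\sdim_f(G)$. Let $f$ be an optimal strong resolving function of $G\square H$ and fix any edge $b_1b_2$ of $H_{SR}$ (one exists by the first observation). Put $g(a)=\tfrac12\big(f(a,b_1)+f(a,b_2)\big)$, so $g\colon V(G)\to[0,1]$. For every edge $a_1a_2$ of $G_{SR}$, both $\{(a_1,b_1),(a_2,b_2)\}$ and $\{(a_1,b_2),(a_2,b_1)\}$ are edges of $G_{SR}\times H_{SR}=(G\square H)_{SR}$, hence $f(a_1,b_1)+f(a_2,b_2)\ge 1$ and $f(a_1,b_2)+f(a_2,b_1)\ge 1$; adding these yields $g(a_1)+g(a_2)\ge 1$. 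Thus $g$ is a strong resolving function of $G$, and since the slices $V(G)\times\{b_1\}$ and $V(G)\times\{b_2\}$ are disjoint in $V(G\square H)$, one gets $\sdim_f(G)\le\sum_{a}g(a)=\tfrac12\sum_a\big(f(a,b_1)+f(a,b_2)\big)\le\tfrac12\,\sdim_f(G\square H)$.

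For sharpness I would take $G$ and $H$ to be paths: then $\sdim_f(P_n)=1$ and $|\partial(P_n)|=2$, so $2\sdim_f(P_n)=|\partial(P_n)|\,\sdim_f(P_n)=2$, matching the known value $\sdim_f(P_r\square P_t)=2$; thus both inequalities are attained simultaneously. The step requiring the most care is the characterization invoked in the upper bound — that a nonnegative $f$ meeting all edge constraints of $\Gamma_{SR}$ really is a strong resolving function of $\Gamma$. This rests on the structural lemma that for any two distinct vertices $x,y$ the set $S\{x,y\}$ contains a pair of mutually maximally distant vertices, which is the fractional counterpart of the combinatorial fact behind Theorem~\ref{th:strong-dim-cover} and is where I would spend the most effort to make the argument self-contained.
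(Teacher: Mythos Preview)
Your proposal is correct and follows precisely the route that the survey itself signals: it explicitly says the Cartesian product results in \cite{Kang-2018} are ``highly based on'' the identity $(G\square H)_{SR}\cong G_{SR}\times H_{SR}$ from \cite{Rodriguez-Velazquez-2014-a}, together with the reduction of $\sdim_f$ to a fractional cover problem on $G_{SR}$ (the lower bound $\sdim_f(G)\ge\nu(G_{SR})$ recorded just before the product results is exactly the LP-dual manifestation of the characterization you invoke). Your upper and lower bound constructions, and the sharpness via grids, are exactly in this spirit; the structural lemma you flag --- that every pair $x,y$ admits a mutually maximally distant pair $u,v\subseteq S\{x,y\}$ --- is indeed the Oellermann--Peters-Fransen argument underlying Theorem~\ref{th:strong-dim-cover}, and it carries over to the fractional setting without change, so your caution there is well placed but no obstacle.
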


\subsection{Fractional $k$-metric dimension}

This parameter introduced in \cite{Kang-2019}, emerges from two possible directions: a first one as the fractionalization of the $k$-metric dimension, and a second one, as an extension from the fractional metric dimension. In formal way, for a real number $k\ge 1$, a real-valued function $h: V(G) \rightarrow [0,1]$ is a \emph{$k$-resolving function} of $G$ if $h(R_G\{x, y\})=\sum_{v\in R_G\{x, y\}}h(v) \ge k$ for any distinct vertices $x, y \in V(G)$. The \emph{fractional $k$-metric dimension}, $\dim_f^k(G)$, of $G$ is $\min\{h(V(G))\!:\! h \mbox{ is a $k$-resolving function of } G\}$. Note  $\dim^1_f(G)=\dim_f(G)$ and that $\dim_f^k(G)$ reduces to $\dim^k(G)$ when the codomain of $k$-resolving functions is restricted to $\{0,1\}$. Figure \ref{Fig:k-fractional} shows a labeling of the cycle $C_8$ which produces a $k$-resolving function of minimum weight for every $k\in [1,n-2]$ (note that $[1,n-2]$ represents the interval of real numbers between $1$ and $n-2$).

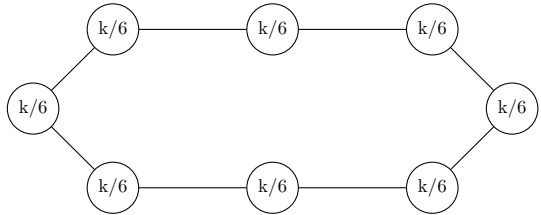
\begin{figure}[h]
\centering
\begin{tikzpicture}[scale=.7, transform shape]
\node [draw, shape=circle] (a1) at  (1.5,0) {k/6};
\node [draw, shape=circle] (a5) at  (0,1.5) {k/6};
\node [draw, shape=circle] (a9) at  (1.5,3) {k/6};
\node [draw, shape=circle] (a2) at  (4.5,0) {k/6};
\node [draw, shape=circle] (a10) at  (4.5,3) {k/6};
\node [draw, shape=circle] (a4) at  (7.5,0) {k/6};
\node [draw, shape=circle] (a8) at  (9,1.5) {k/6};
\node [draw, shape=circle] (a12) at  (7.5,3) {k/6};

\draw(a1)--(a2)--(a4)--(a8);
\draw(a1)--(a5)--(a9)--(a10)--(a12)--(a8);
\end{tikzpicture}
\caption{A labeling in the cycle $C_8$ that produces a $k$-resolving function of minimum weight for any $k\in [1,n-2]$.}\label{Fig:k-fractional}
\end{figure}

Clearly, similarly to the case of the $k$-metric dimension, there is a limit value for $k$ for which there are not $k'$-resolving functions for every $k'>k$. Indeed, the value of $k$ for which a graph $G$ is $k$-metric dimensional is precisely this mentioned limit value. In concordance, from now on, we shall write that $\kappa(G)$ is the value $k$ for which $G$ is $k$-metric dimensional. This means that $k$-resolving functions and fractional $k$-metric dimension are defined for every real number in the interval $[1,\kappa(G)]$.

Based on the fact that there is only one work, \cite{Kang-2019}, on the topic of fractional $k$-metric dimension of graphs, we shall only remark a couple of interesting results on the topic, and will suggest the reader to check such work for more information on the topic.

One significant remark from \cite{Kang-2019} is as follows. For any connected graph $G$ and for any $k \in [1,\kappa(G)]$, $\dim_f^k(G) \ge k \dim_f(G)$. This bound together with the technique of constructing some $k$-resolving function for $G$ allowed several times to compute the fractional $k$-metric dimension of graphs. In order to certify this we consider the following. Let $G$ be a connected graph and let $k \in[1, \kappa(G)]$. If there exists a minimum resolving function $g: V(G) \rightarrow [0,1]$ such that $g(v) \le \frac{1}{k}$ for each $v \in V(G)$, then $\dim_f^k(G)=k\dim_f(G)$ for any $k \in [1, \kappa(G)]$.

Another interesting fact from \cite{Kang-2019} is also as follows.

\begin{proposition}\emph{\cite{Kang-2019}}
Let $G$ be a connected graph of order $n$. For any $k \in [1, \kappa(G)]$, $\displaystyle k \le \dim_f^k(G) \le \frac{kn}{\kappa(G)},$ where both bounds are sharp.
\end{proposition}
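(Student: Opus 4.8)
The plan is to prove the two inequalities separately and then exhibit extremal examples for each. For the lower bound $\dim_f^k(G)\ge k$, I would take an arbitrary $k$-resolving function $h$ of $G$, fix any two distinct vertices $x,y\in V(G)$, and use that $R_G\{x,y\}\subseteq V(G)$ together with the nonnegativity of $h$ to get $h(V(G))\ge h(R_G\{x,y\})\ge k$, where the last inequality is just the defining property of a $k$-resolving function. Taking the minimum over all such $h$ gives $\dim_f^k(G)\ge k$ at once.

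For the upper bound the natural candidate is the constant function. Set $h(v)=k/\kappa(G)$ for every $v\in V(G)$; this takes values in $[0,1]$ precisely because $1\le k\le\kappa(G)$ (and $\kappa(G)\ge 2$ for every connected graph). Recalling the characterisation $\kappa(G)=\min\{|R_G\{x,y\}|:x,y\in V(G),\ x\ne y\}$ quoted earlier in the section, for every pair of distinct vertices $x,y$ we obtain
\[
h(R_G\{x,y\})=\frac{k}{\kappa(G)}\,|R_G\{x,y\}|\ \ge\ \frac{k}{\kappa(G)}\,\kappa(G)=k ,
\]
so $h$ is a $k$-resolving function, and its weight $h(V(G))=kn/\kappa(G)$ certifies $\dim_f^k(G)\le kn/\kappa(G)$.

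It remains to show that neither bound can be improved in general. For the upper bound I would use the complete graph $K_n$: here $R_{K_n}\{x,y\}=\{x,y\}$ for every pair, so $\kappa(K_n)=2$, and summing the constraints $h(x)+h(y)\ge k$ over all $\binom n2$ pairs forces $h(V(K_n))\ge kn/2$, a value attained by $h\equiv k/2$; hence $\dim_f^k(K_n)=kn/2=kn/\kappa(K_n)$ for every admissible $k\in[1,2]$. For the lower bound I would take a path $P_n$ with $k=1$: since $\dim_f^1(P_n)=\dim_f(P_n)=1$, the value $k$ is attained. (More generally, placing weight only on the two leaves of $P_n$, which belong to $R_{P_n}\{x,y\}$ for every pair, shows $\dim_f^k(P_n)=k$ for all $k\in[1,2]$.)

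I do not expect a genuine obstacle here: both inequalities are short, and the constant-function trick for the upper bound is exactly the one already used in the excerpt for the plain fractional metric dimension. The only point needing mild care is the sharpness of the lower bound, where one must choose a graph — such as a path — for which $\bigcap_{x\ne y}R_G\{x,y\}$ is large enough to support a $k$-resolving function of weight exactly $k$; for a path this intersection consists of the two leaves, which is enough to realise equality (for $k=1$ in particular).
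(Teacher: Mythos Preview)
Your proposal is correct and matches the standard argument that the survey's surrounding discussion points to: the constant function $h\equiv k/\kappa(G)$ for the upper bound (exactly the trick the paper recalls for $\dim_f$), the direct estimate $h(V(G))\ge h(R_G\{x,y\})\ge k$ for the lower bound, and paths and complete graphs for sharpness. One cosmetic remark: you do not need ``$\kappa(G)\ge 2$'' to ensure $k/\kappa(G)\in[0,1]$, since $k\le\kappa(G)$ already suffices; otherwise there is nothing to add.
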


Clearly, an interesting problem in then to characterizing the graphs achieving the bounds above. To this end, consider $\mathcal{R}_{\kappa}(G)= \displaystyle\bigcup_{x,y \in V(G), x \neq y, |R\{x,y\}|=\kappa} R\{x,y\}$, where $\kappa=\kappa(G)$. Then, for a connected graph $G$ of order $n \ge 2$ and for $k \in [1, \kappa(G)]$,
\begin{itemize}\setlength\itemsep{0em}
\item $\dim_f^k(G)=k$ if and only if $G \cong P_n$ and $k \in [1,2]$,
\item $\dim_f^k(G)=n$ if and only if $k=\kappa(G)=\kappa$ and $V(G)=\mathcal{R}_{\kappa}(G)$.
\end{itemize}

Note that from the first item, we deduce that if $G$ is not a path, or $k\ne 1,2$, then $\dim_f^k(G)>k$. However, it can be proved that for any real number $\epsilon>0$ and any integer $k$, there exists a graph $G$ such that $dim_f^k(G)\le k+\epsilon$. To see this, we consider for instance a cycle $C_n$ of even order for which $\dim_f^k(C_n)=\frac{kn}{n-2}$. That is, the function $g:V(C_n) \rightarrow [0,1]$ such that $g(u)=\frac{k}{n-2}$, for each $u \in V(C_n)$, is a minimum resolving function of $C_n$. Thus, if $n$ tends to be infinite, then the value of $\dim_f^k(C_n)$ tends to be $k$. It is then readily seen the claimed fact.

We end this subsection with some contribution on the fractional $k$-metric dimension of trees. We first consider the case of paths.

\begin{proposition}\emph{\cite{Kang-2019}}
Let $P_n$ be an $n$-path, where $n \ge 2$. Then $\dim_f^k(P_2)=k$ for $k \in [1,2]$ and, for $n \ge 3$,
\begin{equation*}
\dim_f^k(P_n)=\left\{
\begin{array}{ll}
k & \mbox{ if } k \in [1,2],\\
2+(k-2)\frac{n-2}{n-3} & \mbox{ if } k \in (2,n-1].
\end{array}\right.
\end{equation*}
\end{proposition}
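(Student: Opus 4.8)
The plan is to work from an explicit description of the resolving sets of a path and then solve the resulting linear program. Label the vertices of $P_n$ consecutively as $v_1,v_2,\dots,v_n$. For $i<j$ a vertex $v_\ell$ fails to resolve $v_i,v_j$ exactly when $\ell=(i+j)/2$, so $R\{v_i,v_j\}=V(P_n)$ when $i+j$ is odd, while $R\{v_i,v_j\}=V(P_n)\setminus\{v_{(i+j)/2}\}$ when $i+j$ is even; in the latter case the removed vertex $v_{(i+j)/2}$ is always an \emph{interior} vertex, i.e.\ its index lies in $\{2,\dots,n-1\}$, and conversely every interior vertex $v_m$ arises this way from the pair $\{v_{m-1},v_{m+1}\}$. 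Since $P_n$ is $(n-1)$-metric dimensional and only $K_2$ is $n$-metric dimensional, $\kappa(P_n)=n-1$ for $n\ge3$ (and $\kappa(P_2)=2$), so the two cases in the statement cover the whole admissible interval $[1,\kappa(P_n)]$. For $k\in[1,2]$ the equality $\dim_f^k(P_n)=k$ is immediate from the characterization of graphs with $\dim_f^k=k$ given above; alternatively, the function assigning weight $k/2$ to each of $v_1,v_n$ and $0$ elsewhere is a $k$-resolving function of weight $k$, and $\dim_f^k(G)\ge k$ always holds, which settles this case. Note that $n=3$ makes the interval $(2,n-1]$ empty, so from now on assume $n\ge4$ and $k\in(2,n-1]$.

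For the upper bound I would exhibit the function $h$ defined by $h(v_1)=h(v_n)=1$ and $h(v_m)=\frac{k-2}{n-3}$ for $2\le m\le n-1$. Since $k\in(2,n-1]$ one has $\frac{k-2}{n-3}\in(0,1]$, so $h$ takes values in $[0,1]$. Its weight is $h(V(P_n))=2+(n-2)\frac{k-2}{n-3}=2+(k-2)\frac{n-2}{n-3}$. To check it is $k$-resolving: for a pair with $i+j$ even one gets $h(R\{v_i,v_j\})=h(V(P_n))-\frac{k-2}{n-3}=2+(k-2)\frac{n-3}{n-3}=k$, and for a pair with $i+j$ odd one gets $h(R\{v_i,v_j\})=h(V(P_n))=k+(k-2)\frac{1}{n-3}\ge k$. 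Hence $\dim_f^k(P_n)\le 2+(k-2)\frac{n-2}{n-3}$.

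For the matching lower bound, let $h$ be any $k$-resolving function of $P_n$ and set $W=h(V(P_n))$. Applying the defining inequality to the pair $\{v_{m-1},v_{m+1}\}$ for each interior vertex $v_m$ gives $W-h(v_m)\ge k$ for all $m\in\{2,\dots,n-1\}$. Summing these $n-2$ inequalities and using $\sum_{m=2}^{n-1}h(v_m)=W-h(v_1)-h(v_n)$ yields $(n-3)W\ge(n-2)k-h(v_1)-h(v_n)\ge(n-2)k-2$, whence $W\ge\frac{(n-2)k-2}{n-3}=2+(k-2)\frac{n-2}{n-3}$ (here $n-3>0$ since $n\ge4$). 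Combining the two bounds gives the claimed formula. The only delicate points are the bookkeeping — checking that the removed vertex of an even-index pair is always interior, so that the averaging argument uses exactly the $n-2$ interior-vertex constraints, and that the constructed $h$ stays in $[0,1]$ over the whole range of $k$ — together with isolating the degenerate case $n=3$; the heart of the proof is the averaging step in the lower bound, and I expect that to be the part requiring the most care.
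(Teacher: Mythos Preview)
Your proof is correct. The characterization of $R\{v_i,v_j\}$ is right, the explicit $k$-resolving function for the upper bound is valid (and you correctly verify it lands in $[0,1]$), and the averaging argument for the lower bound is clean and complete: summing the $n-2$ constraints $W-h(v_m)\ge k$ and using $h(v_1)+h(v_n)\le 2$ gives exactly the claimed value.

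The paper itself is a survey and does not give a proof of this proposition; it merely cites the result from \cite{Kang-2019}. So there is no paper-side argument to compare against here. Your LP-duality-flavored approach --- exhibit a feasible primal solution and certify optimality via an averaging (i.e., a feasible dual combination of constraints) --- is the natural one for this kind of fractional parameter and is presumably close in spirit to what the original source does.
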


For the case of trees different from a path, the study needs to be divided into two cases related to the number of exterior major vertices that the tree contains.

\begin{proposition}\emph{\cite{Kang-2019}}
Let $T$ be a tree with $ex(T)=1$. Let $v$ be the exterior major vertex of $T$ and let $\ell_1, \ell_2, \ldots, \ell_a$ be the terminal vertices of $v$ in $T$ (note that $a \ge 3$). Suppose that $d(v, \ell_1) \le d(v, \ell_2) \le \ldots \le d(v, \ell_a)$. Then,
\begin{itemize}\setlength\itemsep{0em}
\item[(a)] if $d(v, \ell_1)=d(v, \ell_2)$, then $\dim_f^k(T)=k\dim_f(T)=\frac{ka}{2}$ for $k \in [1, \kappa(T)]$;
\item[(b)] if $d(v, \ell_1) < d(v, \ell_2)$, then
\begin{equation*}
\dim_f^k(T)=\left\{
\begin{array}{ll}
\frac{ka}{2} & \mbox{ for } k\in [1, 2d(v, \ell_1)],\\
(a-1)k-(a-2)d(v, \ell_1) & \mbox{ for } k \in (2d(v, \ell_1), \kappa(T)].
\end{array}\right.
\end{equation*}
\end{itemize}
\end{proposition}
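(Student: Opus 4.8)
The plan is to first pin down the structure of $T$. Since $ex(T)=1$, the tree $T$ is a \emph{spider} (generalized star): the center $v$ together with $a$ internally disjoint paths (\emph{legs}) $L_1,\dots,L_a$, where $L_i$ runs from $v$ to $\ell_i$ and contains $n_i=d(v,\ell_i)$ vertices other than $v$, with $l:=d(v,\ell_1)=n_1\le n_2\le\cdots\le n_a$ (any major vertex other than $v$ would, via a leaf beyond it, produce a second exterior major vertex). Write $u_{i,j}$ for the vertex of $L_i$ at distance $j$ from $v$. The first real step is to compute $R_T\{x,y\}$ for all pairs of distinct vertices, by a short case analysis on the positions of $x,y$. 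The outcome is: if $x=u_{i,j}$ and $y=u_{i',j}$ lie on different legs at the same distance $j$ from $v$, then $R_T\{x,y\}=V(L_i)\cup V(L_{i'})$; in every other case ($x,y$ on the same leg, on different legs at distinct distances from $v$, or one of them equal to $v$) the set $R_T\{x,y\}$ contains $v$, every leg disjoint from $\{x,y\}$ entirely, and all but at most one vertex of each leg that meets $\{x,y\}$. Consequently $\kappa(T)=\min_{i\ne i'}(n_i+n_{i'})=n_1+n_2$, which equals $2l$ in case (a) and equals $l+n_2>2l$ in case (b); this is exactly what makes the split $k\le 2l$ versus $k>2l$ coincide with the two stated subintervals. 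We also record $\dim_f(T)=\tfrac a2$ from the earlier tree formula $\dim_f(T)=\tfrac12(|L(T)|-ex_1(T))$, since $|L(T)|=a$ and $ex_1(T)=0$.

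Next, the lower bounds. For any $k$-resolving function $h$, the constraint $h(R_T\{u_{i,1},u_{i',1}\})\ge k$ for $i\ne i'$ reads $h(V(L_i))+h(V(L_{i'}))\ge k$. Summing over all $\binom a2$ pairs gives $(a-1)\sum_i h(V(L_i))\ge\binom a2 k$, hence $\dim_f^k(T)\ge h(V(T))\ge\tfrac a2 k$, which matches the claimed value whenever $k\le 2l$ — in particular for all of case (a), where $\kappa(T)=2l$. For case (b) with $k>2l$, combine the same leg inequalities with the capacity bound $h(V(L_1))\le|V(L_1)|=l$ (each vertex has weight at most $1$): from $h(V(L_i))\ge k-h(V(L_1))$ for each $i\ge 2$ one gets $h(V(T))\ge h(V(L_1))+(a-1)\bigl(k-h(V(L_1))\bigr)=(a-1)k-(a-2)h(V(L_1))\ge(a-1)k-(a-2)l$, using $a\ge 3$.

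For the matching upper bounds I would exhibit explicit $k$-resolving functions, always taking $h(v)=0$. When $k\le 2l$, set $h(u_{i,j})=\tfrac{k}{2n_i}$: each value is at most $\tfrac{k}{2l}\le 1$, the weight is $\tfrac a2 k$, and from the description of $R_T\{x,y\}$ one checks that $h(R_T\{x,y\})=k$ on the critical pairs $\{u_{i,j},u_{i',j}\}$ and $h(R_T\{x,y\})\ge\tfrac a2 k-\tfrac{k}{2n_i}\ge\tfrac32 k-\tfrac k2=k$ in all other cases (the point being that since $a\ge 3$ there is at least one leg untouched by the pair, whose weight more than absorbs the single ``midpoint'' vertex that may fail to resolve). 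This regime also follows from the earlier observation that $\dim_f^k(G)=k\dim_f(G)$ whenever $G$ has a minimum resolving function with all values $\le\tfrac1k$. When $2l<k\le\kappa(T)=l+n_2$, set $h(u_{1,j})=1$ on the short leg and $h(u_{i,j})=\tfrac{k-l}{n_i}$ for $i\ge 2$: here $\tfrac{k-l}{n_i}\le 1$ because $n_i\ge n_2\ge k-l$, the weight is $l+(a-1)(k-l)=(a-1)k-(a-2)l$, and the same case check goes through because the total weight exceeds $k$ by $(a-2)(k-l)>1$, so dropping one midpoint vertex (of weight $\le 1$) still leaves at least $k$. Combining with the lower bounds yields (a) and (b).

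The step I expect to be the main obstacle is the exhaustive, bookkeeping-heavy verification that these two candidate functions actually resolve \emph{every} pair of vertices with excess at least $k$: one must correctly track the at-most-one ``midpoint'' vertex on a leg through $x$ (or $y$) that can fail to resolve, and confirm that the legs not meeting $\{x,y\}$ (at least one exists because $a\ge 3$) always contribute enough to compensate. The only other point needing care is the clean identification $\kappa(T)=n_1+n_2$, which is precisely what aligns the two formulas with the intervals $[1,2l]$ and $(2l,\kappa(T)]$ in the statement.
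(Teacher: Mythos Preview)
Your argument is correct. The paper is a survey and does not supply its own proof of this proposition; it simply quotes the result from \cite{Kang-2019}, so there is no in-paper proof to compare against. Your route---computing $R_T\{x,y\}$ explicitly on the spider, reading off $\kappa(T)=n_1+n_2$, getting the lower bounds by summing the leg-pair constraints $h(V(L_i))+h(V(L_{i'}))\ge k$ (augmented by the capacity bound $h(V(L_1))\le l$ when $k>2l$), and matching them with the two explicit constructions---is the natural one and is essentially what the cited source does.

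One minor imprecision worth tightening: your case description says that in the non-critical cases ``all but at most one vertex of \emph{each} leg that meets $\{x,y\}$'' may fail to resolve, which would allow two missing vertices. In fact, when $x=u_{i,j}$ and $y=u_{i',m}$ lie on different legs with $j<m$, every vertex of $L_i$ resolves the pair (the equation $|r-j|=r+m$ has no solution with $r\ge 1$), and only $L_{i'}$ can have a midpoint $r=(m-j)/2$ that fails. So at most one vertex total is missing from $R_T\{x,y\}$ in every non-critical case, which is exactly what your bound checks (subtracting a single weight $\le 1$, respectively $\le \tfrac{k}{2n_i}$) actually use. With that clarified, the verification goes through cleanly.
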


By using this result, we can then deduce the value of $\dim_f^k(T)$ for any tree $T$ with more than one exterior major vertex, where the following notations are necessary. Let $M_1(T)=\{w \in M(T): ter_T(w)=1\}$, $M_2(T)=\{w \in M(T): ter_T(w)=2\}$, and $M_3(T)=\{w \in M(T): ter_T(w) \ge 3\}$

\begin{theorem}\emph{\cite{Kang-2019}}
Let $T$ be a tree with $ex(T) \ge 2$. Then, for any $k \in [1,\kappa(T)]$,
\begin{equation*}
\dim_f^k(T)=k|M_2(T)|+\displaystyle\sum_{v\in M_3(T)}\dim_f^k(T_v).
\end{equation*}
\end{theorem}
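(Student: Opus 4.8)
The plan is to reduce the general statement to the single-exterior-major-vertex case already established in the preceding proposition, by exhibiting an optimal $k$-resolving function as a "gluing" of locally optimal ones. First I would observe the structural decomposition of $T$: for each $w\in M_3(T)$ let $T_w$ be the subtree consisting of $w$ together with all the legs hanging from $w$ toward its terminal vertices (so $T_w$ has exactly one exterior major vertex, namely $w$), and note that the union of the $T_w$ over $w\in M_3(T)$, together with the edges of $M_2(T)$-legs and the "interior skeleton" connecting everything, covers $V(T)$. The key point, inherited from the tree formula for $\dim_k(T)$ in the theorem of \cite{Estrada-Moreno-2015} and from the structure of $R_T\{x,y\}$, is that every pair $\{x,y\}$ of vertices whose resolving set $R_T\{x,y\}$ has small cardinality lives "inside" one of the blocks $T_w$ or inside an $M_2(T)$-leg, and that vertices outside the relevant block contribute full distance discrepancies and hence are effectively free to the constraint for that pair.

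Second I would prove the upper bound $\dim_f^k(T)\le k|M_2(T)|+\sum_{v\in M_3(T)}\dim_f^k(T_v)$ by construction: take an optimal $k$-resolving function $h_v$ on each $T_v$, place weight $k$ on one chosen terminal-leg vertex of each $w\in M_2(T)$ (concretely, the leaf-adjacent vertex), and set $h$ to $0$ on all remaining vertices of the skeleton. One then checks that $h$ is a $k$-resolving function of $T$: a pair $\{x,y\}$ inside some $T_v$ is handled because $R_T\{x,y\}\cap V(T_v)\supseteq R_{T_v}\{x,y\}$ (the ambient distances agree on the block) so $h(R_T\{x,y\})\ge h_v(R_{T_v}\{x,y\})\ge k$; a pair involving an $M_2(T)$-vertex uses the weight-$k$ vertex on its leg; and any remaining pair has $|R_T\{x,y\}|\ge\kappa(T)$ with the resolving set meeting some block on a set of total $h$-weight at least $k$. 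The total weight is exactly $k|M_2(T)|+\sum_v\dim_f^k(T_v)$.

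Third, for the matching lower bound I would use LP duality / a direct averaging argument: restrict any $k$-resolving function $h$ of $T$ to each block. For $w\in M_3(T)$, the restriction $h|_{V(T_w)}$ must already be a $k$-resolving function of $T_w$ — because for the "bad" pairs of $T_w$ (the ones defining $\mathcal C(T_w)$ and $I_k(w)$) the set $R_{T_w}\{x,y\}$ is contained in $V(T_w)$, so the constraint for that pair in $T$ is literally the constraint in $T_w$. Hence $h(V(T_w))\ge\dim_f^k(T_w)$. For each $w\in M_2(T)$, the pair of "second-closest" vertices on its two legs is resolved only by vertices on those two legs, forcing $h$ to place total weight $\ge k$ on an $M_2(T)$-leg region disjoint from all the $T_w$'s. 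Summing these disjoint lower bounds over $M_3(T)\cup M_2(T)$ gives $h(V(T))\ge k|M_2(T)|+\sum_{v\in M_3(T)}\dim_f^k(T_v)$, completing the proof.

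The main obstacle I anticipate is the bookkeeping that makes the blocks genuinely \emph{disjoint} and \emph{exhaustive} for the purpose of summing constraints: one must be careful with shared skeleton vertices (a major vertex $w$ may belong to $T_w$ but also lie on paths between other major vertices), with $M_1(T)$-vertices and interior major vertices of terminal degree $\le 1$ that carry no weight, and with verifying that every pair $\{x,y\}$ not internal to a single block has $|R_T\{x,y\}|$ large enough that the already-allocated weights suffice. Getting the combinatorial partition right — essentially re-deriving enough of the $R_T\{x,y\}$ structure from \cite{Estrada-Moreno-2015} — is where the real work lies; once the blocks are correctly isolated, both inequalities follow from the single-vertex proposition and a short leg-by-leg argument.
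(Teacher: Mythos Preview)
The paper is a survey and does not include its own proof of this statement; it merely quotes the result from \cite{Kang-2019}. So there is no in-paper argument to compare against, and I can only assess your sketch on its own merits.

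Your overall architecture (localise to the subtrees $T_v$ with $v\in M_3(T)$ and to the leg-pairs at each $w\in M_2(T)$, then glue for the upper bound and restrict for the lower bound) is the natural one and is almost certainly what the cited source does. However, there is a genuine error in your upper-bound construction: you propose to ``place weight $k$ on one chosen terminal-leg vertex of each $w\in M_2(T)$''. A $k$-resolving function has codomain $[0,1]$, so for any $k>1$ this is illegal. The fix is easy but must be said: distribute total mass $k$ over the vertices of the two legs of $w$ (there are at least $\kappa(T)\ge k$ such vertices, so values can stay in $[0,1]$). Once you do this, the verification that the glued function is a $k$-resolving function of $T$ still needs more than the one sentence you give it: your clause ``any remaining pair has $|R_T\{x,y\}|\ge\kappa(T)$ with the resolving set meeting some block on a set of total $h$-weight at least $k$'' is exactly the step that requires work, since for a mixed pair (one vertex on a leg of some $v$, the other internal or on a leg of a different major vertex) you must actually identify which block carries the needed mass inside $R_T\{x,y\}$.

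For the lower bound, your claim ``$h|_{V(T_w)}$ must already be a $k$-resolving function of $T_w$'' is not literally what you argue two lines later (you only check the equidistant-from-$w$ pairs), and it need not be true as stated: for pairs on the same leg one has $R_{T_w}\{x,y\}=V(T_w)$, and nothing forces $h(V(T_w))\ge k$ a priori. What you really need (and what you gesture at) is that the LP optimum $\dim_f^k(T_w)$ is already forced by the equidistant-pair constraints alone; that is true, but it is a fact about the single-major-vertex case that has to be invoked explicitly rather than slipped in.
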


Some other families of graphs like cycles, wheels, bouquets of cycles, complete multipartite graphs, and grid graphs are studied in \cite{Kang-2019}.

\subsection{Some open problems}

\begin{itemize}
  \item Can the graphs $G$ for which $\sdim_f(G) = \frac{n}{2}$ be characterized?
  \item Which is the complexity of computing the fractional $k$-metric dimension of graphs?
  \item Study the fractional versions of metric dimension for the other remaining products of graphs.
\end{itemize}

\section{$k$-metric antidimension}

This notion have raised up in connection with the problem of quantifying how secure can be a social network against active attacks to its privacy. That is, the $k$-metric antidimension of graphs allows to generate a privacy measure for social graphs, and thus the study of the theoretical properties of this parameter is worthwhile.

Given an integer $k\ge 1$, a set of vertices $S\subset V(G)$ is known to be $k$-\emph{antiresolving set} for $G$ if $k$ is the largest positive integer such that for every vertex $v\in V(G)\setminus S$, there exists $k-1$ distinct vertices $v_1\dots,v_{k-1}\in V(G)\setminus S$, other than $v$, such that $d_G(v,x)=d_G(v_1,x)=\cdots=d_G(v_{k-1},x)$ for every vertex $x\in S$, and $k$ is the largest possible. The smallest cardinality among all $k$-antiresolving set for $G$ is the $k$-\emph{metric antidimension} of graphs, and is denoted by $\adim_k(G)$. A $k$-antiresolving set of cardinality $\adim_k(G)$ is called a $k$-\emph{antiresolving basis}. Concepts above were first presented in \cite{Trujillo-Rasua-2016}. 

As an example, consider for instance the double star $S_{r,t}$\footnote{A double star $S_{r,t}$ is a tree of order $r+t+2$ in which there two adjacent support vertices and the remaining vertices are leaves. One support vertex has $r$ adjacent leaves and the other one has $t$ adjacent leaves.}, with $r\ge t\ge 3$. Let $S$ be the set of the two support vertices of $S_{r,t}$. One can notice that for every leaf $v$, there are either $r-1$ or $t-1$ other vertices (the remaining leaves being false twins with $v$) having the same distance, as $v$, to every vertex of $S$. Thus, $S$ is a $t$-antiresolving set, and moreover $\adim_t(S_{r,t})=2$. Figure \ref{Fig:k-antidim} shows two drawings of the double star $S_{3,3}$, where the red vertices in the left hand side graph form a $3$-antiresolving set, and the blue vertices in the right side graph form a $2$-antiresolving set.

\begin{figure}[h]
\centering
\begin{tikzpicture}[scale=.7, transform shape]
\node [draw, shape=circle] (a1) at  (0,0) {};
\node [draw, shape=circle,fill=red] (a2) at  (2,0) {};
\node [draw, shape=circle,fill=red] (a3) at  (4,0) {};
\node [draw, shape=circle] (a4) at  (6,0) {};
\node [draw, shape=circle] (a5) at  (0,-1) {};
\node [draw, shape=circle] (a6) at  (0,1) {};
\node [draw, shape=circle] (a7) at  (6,-1) {};
\node [draw, shape=circle] (a8) at  (6,1) {};

\draw(a1)--(a2)--(a3)--(a4);
\draw(a5)--(a2)--(a6);
\draw(a7)--(a3)--(a8);
\end{tikzpicture}
\hspace*{0.7cm}
\begin{tikzpicture}[scale=.7, transform shape]
\node [draw, shape=circle] (a1) at  (0,0) {};
\node [draw, shape=circle,fill=blue] (a2) at  (2,0) {};
\node [draw, shape=circle,fill=blue] (a3) at  (4,0) {};
\node [draw, shape=circle,fill=blue] (a4) at  (6,0) {};
\node [draw, shape=circle] (a5) at  (0,-1) {};
\node [draw, shape=circle] (a6) at  (0,1) {};
\node [draw, shape=circle] (a7) at  (6,-1) {};
\node [draw, shape=circle] (a8) at  (6,1) {};

\draw(a1)--(a2)--(a3)--(a4);
\draw(a5)--(a2)--(a6);
\draw(a7)--(a3)--(a8);
\end{tikzpicture}
\caption{The red vertices in the left hand side graph form a $3$-antiresolving set, and the blue vertices in the right side graph form a $2$-antiresolving set of the double star $S_{3,3}$.}\label{Fig:k-antidim}
\end{figure}
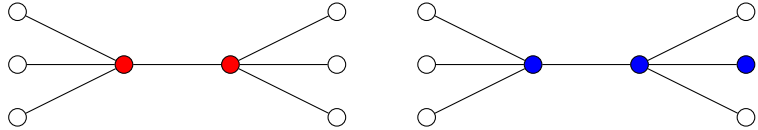

A first issue which one notes is that, there is a limit value for $k$ for which $k$-antiresolving sets are well defined. In this sense, a graph $G$ is called $k$-metric antidimensional if $k$ is the largest integer for which $G$ contains a $k$-antiresolving set. It is clear that the largest integer $k$ for which a graph could be $k$-metric antidimensional is the order of the graph minus one. On the other hand, there could be graphs for which such maximum value is just 1. For instance, if we consider a path of even order, then one can readily see that it does not contain any $k$-antiresolving set for every $k\ge 2$.

Once the first article on this topic was published, a few interesting investigations about it have been presented. There have been even appeared some variants of it which consider a different metric representation of the vertices in order to achieve the ``non-identification'' property. An example of this is for instance the $k$-adjacency antidimension version \cite{Mauw-2019}.

\subsection{The related privacy measure}

In order to define the privacy measure which uses the $k$-metric antidimension, one needs to make some considerations. First, it is understood that the metric representation of vertices with respect to a set of attacker nodes $S$ in a social graph $G$ is the adversary's background knowledge of a target (vertex). Through it, an attacker can uniquely recognize the vertices (or some vertices) of the graph.

The $(k, \ell)$-anonymity is a privacy measure generated from the adversary's background knowledge defined above, and it is based on the concept of \emph{$k$-antiresolving set} previously defined. It was said in \cite{Trujillo-Rasua-2016}, that a graph $G$ meets $(k, \ell)$-\emph{anonymity} with respect to active attacks if $k$ is the smallest positive integer such that the $k$-metric antidimension of $G$ is lower than or equal to $\ell$. This allows to claim that for a social graph $G$ that satisfies $(k, \ell)$-\emph{anonymity}, it is guaranteed that a user cannot be re-identified with probability higher than $\frac{1}{k}$ by an adversary controlling at most $\ell$ attacker nodes.

The value $k$ is used as a privacy threshold, while the value $\ell$ can be understood as an upper bound on the expected number of attacker
nodes that could exist in a network. In general, $\ell$ can be estimated through statistical analysis, and it is usually assumed that the number of attacker nodes is significantly lower than the order of the graph. For instance, it can be noted that for every $n > 0$ and $0 < \ell < n$, the complete graph $K_n$ meets $(n - \ell,\ell)$-anonymity, or equivalently, the complete graph $K_n$ guarantees that a user cannot be re-identified
with probability higher than $\frac{1}{n - \ell}$ by an adversary controlling $\ell$ attacker nodes.

These ideas clearly show that computing the $k$-metric antidimension of graph could be of interest for measuring how much secure the privacy of a graph is with respect to active attacks to its privacy.

Implementations and evaluations of the privacy measure in random graphs and in real social networks were made in the articles \cite{DasGupta-2019,Trujillo-Rasua-2016}. Indeed, \cite{DasGupta-2019} is more centered in these issues than the other work. As a conclusion of both studies, nothing really surprising for society was obtained, since precisely, the studies confirmed what we all know nowadays: that social networks are highly insecure with respect to privacy. However, at least a probabilistic value can be associated with any network that quantifies how much secure (or maybe we should write how much insecure) a network is, with respect to active attacks to its privacy.

A situation that frequently happens while evaluating the privacy achieved by a network is that the largest values for $k$ and $\ell$ in the $(k, \ell)$-anonymity measure are just $k=\ell=1$. In this sense, in \cite{Mauw-2018,Mauw-2016}, authors used such privacy measure and provided an efficient method to transform a graph $G$ into another graph $G'$ such that $G'$ will not be $(1, 1)$-anonymous. In other words, they obtained a graph $G'$  that satisfies  $(k,\ell)$-anonymity with $k>1$  or $\ell>1$. The anonymization methods used in \cite{Mauw-2018,Mauw-2016} are based on edge addition operations only, which means the graph $G$ is a subgraph of the new graph $G'$. In the work, it is provided a theoretical bound on the number of edges that are required to be added in order to transform a graph into another one that is not $(1, 1)$-anonymous. Several experimental results were also made in \cite{Mauw-2018}, on real-life graphs and a large collection of randomly generated graphs. This has shown that the described methods effectively prevent attacks from active adversaries with the capability of adding one node to the network, and additionally provide some level of protection against more capable attackers.

\subsection{$k$-metric antidimensional graphs}

Natural upper bound for $k$, which makes that a graph $G$ would be $k$-metric antidimensional is clearly the maximum degree of the graph, since the number of vertices at distance one from any vertex is at most the maximum degree of the graph. A particular case of this is as follows. Since the maximum degree of a graph is at most the order of the graph minus
one, it follows that if $G$ is any connected $k$-metric antidimensional graph of order $n$, then
$1\le k\le n-1$. Moreover, $G$ is $(n-1)$-metric antidimensional if and only if $G$ has maximum
degree $n-1$. 

In contrast with this latter situation, characterizing the graphs that are $1$-metric antidimensional seems to be a highly challenging problem, although knowing such graphs will be much worthy, since $1$-metric antidimensional graphs are those graphs which are not satisfying any privacy feature. That is, by controlling any set of vertices in a $1$-metric antidimensional graph, an attacker can always uniquely identify some elements of the graph. $1$-metric antidimensional trees and unicyclic graphs were already characterized in \cite{Trujillo-Rasua-2016-a}. In such a work, theoretical characterizations of $1$-metric antidimensional trees and unicyclic graphs were presented, and based on these characterizations, polynomial algorithms were developed in order to test whether a given tree or a unicyclic graph is $1$-metric antidimensional.

Some other lower bounds for the value $k$ such that a graph $G$ is $k$-metric antidimensional were given in \cite{Trujillo-Rasua-2016}, as well as, there were computed such values for some families of graphs. This is next presented.

\begin{proposition}\emph{\cite{Trujillo-Rasua-2016}}
\label{prop:k-antidimensional}
\begin{itemize}
  \item If the center of a graph $G$ is only one vertex, then $G$ is $k$-metric antidimensional for some $k\ge 2$.
  \item If a path $P_n$ has odd order, then it is $2$-metric antidimensional, and if it is even, then is $1$-metric antidimensional.
  \item Any cycle graph $C_n$ is $2$-metric antidimensional.
  \item Any complete bipartite graph $K_{r,t}$ with $r\ge t$ is $r$-metric antidimensional.
  \item Any tree $T$ is $k$-metric antidimensional for some $k\ge \max\{\phi(T),\xi(T)\}$\footnote{See \cite{Trujillo-Rasua-2016} for the formal definitions of $\phi(T)$ and $\xi(T)$}.
\end{itemize}
\end{proposition}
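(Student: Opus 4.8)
The plan is to treat the five assertions one by one, in each case exhibiting an explicit antiresolving set that realises (or exceeds) the claimed value of $k$, and, in the two items where an exact value is asserted, complementing this with a matching upper bound. Throughout I will use the two elementary facts recalled just before the statement: that the integer making $G$ $k$-metric antidimensional is the largest $k$ for which $G$ has a $k$-antiresolving set, and that $k\le\Delta(G)$ for every nonempty proper antiresolving set of a connected graph, since any vertex $v\notin S$ adjacent to some $x\in S$ can have at most $\deg(x)-1$ metric twins with respect to $S$.

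For the path and the cycle the constructions are immediate. In $P_{2t+1}$ and in $C_{2t+1}$ I would take $S=\{c\}$ with $c$ the central vertex: every distance level set $\{u:d_G(u,c)=i\}$ has size exactly $2$, so each vertex outside $S$ has exactly one metric twin and $\{c\}$ is a $2$-antiresolving set; in $C_{2t}$ I would take instead $S=\{u,\bar u\}$ with $\bar u$ the antipode of $u$, so that the reflection fixing $u$ and $\bar u$ is a distance-preserving bijection pairing up every vertex outside $S$, again giving a $2$-antiresolving set. The matching upper bound $k\le 2$ for every nonempty proper $S$ in $P_n$ or $C_n$ comes from a single cross-edge $xy$ with $x\notin S$, $y\in S$: any twin of $x$ must lie at distance $1$ from $y$, hence be a neighbour of $y$, and $y$ has at most two neighbours, so $x$ has at most one twin. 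For the even path $P_{2t}$, which is claimed to be merely $1$-metric antidimensional, I would then run a short case analysis: if some endpoint $v_1\notin S$, comparing its distances to the two nearest elements of $S$ (or, when $|S|=1$, a parity count on the level sets of a single vertex in an even path) shows $v_1$ has no twin; and if both endpoints lie in $S$, the first vertex $v_{a+1}\notin S$ following the initial run $v_1,\dots,v_a\in S$ has no twin because the only neighbour of $v_a$ lying outside $S$ would have to be $v_{a+1}$ itself.

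For $K_{r,t}$ with parts $A,B$ where $|A|=r\ge t=|B|$, I would take $S=B$. Then $V\setminus S=A$, every vertex of $A$ is at distance $1$ from every vertex of $B=S$, so all $r$ vertices of $A$ carry the same distance vector; hence $S$ is an $r$-antiresolving set, and since $\Delta(K_{r,t})=r$ the bound $k\le\Delta$ makes $K_{r,t}$ exactly $r$-metric antidimensional. For the tree item the idea is to realise $\phi(T)$ and $\xi(T)$ separately by two explicit antiresolving sets, so that the larger of the two resulting $k$-values is a lower bound for the $k$-metric antidimensional number of $T$. The prototype is that a maximal family of leaves attached to a common support vertex are pairwise false twins and hence have equal distances to every vertex outside the family; taking $S$ to be the complement of such a family yields a $k$-antiresolving set with $k$ equal to the size of the family, and, more generally, applying the same idea to a maximal family of equal-length pendant paths issuing from a common exterior major vertex (with $S$ the complement of the union of those paths) yields another such set. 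By the definitions of $\phi(T)$ and $\xi(T)$ from \cite{Trujillo-Rasua-2016}, these sizes are exactly $\xi(T)$ and $\phi(T)$, which gives the bound.

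The first item is the delicate one and the main obstacle. The natural candidate is once more $S=\{c\}$ with $c$ the unique centre, and the whole point is the claim that, when the centre is a single vertex, every level set $\{u:d_G(u,c)=i\}$ with $1\le i\le\ecc(c)$ has at least two elements, i.e.\ no vertex is the only vertex at its distance from $c$; granting this, $\{c\}$ is a $k$-antiresolving set with $k\ge 2$ and we are done. To prove the claim I would argue by contradiction: if $\{v\}$ is the unique vertex at distance $j$ from $c$ with $j<\ecc(c)$, then $v$ is a cut vertex separating the near ball $\{c\}\cup\{u:d_G(u,c)<j\}$ from the far set $\{u:d_G(u,c)>j\}$, and a careful estimate of the eccentricity of $v$ --- or of the neighbour of $v$ on a shortest path from $c$ to $v$ --- in terms of $\ecc(c)$ and $j$ should exhibit a second vertex of minimum eccentricity, contradicting uniqueness of the centre; the remaining case $j=\ecc(c)$ is handled analogously by passing to the neighbour of $v$ on a longest path issuing from $v$. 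Making this eccentricity bookkeeping close in the full range of $j$ --- not only in the easy regime where $j$ is at most about $\ecc(c)/2$, where the crude triangle-inequality estimate already suffices --- is precisely where the real work lies, and is the step I expect to be hardest.
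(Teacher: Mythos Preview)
The survey does not actually prove this proposition --- it is simply quoted from \cite{Trujillo-Rasua-2016} --- so there is no in-paper argument to compare against, and I assess your sketch on its own.

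Your handling of paths, cycles, $K_{r,t}$ and trees is sound (up to minor bookkeeping in the even-path case analysis and the fact that you are, inevitably, guessing the precise definitions of $\phi$ and $\xi$). The genuine gap is in the first item. First, your intermediate claim that a singleton level set $\{v\}$ at distance $j$ from the unique centre $c$ makes $v$ a cut vertex is not justified: a singleton distance level only forces every \emph{shortest} $c$--$w$ path with $d(c,w)>j$ to pass through $v$, not every path (in $C_4$, level~$2$ from any vertex is a singleton but there is no cut vertex; of course $C_4$ has no unique centre, but this shows the implication does not follow from the level-set hypothesis alone, and you never invoke uniqueness at that step). Second --- and this is why you found the large-$j$ regime hard --- you are examining the wrong vertex. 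For $v$, or for the neighbour of $v$ toward $c$, the triangle-inequality bound on the eccentricity is $\max\{r-j,2j\}$ (respectively $\max\{r-j+1,2j-1\}$), which overshoots $r=\ecc(c)$ as soon as $j>r/2$. The clean move is to step from $c$ one edge \emph{toward} $v$: let $c'$ be the neighbour of $c$ on a shortest $c$--$v$ path. For $d(c,w)>j$ the shortest-path-through-$v$ property gives $d(c',w)\le d(c',v)+d(v,w)=(j-1)+(d(c,w)-j)=d(c,w)-1\le r-1$; for $d(c,w)<j$ one has $d(c',w)\le 1+(j-1)=j\le r$; and $d(c',v)=j-1<r$. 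Hence $\ecc(c')\le r=\operatorname{rad}(G)$, so $c'$ is a second centre, contradicting uniqueness. With this choice the argument closes uniformly in $j$, and the step you flagged as hardest becomes a one-line computation.
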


One detail that is of interest here is as follows. Assume $G$ is a $k'$-metric antidimensional graph. Hence, clearly $G$ does
not contain any $k''$-antiresolving set for every $k''> k'$.
In contrast, it is not \emph{a priori} clear if $G$ contains $k$-antiresolving sets for every $k<k'$. As an example, a complete graph $K_n$ is $(n-1)$-metric antidimensional and for every $1\le k\le n-1$, there exists a set of nodes in $K_n$ which is a $k$-antiresolving set.
Notwithstanding, if we consider the wheel graph $W_{1,n}$ with $n\ge 5$, then we can readily see that the central node $v_n$ is the unique $n$-antiresolving set. Also, $1$-antiresolving sets, $2$-antiresolving sets, and $3$-antiresolving sets exist as well. However, there are no
$k$-antiresolving sets for every $k\in\{4,\dots, n-1\}$. This motivates the following research question.\\

\noindent
\textbf{Open question:} For a given class of $k'$-metric antidimensional networks, can we decide if they also have $k$-antiresolving sets for all $1\le k\le k'-1$?\\

This question was answered positively for the case of trees in \cite{DasGupta-2019}. That is, there was proved that if $T$ is a $k'$-metric antidimensional tree, then for every $1\le k\le k'$ there exists a $k$-antiresolving set for $T$. The question remains open for any other family of graphs.

\subsection{Computational aspects of $\adim_k(G)$}

With respect to computational issues, the first contributions on this direction were presented in the seminal article \cite{Trujillo-Rasua-2016}. There was presented a not randomized true-biased algorithm for approximating the value of the $k$-metric antidimension of graphs, which was indeed used to make some empirical evaluations of the privacy features that achieves some synthetic as well as some real networks. Such algorithm is clearly exponential, but its complexity becomes significantly lower than the computational complexity of a brute force algorithm in a lot of situations.

As we could suspect the problem of computing the $k$-metric antidimension of graphs is NP-hard, and this result was proved independently in the two works \cite{Chatterjee-2019,Zhang-2017}. Both works are using a reduction from the Exact Cover by 3 Sets (X3C) problem. In \cite{Zhang-2017}, there are also established three bounds on the size of $k$-antiresolving sets in Erd\"os-R\'enyi random graphs. On the other hand, the work \cite{Chatterjee-2019} contains several other more results on computational aspects related to the $k$-metric antidimension of graphs. Specifically, there were studied the following optimization problems.\\

\noindent
\textbf{Problem ADIM:} Finding the largest value $k$ for which a given graph is $k$-metric antidimensional.

\noindent
\textbf{Problem ADIM$_{\ge k}$:} Given an integer $k$, finding a set of vertices which is a $k'$-antiresolving set for some $k'\ge k$.

\noindent
\textbf{Problem ADIM$_{=k}$:} Given an integer $k$, finding a set of vertices which is a $k$-antiresolving set.\\

For each of these problems above some complexity and approximation results were given in \cite{Chatterjee-2019}. For instance, we next remark the main results presented there.

\begin{theorem}\label{thm1}\emph{\cite{Chatterjee-2019}}\\
\noindent
{\bf (a)}
Both ADIM and ADIM$_{\ge k}$ can be solved in $O\left(n^4\right)$ time.

\smallskip
\noindent
{\bf (b)}
Both ADIM and ADIM$_{\ge k}$ can also be solved in $O\left(\frac{n^4 \,\log n}{k}\right)$ time ``with high
probability'' $($i.e., with a probability of at least $1-n^{-c}$ for some constant $c>0$$)$.
\end{theorem}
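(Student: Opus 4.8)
The plan is to reduce both problems to a single deterministic ``absorption'' subroutine run from every potential starting vertex, and then, for part (b), to replace the exhaustive searches by random sampling. For a nonempty proper set $S\subset V(G)$ write $a(S)$ for its antiresolving level, i.e. the least size of an equivalence class of the relation $\equiv_S$ on $V(G)\setminus S$ given by $u\equiv_S v$ iff $d_G(u,x)=d_G(v,x)$ for every $x\in S$; thus $G$ is $k$-metric antidimensional exactly when $k=\max_S a(S)$, and ADIM$_{\ge k}$ asks for some $S$ with $a(S)\ge k$. The key step is the following \emph{absorption characterization}. Fix a threshold $k$ and a vertex $v_0$, put $S:=\{v_0\}$, and while $\equiv_S$ has a class $C\subseteq V(G)\setminus S$ with $|C|<k$ replace $S$ by $S\cup C$. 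Then $G$ has a nonempty proper $k'$-antiresolving set with $k'\ge k$ if and only if, for at least one choice of $v_0$, this process terminates with $S\neq V(G)$. The ``if'' direction is immediate: the stopping condition forces every class of $\equiv_S$ to have size at least $k$, so $a(S)\ge k$. For ``only if'', take any witness $T$ with $a(T)\ge k$ and start the process at some $v_0\in T$; one shows by induction that $S\subseteq T$ is maintained, since $S\subseteq T$ makes $\equiv_T$ refine $\equiv_S$, so a small class $C$ of $\equiv_S$ containing a vertex outside $T$ would force at least $k$ vertices (those of the $\equiv_T$-class of that vertex lying outside $T$) to lie inside $C$, contradicting $|C|<k$; as $S$ grows by at least one vertex per step and never leaves $T\subsetneq V(G)$, the process halts with a nonempty proper set. (The argument does not depend on which small class is chosen.)

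Granting the characterization, part (a) is bookkeeping. First compute all pairwise distances by a BFS from each vertex, in $O(nm)=O(n^3)$ time. To decide ADIM$_{\ge k}$, run the absorption process from each of the $n$ starting vertices, maintaining for every vertex still outside $S$ its vector of distances to $S$ and rebucketing these vectors whenever a class is absorbed; since the absorbed classes are pairwise disjoint, the rebucketing cost of a single run is $O(n^2)$, so the whole test costs $O(n^3)$. To solve ADIM, repeat this test for thresholds $k=n-1,n-2,\dots$ and output the first threshold that succeeds (the predicate being monotone in $k$, a binary search also works); this costs $O(n^4)$ in total.

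For part (b) one randomizes the two exhaustive loops, namely the loop over starting vertices and the loop that checks, for the current $S$, that every $\equiv_S$-class has size at least $k$. Inspecting only a uniform random sample of $\Theta\!\big(\tfrac{n\log n}{k}\big)$ vertices together with their $\equiv_S$-classes certifies, by a Chernoff estimate and a union bound, with probability at least $1-n^{-c}$ that a configuration is $k$-good precisely when it is; a similar sampling bounds the number of starting configurations that must be examined. Together these reduce the number of examined configurations and the per-configuration work each by a factor $\Theta(k/\log n)$, giving the claimed $O\!\big(\tfrac{n^4\log n}{k}\big)$ time, stated in terms of the output value $k$. (Since ADIM$_{\ge k}$ already has the deterministic bound $O(n^3)$, which lies within $O\!\big(\tfrac{n^4\log n}{k}\big)$, the randomization is only substantive for ADIM.)

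The main obstacle is the ``only if'' half of the absorption characterization: proving that the greedy process, started inside an unknown optimal set, can never escape it, which hinges on a careful comparison of $\equiv_S$ and $\equiv_T$ --- in particular on the fact that every $\equiv_S$-class meeting $V(G)\setminus S$ is actually contained in $V(G)\setminus S$, so that refining by the extra coordinates coming from $T$ behaves as claimed. A secondary difficulty lies in part (b), where the sample sizes must be calibrated so that the union bound over all examined configurations still leaves a polynomially small failure probability while the promised speedup remains genuine.
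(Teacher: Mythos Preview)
The survey does not include a proof of this theorem; it merely quotes the statement from Chatterjee, DasGupta, Mobasheri, Srinivasan, and Yero (2019), so there is no ``paper's own proof'' to compare against. I therefore evaluate your proposal on its own merits.

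Your absorption characterization and the resulting algorithm for part~(a) are correct. The invariant $S\subseteq T$ is properly justified: if $S\subseteq T$ and $C$ is a $\equiv_S$-class of size $<k$ containing some $w\notin T$, then the $\equiv_T$-class of $w$ (of size $\ge k$, and contained in $V(G)\setminus T\subseteq V(G)\setminus S$) embeds into $C$ via the inclusion $S\subseteq T$, a contradiction. The $O(n^2)$ cost per absorption run and the $O(n^3)$ bound for ADIM$_{\ge k}$ are fine; the linear search over thresholds then gives $O(n^4)$ for ADIM as claimed.

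Your argument for part~(b), however, does not work. The assertion that inspecting a uniform sample of $\Theta(n\log n/k)$ vertices ``certifies'' that every $\equiv_S$-class has size at least $k$ is false: if exactly one class has size~$1$, such a sample hits it with probability only $\Theta(\log n/k)$, which is $o(1)$ for large $k$, so false positives are not controlled. Sampling starting vertices is equally problematic, since an optimal witness $T^*$ may be a single vertex (take a universal vertex $v$: then $T^*=\{v\}$ with $a(T^*)=n-1$), so the set of ``good'' starts can have density $1/n$. Your arithmetic also slips: reducing two independent factors by $k/\log n$ each would yield $O\!\big(n^4(\log n/k)^2\big)$, not the target bound.

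Fortunately no randomization is needed. You already note that the success predicate for ADIM$_{\ge k}$ is monotone in $k$, so binary search over thresholds solves ADIM in $O(n^3\log n)$ time deterministically; since $k\le n-1$, this lies within $O(n^4\log n/k)$. (In fact, by running the absorption incrementally in $k$ for each starting vertex one even obtains $O(n^3)$ deterministically for ADIM.) Thus part~(b) follows from part~(a) via the binary search you mention in passing but do not exploit; the sampling detour should be dropped.
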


\begin{theorem}\label{k-large}\emph{\cite{Chatterjee-2019}}\\
\noindent
{\bf (a)}
ADIM$_{=k}$ is NP-complete for any integer $k$ in the range $1\leq k \leq n^\epsilon$, where
$0\leq \epsilon<\frac{1}{2}$ is any arbitrary constant, even if the diameter of the input graph is $2$.

\medskip
\noindent
{\bf (b)}
Assuming NP$\not\subseteq$\emph{DTIME}$\,(n^{\log\log n})$, there exists a universal constant $\delta>0$ such that
ADIM$_{=k}$ does not admit a $\left(\frac{1}{\delta}\ln n\right)$-approximation for any integer
$k$ in the range $1\leq k \leq n^\epsilon$, where
$0\leq \epsilon <\frac{1}{2}$ is any arbitrary constant, even if the diameter of the input graph is $2$.

\medskip
\noindent
{\bf (c)}
If $k=n-c$ for some constant $c$, then $\mathcal{L}_{opt}^{=k}=c$ if a solution exists and
ADIM$_{=k}$ can be solved in polynomial time.
\end{theorem}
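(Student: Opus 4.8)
The plan is to treat the three parts separately, disposing of \textbf{(c)} first as a warm-up and then building, for \textbf{(a)}, the reduction that is subsequently sharpened into the gap-preserving reduction needed for \textbf{(b)}.

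For part \textbf{(c)}, the idea is that demanding $k=n-c$ with $c$ constant makes a $k$-antiresolving set extremely rigid. If $S$ is a (necessarily nonempty) $k$-antiresolving set with $|S|=\ell$, then $V(G)\setminus S$ has $n-\ell$ vertices, and since the $k-1$ companions of each $v\in V(G)\setminus S$ must themselves lie in $V(G)\setminus S$, one needs $n-\ell\ge k=n-c$, i.e.\ $\ell\le c$. A short counting argument on the equivalence classes of $V(G)\setminus S$ under the relation ``having the same vector of distances to $S$'' then shows that, whenever $n>2c$ (only finitely many graphs are excluded, and those are handled directly), one must have $\ell=c$ and $V(G)\setminus S$ forms a single such class; hence $\mathcal{L}_{opt}^{=k}=c$ whenever a solution exists. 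Deciding existence then amounts to enumerating all $\binom{n}{c}=O(n^{c})$ subsets of size $c$ and testing, in polynomial time each, whether all remaining $n-c$ vertices share the same distance vector to the chosen subset; since $c$ is constant this is a polynomial-time algorithm.

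For part \textbf{(a)}, membership in NP is immediate: given a candidate $S$, compute the distance vectors of all vertices of $V(G)\setminus S$, group them into equivalence classes, and verify that the minimum class size is exactly $k$ (the maximality of $k$ in the definition of an antiresolving set being precisely the statement that this minimum equals $k$ and not more). For NP-hardness I would reduce from a covering/domination problem, with a view to part \textbf{(b)} from Set Cover, engineering the output graph $G$ to have diameter $2$. The crucial simplification is that in a diameter-$2$ graph the distance vector of a vertex $v\notin S$ is completely determined by $N_G(v)\cap S$, so two vertices outside $S$ are indistinguishable exactly when they have the same trace on $S$, turning $\adim_k$ into a purely neighbourhood-combinatorial quantity. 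One then installs element-gadgets and set-gadgets together with a small ``background'' clique (or a pair of adjacent apex vertices) to force diameter $2$, and adds twin padding so that the minimum indistinguishability class has size exactly the prescribed $k$ for every $k$ up to $n^{\epsilon}$ with $\epsilon<\tfrac12$; the quadratic slack $k\le n^{\epsilon}$, $\epsilon<\tfrac12$, is exactly what leaves room to embed $k$-sized twin blocks without blowing up the instance beyond polynomial size.

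For part \textbf{(b)}, the reduction of part \textbf{(a)} must be made gap-preserving from Set Cover, invoking the $(1-o(1))\ln n$ inapproximability of Set Cover under $\mathrm{NP}\not\subseteq\mathrm{DTIME}(n^{\log\log n})$: a set cover of size $t$ should produce a $k$-antiresolving set of size $t+O(1)$, and conversely, so that an $\alpha\ln n$-approximation for $\mathrm{ADIM}_{=k}$ would give an $O(\alpha)\ln n$-approximation for Set Cover, contradicting the bound for a suitably small universal constant $\delta$. The main obstacle — and where essentially all the technical effort sits — is reconciling three conflicting demands in one construction: (i) keeping the diameter at $2$, which is a severe constraint on the gadgetry since \emph{every} pair of vertices must already be within distance $2$; (ii) forcing the antidimension value to equal the target $k$ \emph{exactly} rather than merely ``at least $k$'', which the maximality in the definition makes fragile under twin-padding; and (iii) preserving the \emph{multiplicative} $\ln n$ gap from Set Cover, so that the padding may inflate the optimum only by lower-order \emph{additive} terms. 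I expect that controlling (ii) and (iii) simultaneously — choosing the twin blocks so that $\mathcal{L}_{opt}^{=k}$ tracks the Set Cover optimum up to additive $O(1)$ — will be the crux of the argument.
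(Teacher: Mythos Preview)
The paper under review is a survey: this theorem is stated with attribution to \cite{Chatterjee-2019} as one of the ``main results presented there,'' and no proof is given in the present paper. There is therefore no proof here to compare your proposal against.

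That said, a few remarks on the proposal itself. Your argument for \textbf{(c)} is essentially correct: with $k=n-c$ every equivalence class outside $S$ has size at least $k=n-c$, so two classes would force $n-|S|\ge 2(n-c)$, impossible once $n>2c$; hence a single class of size $n-|S|=k$, i.e.\ $|S|=c$, and brute force over $\binom{n}{c}$ candidates is polynomial. For \textbf{(a)} and \textbf{(b)} your outline is plausible and you have correctly isolated the genuine difficulty---namely that the definition of a $k$-antiresolving set requires the minimum class size to be \emph{exactly} $k$, so the reduction must simultaneously create one class of size precisely $k$ while preventing any smaller class from appearing, and must do this without spoiling the multiplicative gap inherited from Set Cover. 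One factual point: the survey notes that the hardness reductions in \cite{Chatterjee-2019} (and independently \cite{Zhang-2017}) go through Exact Cover by 3-Sets rather than general Set Cover; the $\ln n$ inapproximability in \textbf{(b)} does of course trace back to Set-Cover-type hardness, so your instinct to route through Set Cover for the gap version is natural. However, what you have written is a plan, not a proof: the actual gadget has not been specified, and the verification that the padding yields minimum class size \emph{equal} to $k$ (the point you yourself flag as the crux) cannot be checked without it.
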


The particular case $k=1$ was separately studied and the main contributions on this directions appear next.

\begin{theorem}\label{k-one}\emph{\cite{Chatterjee-2019}}\\
\noindent
{\bf (a)}
ADIM$_{=1}$ admits a $(1+\ln (n-1)\,)$-approximation in $O\left(n^3\right)$ time.

\medskip
\noindent
{\bf (b)}
If $G$ has at least one node of degree $1$, then
$\mathcal{L}_{opt}^{=1}=1$ and thus ADIM$_{=k}$ can be solved in $O\left(n^3\right)$ time.

\medskip
\noindent
{\bf (c)}
If
$G$ does not contain a cycle of $4$ edges, then
$\mathcal{L}_{opt}^{=1}\leq 2$ and thus ADIM$_{=k}$ can be solved in $O\left(n^3\right)$ time.
\end{theorem}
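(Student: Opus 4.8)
The plan is to first restate the property ``$S$ is a $1$-antiresolving set'' in a convenient form and then dispatch the three items. By definition $S$ is a $1$-antiresolving set precisely when the largest $k$ with the property ``every $v\in V(G)\setminus S$ shares its distance vector $r(v|S)$ with at least $k-1$ other vertices of $V(G)\setminus S$'' equals $1$; that is, when some $v\in V(G)\setminus S$ has a distance vector to $S$ that is shared by no other vertex of $V(G)\setminus S$. Since $x\in S$ always gives $d(x,x)=0\ne d(x,v)$, such a $v$ is automatically distinguished from \emph{every} other vertex of $G$. Hence $S$ is a $1$-antiresolving set if and only if there is a ``target'' $v\in V(G)\setminus S$ such that $S$ distinguishes $v$ from all other vertices of $G$, and therefore $\mathcal{L}_{opt}^{=1}=\min_{v\in V(G)}\lambda(v)$, where $\lambda(v)$ is the least $|S|$ over all $S\subseteq V(G)\setminus\{v\}$ with the property that for every $u\ne v$ some $x\in S$ has $d(x,u)\ne d(x,v)$. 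This quantity is always finite, since $x=u$ itself works for the pair $\{u,v\}$. I expect this reformulation to be the real content of the argument; the rest is then fairly routine.

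For part (a), fix a target $v$ and regard $\lambda(v)$ as the optimum of the \textsc{Set Cover} instance with universe $V(G)\setminus\{v\}$ (the $n-1$ vertices to be separated from $v$) and, for each $x\in V(G)\setminus\{v\}$, the set $C_x=\{u\in V(G)\setminus\{v\}: d(x,u)\ne d(x,v)\}$. The standard analysis of the greedy algorithm produces a cover of size at most $\bigl(1+\ln(n-1)\bigr)\lambda(v)$, the universe having size $n-1$. Running greedy once for each of the $n$ possible targets and returning the smallest set found then yields, by the displayed identity, a set of size at most $\bigl(1+\ln(n-1)\bigr)\mathcal{L}_{opt}^{=1}$; it is indeed a $1$-antiresolving set, because the returned $S$ keeps its target $v$ outside $S$ and isolates $r(v|S)$. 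For the running time, all pairwise distances are computed once in $O(n^3)$ (for instance a BFS from every vertex); for each target the instance has total size $\sum_x|C_x|=O(n^2)$, and greedy \textsc{Set Cover} can be run in time linear in that size via the usual bucketed priority queue, so the $n$ greedy passes cost $O(n^3)$ altogether. Packing the many greedy runs into a single cubic bound, rather than the naive $O(n^4)$, is the one point that needs a little care.

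For part (b), let $v$ be a vertex of degree $1$ with unique neighbour $u$, and take $S=\{v\}$; then $u$ is the only vertex at distance $1$ from $v$, so $r(u|S)$ is shared by no other vertex of $V(G)\setminus S$, whence $\{v\}$ is a $1$-antiresolving set and, there being no nonempty smaller set, $\mathcal{L}_{opt}^{=1}=1$. To solve ADIM$_{=1}$ exactly it then suffices to test the $n$ singletons against the precomputed distance matrix, which costs $O(n^3)$. For part (c), assume $G$ is $C_4$-free, i.e.\ any two vertices of $G$ have at most one common neighbour. If $G$ has a vertex $z$ of degree at least $2$, pick two neighbours $u,w$ of $z$ and set $S=\{u,w\}$; then $z\notin S$ is the unique common neighbour of $u$ and $w$, hence the unique vertex with $r(z|S)=(1,1)$, so $S$ is a $1$-antiresolving set and $\mathcal{L}_{opt}^{=1}\le 2$. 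Otherwise $\Delta(G)\le 1$, so by connectedness $G\in\{K_1,K_2\}$; $K_2$ is covered by item (b) with value $1$, and $K_1$ is degenerate, so $\mathcal{L}_{opt}^{=1}\le 2$ in all cases. An exact algorithm now enumerates the $O(n^2)$ subsets of size at most $2$, testing each in $O(n)$ time against the distance matrix, for an $O(n^3)$ bound overall.
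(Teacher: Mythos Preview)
The paper under review is a survey and does not supply a proof of this theorem; it simply quotes the result from \cite{Chatterjee-2019}. There is therefore nothing in the present paper to compare your argument against. That said, your proof is correct and essentially the natural one: the key reformulation $\mathcal{L}_{opt}^{=1}=\min_{v}\lambda(v)$, with $\lambda(v)$ the optimum of a \textsc{Set Cover} instance over the universe $V(G)\setminus\{v\}$, is exactly what one expects, and the greedy bound, the degree-$1$ construction for (b), and the $C_4$-free common-neighbour argument for (c) are all sound. Your care in keeping the $n$ greedy passes within $O(n^3)$ via the linear-time bucketed implementation is the only nonobvious point, and you handle it correctly.
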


\subsection{Combinatorial aspects of $\adim_k(G)$}

In this direction, the investigations are centered into bounding or finding the $k$-metric antidimension of graphs. We now first correct some wrong results which were published in \cite{Trujillo-Rasua-2016}, concerning complete bipartite graphs. It was presented there the following result.

\begin{proposition}\emph{\cite{Trujillo-Rasua-2016}}
Let $r,t$ be two positive integers with $r\ge t$.
\begin{enumerate}
\item If $t< k\le r$, then $\adim_k(K_{r,t})=r+t-k$.
\item If $1< k\le t$, then $\adim_k(K_{r,t})=r+t-2k$.
\end{enumerate}
\end{proposition}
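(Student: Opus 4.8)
The plan is to lean on the extreme simplicity of the metric of $K_{r,t}$. Write its two parts as $A$ and $B$ with $|A|=r\ge t=|B|$, so that $d(u,v)=2$ whenever $u,v$ lie in the same part and $d(u,v)=1$ whenever they lie in different parts. Fix $S\subseteq V(K_{r,t})$ and put $a=|S\cap A|$ and $b=|S\cap B|$. Then every vertex of $A\setminus S$ has the same distance vector to $S$ (namely $2$ on the $S\cap A$ coordinates and $1$ on the $S\cap B$ coordinates), and every vertex of $B\setminus S$ has the same distance vector to $S$ ($1$ on $S\cap A$ and $2$ on $S\cap B$); moreover these two vectors coincide only when $S=\emptyset$. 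Hence, for $S\neq\emptyset$, the set $V\setminus S$ splits into at most two equivalence classes, $A\setminus S$ of size $r-a$ and $B\setminus S$ of size $t-b$, and $S$ is a $k$-antiresolving set precisely when $k$ equals the smallest \emph{nonzero} value among $r-a$ and $t-b$ (if one of $r-a,\,t-b$ vanishes, that class is simply absent and $k$ equals the other value). So the whole problem reduces to a tiny optimization over the pair $(a,b)$, and the first step I would carry out is to record this reduction carefully, including the degenerate subcases $a=r$, $b=t$, $S=V$ and $S=\emptyset$.

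For the range $t<k\le r$ (item 1), note that $t-b\le t<k$ for every admissible $b$, so the class $B\setminus S$ can never reach size $k$; therefore a $k$-antiresolving set must satisfy $B\subseteq S$, i.e. $b=t$, and then the unique remaining class $A\setminus S$ must have size exactly $k$, forcing $a=r-k$. This forces $|S|=a+b=(r-k)+t=r+t-k$, so every $k$-antiresolving set has this cardinality and at least one exists, whence $\adim_k(K_{r,t})=r+t-k$. I expect this half to be entirely routine.

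For the range $1<k\le t$ (item 2) the main obstacle appears, and I expect it to be a genuine one rather than a technical nuisance. The value $r+t-2k$ is indeed attained, by the natural choice $a=r-k$, $b=t-k$ giving two classes of size $k$ each; but such a set is \emph{not} minimum. Taking instead $S\subseteq B$ with $|S|=t-k$ and $S\cap A=\emptyset$ produces the two classes $A$ (of size $r\ge t>k$) and $B\setminus S$ (of size $k$), whose minimum size is again $k$, so this is a $k$-antiresolving set of cardinality only $t-k$, which is strictly smaller than $r+t-2k$ whenever $k<r$. Thus item 2 cannot be proved as stated; the correct route is to establish the matching bounds for the true value, via the same case split (both classes present; $A\subseteq S$; $B\subseteq S$) used above. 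Carrying this out yields $\adim_k(K_{r,t})=t-k$ for $1<k\le t-1$, while the borderline case $k=t$ has to be treated on its own because the two-class option degenerates to $S=\emptyset$; there one gets $\adim_t(K_{t,t})=t$ and $\adim_t(K_{r,t})=1$ for $r>t$. So the honest outcome of this plan is that item 1 stands, item 2 must be replaced by the formula just described, and the ``proof'' of the corrected item 2 is precisely the little construction above together with the routine lower bound from the first-step reduction.
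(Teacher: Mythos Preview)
Your proposal is correct and aligns almost exactly with the paper's own treatment. The paper does not prove this proposition; rather, it explicitly notes that item~2 is wrong and supplies a corrected statement with proof, obtaining precisely your conclusions: item~1 stands, while for $1<k<t$ one has $\adim_k(K_{r,t})=t-k$, and for $k=t$ one gets $\adim_t(K_{r,t})=1$ if $r>t$ and $\adim_t(K_{r,t})=r$ if $r=t$. Your reduction to the pair $(a,b)=(|S\cap A|,|S\cap B|)$ and the observation that $V\setminus S$ falls into at most two equivalence classes is exactly the mechanism the paper uses, and your counterexample $S\subseteq B$ with $|S|=t-k$ is the same construction the paper gives for the upper bound in the corrected item~2.
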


However, some parts of this result are not correct, and the corrected version of it is as follows.

\begin{proposition}
Let $r,t$ be two positive integers with $r\ge t$.
\begin{enumerate}
\item If $1< k< t$, then $\adim_k(K_{r,t})=t-k$.
\item If $k=t$, then $\adim_k(K_{r,t})=\left\{\begin{array}{ll}
                                         1, & \mbox{if $r>t$}, \\
                                         r, & \mbox{if $r=t$}.
                                       \end{array}\right.
$
\item If $t< k\le r$, then $\adim_k(K_{r,t})=r+t-k$.
\end{enumerate}
\end{proposition}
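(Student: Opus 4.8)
The plan is to reduce the whole statement to one structural fact about distance vectors in $K_{r,t}$, and then read off the three cases by elementary minimization. Let $A$ and $B$ be the partite sets with $|A|=r\ge t=|B|$, and recall that in $K_{r,t}$ two vertices are at distance $1$ when they lie in different parts and at distance $2$ when they lie in the same part. For $S\subseteq V(K_{r,t})$ put $S_A=S\cap A$, $S_B=S\cap B$, $a=|S_A|$, $b=|S_B|$. The key observation I would establish first is that any two distinct vertices lying in the same partite set and outside $S$ have identical vectors of distances to $S$: for $x\in S$ lying in their part both distances equal $2$, and for $x\in S$ lying in the other part both equal $1$. Thus $V(K_{r,t})\setminus S$ splits into at most two classes of vertices having equal distance vectors, namely $A\setminus S_A$ and $B\setminus S_B$; and a short check shows a vertex of $A\setminus S_A$ and a vertex of $B\setminus S_B$ lie in the same class only when $S=\varnothing$ (otherwise any element of $S$ separates them). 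Discarding empty classes, the multiset of class sizes occurring in $V(K_{r,t})\setminus S$ is therefore $\{\,r+t\,\}$ if $S=\varnothing$, $\{\,r-a,\ t\,\}$ if $b=0$ and $a\ge 1$, $\{\,r,\ t-b\,\}$ if $a=0$ and $b\ge 1$, and $\{\,r-a,\ t-b\,\}$ if $a\ge 1$ and $b\ge 1$; if both sizes in the last case vanish then $S=V(K_{r,t})$, which I simply exclude.

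The next step is to recall that, by definition, the integer $k$ for which $S$ is a $k$-antiresolving set is precisely the minimum of these class sizes, so that $\adim_k(K_{r,t})$ is the least value of $a+b$ over all $S$ whose smallest class has size exactly $k$. Each item then follows from the dictionary above. For item $(1)$, with $1<k<t$: since $t-b\le t-1<t$ whenever $b\ge 1$, a minimum class size equal to $k$ forces either the choice $a=0$, $b=t-k$ (cost $t-k$), or $a=r-k$ (cost at least $r-k\ge t-k$), or, when both $a,b$ are positive, a strictly larger combination; hence $\adim_k(K_{r,t})=t-k$, realized by any $(t-k)$-element subset of $B$. For item $(3)$, with $t<k\le r$: a minimum class size exceeding $t$ can occur only if the $B$-class is absent, i.e.\ $b=t$, whereupon $A\setminus S_A$ must have size $k$, forcing $a=r-k$ and $|S|=(r-k)+t=r+t-k$; verifying that no other configuration produces $k$ yields $\adim_k(K_{r,t})=r+t-k$ (the value $k=r$ degenerating to $S=B$). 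For item $(2)$, with $k=t$: if $r>t$ then a single vertex $x\in A$ already works, since the classes it leaves are $A\setminus\{x\}$ of size $r-1\ge t$ and $B$ of size $t$, with minimum $t$, so $\adim_t(K_{r,t})=1$; whereas if $r=t$ then none of the patterns $\{\,r-a,\ t-b\,\}$, $\{\,r-a,\ t\,\}$, $\{\,r,\ t-b\,\}$ can have minimum $t$ while keeping a class of each type nonempty, so the only $t$-antiresolving sets are $A$ and $B$, giving $\adim_t(K_{t,t})=r$.

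The step I expect to be the main obstacle is not any individual computation but the careful handling of the degenerate configurations --- $S$ meeting only one part, $S$ containing a whole part, $S$ empty --- because in each of these a class disappears and the formula for the value of $S$ changes; indeed the failure of the earlier formula $r+t-2k$ stems exactly from ignoring these cases. I would therefore phrase the ``class-size dictionary'' of the first paragraph as a separate lemma with all boundary conventions spelled out, after which items $(1)$--$(3)$ are routine. As a final sanity check I would test the dictionary on the smallest instances, such as the stars $K_{r,1}$ and the balanced graphs $K_{t,t}$ for small $t$, and confirm agreement with the value $r+t-k$ for $t<k\le r$ already known in the literature.
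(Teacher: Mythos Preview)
Your proposal is correct and follows essentially the same approach as the paper's proof: both rest on the observation that in $K_{r,t}$ the equivalence classes of $V\setminus S$ under metric representation with respect to $S$ are precisely $A\setminus S_A$ and $B\setminus S_B$, so that the value of $S$ as an antiresolving set is the smaller of the two nonempty class sizes, and one then minimizes $|S|=a+b$ subject to that minimum being exactly $k$. Your organization via a systematic ``class-size dictionary'' covering the degenerate configurations is somewhat tidier than the paper's case-by-case treatment, but the underlying argument is the same.
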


\begin{proof}
From Proposition \ref{prop:k-antidimensional} we know that $K_{r,t}$ is $r$-metric
antidimensional. Let $U$ and $V$ be the two partite sets of $K_{r,t}$
with $|U|=r$ and $|V|=t$. We first assume that $t< k\le r$. Let $A\subseteq U$
with $|A|=k$ and let be $S=(V\cup U)-A$. Notice that if $k=r$, then $A=U$ and
so, $S=V$. Since any vertex $v\notin S$ (or equivalently $v\in A$) is adjacent
to every vertex of $V$ and it has distance two to every vertex in $U-A$, we
have that all the vertices of $A$ have the same metric representation with
respect to $S$. As $|A|=k$, it follows that $S$ is a $k$-antiresolving set and
$\adim_k(K_{r,t})\le r+t-k$. Now, suppose $\adim_k(K_{r,t})<r+t-k$ and let $S'$
be a
$k$-antiresolving set for $K_{r,t}$. So, we have either one of the following
situations.
\begin{itemize}
\item There exist more than $k$ vertices of $U$ not in $S'$. Hence, for any
vertex $u\in U-S'$ there exist at least $k$ vertices not in $S'$ which,
together with $u$, have the same metric representation with respect to $S'$.
So, $S'$ is not a $k$-antiresolving set, but a $k'$-antiresolving set for some
$k'\ge k+1$, a contradiction.
\item There exists at least one vertex of $V$ not in $S'$. It is a direct
contradiction, since $|V|=t<k$.
\end{itemize}
Therefore, we obtain that $\adim_k(K_{r,t})=r+t-k$.

Now assume $k=t$. If $r=t$, then clearly $U$ and $V$ are the only two $k$-antiresolving sets of $K_{r,t}$. Thus, $\adim_k(K_{r,t})=r$. On the contrary, if $r>t$, then for any vertex $z\in V$ the $t=k$ vertices in $U$ have the same metric representation with respect to $\{z\}$ and for any vertex $w\in V-\{z\}$ there are at least $k-1$ vertices having the same metric representation as $w$ with respect to $\{z\}$. Thus, $\{z\}$ is a $k$-antiresolving sets of $K_{r,t}$ of minimum cardinality, or equivalently, $\adim_k(K_{r,t})=1$.

Finally, we assume that $1< k\le t$. Let $Y\subseteq V$ with $|Y|=k$ and let $Q=V-Y$. Hence,
for any vertex $v\in Y$, there exist
exactly $k-1$ vertices, such that all of them, together with $v$, have the same
metric representation with respect to $Q$. Moreover, for any vertex $u\in U$, there exist
at least $k$ vertices having the same
metric representation as $u$ with respect to $Q$. Thus, $Q$ is a $k$-antiresolving set and
$\adim_k(K_{r,t})\le t-k$.

Now, suppose that $\adim_k(G)<t-k$ and let $Q'$ be a $k$-antiresolving set
in $K_{r,t}$. Hence, there exist more than $k$ vertices of $U$ not in
$Q'$ or there exist more than $k$ vertices of $V$ not in $Q'$. Thus, in any
of both possibilities we obtain that $Q'$ is not $k$-antiresolving set, but a
$k'$-antiresolving set for some $k'\ge k+1$, a contradiction.
As a consequence, $\adim_k(K_{r,t})= t-k$.
\end{proof}

Other contributions on this direction are summarized in Table \ref{tab:k-antidim}.

\begin{table}
  \centering
  \small{\begin{tabular}{|c|c|c|}
    \hline
    \textbf{Graphs} $\mathbf{G}$ & $\mathbf{\adim_{k}(G)}$ & \textbf{Reference} \\ \hline
    Cycle $C_n$ ($k=2$)  & $\begin{array}{cc}
                                               =1, & \mbox{if $n$ is odd} \\
                                               =2, & \mbox{if $n$ is even}
                                             \end{array}$ & \cite{Trujillo-Rasua-2016} \\ \hline
    Tree $T$ & Some bounds and formulas & \cite{Trujillo-Rasua-2016} \\ \hline
    Some generalized Petersen graphs & Some formulas & \cite{Kratica-2019} \\ \hline
  \end{tabular}}
  \caption{Bounds and formulas for the $k$-metric antidimension of some families of graphs.}\label{tab:k-antidim}
\end{table}

\subsection{Some open problems}

\begin{itemize}
  \item Characterizing the family of all $1$-metric antidimensional graphs or at least some classes of such graphs.
  \item Is it the case that every Cartesian, strong and lexicographic products graphs are $k$-metric antidimensional for some $k\ge 2$?
  \item For a given class of $k'$-metric antidimensional networks, can we decide if they also have $k$-antiresolving sets for all $1\le k\le k'-1$?
  \item Find the $k$-metric antidimension of trees and unicyclic graphs.
  \item Find relationships between the $k$-metric antidimension of product graphs and that of its factors.
\end{itemize}

\section{Final remarks}

In this final section, we include several other interesting metric dimension related parameters which are known from the literature. For each of them, we include their definitions, the seminal publication where they appeared, and in some cases, some remarkable publications on each topic.

\subsection*{Doubly resolving sets}

This concept was born in connection with studying the metric dimension of Cartesian product graphs, and was first introduced in \cite{Caceres-2007}. If we consider the Cartesian product graph $G\Box H$, and two resolving sets in $A$ and $B$ of $G$ and $H$, respectively, then the set $A\times B$ is not necessarily a resolving set of $G\Box H$, as it usually happens with many graph parameters. That is, there could exist two vertices $(g,h),(g',h')\in V(G\Box H)$ for which $d_{G\Box H}((g,h),(x,y))=d_{G\Box H}((g',h'),(x,y))$ for every $(x,y)\in A\times B$, even so that there are $x'\in A$ and $y'\in B$ such that $d_G(g,x')\ne d_G(g',x')$ and $d_H(h,y')\ne d_H(h',y')$.

However, if the notion of resolving set is substituted by that of doubly resolving set, then one can prove that the Cartesian product of one doubly resolving set in one factor of the Cartesian product and one resolving set in the second factor, produces a resolving set in the product. It can be indeed proved a stronger result. That is, the metric dimension of any connected Cartesian product graph is bounded above by the sum of the metric dimension of one factor and the smallest cardinality of a doubly resolving set in the second factor minus one. With this motivation in mind the doubly resolving sets in graphs were introduced in \cite{Caceres-2007} as follows.

Two vertices $v, w\in V(G)$ are \emph{doubly resolved} by $x, y \in V(G)$ if $d_G(v, x)-d_G(w, x) \ne d_G(v, y)-d_G(w, y)$. A set of vertices $S\subset V(G)$ is a \emph{doubly resolving set} for $G$, if every pair of distinct vertices $v, w \in V(G)$ are doubly resolved by two
vertices in $S$. The smallest possible cardinality an any doubly resolving set for $G$ was denoted by $\psi(G)$ in \cite{Caceres-2007}, although they did not use any name for this, and we could consider call this cardinality as the \emph{doubly resolving number} of $G$.

Since \cite{Caceres-2007}, the topic of doubly resolving number of graphs became of interest and it has been separately studied in a few interesting investigations. We remark for instance, \cite{Kratica-2009,Mladenovic-2012} where some genetic algorithms were used to compute or approximate the values of such a parameter of graphs.

\subsection*{Multiset versions of metric dimension}

The idea behind this parameter is that of using multisets as the metric representations of the vertices in order to uniquely identify them. That is, given a vertex $u\in V(G)$ and a set $S\subset V(G)$, the \emph{multiset representation} of $u$ with respect to $S$, denoted $\msrepr(u|S)$,
is defined by $$\msrepr(u|S)=\msl d_G(u, w_1), \ldots, d_G(u, w_t) \msr,$$
where $\msl . \msr$ denotes a multiset.

With this definition, there are two variants of multiset versions of metric dimension. In the first one, from \cite{Simanjuntak-2017}, the authors say that a set $S$ of vertices of $G$ is a \emph{multiset resolving set} if for any two distinct vertices $u,v\in V(G)$ it follows that $\msrepr(u|S)\ne \msrepr(v|S)$. The smallest cardinality among all multiset resolving sets of $G$ is the multiset dimension of $G$. The inconvenience of this concept is that there are graphs in which such parameter cannot be computed because there are pairs of vertices that have the same multiset representation with respect to every set of vertices of the graphs (graphs having twins for instance). In such situation, authors of \cite{Simanjuntak-2017} took the agreement that such graph will have multiset dimension infinite, and indeed raised up the following open question.\\

\noindent
\textbf{Open question:} Characterize all the graphs with infinite multiset dimension.\\

In order to not deal with such situation, and have a parameter that can be computed for every graph, in \cite{Gil-Pons-2019} it was defined a related parameter as follows. The set $S\subset V(G)$ is an \emph{outer multiset resolving set} if for any two distinct vertices $u,v\in V(G)\setminus S$ it follows that $\msrepr(u|S)\ne \msrepr(v|S)$. The smallest cardinality among all outer multiset resolving sets of $G$ is the \emph{outer multiset dimension} of $G$. There has been no more studies on this parameter so far.

\subsection*{Resolving number}

The \emph{resolving number} of $G$ is the smallest integer $k$ such that every subset of cardinality $k$ in $G$ is a resolving set of $G$, and is denoted by $\res(G)$. This concept was introduced in \cite{Chartrand-2000-c}.

\subsection*{Simultaneous versions of metric dimension}

This version of metric dimension centers the attention into studying the metric dimension (or some of its variants) of not only graph, but of a family of graphs that can indeed be infinite. That is, consider $\mathcal{F}=G_1, G_2, \dots $ is a family of graphs defined over the same set of vertices $V=\{v_1,\dots,v_n\}$. A set $S\subset V$ is a \emph{simultaneous resolving set} of $\mathcal{F}$ if $S$ is a resolving set for every graph $G_i\in \mathcal{F}$. The smallest cardinality among all simultaneous resolving sets for $\mathcal{F}$ is the \emph{simultaneous metric dimension} of $\mathcal{F}$. These concepts were introduced first in \cite{Ramirez-Cruz-2016}.

Strong, local and adjacency variants of the simultaneous metric dimension have been introduced in \cite{Estrada-Moreno-2016-a}, \cite{Barragan-Ramirez-2017} and \cite{Ramirez-Cruz-2016-a}, respectively, in a natural way. We remark that concerning the simultaneous strong metric dimension of graph families, it was introduced in \cite{Yero-2020} the simultaneous version of the strong resolving graph for graph families. That is, a kind of ``union'' of the strong resolving graphs of all the graphs of the family. This construction allowed to prove, for instance, that computing the simultaneous strong metric dimension is NP-hard, even when restricted to families of graphs composed of only paths or only stars.

\subsection*{Solid metric dimension}

Having in mind the metric representation $r(v|S)$ of a vertex $v\in V(G)$ with respect to a set $S\subset V(G)$, it is generalized this notion to the metric representation $r(X|S)$ of a set $X=\{x_1,\dots,x_r\}\subset V(G)$ with respect to the set $S\subset V(G)$ as follows: $r(X|S)=(d_G(x,1,S),d_G(x_2,S),\dots,d_G(x_r,S))$ (the distance between a vertex and a set was already defined in Subsection \ref{subsec:pd}).

A set $S\subset V(G)$ is a \emph{solid-resolving set} of $G$ if for all vertices $x\in V(G)$ and nonempty subsets $Y\subset V(G)$, $r(x|S) = r(Y|S)$ implies that $Y = \{x\}$. The minimum cardinality among all solid-resolving sets of $G$ is called the \emph{solid-metric dimension} of $G$. Concepts above were introduced in \cite{Hakanen-2020} and further on generalized to $\{\ell\}$-resolving sets and studied in \cite{Hakanen-2020-a,Hakanen-2018}.

\subsection*{Threshold dimension}

The \emph{threshold dimension} of a graph $G$ is the minimum metric dimension among all graphs $H$ having $G$ as a spanning subgraph. In other words, the threshold dimension of $G$ is the minimum metric dimension among all graphs obtained from $G$ by adding edges. This concept was first presented in \cite{Mol-2020}. A strong variant (defined in a natural way with respect to the strong metric dimension) of it was published in \cite{Benakli-2021}.

\subsection*{Upper dimension}

A resolving set $S$ of a graph $G$ is a \emph{minimal resolving set} if no proper subset of $S$ is a resolving set. The maximum possible cardinality among all minimal resolving set of $G$ is the \emph{upper dimension} of $G$, denoted $\dim^+(G)$. These concepts were introduced in \cite{Chartrand-2000-c}. The resolving number, the metric dimension and the upper dimension are clearly related. That is, $1\le \dim(G)\le \dim^+(G) \le \res(G) \le n-1$ for every nontrivial connected graph $G$ of order $n$.

\section*{Acknowledgements}

The authors have been partially supported by the Spanish Ministry of Science and Innovation through the grant PID2019-105824GB-I00.

\end{document}